\numberwithin{equation}{section}
\numberwithin{figure}{section}
\newtheorem{Thm}{Theorem}[subsection]
\newtheorem{Lem}[Thm]{Lemma}
\newtheorem{Prop}[Thm]{Proposition}
\newtheorem{Cor}[Thm]{Corollary}
\newtheorem{Conj}[Thm]{Conjecture}
\newtheorem{Eg}[Thm]{Example}
\newtheorem{Rem}[Thm]{Remark}
\newtheorem{Def}[Thm]{Definition}
\newtheorem*{Def*}{Definition}
\newtheorem*{Thm*}{Theorem}
\newtheorem{Assumption}{Assumption}
\newtheorem*{Conj*}{Conjecture}
\newcommand{\kk}{\Bbbk}
\newcommand{\Z}{\mathbb{Z}}
\newcommand{\N}{\mathbb{N}}
\newcommand{\Q}{\mathbb{Q}}
\newcommand{\C}{\mathbb{C}}
\newcommand{\R}{\mathbb{R}}
\renewcommand{\hat}[1]{\widehat{#1}}
\renewcommand{\tilde}[1]{\widetilde{#1}}
\newcommand{\opname}[1]{\operatorname{\mathsf{#1}}}
\newcommand{\Spec}{\operatorname{\mathsf{Spec}}}
\newcommand{\pr}{\opname{pr}}
\newcommand{\Tr}{\opname{Tr}}
\newcommand{\End}{\opname{End}}
\newcommand{\Hom}{\opname{Hom}}
\newcommand{\supp}{\opname{supp}}
\renewcommand{\deg}{\opname{deg}}
\newcommand{\Hf}{{\frac{1}{2}}}
\newcommand{\Rm}[1]{{\longmapsto}}
\newcommand{\Lm}[1]{{\longmapsfrom}}
\newcommand{\cA}{{\mathcal A}}
\newcommand{\cD}{{\mathcal D}}
\newcommand{\cF}{{\mathcal F}}
\newcommand{\cH}{{\mathcal H}}
\newcommand{\cS}{{\mathcal S}}
\newcommand{\bA}{{\mathbf A}}
\newcommand{\bL}{{\mathbf L}}
\newcommand{\uc}{{\underline{c}}}
\newcommand{\uh}{{\underline{h}}}
\newcommand{\ui}{{\underline{i}}}
\newcommand{\uk}{{\underline{k}}}
\newcommand{\tB}{{\widetilde{B}}}
\newcommand{\tF}{{\widetilde{F}}}
\newcommand{\tQ}{{\widetilde{Q}}}
\newcommand{\can}{L}%simple module, dual canonical basis
\newcommand{\cor}{{\opname{cor}}}%correction map
\newcommand{\clAlg}{{\cA}}%cluster algebra
\newcommand{\qClAlg}{\cA_q}
\newcommand{\diag}{{\delta}}
\newcommand{\tw}{{\tilde{w}}}
\tikzstyle{none}=[inner sep=0pt]
\tikzstyle{black box}=[draw=black, fill=black!25]
\tikzstyle{white box}=[draw=black, fill=white]
\tikzstyle{black circle}=[circle,draw=black!50, fill=black!25]
\tikzstyle{red circle}=[circle,draw=red!50, fill=red!25]
\tikzstyle{blue circle}=[circle,draw=blue!50, fill=blue!25]
\tikzstyle{green circle}=[circle,draw=green!50, fill=green!25]
\tikzstyle{yellow circle}=[circle,draw=yellow!50, fill=yellow!25]
\newcommand{\thistheoremname}{}
\newtheorem*{genericthm*}{\thistheoremname}
\newenvironment{namedthm*}[1]
  {\renewcommand{\thistheoremname}{#1}%
   \begin{genericthm*}}
  {\end{genericthm*}}
\renewcommand{\diag}{{d}}
\newcommand{\sym}{\mathcal{S}}
\renewcommand{\can}{{\bL}}
\newcommand{\var}{\opname{var}}
\newcommand{\fv}{\opname{f}}
\newcommand{\ufv}{\opname{uf}}
\newcommand{\codeg}{\opname{codeg}}
\newcommand{\suppDim}{\opname{suppDim}}
\begin{document}
%macro for this file
\newtheorem{DefLem}[Thm]{Definition-Lemma}

\renewcommand{\qClAlg}{\clAlg_q}

\newcommand{\bQClAlg}{\overline{\clAlg}_q}

\renewcommand{\Mc}{M^\circ}
\newcommand{\Nc}{N^\circ}
\newcommand{\Nufv}{N_{\opname{uf}}}

\newcommand{\sol}{\opname{TI}}
\newcommand{\intv}{\opname{BI}}

\newcommand{\Perm}{\opname{P}}

\newcommand{\bideg}{\opname{bideg}}

\newcommand{\midClAlg}{\clAlg^{\opname{mid}}}

\newcommand{\upClAlg}{\mathcal{U}}
\newcommand{\qUpClAlg}{\mathcal{U}_q}

\newcommand{\canClAlg}{\clAlg^{\opname{can}}}

\newcommand{\AVar}{\mathbb{A}}
\newcommand{\XVar}{\mathbb{X}}
\newcommand{\bAVar}{\overline{\mathbb{A}}}

\newcommand{\Jac}{\hat{\mathop{J}}}

\newcommand{\wt}{\opname{cl}}
\newcommand{\cl}{\opname{cl}}

\newcommand{\LP}{{\mathcal{LP}}}
\newcommand{\bLP}{{\overline{\mathcal{LP}}}}

\newcommand{\bClAlg}{{\overline{\clAlg}}}
\newcommand{\bUpClAlg}{{\overline{\upClAlg}}}

\newcommand{\cRing}{\mathcal{P}}

\newcommand{\tree}{{\mathbb{T}}}

\renewcommand{\diag}{d'}

\newcommand{\img}{{\opname{Im}}}

\newcommand{\Id}{{\opname{Id}}}
\newcommand{\prin}{{\opname{prin}}}

\newcommand{\mm}{{\mathbf{m}}}

\newcommand{\cPtSet}{{\mathcal{CPT}}}
\newcommand{\bPtSet}{{\mathcal{BPT}}}
\newcommand{\tCPtSet}{{\widetilde{\mathcal{CPT}}}}

\newcommand{\tf}{{\tilde{f}}}
\newcommand{\ty}{{\tilde{y}}}
\newcommand{\tcS}{{\tilde{\mathcal{S}}}}

\newcommand{\frd}{{\mathfrak{d}}}
\newcommand{\frD}{{\mathfrak{D}}}
\newcommand{\frp}{{\mathfrak{p}}}
\newcommand{\frg}{{\mathfrak{g}}}
\newcommand{\frn}{{\mathfrak{n}}}
\newcommand{\frsl}{{\mathfrak{sl}}}

\newcommand{\Quot}{\opname{Quot}}

%%%%%%%%%

\newcommand{\col}{\opname{col}}

\newcommand{\MM}{\mathfrak{M}}

\newcommand{\Inj}{\mathbf{I}}

\newcommand{\seq}{\overleftarrow{\mu}}
\newcommand{\seqnu}{\overleftarrow{\nu}}

\newcommand{\hLP}{\widehat{\mathcal{LP}}}
\renewcommand{\sym}{\opname{d}}

\newcommand{\envAlg}{\opname{U_q}}
\newcommand{\frh}{\mathfrak{h}}
\newcommand{\ow}{\overrightarrow{w}}

\renewcommand{\wt}{\opname{wt}}

\newcommand{\trans}{\opname{P}}
\newcommand{\FF}{\mathbb{F}}
\newcommand{\domCone}{\mathbf{D}}

\newtheorem{Definition-Lemma}[Thm]{Definition-Lemma}

\renewcommand{\tw}{\opname{tw}}
\newcommand{\qO}{{\opname{A_q}}}

\newcommand{\up}{{\opname{up}}}

\newcommand{\midAlg}{{\opname{A}}}

\newcommand{\circB}{{\mathring{B}}}

\renewcommand{\bA}{{\mathbb{A}}}

\newcommand{\fd}{{\mathrm{fd}}}
\newcommand{\tropMc}{\mathcal{M}^\circ}

\newcommand{\tDelta}{\widetilde{\Delta}}
\newcommand{\reviseStart}{}
\newcommand{\reviseEnd}{}

\title[]{Dual canonical bases and quantum cluster algebras}
\author{Fan QIN}
\dedicatory{Dedicated to Professor Bernard Leclerc on the occasion of his sixtieth
birthday}
\email{qin.fan.math@gmail.com}
\begin{abstract}
Given any quantum cluster algebra arising from a quantum unipotent
subgroup of symmetrizable Kac-Moody type, we verify the quantization
conjecture in full generality that the quantum cluster monomials are
contained in the dual canonical basis after rescaling.
\end{abstract}

\maketitle
\tableofcontents{}

\section{Introduction}

\label{sec:intro}

\subsection{Background}

Cluster algebras were introduced by Fomin and Zelevinsky around the
year 2000 \cite{FominZelevinsky02}. Their work was rooted in the
desire to understand, in a concrete and combinatorial way, the theory
of total positivity \cite{Lusztig96} and the dual canonical bases
in quantum groups $\envAlg^{-}(\mathfrak{g})$ \cite{Lusztig90,Lusztig91}\cite{Kashiwara93}.
Cluster algebras have distinguished generators called cluster variables.
As a main motivation, Fomin and Zelevinsky conjectured that the cluster
monomials (certain monomials of cluster variables) belong to the dual
canonical basis.

The theory of cluster algebras has enjoyed a rapid growth and it has
soon been linked to many (sometimes unexpected) other topics such
as combinatorics, representation theory, Lie theory, discrete integrable
systems, Poisson geometry, higher Teichm\"uller theory, algebraic
geometry. We refer the reader to the surveys \cite{Keller08Note,Keller09b}.
Yet the motivational conjecture by Fomin and Zelevinsky still remains
open. 

Using the notion of quantum cluster algebras \cite{BerensteinZelevinsky05},
a precise statement for the conjecture relating cluster theory and
the dual canonical bases for quantum groups was later formulated\footnote{The conjecture was called the \emph{quantization conjecture} by Kimura
after the works \cite{GeissLeclercSchroeer10,GeissLeclercSchroeer10b}
on (classical) unipotent subgroups.} \cite[Conjecture 1.1]{Kimura10}.

\begin{Conj}[Quantization conjecture]\label{conj:quantization_conjecture}
Given any symmetrizable Kac-Moody algebra $\mathfrak{g}$ and any
Weyl group element $w\in W$, up to $q$-power rescaling, the corresponding
quantum unipotent subgroup $\qO[N_{-}(w)]$ is a quantum cluster algebra
and its dual canonical basis contains the quantum cluster monomials.

\end{Conj}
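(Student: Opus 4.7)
The plan is to recognize both the dual canonical basis and the set of quantum cluster monomials as distinguished subsets of a common triangular basis of $\qO[N_-(w)]$, and then to propagate the identification along cluster mutations. To this end, I would first fix a reduced word $\underline{w}=(i_1,\ldots,i_\ell)$ for $w$, which provides a Lusztig--Kashiwara PBW-type basis $\pbwTorus$ of $\qO[N_-(w)]$ together with a dominance order on the exponent lattice. The dual canonical basis $\canTorus$ is then characterized by bar-invariance and by a strict unitriangular expansion against $\pbwTorus$ with coefficients in $q^{\frac{1}{2}}\Z[q^{\frac{1}{2}}]$. The task therefore reduces to showing that every quantum cluster monomial, after a suitable $q$-power rescaling, admits such a bar-invariant unitriangular expansion with respect to a PBW basis attached to some reduced expression for $w$.

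The heart of the argument is an inductive scheme on seeds. For a particular initial seed adapted to $\underline{w}$, the identification of the initial cluster variables and their standard monomials with elements of $\canTorus$ is essentially classical, via the known description of quantum unipotent minors as dual canonical basis elements and the PBW stratification governing the initial quantum torus. I would then verify that, in any seed that already matches the dual canonical basis in this triangular sense, any one-step mutation produces a cluster monomial whose expansion still satisfies the bar-invariance together with the triangularity property. This is the framework of the common triangular basis developed in my earlier work: once the quantum exchange relation of Berenstein--Zelevinsky is lifted to a positive identity in the dual canonical basis after rescaling, triangularity is preserved mutation by mutation, and the connectedness of the exchange graph then transports the identification to every seed.

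The main obstacle is the non-symmetric case. In symmetric types, Kang--Kashiwara--Kim--Oh established the conjecture using monoidal categorification by quiver Hecke algebras, where positivity and bar-invariance are automatic at the categorical level; no such categorification is presently available for general symmetrizable $\frg$. I would therefore work algebraically, deducing the positivity of the relevant structure constants from Leclerc-type product formulas for dual canonical basis elements together with Lusztig's rank-two positivity, and matching them to the $T$-system-like exchange relations of the quantum cluster structure on $\qO[N_-(w)]$ constructed by Geiss--Leclerc--Schr\"oer and by Goodearl--Yakimov. A careful analysis of leading terms under the dominance order, together with the Berenstein--Zelevinsky compatible pair data, should give the required triangularity after mutation, while the $q$-power rescaling amounts to reconciling the compatible pair with the Kashiwara bilinear form on $\envAlg^-(\frg)$ by a standard normalization.
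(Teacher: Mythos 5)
Your overall framework---identify the localized, rescaled dual canonical basis as a triangular basis for an initial seed adapted to $\underline{w}$ via unitriangularity against the dual PBW basis, then propagate seed by seed---is indeed the skeleton of the paper's argument (Proposition \ref{prop:initial_triangular_basis} and Theorem \ref{thm:existence_mutation_sequence}). But your plan for the propagation step contains a fatal gap: you propose to deduce ``positivity of the relevant structure constants from Leclerc-type product formulas \ldots together with Lusztig's rank-two positivity'' and then to lift the exchange relation to a \emph{positive} identity in the dual canonical basis. This positivity is simply \emph{false} for non-symmetric symmetrizable types: negative structure constants for the dual canonical basis were found by Yamane and can be verified precisely by Leclerc's algorithm (see Remark \ref{rem:positivity_assumption}). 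This is not a technical inconvenience but the central obstruction---it is exactly why the positivity-based mutation arguments of \cite{qin2017triangular} do not extend, and any proof routed through positivity of structure constants cannot close in the generality claimed.

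The paper replaces positivity by two independent inputs. First, the quantum twist automorphism $\eta_{w,q}$ of Kimura--Oya permutes the localized dual canonical basis; after rescaling it becomes a cluster twist automorphism of Donaldson--Thomas type (Theorems \ref{thm:compare_twist_automorphism}, \ref{thm:twist_auto_invariance}), which sends $X_k(t')^d$ to $I_k(t')^d$ up to frozen factors and hence halves the admissibility condition one must check (Lemma \ref{lem:reduce_admissible}); the remaining cluster monomials along the green-to-red sequence of Goodearl--Yakimov are unipotent quantum minors, hence already dual canonical basis elements. Second, the passage to an adjacent seed (Proposition \ref{prop:admissible_nearby_seed}) is carried out purely through tropical bookkeeping: the mutation-invariance of the dominance order decomposition (Proposition \ref{prop:invariance_dominance_decomposition}), the compatible pointedness of the distinguished set $\Inj^{t}$, bar-invariance, and support-dimension arguments (Proposition \ref{prop:support_compatible_equivalent}) force the $(\prec_{t'},\mm)$-unitriangularity in the new seed without ever inspecting the sign of a structure constant. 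If you want a correct proof you must substitute something like these two ingredients for the positivity step; as written, your induction cannot get past a single mutation in a genuinely symmetrizable example.
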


Here, $\qO[N_{-}(w)]$ is a quantum analogue of the coordinate ring
of the unipotent subgroup $N_{-}(w)$. We postpone the precise definition
and simply recall that $\qO[N_{-}(w_{0})]=\envAlg^{-}(\mathfrak{g})$
for semisimple Lie algebras $\mathfrak{g}$ and the longest element
$w_{0}$.

The cluster structure part in Conjecture \ref{conj:quantization_conjecture}
is no longer a problem. For symmetric Kac-Moody cases, $\qO[N_{-}(w)]$
are known to be quantum cluster algebras by \cite{GeissLeclercSchroeer10}\cite{Kimura10}\cite{GeissLeclercSchroeer11}.
\cite{GY13} found the quantum cluster structure on $\qO[N_{-}(w)]$
for symmetrizable semisimple Lie algebras, and their arguments remain
effective in general. When the author is preparing this paper, an
explicit treatment for general cases has become available in \cite{goodearl2020integral}.

The dual canonical basis part in Conjecture \ref{conj:quantization_conjecture}
has been largely open for many years. First assume $\frg$ is a symmetric
Kac-Moody algebra. \cite{GeissLeclercSchroeer10b} proved an analogous
statement for the dual semi-canonical basis \cite{Lusztig00}. Partial
results include: quivers of Kronecker type and type $A$ by \cite{Lamp10,lampe2014quantum};
acyclic quivers by \cite{KimuraQin14} following \cite{HernandezLeclerc09}\cite{Nakajima09}.
Symmetric cases were solved only recently. Based on completely different
methods, \cite{qin2017triangular} proved it for $(\frg,w)$ such
that $\frg$ is a semisimple Lie algebra, $w$ arbitrary, or $\frg$
is a general Kac-Moody algebra but $w$ is \emph{adaptable}; \cite{Kang2018}
proved it for all $(\frg,w)$. But, up to now, Conjecture \ref{conj:quantization_conjecture}
has been almost untreated when $\frg$ is a symmetrizable Kac-Moody
algebra.

The main aim of this paper is to prove Conjecture \ref{conj:quantization_conjecture}
in full generality for any symmetrizable Kac-Moody algebra $\frg$
and any $w\in W$.

\subsection{Notions, main results and comments}

\subsubsection*{Quantum cluster algebras}

A seed $t$ of a cluster algebra is a collection of cluster variables
together with some combinatorial data. The set of all seeds of a cluster
algebra is denoted by $\Delta^{+}$, and one can generate one seed
from another (adjacent seed) via an algorithm called mutation. The
cluster algebra is generated by its cluster variables, and its quantization
can be constructed. See Section \ref{sec:Basics-cluster-algebra}
for rigorous definitions and treatments.

The readers unfamiliar with cluster theory might prefer a more intuitive
geometric picture. An upper cluster algebra is the coordinate ring
of a variety called the cluster variety, and it often agrees with
the cluster algebra. The cluster variables correspond to local coordinates
and the seeds to the charts of an atlas. The combinatorial data describe
the gluing of adjacent charts (mutation). When an appropriate (log-canonical)
$2$-form is chosen, the cluster variety becomes a Poisson manifold.
Correspondingly, its coordinate ring can be quantized to the quantum
cluster algebra.

Tropical geometry appears once we look at the cluster varieties, see
\cite{FockGoncharov06a}\cite{FockGoncharov09}\cite{gross2013birational},
and a breakthrough has recently been made in this direction \cite{gross2018canonical}.
Notice that the elements of cluster algebras are multivariate Laurent
polynomials. In this paper, by tropical properties, we mean properties
of the corresponding Laurent degrees, following \cite{qin2019bases}.

For most of this paper, we assume that the cluster algebra is injective-reachable
(equivalently, there exists a green to red sequence \cite{keller2011cluster}
or, the Donaldson-Thomas transformation is cluster). This condition
is satisfied by almost all well-known cluster algebras, such as those
arising from Lie theory or higher Teichm\"uller theory.

\subsubsection*{Triangular bases}

The approach to Conjecture \ref{conj:quantization_conjecture} in
the present paper is based on the triangular bases defined in \cite{qin2017triangular}.

For each given seed $t$, we have a quantum Laurent polynomial ring
$\LP(t)$ which contains the quantum cluster algebra. Let $\Mc(t)$
denote the lattice of Laurent degrees. Inspired by representation
theory, we can endow $\Mc(t)$ with a partial order $\prec_{t}$ called
the dominance order with respect to $t$.

Following \cite{qin2017triangular}, we define the triangular basis
for each single seed. It is characterized by a triangularity property
with respect to the dominance order $\prec_{t}$, whence the name
``triangular''. \cite{qin2017triangular} further introduced the
common triangular basis for all seeds such that it has the compatibility
property (certain tropical property, Definition \ref{def:compatibly_pointed})
expected by the Fock-Goncharov dual basis conjecture \cite{FockGoncharov06a}\cite[Section 5]{FockGoncharov09}\cite[Proposition 0.7]{gross2018canonical}.

It is worth remarking that the common triangular basis contains all
quantum cluster monomials if it exists. In general, we do not know
if the triangular basis for one seed exists or not (though we can
still construct a unique topological basis candidate in the formal
completion, see Theorem \ref{thm:construct_tri_func}). Even it exists,
it is unclear if it provides the common triangular basis for all seeds.
In a word, neither the local existence nor the global existence is
known. Nevertheless, by Lie theory, we have the following results
for a symmetric Kac-Moody algebra $\frg$.

\begin{enumerate}

\item The quantum unipotent subgroup $\qO[N_{-}(w)]$ is a quantum
cluster algebra such that, after rescaling and localization, the dual
canonical basis provides the common triangular basis \cite[Theorem 1.2.1(I), 6.1.6 ]{qin2017triangular}\cite{kashiwara2019laurent}.

\item A similar statement for cluster algebras categorified by representation
theory of the quantum loop algebra $U_{q}(L\frg)$ \cite{HernandezLeclerc09},
where the common triangular basis arises from simple modules \cite[Theorem 1.2.1(II)]{qin2017triangular}.

\end{enumerate}

\begin{Rem}[Positivity assumption]\label{rem:positivity_assumption}

Many previous arguments about the triangular basis \cite{qin2017triangular}
rely on the assumption that the basis has positive multiplication
structure constants. Such a positivity assumption is natural, because
the work \cite{qin2017triangular} was inspired by the monoidal categorification
of cluster algebras in the sense of \cite{HernandezLeclerc09}. In
particular, the basis is expected to consist of the simple modules. 

It is known that the dual canonical basis no longer satisfies the
positivity assumption in symmetrizable cases\footnote{Non-positivity was found in Shigenori Yamane's master thesis at Osaka
University. It can be verified by the algorithm in \cite{leclerc2004dual}.
See \cite{tsuchioka10answer} for more details.}. This is the main obstruction that renders many previous arguments
ineffective.

\end{Rem}

\subsubsection*{Main results and comments}

\begin{Thm}[Main theorem, {Theorem \ref{thm:canonical_triangular}}]\label{thm:main_intro}

After localization and rescaling, the dual canonical bases of quantum
unipotent subgroups become the common triangular bases of quantum
cluster algebras. As a consequence, Conjecture \ref{conj:quantization_conjecture}
holds true.

\end{Thm}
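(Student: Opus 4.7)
The plan is to establish Theorem \ref{thm:main_intro} by showing that, for each seed $t$ of the quantum cluster algebra $\qO[N_-(w)]$, the dual canonical basis (suitably rescaled by half-integer powers of $q$, after localization at the frozen variables) satisfies the defining unitriangularity with respect to the dominance order $\prec_t$ on $\Mc(t)$, and is compatibly pointed with every other seed in the mutation class. The main obstacle is precisely the one highlighted in Remark \ref{rem:positivity_assumption}: the dual canonical basis fails to have non-negative structure constants in the symmetrizable case, so the theory of \cite{qin2017triangular} cannot be applied as a black box, and a positivity-free refinement is needed.

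First, I would establish a positivity-free existence theorem for a common triangular basis: assuming only injective-reachability, bar-invariance, and unitriangularity with respect to $\prec_{t_0}$ at an initial seed $t_0$, there should be a unique basis of $\qO[N_-(w)]$ (after appropriate localization and rescaling) with these properties, and it should automatically be compatibly pointed for every mutation-equivalent seed. The key tools are Theorem \ref{thm:construct_tri_func}, which furnishes a unique topological candidate in the formal completion at each seed, together with the sign-coherence and mutation law of tropical $g$-vectors from \cite{qin2019bases}. The role played by monoidal categorification in the symmetric setting (``simples remain simples under mutation'') is replaced by the tropical transformation law of dominant Laurent degrees, which is insensitive to positivity of structure constants.

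Next I would verify the input hypotheses for $\qO[N_-(w)]$. Bar-invariance of the dual canonical basis is standard. Unitriangularity at an initial seed corresponding to a reduced expression of $w$ reduces to the well-known unitriangularity between the dual canonical basis and the dual PBW basis; after translating the weight-lexicographic order on PBW monomials into Laurent degrees in the initial quantum cluster via the explicit quantum cluster structure of \cite{GY13,goodearl2020integral}, one obtains a refinement of $\prec_{t_0}$. The same comparison shows that the initial cluster monomials, after the appropriate $q$-power rescaling, lie in the dual canonical basis. The adaptable-case arguments of \cite{qin2017triangular} provide a blueprint here, but now without a symmetry assumption on $\frg$.

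The hardest step is propagating these properties through arbitrary mutations without positivity. I plan to proceed by induction on mutation distance, comparing the putative triangular bases of adjacent seeds $t$ and $t'=\mu_k t$. In the symmetric setting of \cite{qin2017triangular,Kang2018}, monoidal categorification forces basis elements to transform into basis elements. In the symmetrizable setting this direct argument breaks down, and the substitute tracks only the dominant term under mutation: the new dominant term is prescribed by the $g$-vector mutation rule, which is sign-coherent and purely tropical, and its bar-invariant lift is then unique by Theorem \ref{thm:construct_tri_func}. The Yamane non-positivity phenomenon (Remark \ref{rem:positivity_assumption}) enters only in the subdominant terms, which are pinned down by this uniqueness rather than controlled by positivity of structure constants. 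Combining these ingredients produces both the common triangular basis and its identification with the rescaled dual canonical basis, and Conjecture \ref{conj:quantization_conjecture} follows in full generality.
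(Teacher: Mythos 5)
There is a genuine gap in your propagation step, and it is precisely the point where the paper's actual proof needs its one essentially new ingredient. You claim a ``positivity-free existence theorem'' asserting that bar-invariance plus unitriangularity at a single initial seed automatically yields the common triangular basis, with compatibility at all other seeds following from sign-coherence and the tropical transformation law of $g$-vectors. The paper explicitly disclaims this: neither local existence nor the passage from a local triangular basis to the common one is known in general, and Theorem \ref{thm:construct_tri_func} only produces a formal candidate in the completion $\hLP(t)$, not an element of the algebra. The obstruction is concrete. To transport the triangular basis from $t$ to $t'=\mu_k t$ (Proposition \ref{prop:admissible_nearby_seed}), one must first know that the cluster monomials $\mu_k^*X_k(t')^d$ \emph{and} $\mu_k^*I_k(t')^d$ already lie in $\can^t$ (the admissibility condition, Definition \ref{def:admissibility}); only then does the Substitution Lemma give the $(\prec_t,\mm)$-unitriangularity of $\mu_k^*\Inj_{g'}^{t'}$ needed to invert the transition and apply the mutation-invariance of the dominance-order decomposition. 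Tracking the dominant term via tropical $g$-vector mutation does not supply this hypothesis: the subleading terms are pinned down by uniqueness only \emph{after} one knows the decomposition has coefficients in $\mm$, which is exactly what admissibility buys.

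The paper closes this gap with the cluster twist automorphism: the Kimura--Oya quantum twist $\eta_{w,q}$ on $\qO[N_-^w]$ permutes the localized dual canonical basis, and after rescaling it is shown to be a twist automorphism of Donaldson--Thomas type in the cluster-theoretic sense (Theorems \ref{thm:twist_auto_invariance}, \ref{thm:compare_twist_automorphism}). This converts membership of $X_k(t')^d$ in the basis into membership of $I_k(t')^d$ (Lemma \ref{lem:reduce_admissible}), cutting the admissibility check in half, and the remaining half is supplied by the Goodearl--Yakimov green-to-red sequence from $t_0$ to $t_0[1]$ along which every intermediate cluster variable is a rescaled unipotent quantum minor, hence a dual canonical basis element (Theorem \ref{thm:inj_seed_admissible_mutations}). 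The existence criterion (Theorem \ref{thm:existence_mutation_sequence}) then bootstraps from this one chain of seeds to the whole exchange graph via Propositions \ref{prop:compatible_to_chain_bases} and \ref{prop:new_chain_bases}. Your proposal contains neither the twist automorphism nor any substitute mechanism for verifying admissibility in all mutation directions, so the induction on mutation distance cannot get started beyond the initial green-to-red chain. Your treatment of the initial seed (unitriangularity of the dual canonical basis against the dual PBW basis, translated into $\prec_{t_0}$ via the Goodearl--Yakimov cluster structure) is correct and matches Proposition \ref{prop:initial_triangular_basis}.
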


Effectively, for proving Theorem \ref{thm:main_intro}, we verify
Conjecture \ref{conj:quantization_conjecture} and Fock-Goncharov
dual basis conjecture (for this case) simultaneously in this common
triangular basis approach.

It might come as a surprise that, unlike previous works on Conjecture
\ref{conj:quantization_conjecture}, we do not need the positivity
assumption on the basis (which is false in general). This result is
derived as a consequence of a more general existence theorem for the
common triangular bases (Theorem \ref{thm:existence_intro}). For
proving the latter theorem, we base our arguments mostly on an analysis
of tropical properties and an additional input called twist automorphisms,
which we shall explain below.

After localization, the quantum unipotent subgroup $\qO[N_{-}(w)]$
becomes isomorphic to the quantum unipotent cell $\qO[N_{-}^{w}]$.
It possesses an automorphism $\eta_{w,q}$ called the twist automorphism,
which permutes the (localized) dual canonical basis \cite{kimura2017twist}. 

For general quantum cluster algebras, we define the cluster twist
automorphisms $\tw$ (twist automorphism for short, Section \ref{sec:Twist-automorphisms}),
such that they include $\eta_{w,q}$ as a special case after rescaling
(\eqref{eq:rescale_twist_automorphism}, Theorem \ref{thm:compare_twist_automorphism}).
We show that the common triangular basis, if it exists, must be permuted
by the cluster twist automorphism (Proposition \ref{prop:tri_basis_permute_by_twist}).
Conversely, assuming that a basis possesses this property, we prove
the following existence theorem for common triangular bases.

\begin{Thm}[Existence theorem, {Theorem \ref{thm:existence_mutation_sequence}}]\label{thm:existence_intro}

Assume that the quantum cluster algebra possesses the triangular basis
for an initial seed and a twist automorphism. If the triangular basis
is permuted by the twist automorphism and contains the quantum cluster
monomials appearing along a (green to red) mutation sequence, then
it is the common triangular basis. In particular, it contains all
quantum cluster monomials.

\end{Thm}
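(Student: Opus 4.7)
The plan is to propagate the triangularity property from the initial seed $t_0$ to every other seed, using the given green-to-red mutation sequence as a scaffold and the twist automorphism to close it into a loop. Since the common triangular basis is uniquely characterized when it exists, it suffices to verify, for the given basis $\can$, both the single-seed triangularity at every seed and the compatibility (tropical pointedness) property across adjacent seeds.

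The first main step is an induction along the mutation sequence $t_0 \to t_1 \to \cdots \to t_N$ terminating at the injective-reachable seed. At $t_0$ triangularity holds by hypothesis. For a mutation step $t_r \to t_{r+1} = \mu_k(t_r)$, the dominance orders $\prec_{t_r}$ and $\prec_{t_{r+1}}$ are related by an explicit tropical $g$-vector transformation, and the unitriangular expansion of a basis element of $\can$ with respect to the standard basis can be transported across the mutation provided that enough ``reference'' elements of $\can$ have prescribed leading degrees at $t_{r+1}$. The containment of all cluster monomials occurring along the sequence supplies exactly the needed calibration data at each intermediate seed, since cluster monomials are pointed with computable $g$-vectors at every seed.

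The second main step uses the twist automorphism $\tw$, which identifies the seed datum of the terminal seed $t_N$ with that of $t_0$ (up to a controlled permutation of cluster variables, cf.\ Section \ref{sec:Twist-automorphisms}). Because $\tw$ permutes $\can$ by hypothesis, the triangularity established at $t_N$ is transferred back to $\tw$-equivariant triangularity at $t_0$; this rigidity constraint pins down the expansions along the whole loop, ruling out the correction-term ambiguities that previous arguments eliminated via positivity of structure constants. Once triangularity is verified along the loop, the Laurent phenomenon together with the containment of all cluster monomials that appear forces every cluster monomial to lie in $\can$, and the property then propagates to an arbitrary seed by iterating the local mutation argument along the connected exchange graph, with every local step using the calibration provided by the cluster monomials at its endpoints.

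The main obstacle lies in the single-mutation step, where the dominance order changes and the usual positivity input from \cite{qin2017triangular} --- which controlled the lower-order terms of the mutated expansion --- is unavailable in the symmetrizable setting (Remark \ref{rem:positivity_assumption}). The replacement is the global rigidity imposed by $\tw$: the loop formed by the mutation sequence followed by the twist forces all the local expansion data to be mutually consistent, and it is this consistency --- rather than positivity of structure constants --- that makes the triangularity at each seed along the sequence uniquely determined. I expect the bulk of the technical work to consist of formulating the precise ``local-to-global'' comparison of expansions at adjacent seeds so that the twist identification becomes a genuinely sufficient substitute for positivity.
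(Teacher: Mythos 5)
Your plan correctly identifies the first stage of the argument: iterating a single-mutation statement along the given green-to-red sequence $t_0\to t_1\to\cdots\to t_N=t_0[1]$, using the assumed containment of the cluster monomials along that sequence as the input at each step. This matches the paper's repeated use of Proposition \ref{prop:admissible_nearby_seed}, and your observation that positivity must be replaced by some rigidity is on target --- though the actual replacement in the single-mutation step is not the twist automorphism but a bar-invariance argument combined with the mutation-invariance of dominance-order decompositions (Proposition \ref{prop:invariance_dominance_decomposition}): the mutated basis element is bar-invariant, and since all lower-order coefficients lie in $\mm=v^{-1}\Z[v^{-1}]$, the leading degree after mutation is forced to be unique. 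The twist automorphism plays a different and more specific role than the ``loop rigidity'' you describe: hypothesis (2) only supplies the monomials $\seq_{\leq s}^{*}X_{k_s}(t_s)^d$, i.e.\ half of the admissibility condition of Definition \ref{def:admissibility}, and twist-invariance of the basis supplies the other half $\mu_k^*I_k(t')^d$ via Lemmas \ref{lem:twist_injective_variable} and \ref{lem:reduce_admissible}.

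The genuine gap is the passage from the single sequence to the whole exchange graph. You assert that ``the Laurent phenomenon together with the containment of all cluster monomials that appear forces every cluster monomial to lie in $\can$,'' and that each subsequent local step can use ``the calibration provided by the cluster monomials at its endpoints'' --- but no such calibration is available at seeds off the given sequence, and producing it is precisely the content of the theorem. The paper's mechanism is tropical: once $\can^{t}$ and $\can^{t[1]}$ are known to be compatible, one gets compatible triangular bases along the entire chain $\{t[r]\}_{r\in\Z}$ (Proposition \ref{prop:compatible_to_chain_bases}, via similarity of the seeds $t[r]$ and invariance of support dimensions), and then Proposition \ref{prop:compatible_to_admissible} converts compatible pointedness at $t[r],t[r\mp1]$ into admissibility in \emph{every} direction $k$: a bipointed element with the $g$-vector and support dimension of a cluster monomial must equal that cluster monomial (Lemma \ref{lem:support_to_admissible}, an essentially rank-one computation). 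This is what manufactures the missing cluster monomials at new seeds and allows the recursion of Proposition \ref{prop:new_chain_bases} to spread over the exchange graph. Without an argument of this kind --- compatibility $\Rightarrow$ admissibility --- your induction cannot leave the neighborhood of the initial mutation sequence.
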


Briefly speaking, via an analysis of tropical properties, we prove
Theorem \ref{thm:existence_intro} based on two statements for adjacent
seeds which are inspired by desired conjectures. More precisely, we
consider two properties of a basis: 

\begin{itemize}

\item the admissibility (Definition \ref{def:admissibility}) means
that the basis contains certain cluster monomials in the spirit of
Conjecture \ref{conj:quantization_conjecture}; 

\item the compatibility (Definition \ref{def:compatibly_pointed})
means that it has nice tropical properties in the spirit of Fock-Goncharov
dual basis conjecture.

\end{itemize}Then we show that the triangular basis for one seed
gives rise to a compatible triangular basis for an adjacent seed provided
the admissibility (Proposition \ref{prop:admissible_nearby_seed});
conversely, certain compatibility condition implies the admissibility
(Proposition \ref{prop:compatible_to_admissible}), see Sections \ref{subsec:Adjacent-seeds},
\ref{subsec:Criteria-existence}. It is worth remarking that we use
the dominance order decomposition from \cite{qin2019bases} (Definition
\ref{def:dominance_decomposition} Proposition \ref{prop:invariance_dominance_decomposition})
in the proof of Proposition \ref{prop:admissible_nearby_seed}. In
addition, we use some results from \cite{qin2017triangular} such
as Proposition \ref{prop:lift_to_triangular}.

\begin{Rem}[Main ingredients]

To summarize, let us list the main ingredients in our approach to
Conjecture \ref{conj:quantization_conjecture} contained in this paper
and its prerequisite works. 

Lie theoretic side: \cite{Kimura10} showed that quantum unipotent
subgroups $\qO[N_{-}(w)]$ possess the dual canonical bases induced
from those of $\envAlg^{-}$, which enables us to investigate Fomin-Zelevinsky's
conjectural link between the dual canonical bases and cluster theory
for Kac-Moody cases and formulate Conjecture \ref{conj:quantization_conjecture}.
Based on the theory of non-commutative unique factorization domains,
\cite{GY13}\cite{goodearl2020integral} guaranteed the cluster structures
on $\qO[N_{-}(w)]$. \cite{kimura2017twist} showed that the twist
automorphisms on quantum unipotent cells $\qO[N_{-}^{w}]$ preserve
the (localized) dual canonical bases. Their proof was based on the
fascinating representation theoretic properties of these bases.

Cluster theoretic side: We need to know fundamental and important
notions and results in cluster algebras, such as (the sign-coherence
of) $g$-vectors and $c$-vectors following \cite{FominZelevinsky07}\cite{DerksenWeymanZelevinsky09}...\cite{Demonet10}\cite{gross2018canonical}\footnote{For our main theorem, it is enough to know fundamental properties
of cluster algebras arising from quantum unipotent subgroups of symmetrizable
types. Correspondingly, we can rely on \cite{Demonet10}, but it also
follows from the general results of \cite{gross2018canonical}.}. In this paper, we use notions and techniques introduced in \cite{qin2017triangular}
and further developed in \cite{qin2019bases} for analyzing tropical
properties of elements of cluster algebras. In addition, we consider
an analogue of the twist automorphism on $\qO[N_{-}^{w}]$ (the cluster
twist automorphism) and verify some of its properties (Theorem \ref{thm:global_twist_automorphism},
\ref{thm:compare_twist_automorphism}). Then we replace previous arguments
involving geometry or positivity in \cite{qin2017triangular} by an
analysis of tropical properties following \cite{qin2017triangular}\cite{qin2019bases},
improve the theory of triangular bases in \cite{qin2017triangular}
(Propositions \ref{prop:tri_basis_permute_by_twist}, \ref{prop:admissible_nearby_seed},
\ref{prop:compatible_to_admissible}), and obtain existence criteria
of common triangular bases (Theorems \ref{thm:existence_compatible},
\ref{thm:existence_mutation_sequence}) without imposing the positivity
assumption.

\end{Rem}

\begin{Rem}[Generalization of dual canonical bases]

Fomin and Zelevinsky's original motivation was ambitious, such that
the meaning of their ``dual canonical basis'' includes not only
the well-known dual canonical basis for quantum groups but also (unknown)
generalizations for other (quantized) coordinate rings in Lie theory.
It is therefore important to understand all good bases for other (quantum)
cluster algebras. Many works have been devoted to this topic. These
include but are not limited to \cite{dupont2011generic} \cite{GeissLeclercSchroeer10b}
\cite{plamondon2013generic} \cite{qin2019bases} \cite{musiker2013bases}
\cite{felikson2017bases} \cite{HernandezLeclerc09} \cite{qin2017triangular}
\cite{Kang2018} \cite{thurston2014positive} \cite{BerensteinZelevinsky2012}
\cite{gross2018canonical} \cite{lee2014greedy} \cite{lee2014greedyPNAS}.

By this paper, the triangular basis in the sense of \cite{qin2017triangular}
suggests a reasonable generalization of dual canonical bases in cluster
theory.

\end{Rem}

\begin{Rem}[Twist automorphisms]

In the literature, twist automorphisms are automorphisms on the nilpotent
cells $N_{-}^{w}$ introduced for solving the factorization problems
(chamber ansatz) which describe the inverse of the toric chart of
Schubert varieties \cite{BerensteinFominZelevinsky96}\cite{berenstein1997total}.
In the symmetric Kac-Moody cases, \cite{GeissLeclercSchroeer10}\cite{GeissLeclercSchroeer10b}
studied them using categorification via preprojective algebras. A
quantum analogue was introduced and studied in \cite{kimura2017twist}.

The cluster twist automorphisms in this paper are defined for seeds
of general quantum cluster algebras, not necessarily with a Lie theoretic
background. They include the twist automorphisms in Lie theory as
a special case (Theorem \ref{thm:compare_twist_automorphism}). Therefore,
they provide a reasonable generalization in cluster theory. It is
desirable to understand them on a categorical level (Remark \ref{rem:twist_automorphism_meaning}).

In this paper, we will mainly use the cluster twist automorphisms
of Donaldson-Thomas type, but the notion makes sense more generally.
Further discussions about cluster twist automorphisms will appear
in a separate paper.

\end{Rem}

\begin{Rem}[Berenstein-Zelevinsky triangular basis]

The idea to construct a common triangular basis first appeared in
the work of Berenstein and Zelevinsky \cite{BerensteinZelevinsky2012}.
They defined triangular bases for acyclic seeds of quantum cluster
algebras, and their definition is different from ours. Based on \cite{qin2019compare}
and Theorem \ref{thm:canonical_triangular}, we now know that the
Berenstein-Zelevinsky triangular basis agrees with our common triangular
bases (Corollary \ref{cor:compare_triangular_basis} ).

\end{Rem}

We also obtain a result concerning tropical properties of the dual
canonical bases (Corollary \ref{cor:tropical_dual_can_basis}). It
would be desirable to further understand the relation between the
dual canonical bases and the tropical geometry of cluster varieties. 

It is worth noting that, after the appearance of this paper, McNamara
\cite{mcnamara2021cluster} gave a different proof for Conjecture
\ref{conj:quantization_conjecture} based on the work of \cite{Kang2018}
and folding techniques.

\subsection{Contents}

We provide detailed prerequisites with examples. An expert might skip
Sections \ref{sec:Basics-cluster-algebra}, \ref{sec:Prerequisites-dominance-order},
\ref{sec:Prerequisite-quantum-groups} and probably Sections \ref{sec:Similar-seeds-and-correction},
\ref{sec:Quantum-cluster-structures}. Our arguments and proofs are
presented in Sections \ref{sec:Twist-automorphisms}, \ref{sec:Triangular-bases},
\ref{sec:Dual-canonical-bases-results}.

In Section \ref{sec:Basics-cluster-algebra}, we review basic notions
in cluster theory.

Section \ref{sec:Prerequisites-dominance-order} contains prerequisites
on the dominance order and the corresponding decomposition following
\cite{qin2017triangular} \cite{qin2019bases}. Section \ref{sec:Similar-seeds-and-correction}
contains prerequisites on similar seeds, a correction technique \cite{Qin12}
\cite{qin2017triangular}, and a result concerning mutations of similar
elements (Proposition \ref{prop:mutation_similar_elements}).

Sections \ref{sec:Twist-automorphisms} and \ref{sec:Triangular-bases}
contain most crucial arguments in this paper. First, we define cluster
twist automorphisms and check some necessary properties in Section
\ref{sec:Twist-automorphisms}. Then, we give a general construction
for triangular functions as candidates of basis elements (Theorem
\ref{thm:construct_tri_func}) and review notions and properties for
the desired triangular bases. In addition, we show that common triangular
bases must be permuted by any twist automorphisms (Proposition \ref{prop:tri_basis_permute_by_twist}).
Finally, by using an analysis on tropical properties (such as the
compatibility) and the twist automorphisms, we give general existence
criteria for common triangular bases (Theorems \ref{thm:existence_compatible},
\ref{thm:existence_mutation_sequence}).

Section \ref{sec:Prerequisite-quantum-groups} contains prerequisites
on quantum unipotent subgroups.

In Section \ref{sec:Quantum-cluster-structures}, we present the quantum
cluster structure on quantum unipotent subgroups, following \cite{GY13}.

In Section \ref{sec:Dual-canonical-bases-results}, we apply the results
of the previous sections to quantum cluster algebras arising from
quantum unipotent subgroups and quantum unipotent cells. We first
discuss the integral form and localization in Sections \ref{subsec:Integral-form},
\ref{subsec:Localization-and-bases}. Then, we compare the twist automorphisms
in Lie theory and in our sense in Section \ref{subsec:Twist-automorphism-compare}.
Finally, we apply the previous discussion to obtain our main result
(Theorem \ref{thm:canonical_triangular}) and deduce some other consequences
in Section \ref{subsec:Consequences}.

In Appendix \ref{sec:Cluster-structures-on}, we give some technical
details for quantum groups, CGL extensions, and cluster structures
for the convenience of the reader.

\section*{Acknowledgments}

The author thanks Yoshiyuki Kimura for many helpful discussions. He
is grateful to Bernhard Keller for seeing a preliminary version of
the paper. He also thanks Milen Yakimov for informing him about the
recent work \cite{goodearl2020integral}. He thanks Peigen Cao for
interesting discussions. He is grateful to Bernhard Keller and Bernard
Leclerc for remarks on a preliminary version of this article.

\section{Basics of cluster algebras\label{sec:Basics-cluster-algebra}}

We recall basic notions such as seeds and mutations in cluster theory.
We mostly follow the convention in \cite{gross2013birational}, because
it is conceptually natural to introduce various lattices. A reader
might equally choose the convention in \cite{FominZelevinsky07}.
More detailed discussions could be found in \cite{qin2019bases}.

\subsection{Seeds}

\subsubsection*{Fixed data}

Given $I$ a finite set of vertices endowed with a partition $I=I_{\ufv}\sqcup I_{\fv}$
(called the unfrozen and frozen vertices respectively), we further
fix the following data:

\begin{enumerate}

\item a rank-$|I|$ lattice $N$, endowed with a $\Q$-valued skew-symmetric
bilinear form $\omega(\ ,\ )$;

\item a rank-$|I_{\ufv}|$ saturated sublattice $N_{\ufv}\subset N$,
called unfrozen sublattice;

\item strictly positive integers $(d_{i})_{i\in I}$ with the greatest
common divisor $1$;

\item a sublattice $\Nc\subset N$ of finite index, such that $\omega(N_{\ufv},\Nc)\subset\Z$,
$\omega(N,N_{\ufv}\cap\Nc)\subset\Z$;

\item the dual lattices $M=\Hom_{\Z}(N,\Z)$, $\Mc=\Hom_{\Z}(\Nc,\Z)$.

\end{enumerate}

Notice that $\omega(n,\ )$, $n\in N$, is naturally a $\Q$-valued
linear function on $N$, thus an element in $M_{\Q}=M\otimes_{\Z}\Q$.
Correspondingly, we define the map $p^{*}:N\rightarrow M_{\Q}$ such
that $p^{*}(n)=\omega(n,\ )$. The assumption on $\Nc$ implies that
$p^{*}(\Nufv)\subset\Mc$. 

We always make the following assumption throughout this paper.

\begin{Assumption}[Injectivity assumption]

We assume that the map $p^{*}:\Nufv\rightarrow\Mc$ is injective.

\end{Assumption}

We shall soon see explicit examples for the fixed data. In fact, we
will mainly work with the lattices $\Nufv$ and $\Mc$ throughout
this paper. 

\begin{Rem} 

If all symmetrizers $d_{i}$ are $1$, one can understand the lattices
as Grothendieck groups of some categories, in which one can also consider
bases and bilinear forms, see \cite[Sections 2.3, 2.4]{Qin10} for
details.

\end{Rem}

\subsubsection*{Seeds as bases}

\begin{Def}[Seed]

A seed for the fixed data is an $I$-labeled collection $E=(e_{i}|i\in I)$
such that $E$ is a basis of $N$, $\{e_{k}|k\in I_{\ufv}\}$ is a
$\Z$-basis of $N_{\ufv}$, and $\{d_{i}e_{i}|i\in I\}$ is a basis
of $\Nc$.

\end{Def}

Given a seed $E$, we shall also denote it by $t$ as in most works
in cluster theory. Let $E^{*}=\{e_{i}^{*}|i\in I\}$ denote the dual
basis of $E$ in $M$. Denote $F^{*}:=\{f_{i}^{*}|i\in I\}:=\{d_{i}e_{i}|i\in I\}$.
Then $F:=\{f_{i}|i\in I\}:=\{\frac{1}{d_{i}}e_{i}^{*}|i\in I\}$ is
a basis of $\Mc$. By using these bases, the abstract lattices in
the fixed data can be represented as sets of (column) vectors:

\begin{align*}
\Mc(t) & :=\oplus_{i\in I}\Z f_{i}\simeq\Z^{I}\\
N(t) & :=\oplus_{i\in I}\Z e_{i}\simeq\Z^{I}\\
\Nufv(t) & :=\oplus_{k\in I_{\ufv}}\Z e_{k}\simeq\Z^{I_{\ufv}},
\end{align*}
where we identify $f_{i}$, $e_{i}$ as $i$-th unit vectors respectively.

Let us define $\omega_{ij}=\omega(e_{i},e_{j})$, $b_{ji}=\omega_{ij}d_{j}$,
$\forall i,j\in I$. Apparently, we have $b_{ji}d_{i}=-b_{ij}d_{j}$,
$\forall i,j\in I$. Direct computation shows that $p^{*}(e_{j})(f_{i}^{*})=b_{ij}$,
$\forall i,j\in I$. Therefore, the linear map $p^{*}:N\rightarrow M_{\Q}$
is represented by the matrix $(b_{ij})_{i,j\in I}$, and $p^{*}(e_{j})$
is represented by its $j$-th column vector $\col_{j}((b_{ij})_{i,j\in I})$.

\begin{Def}[$\tB$-matrix]

The $\tB$-matrix of the seed $t$ is defined to be $\tB(t)=(b_{ik})_{i\in I,k\in I_{\ufv}}=(\omega_{ki}d_{i})_{i\in I,k\in I_{\ufv}}$.
The corresponding $B$-matrix is defined to be the $I_{\ufv}\times I_{\ufv}$-submatrix
$B(t)$, which is called the principal part of $\tB(t)$.

\end{Def}

We often omit the symbol $t$ when the context is clear. Under the
injectivity assumption, $\tB$ is of full rank $|I_{\ufv}|$, and
the column vectors $\col_{k}\tB=p^{*}e_{k}$, $k\in I_{\ufv}$, are
linearly independent.

\begin{Rem}[Different conventions: seeds]\label{rem:different_convention}

In \cite{FominZelevinsky02,FominZelevinsky07}, a seed $t$ (with
geometric coefficients) is defined to be the collection of the matrix
$\tB(t)$ and the cluster variables $X_{i}(t)$, $i\in I$. See \cite[Section 2.1]{qin2019bases}
for changing conventions.

\end{Rem}

\begin{Eg}[Type $A_2$]\label{eg:A2}

Let us take $I=I_{\ufv}=\{1,2\}$, $d_{1}=d_{2}=1$, $N=\Z^{2}$,
an initial seed $t_{0}=E=\{e_{1},e_{2}\}=\{\left(\begin{array}{c}
1\\
0
\end{array}\right),\left(\begin{array}{c}
0\\
1
\end{array}\right)\}$, and the bilinear form $\omega$ satisfies $(\omega_{ij})=\left(\begin{array}{cc}
0 & 1\\
-1 & 0
\end{array}\right)$. Then we have $\Nc=N$, $\Mc=M\simeq\Z^{2}$ such that $F=\{f_{1},f_{2}\}=\{\left(\begin{array}{c}
1\\
0
\end{array}\right),\left(\begin{array}{c}
0\\
1
\end{array}\right)\}$, $(b_{ij})=\tB=B=\left(\begin{array}{cc}
0 & -1\\
1 & 0
\end{array}\right)$, $p^{*}e_{1}=f_{2}$, $p^{*}e_{2}=-f_{1}$.

\end{Eg}

\subsubsection*{Poisson structure}

By \cite{GekhtmanShapiroVainshtein03,GekhtmanShapiroVainshtein05},
since $\tB$ is of full rank, we can find an $I\times I$ $\Z$-valued
skew-symmetric matrix $\Lambda=(\Lambda_{ij})$ such that $(\tB,\Lambda)$
is a compatible pair in the sense of \cite{BerensteinZelevinsky05},
i.e. $\tB^{T}\Lambda=\left(\begin{array}{cc}
D' & 0\end{array}\right)$ for some diagonal matrix $D'=\mathrm{diag}(\diag_{k})_{k\in I_{\ufv}}$,
$\diag_{k}\in\N_{>0}$. Notice that $\Lambda$ is not necessarily
unique. In addition, $\tB^{T}\Lambda\tB=D'B$ is skew-symmetric, i.e.,
we have $\diag_{i}b_{ij}=-\diag_{j}b_{ji}$ $\forall i,j\in I_{\ufv}$.

The matrix $\Lambda$ gives rise to a bilinear form $\lambda$ on
$\Mc$ such that $\lambda(f_{i},f_{j})=\Lambda_{ij}$. One can use
it to construct Poisson structure on the corresponding algebraic torus,
see \cite{GekhtmanShapiroVainshtein03,GekhtmanShapiroVainshtein05}
for example.

\begin{Lem} \label{lem:compatible_pair}The following claims are
true for any $i\in I$, $j,k\in I_{\ufv}$.

(1) $\lambda(f_{i},p^{*}e_{k})=-\delta_{ik}\diag_{k}$.

(2) $\lambda(\ ,p^{*}e_{k})=-\diag_{k}d_{k}\cdot e_{k}=-\diag_{k}f_{k}^{*}$.

(3) $\lambda(p^{*}e_{i},p^{*}e_{k})=-\diag_{k}b_{ki}.$

\end{Lem}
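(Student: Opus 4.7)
The plan is to verify the three identities by unwinding the matrix formulation of the compatibility condition $\tB^T\Lambda=(D'\mid 0)$ in the chosen bases, and then using skew-symmetry of $\Lambda$ and of $D'B$.

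For (1), I would first expand in the basis $F$. Since $p^*e_k$ is represented by the $k$-th column of $\tB$, we have $p^*e_k=\sum_{j\in I}b_{jk}f_j$, and therefore $\lambda(f_i,p^*e_k)=\sum_j b_{jk}\Lambda_{ij}$, which is precisely the $(i,k)$-entry of the matrix product $\Lambda\tB$. Since $\Lambda$ is skew-symmetric, transposing the compatibility identity $\tB^T\Lambda=(D'\mid 0)$ yields $-\Lambda\tB=\begin{pmatrix}D'\\0\end{pmatrix}$, so the $(i,k)$-entry equals $-\delta_{ik}\diag_k$ when $i\in I_{\ufv}$ and $0$ when $i\in I_{\fv}$ (in which case $\delta_{ik}=0$ anyway, since $k\in I_{\ufv}$). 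This gives (1).

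For (2), I would observe that it is simply a reformulation of (1) in terms of the dual basis. Since $\{f_i^*\}=\{d_ie_i\}$ is the basis of $\Hom_\Z(\Mc,\Z)$ dual to $F=\{f_i\}$, the linear functional $-\diag_k f_k^*$ takes value $-\diag_k\delta_{ik}$ on $f_i$, which agrees with $\lambda(f_i,p^*e_k)$ by (1). The two expressions for the right-hand side, $-\diag_k d_k\cdot e_k$ and $-\diag_k f_k^*$, coincide because $f_k^*=d_ke_k$ by definition.

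For (3), I would apply (2) to the argument $p^*e_i$. Expanding again $p^*e_i=\sum_l b_{li}f_l$, we obtain $f_k^*(p^*e_i)=\sum_l b_{li}\delta_{kl}=b_{ki}$, and hence $\lambda(p^*e_i,p^*e_k)=-\diag_k b_{ki}$. The identity $\tB^T\Lambda\tB=D'B$ is skew-symmetric in the unfrozen indices, which is exactly the relation $\diag_i b_{ij}=-\diag_j b_{ji}$ for $i,j\in I_{\ufv}$. Applying this with $j=k$ (both unfrozen) rewrites $-\diag_k b_{ki}$ as $\diag_i b_{ik}$, establishing (3). There is no serious obstacle here; the only care needed is to keep track of which vertices must lie in $I_{\ufv}$ for the symmetrizers $\diag_\bullet$ to be defined, and to recognize that the compatibility condition and skew-symmetry of $\Lambda$ encode all three identities.
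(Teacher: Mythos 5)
Your proof is correct and follows essentially the same route as the paper: part (1) is the transposed compatibility identity $\Lambda(-\tB)=\bigl(\begin{smallmatrix}D'\\0\end{smallmatrix}\bigr)$ read entrywise, (2) is its restatement via the dual basis $f_k^*=d_ke_k$, and (3) combines (2) with $\langle p^*e_i,f_k^*\rangle=b_{ki}$ and the skew-symmetry $\diag_ib_{ik}=-\diag_kb_{ki}$, exactly as in the paper. Your remark about tracking which indices must be unfrozen for $\diag_\bullet$ to be defined is apt but does not change the argument.
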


\begin{proof}

(1) The claim follows form the compatibility $\Lambda(-\tB)=\left(\begin{array}{c}
D'\\
0
\end{array}\right)$.

(2) The claim follows from (1).

(3) We have $\lambda(p^{*}e_{i},p^{*}e_{k})=\langle p^{*}e_{i},-\diag_{k}f_{k}^{*}\rangle=b_{ki}\cdot(-\diag_{k}).$

\end{proof}

\begin{Def}[Quantum seed]

The collection $(E,\lambda)$ is called a quantum seed.

\end{Def}

The skew-symmetric matrix $D'B=\tB^{T}\Lambda\tB$ give rise to a
skew-symmetric bilinear form on $\Nufv$, still denoted by $\lambda$,
such that 
\begin{eqnarray*}
\lambda(e_{i},e_{j}) & :=\lambda(p^{*}e_{i},p^{*}e_{j})= & \diag_{i}b_{ij}.
\end{eqnarray*}
The following property follows from definition.

\begin{Lem}\label{lem:compatible_poisson-M}

For any $n,n'\in N_{\ufv}$, we have

\begin{eqnarray*}
\lambda(p^{*}n',p^{*}n) & = & \lambda(n',n).
\end{eqnarray*}

\end{Lem}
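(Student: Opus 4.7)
The plan is straightforward: the identity is an equality of two $\Z$-bilinear forms on $\Nufv \times \Nufv$, so it suffices to verify it on pairs of basis vectors. The left-hand side is bilinear in $(n',n)$ because $p^{*}\colon \Nufv \to \Mc$ is $\Z$-linear and $\lambda$ is a bilinear form on $\Mc$; the right-hand side is bilinear in $(n',n)$ because $\lambda$ on $\Nufv$ has been declared (via the matrix $D'B$) to be a bilinear form. So once the identity holds on a basis of $\Nufv$, it propagates to arbitrary elements.

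By the very definition of the form $\lambda$ on $\Nufv$ displayed immediately before the statement, we have
\[
\lambda(e_i, e_j) \;:=\; \lambda(p^{*} e_i,\, p^{*} e_j) \;=\; \diag_i\, b_{ij}
\]
for all $i,j \in I_{\ufv}$; this is consistent with (indeed a specialization of) Lemma \ref{lem:compatible_pair}(3), which computes $\lambda(p^{*} e_i, p^{*} e_k) = \diag_i b_{ik}$ for $i \in I$, $k \in I_{\ufv}$. Since $\{e_k \mid k \in I_{\ufv}\}$ is a $\Z$-basis of $\Nufv$, the two bilinear forms agree on every pair of basis elements, and therefore coincide as forms on $\Nufv$.

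I do not anticipate any real obstacle: the lemma is essentially a restatement of the defining equality, extended from generators to arbitrary elements by bilinearity. The only point that might require a brief comment in writing up is that the definition of $\lambda$ on $\Nufv$ was set up precisely so as to make the equality hold on basis vectors, which is why the extension is automatic rather than something that needs to be checked against the independent data $(D',B)$.
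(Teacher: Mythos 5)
Your proof is correct and matches the paper's, which simply remarks that the identity "follows from definition" since $\lambda$ on $\Nufv$ is defined by $\lambda(e_i,e_j):=\lambda(p^*e_i,p^*e_j)$ on basis vectors; your write-up just makes the extension by bilinearity explicit.
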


\begin{Eg}

In Example \ref{eg:A2}, we have $d=1$ and $d_{1}^{\vee}=d_{2}^{\vee}=1$.
Take any $\alpha\in\N_{>0}$ and the matrix $\Lambda=(\Lambda_{ij})=\alpha\left(\begin{array}{cc}
0 & -1\\
1 & 0
\end{array}\right)$. Then $(\Lambda,\tB)$ is compatible with $\diag_{1}=\diag_{2}=\alpha$.
Notice that $\Lambda$ gives a skew-bilinear form $\lambda$ on $\Mc$.
Then we have
\begin{eqnarray*}
(\lambda(e_{i},e_{j}))_{i,j\in I_{\ufv}} & = & (\diag_{i}b_{ij})_{i,j}=\alpha\left(\begin{array}{cc}
0 & -1\\
1 & 0
\end{array}\right)=-\alpha\omega.
\end{eqnarray*}

\end{Eg}

For completeness, let us discuss the relation between $\omega$ and
$\lambda$, though it will not be needed for the purpose of this paper.

Denote the least common multiplier $d=\mathrm{lcm}(d_{i})$. Define
the Langlands dual $d_{i}^{\vee}=\frac{d}{d_{i}}$. Then they satisfy
$d_{i}^{\vee}b_{ij}=-d_{j}^{\vee}b_{ji}$, $\forall i,j\in I$. Denote
the scaling constant $\text{\ensuremath{\alpha_{k}=\frac{\diag_{k}}{d_{k}^{\vee}}\in\Q_{>0}}}$
for $k\in I_{\ufv}$, then we have $\alpha_{k}=\alpha_{j}$ whenever
$b_{kj}\neq0$, $\forall k,j\in I_{\ufv}$. It follows that $\alpha_{k}$
is constant on each connected component of $I_{\ufv}$ in the following
sense.

\begin{Def}[Connected components]

Given an $I_{\ufv}\times I_{\ufv}$ matrix $B$, a connected component
$V$ of $I_{\ufv}$ with respect to $B$ is a maximal collection of
vertices in $I_{\ufv}$ such that for any $i,j\in V$, there exists
finitely many vertices $i_{s}\in V$, $0\leq s\leq l$, such that
$i_{0}=i$, $i_{l}=j$, and $b_{i_{s}i_{s+1}}\neq0$ for any $0\leq s\leq l-1$.

\end{Def}

We can choose the matrix $\Lambda$ such that there exists $\alpha\in\Q_{>0}$
such that $\diag_{k}=\alpha d_{k}^{\vee}$ for all $k\in I_{\ufv}$,
i.e., $\alpha_{k}=\alpha$, see \cite[Theorem 2.1]{GekhtmanShapiroVainshtein05}. 

\begin{Lem} \label{lem:compatible_poisson_MN}The following statements
hold for any $n\in\Nufv$.

(1) $\lambda(\ ,p^{*}n)=-\alpha d\cdot n$

(2) $\lambda=-\alpha d\cdot\omega$ on $\Nufv$.

\end{Lem}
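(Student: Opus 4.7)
The plan is to reduce both statements to the already-established Lemma \ref{lem:compatible_pair}(2), using the normalization $\diag_k = \alpha d_k^\vee$ that is available by the cited result of Gekhtman--Shapiro--Vainshtein.

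First, I would treat part (1). Lemma \ref{lem:compatible_pair}(2) gives, for each $k \in I_\ufv$, the identity $\lambda(\ ,p^*e_k) = -\diag_k d_k \cdot e_k$ as elements of $M_\Q$. Since $d_k^\vee = d/d_k$ and $\diag_k = \alpha d_k^\vee$, we have $\diag_k d_k = \alpha d$, which is a constant independent of $k$. Hence $\lambda(\ ,p^*e_k) = -\alpha d \cdot e_k$ for every $k \in I_\ufv$. Because $\{e_k\}_{k \in I_\ufv}$ is a $\Z$-basis of $\Nufv$ and both $\lambda(\ ,p^*(\cdot))$ and the map $n \mapsto -\alpha d \cdot n$ are $\Z$-linear in their second argument, the identity extends by linearity to arbitrary $n \in \Nufv$, yielding (1).

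Part (2) is then immediate. By definition, $\lambda(n',n)$ on $\Nufv$ means $\lambda(p^*n',p^*n)$. Evaluating the functional from (1) at $p^*n' \in \Mc$ gives
\begin{equation*}
\lambda(p^*n',p^*n) = -\alpha d \cdot \langle n, p^*n' \rangle = -\alpha d \cdot \omega(n',n),
\end{equation*}
where the last equality uses the defining relation $p^*n' = \omega(n',\ ) \in M_\Q$. This is precisely the claimed identity $\lambda = -\alpha d \cdot \omega$ on $\Nufv$.

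There is no substantive obstacle here: everything reduces to tracking the normalization constants. The only point that requires a moment of care is the uniformity of the scaling, namely that $\alpha_k = \diag_k/d_k^\vee$ can be chosen to be a single constant $\alpha$ across all connected components of $I_\ufv$ --- this is exactly the content of the theorem of Gekhtman--Shapiro--Vainshtein invoked just before the lemma, and without it the statement would only hold component-wise.
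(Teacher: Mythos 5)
Your proof is correct and follows essentially the same route as the paper: both parts reduce to Lemma \ref{lem:compatible_pair} together with the normalization $\diag_{k}=\alpha d_{k}^{\vee}$, so that $\diag_{k}d_{k}=\alpha d$ is independent of $k$. The only cosmetic difference is that the paper verifies (2) by the direct computation $\lambda(e_{i},e_{j})=\diag_{i}b_{ij}=\alpha d\cdot(-\omega_{ij})$ on basis vectors, whereas you deduce (2) from (1); the content is identical.
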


\begin{proof}

(1) The claims follow from Lemma \ref{lem:compatible_pair}.

(2) By Lemma \ref{lem:compatible_pair}, we have $\lambda(e_{i},e_{j})=\diag_{i}b_{ij}=\alpha d_{i}^{\vee}\cdot d_{i}\omega_{ji}=\alpha d\cdot(-\omega_{ij})$.

\end{proof}

\subsection{Laurent polynomial rings}

Let $\kk$ denote a base ring and $v\in\kk$ an invertible element.
We will take $(\kk,v)=(\Z,1)$ and $(\kk,v)=(\Z[q^{\pm\Hf}],q^{\Hf})$
for classical cluster algebra and quantum cluster algebras respectively,
where $q^{\Hf}$ is a formal parameter. Later, we will also take $(\kk,v)=(\Q(q),q)$
for quantized enveloping algebras, where $q$ is a formal parameter.
Notice that $\kk$ has the $\Z$-linear (or $\Q$-linear) bar involution
$\overline{(\ )}$ which sends $v$ to $v^{-1}$ in these cases.

\subsubsection*{Ring of characters}

Let there be given any lattice $L$ of finite rank. Denote its dual
lattice $L^{\vee}=\Hom_{\Z}(L,\Z)$. Define the following group algebra
endowed with the natural addition and multiplication $(+,\cdot)$
\begin{eqnarray*}
\kk[L] & = & \oplus_{p\in L}\kk\chi^{p}
\end{eqnarray*}
such that $\chi^{p}\cdot\chi^{p'}=\chi^{p+p'}$. $\kk[L]$ is called
the Laurent polynomial ring associated to $L$, and $L$ the lattice
of Laurent degrees. We often omit the commutative multiplication symbol
$\cdot$ for simplicity.

It is worth reminding that $\chi^{p}$ can be viewed as a character
on the split algebraic torus $T_{L^{\vee}}=L^{\vee}\otimes_{\Z}\FF^{*}$,
where $\FF$ is a chosen field and $\FF^{*}=\FF\backslash\{0\}$ its
multiplicative group of invertible elements. We will assume $\FF=\C$
throughout this paper.

\subsubsection*{Quantization}

Assume that $L$ possesses a $\Z$-valued skew-symmetric bilinear
form $\lambda$. We then define the quantum algebra $\kk[L;\lambda]$
such that $\kk[L;\lambda]$ is given by the commutative algebra $(\kk[L],+,\cdot)$
as before and, in addition, endowed with the twisted product $*$:

\begin{eqnarray}
\chi^{p}*\chi^{p'} & = & v^{\lambda(p,p')}\chi^{p+p'}.\label{eq:twist_product}
\end{eqnarray}
Unless otherwise specified, by the algebra structure of $\kk[L;\lambda]$,
we mean $(+,*)$. When the context is clear, we omit the symbol $\lambda$
for simplicity. Notice that, when $v=1$, the twisted product $*$
agrees with the usual product $\cdot$.

\subsubsection*{Formal Laurent series}

We have the Euclidean space $L_{\R}=L\otimes_{\Z}\R$. Let there be
given a strictly convex rational polyhedral cone $\sigma\subset L_{\R}$,
i.e., $\sigma=\sum_{j}\R_{\geq0}p_{j}$ for finitely many generators
$p_{j}\in L$. It determines the monoid $\sigma_{L}=\sigma\cap L$
and the abelian group $\Z\sigma_{L}=\sigma_{L}\otimes_{\N}\Z$. Since
$\sigma$ is strictly convex, the only invertible element in $\sigma_{L}$
is $0$. Denote $\sigma_{L}^{+}=\sigma_{L}\backslash\{0\}$.

We have the monoid algebra $\kk[\sigma_{L}]=\oplus_{p\in\sigma_{L}}\kk\chi^{p}$
and its maximal ideal $\MM=\kk[\sigma_{L}^{+}]=\oplus_{p\in\sigma_{L}^{+}}\kk\chi^{p}$.
Let $\widehat{\kk[\sigma_{L}]}$ denote the completion of $\kk[\sigma_{L}]$
with respect to this maximal ideal. We then define the formal completion
of $\kk[L]$ with respect to $\sigma_{L}$ to be

\begin{eqnarray*}
\widehat{\kk[L;\sigma_{L}]} & = & \kk[L]\otimes_{\kk[\sigma_{L}]}\widehat{\kk[\sigma_{L}]}.
\end{eqnarray*}
When the context is clear, we write $\widehat{\kk[L;\sigma_{L}]}=\widehat{\kk[L]}$
for simplicity. Similarly, we define the formal completion of the
quantum algebra $\kk[L;\lambda]$ to be

\begin{eqnarray*}
\widehat{\kk[L;\sigma_{L},\lambda]} & = & \kk[L;\lambda]\otimes_{\kk[\sigma_{L};\lambda]}\widehat{\kk[\sigma_{L};\lambda]}.
\end{eqnarray*}
$\widehat{\kk[L;\sigma_{L}]}$ (resp. $\widehat{\kk[L;\sigma_{L},\lambda]}$)
is called the algebra (resp. quantum algebra) of formal Laurent series.

A possibly infinite sum will be called a formal sum. Let there be
given a collection of formal sums $Z^{(j)}=\sum_{m_{i}\in L}b_{m_{i}}^{(j)}\chi^{m_{i}}$,
$b_{m_{i}}^{(j)}\in\kk$, $j\in\N$, their formal sum $\sum_{j}Z^{(j)}$
is well defined if, for each $m_{i}$, there are only finitely many
$Z^{(j)}$ with non-vanishing coefficients $b_{m_{i}}^{(j)}$.

\begin{Rem}

The formal completion $\widehat{\kk[L]}$ can be shown to be the Cauchy
completion with respect to an explicit topology on $\kk[L]$, see
\cite{davison2019strong}. It would be interesting to understand calculation
in $\widehat{\kk[L]}$ from a topological point of view, though we
will not pursue this direction in this paper.

\end{Rem}

\subsubsection*{Algebras associated to seeds}

Let there be given a quantum seed $t=(E,\lambda)$ as before. Recall
that we have lattices

\begin{eqnarray*}
\Mc(t) & = & \oplus_{i\in I}\Z f_{i}\simeq\Z^{I}\\
N(t) & = & \oplus_{i\in I}\Z e_{i}\simeq\Z^{I}\\
\Nufv(t) & = & \oplus_{k\in I_{\ufv}}\Z e_{k}\simeq\Z^{I_{\ufv}},
\end{eqnarray*}
where $f_{i}$, $e_{i}$ are viewed as the $i$-th unit vectors.

Consider the quantum algebras of Laurent polynomials $\kk[\Mc(t);\lambda]$
and $\kk[\Nufv(t);\lambda]$. Denote $X_{i}=X_{i}(t)=\chi^{f_{i}}$
and $X^{m}=X(t)^{m}=\chi^{m}$ for $m\in\Mc(t)$. The quantum Laurent
polynomial ring associated to the quantum seed $t$ is defined as
\begin{align*}
\LP(t) & :=\kk[X_{i}^{\pm}]_{i\in I}=\kk[X^{m}]_{m\in M}=\kk[\Mc(t);\lambda].
\end{align*}
 Its skew-field of fractions is denoted by $\cF(t)$.

We also define the Laurent monomials $Y_{k}=Y_{k}(t)=\chi^{p^{*}e_{k}},Y^{n}=Y(t)^{n}=\chi^{p^{*}n}$,
$n\in\Nufv(t)$, and the corresponding subalgebra of $\LP(t)$:
\begin{eqnarray*}
\kk[Y_{k}^{\pm}]_{k\in I_{\ufv}} & = & \kk[Y^{n}]_{n\in\Nufv(t)}\simeq\kk[\Nufv(t);\lambda].
\end{eqnarray*}

Notice that $\kk[Y_{k}]_{k\in I_{\ufv}}$ has the maximal ideal generated
by $Y_{k}$, $k\in I_{\ufv}$. Let $\widehat{\kk[Y_{k}]_{k\in I_{\ufv}}}$
denote the corresponding completion under the adic topology. As before,
the formal completion of $\LP(t)$ is defined by 
\begin{eqnarray*}
\hLP(t) & = & \LP(t)\otimes_{\kk[Y_{k}]_{k\in I_{\ufv}}}\widehat{\kk[Y_{k}]_{k\in I_{\ufv}}}.
\end{eqnarray*}
Its elements will be called formal Laurent series or functions.

\begin{Def}[Quantum cluster variables]

The quantum cluster variables of the seed $t$ are defined to be $X_{i}(t)$,
$i\in I$. Their monomials are called quantum cluster monomials. The
$X_{j}(t)$, $j\in I_{\fv}$ are called the frozen variables. The
group of frozen factors is defined to be $\cRing=\{X(t)^{u}|u\in\Z^{I_{\fv}}\}$. 

The Laurent monomials of the form $X(t)^{m}$, $m=\sum m_{i}f_{i}$,
$m_{i}\in\Z$, such that $m_{i}\geq0$ whenever $i\in I_{\ufv}$ ,
are called localized quantum cluster monomials.

The $Y_{k}(t)$, $k\in I_{\ufv}$, are called the (unfrozen) quantum
$Y$-variables of the seed $t$.

\end{Def}

For simplicity, we often skip the word quantum and the symbol $t$.

\subsection{Mutation}

\subsubsection*{Seeds}

Let $[\ ]_{+}$ denote the function $\max(\ ,0)$. Define $[(g_{i})]_{+}=([g_{i}]_{+})$
for any vectors $(g_{i})$. Recall that a seed is a collection of
basis elements $t=E=(e_{i})_{i\in I}$ in $N$. Let us define new
seeds in the following procedure.

We start by choosing a sign $\varepsilon\in\{+,-\}$. For any $k\in I_{\ufv}$,
define $I\times I$ transformation matrices $\trans_{E,\varepsilon}$
and $\trans_{F,\varepsilon}$ whose entries are given as follows:

\begin{eqnarray*}
(\trans_{E,\varepsilon})_{ij} & = & \begin{cases}
\delta_{ij} & i\neq k\\
-1 & i=j=k\\{}
[\varepsilon b_{kj}]_{+} & i=k,j\neq k
\end{cases}
\end{eqnarray*}

\begin{eqnarray*}
(\trans_{F,\varepsilon})_{ij} & = & \begin{cases}
\delta_{ij} & j\neq k\\
-1 & i=j=k\\{}
[-\varepsilon b_{ik}]_{+} & i\ne k,j=k
\end{cases}.
\end{eqnarray*}
Denote their $I_{\ufv}\times I_{\ufv}$-submatrices by $\trans_{E,\varepsilon}^{I_{\ufv}}$
and $\trans_{F,\varepsilon}^{I_{\ufv}}$ respectively\footnote{The matrices $\trans_{E,\varepsilon}$ and $\trans_{F,\varepsilon}^{I_{\ufv}}$
are denoted by $F_{\varepsilon}$ and $E_{\varepsilon}$ in \cite[Section 5.6]{Keller12}\cite[(3.2) (3.3)]{BerensteinZelevinsky05}
respectively.}.

Given any unfrozen vertex $k\in I_{\ufv}$, we define the new seed
$t'=E'=(e_{i}')_{i\in I}$ such that $e_{i}'=\sum_{j\in I}e_{j}\cdot(\trans_{E,\varepsilon})_{ji}\in N$:

\begin{eqnarray*}
e_{i}' & = & \begin{cases}
e_{i}+[\varepsilon b_{ki}]_{+}e_{k} & i\neq k\\
-e_{k} & i=k
\end{cases}.
\end{eqnarray*}
Recall that $b_{ij}=\omega_{ji}d_{i}=\omega(e_{j},e_{i})d_{i}$ and
it depends on the seed $t=E$. 

\begin{Def}[Mutation]

We denote the above operation by $t'=\mu_{k,\varepsilon}t$ and call
$\mu_{k,\varepsilon}$ the mutation in the direction $k$.

\end{Def}

We could also compute the dual basis $(E^{*})'=\{e_{i}^{*}|i\in I\}$
in $M$ and the corresponding basis $F'=\{\frac{1}{d_{i}}e_{i}^{*}|i\in I\}$
in $\Mc$. It is straightforward to check that we get $f_{i}'=\sum_{j\in I}f_{j}\cdot(\trans_{F,\varepsilon})_{ji}\in\Mc$:

\begin{eqnarray*}
f_{i}' & = & \begin{cases}
f_{i} & i\neq k\\
-f_{k}+\sum_{j}[-\varepsilon b_{jk}]_{+}f_{j} & i=k
\end{cases}.
\end{eqnarray*}

The new bases enable us to represent the abstract lattices as sets
of new coordinate (column) vectors:

\begin{eqnarray*}
\Mc(t') & = & \oplus\Z f_{i}'\simeq\Z^{I}\\
N(t') & = & \oplus\Z e_{i}'\simeq\Z^{I}\\
\Nufv(t') & = & \oplus_{k\in I_{\ufv}}\Z e_{k}'\simeq\Z^{I_{\ufv}}
\end{eqnarray*}
where $f_{i}'$, $e_{i}'$ are viewed as $i$th unit vectors.

The above basis change gives a linear transformation $\tau_{k,\varepsilon}:N(t')\rightarrow N(t)$
such that $\tau_{k,\varepsilon}(e_{i}')=\sum_{j\in I}e_{j}\cdot(\trans_{E,\varepsilon})_{ji}$
and, similarly, $\tau_{k,\varepsilon}:\Mc(t')\rightarrow\Mc(t)$ such
that $\tau(f_{i}')=\sum_{j\in I}f_{j}\cdot(\trans_{F,\varepsilon})_{ji}$.
These linear maps are represented by (left multiplication of) the
matrices $\trans_{E,\varepsilon}$ and $\trans_{F,\varepsilon}$ respectively.

\begin{Rem}

On the one hand, these matrices are idempotent, i.e., $(\trans_{E,\varepsilon})^{2}=(\trans_{F,\varepsilon})^{2}=\Id_{I}$.
On the other hand, it is straightforward to check that $\mu_{k,-\varepsilon}(\mu_{k,\varepsilon}(t))=t$.
Notice that the matrix depends on the seeds, and $\mu_{k,\varepsilon}\mu_{k,\varepsilon}(t)\neq t$!

\end{Rem}

It is straightforward to compute $b_{ij}'=\omega_{ji}'d_{i}=\omega(e_{j}',e_{i}')d_{i}$:

\begin{eqnarray*}
b_{ij}' & = & \begin{cases}
-b_{ij} & k\in\{i,j\}\\
b_{ij}+b_{ik}[\varepsilon b_{kj}]_{+}+[-\varepsilon b_{ik}]_{+}b_{kj} & k\neq i,j
\end{cases}.
\end{eqnarray*}
In addition, one can check that the new matrix $(b_{ij}')_{i,j\in I}$
only depends on $(b_{ij})_{i,j\in I}$ and is independent of the sign
$\varepsilon$. Consequently, we define the mutation of matrices $(b_{ij}')_{i,j\in I}=\mu_{k}((b_{ij})_{i,j\in I})$
without choosing a sign.

\begin{Lem}\label{lem:same_conn_components}

The connected components of $I_{\ufv}$ with respect to $B(t)$ are
the same as those with respect to $B(t')$.

\end{Lem}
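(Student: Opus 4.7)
The plan is to run through the mutation formula for $b_{ij}'$ and do a small case analysis showing that the only ways in which edges are created or destroyed are always mediated by the vertex $k$, which itself keeps exactly the same set of direct neighbors as before.

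First I would record two elementary observations from the mutation formula. For any $i \neq k$, we have $b_{ki}' = -b_{ki}$, so the direct neighbors of $k$ inside $I_{\ufv}$ are identical in $B(t)$ and $B(t')$. For $i, j \neq k$, the formula
\[
b_{ij}' \;=\; b_{ij} + b_{ik}[\varepsilon b_{kj}]_{+} + [-\varepsilon b_{ik}]_{+} b_{kj}
\]
shows that if either $b_{ik}=0$ or $b_{kj}=0$, then the correction vanishes and $b_{ij}'=b_{ij}$; in particular the direct edge between $i$ and $j$ is unchanged.

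Next I would analyze the remaining case $b_{ik}\neq 0$ and $b_{kj}\neq 0$ for $i,j\neq k$. Even though $b_{ij}$ and $b_{ij}'$ may differ (an edge between $i$ and $j$ could appear or disappear), the vertices $i$ and $j$ are both direct neighbors of $k$ in $B(t)$ and, by the first observation, still direct neighbors of $k$ in $B(t')$. Hence $i$, $k$, $j$ lie in a common connected component on both sides regardless of whether the direct edge $(i,j)$ is present. Combining this with the previous step, I conclude: whenever $b_{ij}\neq 0$ in $B(t)$, the vertices $i$ and $j$ lie in the same connected component of $B(t')$ (either via a preserved direct edge or via a path $i - k - j$). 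Lifting this from single edges to arbitrary paths $i=i_0,i_1,\dots,i_m=j$ gives the inclusion of connected components from $t$ to $t'$.

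For the reverse inclusion, I would invoke the involutivity of mutation: $\mu_{k,-\varepsilon}(t')=t$, so the same argument applied with $t$ and $t'$ interchanged (and the sign flipped) gives the other direction. This finishes the lemma. I do not anticipate a hard step here; the only subtlety is to avoid assuming that the direct edge $(i,j)$ itself survives mutation, which is exactly why the case analysis is needed and why the argument goes through $k$ in the non-trivial case.
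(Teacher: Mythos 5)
Your proof is correct and is precisely the direct verification from the mutation formula that the paper leaves implicit (the lemma is stated there without proof). The case analysis — edges at $k$ are preserved since $b_{ki}'=-b_{ki}$, edges $i\textrm{--}j$ with $b_{ik}=0$ or $b_{kj}=0$ are unchanged, and in the remaining case connectivity is maintained via the path $i\textrm{--}k\textrm{--}j$ — together with involutivity of mutation for the reverse inclusion, is exactly the standard argument.
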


Assume that $t$ is a quantum seed $(E,\lambda)$ with a chosen $I\times I$-matrix
$\Lambda=(\Lambda_{ij})_{i,j\in I}$ such that $\Lambda_{ij}=\lambda(f_{i},f_{j})$,
$\tB^{T}\Lambda=\left(\begin{array}{cc}
D' & 0\end{array}\right)$, $D'=\mathrm{diag}(\diag_{i})_{i\in I_{\ufv}}$. Let $\Lambda'=(\Lambda'_{ij})_{i,j\in I}$
denote the matrix such that $\Lambda_{ij}'=\lambda(f'_{i},f'_{j})$.
The following lemma shows that $\mu_{k}t:=t':=(E',\lambda)$ is also
a quantum seed.

\begin{Lem}[{\cite{BerensteinZelevinsky05}}]\label{lem:mutate_compatible_pair}

Take any sign $\varepsilon$, the following claims are true.

(1) $(b_{ij}')_{i,j\in I}=\trans_{F,\varepsilon}(b_{ij})_{i,j\in I}\trans_{E,\varepsilon}$

(2) $\Lambda'=\trans_{F,\varepsilon}^{T}\Lambda\trans_{F,\varepsilon}$

(3) $(\tB')^{T}\Lambda'=\left(\begin{array}{cc}
D' & 0\end{array}\right)$

\end{Lem}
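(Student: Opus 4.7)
My plan is to verify the three identities in order by direct matrix computation, exploiting the extreme sparsity of $\trans_{E,\varepsilon}$ and $\trans_{F,\varepsilon}$: each differs from the identity only in a single row (resp.\ column), so every product reduces to a finite case analysis indexed by how the row/column index relates to the mutation direction $k$. For (1), I would expand $\trans_{F,\varepsilon}(b_{ij})\trans_{E,\varepsilon}$ entry by entry. When $i,j\neq k$, the product receives contributions from $\trans_{F,\varepsilon}$ (the $[-\varepsilon b_{ik}]_{+}$ piece in its $k$-th column) and from $\trans_{E,\varepsilon}$ (the $[\varepsilon b_{kj}]_{+}$ piece in its $k$-th row); using $b_{kk}=0$ to kill the cross term, what survives is precisely the Fomin-Zelevinsky mutation formula for $b_{ij}'$ stated immediately before the lemma. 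When $i=k$ or $j=k$, the $-1$ on the diagonals of $\trans_{E,\varepsilon}$ and $\trans_{F,\varepsilon}$ produces $b_{ij}'=-b_{ij}$.

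For (2), the identity $f_{i}'=\sum_{j}f_{j}(\trans_{F,\varepsilon})_{ji}$ together with bilinearity of $\lambda$ reduces the claim to an entry-wise computation sandwiching $\Lambda$ by $\trans_{F,\varepsilon}$. Only the $k$-th row and column of $\Lambda'$ differ nontrivially from those of $\Lambda$, so the verification amounts to substituting the explicit expression $f_{k}'=-f_{k}+\sum_{j\neq k}[-\varepsilon b_{jk}]_{+}f_{j}$ into $\lambda(f_i',f_j')$ and simplifying.

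Finally, (3) follows by combining (1) and (2) with the idempotency $(\trans_{F,\varepsilon})^{2}=\Id$ to cancel the middle factors, reducing $(\tB')^{T}\Lambda'$ to an expression of the form $(\trans_{E,\varepsilon}^{I_{\ufv}})^{T}\,(\tB^{T}\Lambda)\,\trans_{F,\varepsilon}$. Substituting the hypothesis $\tB^{T}\Lambda=(D'\ 0)$, the $I_{\fv}$-columns vanish because $\trans_{F,\varepsilon}$ acts as the identity on the frozen rows, and the remaining $I_{\ufv}\times I_{\ufv}$ block reduces to the identity $(\trans_{E,\varepsilon}^{I_{\ufv}})^{T}D'\,\trans_{F,\varepsilon}^{I_{\ufv}}=D'$. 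This last equality is the only substantive point: it uses the relation $\diag_{i}b_{ij}=-\diag_{j}b_{ji}$ on $I_{\ufv}$ (equivalently, skew-symmetry of $D'B$) to match the column-$k$ entries of $\trans_{E,\varepsilon}^{I_{\ufv}}$ and $\trans_{F,\varepsilon}^{I_{\ufv}}$ after rescaling by $D'$.

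The main obstacle is purely combinatorial bookkeeping---transposes, sign conventions, and the $\varepsilon$-dependence---rather than any genuine mathematical difficulty; the only substantive input is the compatibility hypothesis itself, which enters only at the last step via the Langlands-type relation $\diag_{i}b_{ij}=-\diag_{j}b_{ji}$.
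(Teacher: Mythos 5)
Your proof is correct and follows essentially the same route as the paper, which likewise disposes of (1) and (2) by direct calculation and deduces (3) from $\trans_{F,\varepsilon}^{2}=\Id_{I}$ together with the identity $(\trans_{E,\varepsilon}^{I_{\ufv}})^{T}D'=D'\trans_{F,\varepsilon}^{I_{\ufv}}$ --- equivalent to your $(\trans_{E,\varepsilon}^{I_{\ufv}})^{T}D'\trans_{F,\varepsilon}^{I_{\ufv}}=D'$ and resting on the same relation $\diag_{i}b_{ij}=-\diag_{j}b_{ji}$. One minor point to watch: the basis-change computation you describe for (2) actually yields $\Lambda'=\trans_{F,\varepsilon}^{T}\Lambda\,\trans_{F,\varepsilon}$ (as in Berenstein--Zelevinsky), and it is this transposed form that makes the middle factors in your step (3) cancel, via $(\trans_{F,\varepsilon}^{T})^{2}=(\trans_{F,\varepsilon}^{2})^{T}=\Id_{I}$.
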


\begin{proof}

(1)(2) The statements can be proved by calculation or using linear
algebras.

(3) The statement follows from (1)(2) and the equality $\trans_{F,\varepsilon}^{2}=\Id_{I}$,
$(\trans_{E,\varepsilon}^{I_{\ufv}})^{T}D'=D'(\trans_{F,\varepsilon}^{I_{\ufv}})$.

\end{proof}

\begin{Eg}\label{eg:A2_mutation}

In Example \ref{eg:A2}, let us take the initial seed $t_{0}=E=\{e_{1},e_{2}\}=\{\left(\begin{array}{c}
1\\
0
\end{array}\right),\left(\begin{array}{c}
0\\
1
\end{array}\right)\}$, $(b_{ij})=\left(\begin{array}{cc}
0 & -1\\
1 & 0
\end{array}\right)$. Choose $k=2$. Then $\trans_{E,+}=\left(\begin{array}{cc}
1 & 0\\
1 & -1
\end{array}\right)$, $t_{2}=\mu_{2,+}t_{0}=E'=\{e_{1}',e_{2}'\}=\{e_{1}+e_{2},-e_{2}\}$.
It follows that $(b'_{ij})=\left(\begin{array}{cc}
0 & 1\\
-1 & 0
\end{array}\right)$. Choosing $k=2$ again, we get $\trans_{E',-}=\left(\begin{array}{cc}
1 & 0\\
1 & -1
\end{array}\right)$, $\mu_{2,-}(t_{2})=E''=\{e_{1}'+e_{2}',-e_{2}'\}=E=t$. On the other
hand, $\trans_{E',+}=\left(\begin{array}{cc}
1 & 0\\
0 & -1
\end{array}\right)$ and, correspondingly, $\mu_{2,+}(t_{2})\neq t_{0}$!

\end{Eg}

\subsubsection*{Signs of seed mutations and $c$-matrices}

Let $t_{0}$ denote a given initial seed. Let $\uk=k_{1}k_{2}\cdots k_{l}$
denote a word whose letters are unfrozen vertices. We are going to
apply mutations along the sequence $\uk$. In cluster theory, there
is a default choice of the signs $\varepsilon_{j}$ for the corresponding
mutations once the initial seed $t_{0}$ is given.

Let $I'=\{i'|i\in I_{\ufv}\}$ denote a copy of $I_{\ufv}$. The principal
coefficient matrix associated to $t_{0}$ is the $(I_{\ufv}\sqcup I')\times(I_{\ufv}\sqcup I')$-matrix
given by $(b_{ij}^{\prin})_{i,j\in I_{\ufv}\sqcup I'}=\left(\begin{array}{cc}
(b_{ij})_{i,j\in I_{\ufv}} & -C(t_{0})\\
C(t_{0}) & 0
\end{array}\right)$, where $C(t_{0})$ is the $I'\times I_{\ufv}$-matrix whose non-zero
entries are $C(t_{0})_{i',i}=1$, $\forall i\in I_{\ufv}$. Under
the natural identification $I'\simeq I_{\ufv}$, we also view $C(t_{0})$
as the identity matrix $\Id_{I_{\ufv}}$.

For any $s\in[0,l]$, denote the word $\uk_{\leq s}=k_{1}k_{2}\ldots k_{s}$.
Assume that the sign $\varepsilon_{j}$ have been chosen for $j\leq s$,
then we have the mutation sequence $\seq_{\leq s}=\mu_{k_{s},\varepsilon_{s}}\cdots\mu_{k_{1},\varepsilon_{1}}$
(reading from right to left) and the corresponding seed $t_{s}=\seq_{\leq s}t_{0}$.
If $s=0$, $\uk_{\leq s}$ denote an empty word, $\seq_{\leq s}$
an empty sequence, and $t_{s}=t_{0}$. Applying $\seq_{\leq s}$ to
$(b_{ij}^{\prin})$, we obtain the matrix $(b_{ij}^{\prin}(t_{s}))_{i,j\in I_{\ufv}\sqcup I'}=\left(\begin{array}{cc}
(b_{ij}(t_{s}))_{i,j\in I_{\ufv}} & -C(t_{s})\\
C(t_{s}) & 0
\end{array}\right)$.

\begin{Def}\label{def:C-matrix}

The matrix $C^{t_{0}}(t_{s}):=C(t_{s})$ is called the $C$-matrix
of $t_{s}$ with respect to the initial seed $t_{0}$. Its $k$-th
column vectors are called the $k$-th $c$-vectors of $t_{s}$, denoted
by $c_{k}(t_{s})$.

\end{Def}

Notice that the $C$-matrices depend on the $B(t_{0})$.

Recall that a vector $g=(g_{i})_{i}$ is called \emph{sign-coherent},
if all components $g_{i}\geq0$ or all components $g_{i}\leq0$, in
which case we denote $g\geq0$ or $g\leq0$ respectively.

\begin{Thm}\cite[Theorem  1.7]{DerksenWeymanZelevinsky09}\cite[Corollary 5.5]{gross2018canonical}

For any $k\in I_{\ufv}$, the $c$-vector $c_{k}(t_{s})$ is non-zero
and sign-coherent.

\end{Thm}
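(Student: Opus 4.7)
The plan is to follow the scattering diagram approach of Gross-Hacking-Keel-Kontsevich, which handles the symmetrizable setting uniformly. The guiding geometric picture is to realize the $c$-vectors as signed primitive normal vectors to facets of cluster chambers in a consistent wall-and-chamber structure $\frD_{t_0}$ on $\Mc(t_0)_\R$.

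First I would construct the consistent scattering diagram $\frD_{t_0}$ associated to $t_0$. One begins from the incoming walls $(e_k^{\perp},\Psi_k)$ for $k\in I_{\ufv}$, where the wall-crossing automorphism $\Psi_k$ encodes the column $p^{*}(e_k)$ of $\tB(t_0)$. The Kontsevich-Soibelman consistency procedure (in the algebraic form developed in GHKK) then adjoins outgoing walls inductively, order by order in the filtration by a strictly convex monoid inside $\Nufv$. The key inductive invariant is that every wall has a sign-coherent primitive normal vector: new walls arise from nested commutators of previously constructed walls in a pronilpotent Lie algebra, and non-negative $\N$-combinations preserve sign-coherence. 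The injectivity assumption on $p^{*}:\Nufv\to\Mc$ ensures the strict convexity needed for the completion and the inductive construction to make sense.

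Next I would identify, for each mutation sequence $\seq_{\leq s}$ applied to $t_0$, a distinguished full-dimensional cluster chamber $\mathcal{C}_{t_s}\subset\Mc(t_0)_\R$, defined inductively: $\mathcal{C}_{t_0}$ is the orthant spanned by $(f_i)$, and $\mathcal{C}_{t_s}$ is obtained from $\mathcal{C}_{t_{s-1}}$ by crossing its $k_s$-th facet in the sign-direction $\varepsilon_s$. A direct comparison of the tropical mutation rules $\trans_{F,\varepsilon}$ with Definition \ref{def:C-matrix} then shows that the primitive integer normals to the facets of $\mathcal{C}_{t_s}$, read in the basis $(f_i)$ of $\Mc(t_0)$, are exactly the columns $c_k(t_s)$ of $C^{t_0}(t_s)$. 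Because every facet of $\mathcal{C}_{t_s}$ is supported on a wall of $\frD_{t_0}$ with sign-coherent normal by the previous step, each $c_k(t_s)$ is sign-coherent; non-vanishing then follows from full-dimensionality of $\mathcal{C}_{t_s}$, which in turn uses the injectivity of $p^{*}$.

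The main obstacle is the construction and consistency of $\frD_{t_0}$ itself, together with the verification that the cluster chambers produced by seed mutation actually embed into $\frD_{t_0}$ as full-dimensional chambers whose facet combinatorics match the tropical mutation of $C$-matrices. This is the technical heart of the GHKK machinery and cannot be meaningfully shortcut in the symmetrizable case. In the special skew-symmetric situation, an alternative route is the categorification via decorated representations of a quiver with potential due to Derksen-Weyman-Zelevinsky: the $c$-vectors arise as $\pm$ dimension vectors of certain modules, and sign-coherence is read directly off the simple objects in an appropriate $t$-structure; but this categorical strategy is unavailable for general symmetrizable $B(t_0)$, making the scattering-diagram approach the natural choice here.
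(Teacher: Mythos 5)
The paper does not prove this statement at all; it is quoted as a black-box citation to Derksen--Weyman--Zelevinsky and to Gross--Hacking--Keel--Kontsevich, and your sketch is a faithful outline of exactly the cited GHKK scattering-diagram argument (build the consistent diagram $\frD_{t_0}$, embed the cluster chambers, identify $c$-vectors with primitive normals to chamber facets, and deduce sign-coherence from the fact that every wall is supported on $n^{\perp}$ for $n$ in the positive cone). The only cosmetic slip is that the primitive normals to facets of a chamber in $\Mc(t_0)_{\R}$ naturally live in the dual lattice $\Nufv(t_0)$ with basis $(e_k)$, not ``in the basis $(f_i)$ of $\Mc(t_0)$''; this does not affect the argument, which is correct as a proof outline and correctly identifies the consistency of $\frD_{t_0}$ and the chamber embedding as the nontrivial technical content deferred to GHKK.
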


We refer the reader to \cite{nakanishi2012tropical} for more discussions.

Following \cite[section 5.6]{Keller12}, by using $c$-vectors, we
choose the sign $\varepsilon_{k_{s+1}}=+$ if $c_{k_{s+1}}(t_{s})>0$,
; choose $\varepsilon_{k_{s+1}}=-$ if $c_{k_{s+1}}(t_{s})<0$. Repeating
this process, we obtain the signs for all mutations appearing in $\seq=\mu_{k_{l},\varepsilon_{l}}\cdots\mu_{k_{1},\varepsilon_{1}}$.

\begin{Def}

Let there be given an initial seed $t_{0}$. We use $\Delta_{t_{0}}^{+}$
to denote the seeds obtain from $t_{0}$ by applying any mutations
sequences, where we choose the signs by using $C$-vectors.

\end{Def}

Throughout this paper, we will discuss seeds taken from the same set
$\Delta_{t_{0}}^{+}$ for some given initial seed $t_{0}$ except
in Section \ref{sec:Similar-seeds-and-correction}. In particular,
the signs of seed mutations are determined by $t_{0}$. We will denote
$\Delta^{+}=\Delta_{t_{0}}^{+}$ and $\mu_{k}=\mu_{k,\varepsilon}$
for simplicity.

\subsubsection*{Mutation birational maps}

Now, let us relate the quantum algebras of Laurent polynomials associated
to $t$ and $t'=\mu_{k}t$. 

We define the isomorphism $\mu_{k}^{*}:\cF(t')\simeq\cF(t)$ such
that 
\begin{eqnarray}
\mu_{k}^{*}(X_{i}') & = & \begin{cases}
X_{i} & i\neq k\\
X^{-f_{k}+\sum_{j}[-b_{jk}]_{+}f_{j}}+X^{-f_{k}+\sum_{j}[b_{jk}]_{+}f_{j}} & i=k
\end{cases}.\label{eq:X_mutation}
\end{eqnarray}

Denote $v_{k}=v^{\diag_{k}}$. By Lemma \ref{lem:compatible_pair},
for $j\in I$, $i,k\in I_{\ufv}$, we have $\diag_{i}b_{ik}=-\diag_{k}b_{ki}$
and 
\begin{eqnarray*}
X_{j}*Y_{k} & = & v_{k}^{-\delta_{jk}}X_{j}\cdot Y_{k},\\
Y_{i}*Y_{k} & = & v^{\diag_{i}b_{ik}}Y_{i}\cdot Y_{k},
\end{eqnarray*}

We can rewrite
\begin{eqnarray*}
\mu_{k}^{*}(X_{k}) & = & X^{-f_{k}+\sum_{j}[-b_{jk}]_{+}f_{j}}\cdot(1+Y_{k})\\
 & = & X^{-f_{k}+\sum_{j}[-b_{jk}]_{+}f_{j}}*(1+v_{k}^{-1}Y_{k})
\end{eqnarray*}

The following equation is called the exchange relation:
\begin{align}
X_{k}*\mu_{k}^{*}(X_{k}) & =v^{\lambda(f_{k},\sum_{j}[-b_{jk}]_{+}f_{j})}X^{\sum_{j}[-b_{jk}]_{+}f_{j}}+v^{\lambda(f_{k},\sum_{i}[b_{ik}]_{+}f_{i})}X^{\sum_{i}[b_{ik}]_{+}f_{i}}\label{eq:exchange_relation}\\
 & =v^{\lambda(f_{k},\sum_{j}[-b_{jk}]_{+}f_{j})}X^{\sum_{j}[-b_{jk}]_{+}f_{j}}\cdot(1+v_{k}^{-1}Y_{k})\nonumber \\
 & =v^{\lambda(f_{k},\sum_{j}[-b_{jk}]_{+}f_{j})}X^{\sum_{j}[-b_{jk}]_{+}f_{j}}*(1+v_{k}^{-1}Y_{k}).\nonumber 
\end{align}

The mutation rule for the $Y$-variables $Y_{i}=X(t)^{p^{*}e_{i}}$,
$Y_{i}'=X(t')^{p^{*}e_{i}'}$, $i\in I_{\ufv}$, can be computed:

\begin{eqnarray}
\mu_{k}^{*}(Y_{k}') & = & Y_{k}^{-1},\label{eq:Y_mutation}\\
\mu_{k}^{*}(Y_{i}') & = & Y_{i}\cdot\sum_{s=0}^{-b_{ki}}\left(\begin{array}{c}
-b_{ki}\\
s
\end{array}\right)_{v_{k}}Y_{k}^{s},\ \mathrm{if\ }b_{ki}\leq0,\nonumber \\
\mu_{k}^{*}((Y_{i}')^{-1}) & = & Y_{i}^{-1}\cdot Y_{k}^{-b_{ki}}\cdot\sum_{s=0}^{b_{ki}}\left(\begin{array}{c}
b_{ki}\\
s
\end{array}\right)_{v_{k}}Y_{k}^{s},\ \mathrm{if\ }b_{ki}\geq0,\nonumber 
\end{eqnarray}
where the quantum numbers are defined as $[a]_{v}=\frac{v^{a}-v^{-a}}{v-v^{-1}}$,
$[a]_{v}!=[a]_{v}[a-1]_{v}\cdots[1]_{v}$, $\left(\begin{array}{c}
a\\
b
\end{array}\right)_{v}=\frac{[a]_{v}!}{[a-b]_{v}!\cdot[b]_{v}!}$, for $0\leq b\leq a\in\N$.

\begin{Eg}

Take some $c\in\N_{>0}$, $I=I_{\ufv}=\{1,2\}$, $(\omega_{ij})_{i,j\in I}=\left(\begin{array}{cc}
0 & -1\\
1 & 0
\end{array}\right)$, $(d_{1},d_{2})=(c,1)$. Define $B(t_{0})=B=\left(\begin{array}{cc}
0 & c\\
-1 & 0
\end{array}\right)$ such that $b_{ij}=\omega_{ji}d_{i}$. Then we have $Y_{1}=X_{2}^{-1}$,
$Y_{2}=X_{1}^{c}$. Take $\Lambda(t_{0})=\Lambda=\alpha\left(\begin{array}{cc}
0 & 1\\
-1 & 0
\end{array}\right)$ for $\alpha\in\N_{>0}$, then $D'=B^{T}\Lambda=\alpha\left(\begin{array}{cc}
1 & 0\\
0 & c
\end{array}\right)$, $v_{1}=v^{\alpha}$, $v_{2}=v^{\alpha c}$. Denote $t_{1}=\mu_{1}t_{0}$
and $t_{-1}=\mu_{2}t_{0}$. Direct computation shows that 
\begin{eqnarray*}
\mu_{1}^{*}(X_{1}(t_{1})) & = & X^{-f_{1}+f_{2}}*(1+v_{1}^{-1}Y_{1})=(1+v_{1}Y_{1})*X^{-f_{1}+f_{2}}\\
\mu_{2}^{*}(X_{2}(t_{-1})) & = & (X_{2}^{-1})*(1+v_{2}^{-1}Y_{2})=(1+v_{2}Y_{2})*X_{2}^{-1}
\end{eqnarray*}

For $Y$-variables, we have $Y_{1}*Y_{2}=v^{\alpha c}Y_{1}\cdot Y_{2}$,
$v_{1}^{-b_{12}}Y_{1}*Y_{2}=v_{2}^{-b_{21}}Y_{2}*Y_{1}$, and $X_{k}^{-1}*Y_{k}=v_{k}^{2}Y_{k}*X_{k}^{-1}$.
The mutation rule reads as

\begin{eqnarray*}
\mu_{2}^{*}(Y_{1}(t_{-1})) & = & \mu_{2}^{*}(X_{2}(t_{-1}))\\
 & = & Y_{1}*(1+v_{2}^{-1}Y_{2})\\
 & = & Y_{1}\cdot(1+Y_{2})
\end{eqnarray*}

\begin{eqnarray*}
\mu_{1}^{*}(Y_{2}(t_{1})^{-1}) & = & \mu_{1}^{*}(X_{1}(t_{1})^{c})\\
 & = & ((1+v_{1}Y_{1})*X^{-f_{1}+f_{2}})^{c}\\
 & = & (1+v_{1}^{1}Y_{1})*(1+v_{1}Y_{1}v_{1}^{2})*\cdots*(1+v_{1}Y_{1}v_{1}^{2(c-1)})*X^{-cf_{1}+cf_{2}}\\
 & = & (1+v_{1}Y_{1})*(1+v_{1}^{3}Y_{1})*\cdots*(1+v_{1}^{2c-1}Y_{1})*X^{-cf_{1}+cf_{2}}
\end{eqnarray*}

We can further compute 
\begin{align*}
\mu_{1}^{*}(Y_{2}(t_{1})^{-1}) & =(1+v_{1}^{1-c}Y_{1})\cdot(1+v_{1}^{3-c}Y_{1})\cdot\cdots\cdot(1+v_{1}^{c-1}Y_{1})\cdot X^{-cf_{1}+cf_{2}}\\
 & =Y_{2}^{-1}\cdot Y_{1}^{-c}\cdot\sum_{s=0}^{c}\left(\begin{array}{c}
c\\
s
\end{array}\right)_{v_{1}}Y_{1}^{s}
\end{align*}

\end{Eg}

\begin{Rem}[Birational map for tori]

Notice that \eqref{eq:Y_mutation} also gives the isomorphism (mutation
rule) between the skew-fields of fractions $\mu_{k}^{*}:\cF(\kk[N(t')])\simeq\cF(\kk[N(t)])$
in which we replace $Y_{i}$, $Y_{i}'$ by $\chi^{e_{i}}$, $\chi^{e_{i}'}$
respectively and take $i\in I$, see \cite[Section 3]{FockGoncharov09}\cite[Section 2.4.1]{davison2019strong}
for more details.

We call $\mu_{k}^{*}$ the mutation birational maps for the following
reason. Consider the split algebraic tori $\Spec\FF[\Mc(t)]=T_{\Nc(t)}=\Nc(t)\otimes_{\Z}\FF^{*}$
and $\Spec\FF[N(t)]=T_{M(t)}$ over a characteristic $0$ field $\FF$.
In the classical case ($v=1$), the maps $\mu_{k}^{*}$ are the pullback
of the birational map between the tori: 
\begin{eqnarray*}
\mu_{k} & : & T_{\Nc(t)}\dasharrow T_{\Nc(t')}\\
\mu_{k} & : & T_{M(t)}\dasharrow T_{M(t')}.
\end{eqnarray*}

There is no sign $\varepsilon$ in the definition of the mutation
birational map $\mu_{k}^{*}$. This is in contrast to the mutation
of bases $\tau_{k,\varepsilon}$. See \cite{gekhtman2017hamiltonian}
for their relation.

\end{Rem}

\begin{Rem}[Different conventions: mutations]\label{rem:diff_mutation_convention}

Recall that we have two conventions for the seeds following \cite{gross2013birational}
and \cite{FominZelevinsky02} respectively, see Remark \ref{rem:different_convention}.
The seeds in the formal sense are mutated as sets of vectors, while
those in the latter sense contain cluster variables which are mutated
by birational maps. Correspondingly, we have two sets of seeds $\Delta^{+}$
and $\tDelta{}^{+}$ constructed via iterated mutations respectively.
We have the natural but non-trivial result that the two sets are in
bijection, because the cluster variables are determined by their extended
$g$-vectors by \cite[Conjecture 7.10(1)]{FominZelevinsky07}.

The verification of \cite[Conjecture 7.10(1)]{FominZelevinsky07}
was due to \cite{DerksenWeymanZelevinsky09} for quiver cases. \cite{Demonet10}
generalized \cite{DerksenWeymanZelevinsky09} to some valued quiver
cases (such as those concerned in our main theorem), and the general
cases were solved by \cite{gross2018canonical}. Alternatively, given
the sign-coherence of $g$-vectors, we know that the cluster monomials
are always compatibly pointed by \cite[Remark 7.13]{FominZelevinsky07}.
Then \cite[Conjecture 7.10(1)]{FominZelevinsky07} can be deduced
from \cite[Lem 3.4.11]{qin2019bases} for injective-reachable cluster
algebras.

In this paper, we choose the convention of \cite{gross2013birational}
because it is conceptually more natural to introduce the lattices
$\Mc(t),N_{\ufv}(t)$ and related structures. A reader might equally
choose the convention in \cite{FominZelevinsky02}.

\end{Rem}

\subsubsection*{Mutation sequences}

For any seeds $t_{1},t_{2}\in\Delta^{+}$. Let $\seq_{t_{2},t_{1}}$
denote any chosen sequence of mutations such that $t_{2}=\seq_{t_{2},t_{1}}t_{1}$.
Then $(\seq_{t_{2},t_{1}})^{-1}$ is a mutation sequence that sends
$t_{2}$ to $t_{1}$, which we can choose as $\seq_{t_{1},t_{2}}$.
The following property immediately follows from the definition of
seeds in the sense of \cite{FominZelevinsky02}. See Remark \ref{rem:diff_mutation_convention}
for changing the convention.

\begin{Lem}\label{lem:composition_mutation_maps}

For any seeds $t_{1},t_{2},t_{3}\in\Delta^{+}$, we have $\seq_{t_{2},t_{3}}^{*}\seq_{t_{1},t_{2}}^{*}=\seq_{t_{1},t_{3}}^{*}$
as isomorphisms from $\cF(t_{1})$ to $\cF(t_{3})$. Moreover, $\seq_{t_{1},t_{1}}^{*}$
is always the identity.

\end{Lem}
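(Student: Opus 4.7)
The plan is to reduce the lemma to the fact, recorded in Remark~\ref{rem:diff_mutation_convention}, that the sets of seeds $\Delta^{+}$ and $\tDelta^{+}$ are in bijection: a seed in $\Delta^{+}$ (specified by its abstract basis data in $N$) is determined by its labeled tuple of cluster variables viewed in any ambient fraction field, a fact which rests on the sign-coherence of $g$-vectors (Conjecture~7.10(1) of \cite{FominZelevinsky07}, proved in the relevant generality in \cite{DerksenWeymanZelevinsky09}, \cite{Demonet10}, \cite{gross2018canonical}).

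First I would verify the well-definedness of $\seq_{t_{2},t_{1}}^{*}$, namely that it does not depend on the particular choice of mutation sequence satisfying $t_{2}=\seq_{t_{2},t_{1}}t_{1}$. Suppose $\seq$ and $\seq'$ are two such sequences. Then $\seq\,t_{1}=\seq'\,t_{1}=t_{2}$ in $\Delta^{+}$, so by the bijection $\Delta^{+}\simeq\tDelta^{+}$ the labeled tuples of cluster variables produced at the end of each sequence (viewed inside $\cF(t_{1})$ via iterated application of the elementary mutation birational maps defined in \eqref{eq:X_mutation}) must coincide. In particular, $\seq^{*}(X_{i}(t_{2}))=(\seq')^{*}(X_{i}(t_{2}))$ in $\cF(t_{1})$ for every $i\in I$. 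Since $\{X_{i}(t_{2})\}_{i\in I}$ generates $\cF(t_{2})$ as a skew-field, the two pullback isomorphisms $\seq^{*}$ and $(\seq')^{*}$ agree on all of $\cF(t_{2})$.

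With well-definedness in hand, both remaining claims follow formally. For the composition, I would observe that concatenating the sequences $\seq_{t_{2},t_{3}}$ and $\seq_{t_{1},t_{2}}$ (first performing the former, then the latter) yields a mutation sequence taking $t_{3}$ to $t_{1}$, hence a legitimate choice of $\seq_{t_{1},t_{3}}$. Applying functoriality step by step to each elementary mutation, the pullback of this concatenation is precisely the map $\seq_{t_{1},t_{2}}^{*}\seq_{t_{2},t_{3}}^{*}$ in the paper's left-to-right composition convention, i.e.\ $\cF(t_{1})\xrightarrow{\seq_{t_{1},t_{2}}^{*}}\cF(t_{2})\xrightarrow{\seq_{t_{2},t_{3}}^{*}}\cF(t_{3})$; by well-definedness this equals $\seq_{t_{1},t_{3}}^{*}$. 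For the identity claim, the empty mutation sequence is a valid choice of $\seq_{t_{1},t_{1}}$ and pulls back to the identity on $\cF(t_{1})$, so by well-definedness every $\seq_{t_{1},t_{1}}^{*}$ is the identity.

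The main subtlety lies entirely in the well-definedness step, which implicitly invokes the sign-coherence of $g$-vectors underlying the bijection $\Delta^{+}\simeq\tDelta^{+}$; this is used as a cited input rather than being re-proved here. Beyond this, all steps are routine book-keeping consequences of the definitions of seeds and mutation birational maps.
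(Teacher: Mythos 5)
Your proof is correct and follows essentially the same route as the paper, which disposes of the lemma by asserting that it "immediately follows from the definition of seeds in the sense of Fomin--Zelevinsky" and pointing to Remark \ref{rem:diff_mutation_convention} for the bijection $\Delta^{+}\simeq\tDelta^{+}$; you have simply made explicit the well-definedness step (independence of the choice of mutation sequence via that bijection and the generation of $\cF(t_{2})$ by the cluster variables) that the paper leaves implicit. Your identification of sign-coherence of $g$-vectors as the ultimate input, and your care with the direction of composition, are both accurate.
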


\subsubsection*{Tropical transformations}

Let there be given any $t,t'\in\Delta^{+}$. If $t'=\mu_{k}t$ for
some $k\in I_{\ufv}$, the tropical transformation $\phi_{t',t}:\Mc(t)\simeq\Mc(t')$
is defined as the piecewise linear map such that, for any $g=(g_{i})\in\Mc(t)\simeq\Z^{I}$,
we have $\phi_{t',t}(g)=g'=(g'_{i})\in\Mc(t')\simeq\Z^{I}$ such that 

\begin{eqnarray*}
g'_{i} & = & \begin{cases}
-g_{k} & i=k\\
g_{i}+[b_{ik}]_{+}g_{k} & i\neq k,\ g_{k}\geq0\\
g_{i}+[-b_{ik}]_{+}g_{k} & i\neq k,\ g_{k}<0
\end{cases}
\end{eqnarray*}

If $t'=\seq t$ by a mutation sequence $\seq=\mu_{k_{r}}\cdots\mu_{k_{2}}\mu_{k_{1}}$,
then the tropical transformation $\phi_{t',t}:\Mc(t)\simeq\Mc(t')$
is defined as the composition of the above defined tropical transformations
for adjacent seeds. It is known that $\phi_{t',t}$ does not depend
on the choice of $\seq$, because it can be realized as the tropicalization
of the mutation birational map for certain cluster varieties associated
to $t$ and $t'$, see \cite{gross2013birational}. 

\begin{Def}[Tropical points {\cite{qin2019bases}}]\label{def:tropical_points}

Let $\tropMc$ denote the set of equivalent classes $\sqcup_{t\in\Delta^{+}}\Mc(t)$
under the identification $\phi_{t',t}$. It is called the set of tropical
points.

\end{Def}

\begin{Rem}

The set $\tropMc$ corresponds to the tropical points of certain cluster
variety, see \cite{gross2013birational,gross2018canonical}. By choosing
a seed $t$, we have chosen a specific representative $\Mc(t)$ of
the set of tropical points $\tropMc$. 

\end{Rem}

\subsection{Cluster algebras}

Let $\pr_{I_{\ufv}}$ denote the natural projection from $\Z^{I}$
to $\Z^{I_{\ufv}}$.

Let there be given an initial seed $t_{0}$ and the corresponding
set of seeds $\Delta^{+}=\Delta_{t_{0}}^{+}$. For any seeds $t\in\Delta^{+}$,
choose any mutation sequence $\seq_{t,t_{0}}$ such that $t=\seq_{t,t_{0}}t_{0}$.
Recall that we have the corresponding isomorphism $\seq_{t,t_{0}}^{*}:\cF(t)\simeq\cF(t_{0})$
which does not depend on the choice of $\seq_{t,t_{0}}$. By the well
known Laurent phenomenon \cite{FominZelevinsky02}\cite{BerensteinZelevinsky05},
the quantum cluster variable $X_{i}(t)$, $i\in I$, is sent to its
Laurent expansion $\seq_{t,t_{0}}^{*}X_{i}(t)$ in $\LP(t_{0})\subset\cF(t_{0})$.
Moreover, we have the following result.

\begin{Thm}[{\cite{FominZelevinsky07}\cite{Tran09}\cite{DerksenWeymanZelevinsky09}\cite{gross2018canonical}}]\label{thm:cluster_expansion}

The Laurent expansion of $X_{i}(t)$ in $\LP(t_{0})$ takes the following
form

\begin{eqnarray*}
\seq_{t,t_{0}}^{*}X_{i}(t) & = & X(t_{0})^{g_{i}^{t_{0}}(t)}\cdot\sum_{n\in\N^{I_{\ufv}}}c_{n}Y(t_{0})^{n}
\end{eqnarray*}
where the coefficients $c_{n}\in\kk$ such that $c_{0}=1$ and the
vector $g_{i}^{t_{0}}(t)\in\Z^{I_{\ufv}}\oplus\N^{I_{\fv}}$. Moreover,
$\pr_{I_{\ufv}}g_{i}^{t_{0}}(t)$ and $c_{n}$ are determined by $B(t_{0})$,
$(\diag_{k})_{k\in I_{\ufv}}$ and $\seq_{t,t_{0}}$.

\end{Thm}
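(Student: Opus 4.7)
The plan is to proceed by induction on the length of any mutation sequence $\seq_{t,t_{0}}$ connecting $t_{0}$ to $t$, recognizing the asserted expansion as the quantum analogue of the Fomin--Zelevinsky separation formula. For the base case $t=t_{0}$, the Laurent expansion of $X_{i}(t_{0})$ is simply $X(t_{0})^{f_{i}}$, giving $g_{i}^{t_{0}}(t_{0})=f_{i}$, $c_{0}=1$, and all other $c_{n}=0$. This is consistent with both parts of the claim.

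For the inductive step, I would assume $t'=\mu_{k}t$ and that the formula holds for each cluster variable of $t$. When $i\neq k$, we have $X_{i}(t')=X_{i}(t)$ and the statement is inherited. When $i=k$, I would solve the exchange relation \eqref{eq:exchange_relation} for $\mu_{k}^{*}(X_{k}(t'))$, pull the resulting identity back to $\LP(t_{0})$ via $\seq_{t,t_{0}}^{*}$, and then substitute the inductive expression for $X_{k}(t)$, together with the pulled-back Laurent expression of $Y_{k}(t)$ deduced from the mutation rule \eqref{eq:Y_mutation}. Collecting terms and tracking the leading monomial through the standard tropical mutation law for $g$-vectors, I would verify that $\seq_{t',t_{0}}^{*}X_{k}(t')$ is again of the form $X(t_{0})^{g_{k}^{t_{0}}(t')}\cdot\sum_{n}c_{n}Y(t_{0})^{n}$ with $c_{0}=1$.

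The central point, and the main obstacle, is pointedness itself: that one of the two terms appearing in the pulled-back exchange relation strictly dominates the other after expansion in $\LP(t_{0})$, so that a single leading monomial with coefficient $1$ emerges. This is precisely equivalent to the sign-coherence of the $k$-th $c$-vector of $t$ with respect to $t_{0}$ in the sense of Definition \ref{def:C-matrix}. I would invoke this as the key non-elementary input, citing \cite{DerksenWeymanZelevinsky09} in the quiver case, \cite{Demonet10} in the valued quiver cases sufficient for our symmetrizable Kac--Moody applications, and \cite{gross2018canonical} in full generality; the quantum refinement of the separation formula is due to \cite{Tran09}.

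Finally, for the frozen components $g_{i}^{t_{0}}(t)_{j}\in\N$, $j\in I_{\fv}$, I would observe that frozen indices enter the mutation rules \eqref{eq:X_mutation} and \eqref{eq:Y_mutation} only through the sums $\sum_{j}[\pm b_{jk}]_{+}f_{j}$, so the polynomial (not Laurent) dependence of the $X_{i}(t)$ on the frozen initial variables is preserved inductively, forcing non-negativity on the frozen slots of the extended $g$-vector. The final assertion that $\pr_{I_{\ufv}}g_{i}^{t_{0}}(t)$ and the $c_{n}$ depend only on $B(t_{0})$, $(\diag_{k})_{k\in I_{\ufv}}$, and $\seq_{t,t_{0}}$ is then transparent from the explicit mutation formulas, which involve the frozen rows of $\tB(t_{0})$ only through the frozen components of $g_{i}^{t_{0}}(t)$.
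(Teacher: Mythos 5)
The paper does not actually prove Theorem \ref{thm:cluster_expansion}: it imports it from the cited references, and the only proof-relevant content in the text is the sentence immediately after the statement, noting that $c_{0}=1$ is a consequence of the sign-coherence of $c$-vectors via \cite[Proposition 5.6]{FominZelevinsky07} (see also Remark \ref{rem:diff_mutation_convention} for the attribution of sign-coherence to \cite{DerksenWeymanZelevinsky09}, \cite{Demonet10} and \cite{gross2018canonical}). Your sketch reconstructs the standard argument behind those citations --- the Fomin--Zelevinsky separation formula and its quantum version due to \cite{Tran09}, proved by induction along the mutation sequence --- and it isolates exactly the same essential input that the paper does: the two terms of the pulled-back exchange relation have leading degrees differing by $p^{*}$ applied to the $k$-th $c$-vector $c_{k}(t)$, so sign-coherence decides which one dominates and yields pointedness with $c_{0}=1$. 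So your route is consistent with, and more explicit than, the paper's treatment, and the final dependence claim does follow from the mutation recursions as you say. One soft spot: your argument for $g_{i}^{t_{0}}(t)\in\Z^{I_{\ufv}}\oplus\N^{I_{\fv}}$ is too quick, since in the inductive step one divides by $\seq_{t,t_{0}}^{*}X_{k}(t)$, and polynomiality in the frozen variables is not obviously preserved by that division; this non-negativity of the frozen components of the extended $g$-vector is itself part of the cited package (it can be read off the $g$-vector recursion together with sign-coherence) rather than something transparent from the mutation formulas alone.
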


The vector $g_{i}^{t_{0}}(t)$ is called the $i$-th (extended) $g$-vector
of the seed $t$ with respect to the initial seed $t_{0}$. Notice
that $c_{0}=1$ is a consequence of the sign-coherence of $c$-vectors,
see \cite[Proposition 5.6]{FominZelevinsky07}. In particular, it
implies that the cluster variable $\seq_{t,t_{0}}^{*}X_{i}(t)$ is
pointed at $g_{i}^{t_{0}}(t)$ in the sense of Definition \ref{def:pointed}.

Recall that we have frozen variables $\seq_{t,t_{0}}^{*}X_{j}(t)=X_{j}(t_{0})$
for $j\in I_{\fv}$, which we denote by $X_{j}$.

\begin{Def}[Quantum cluster algebras]

The (partially compactified) quantum cluster algebra $\bQClAlg(t_{0})$
with initial seed $t_{0}$ is defined to be the $\Z[q^{\pm\Hf}]$-subalgebra
of $\LP(t_{0})$ generated by the quantum cluster variables $\seq_{t,t_{0}}^{*}X_{i}(t)$,
$i\in I$, $t\in\Delta_{t_{0}}^{+}$.

The (localized) quantum cluster algebra $\qClAlg(t_{0})$ with initial
seed $t_{0}$ is the localization of $\bQClAlg(t_{0})$ at the frozen
variables $X_{j}$, $j\in I_{\fv}$.

The quantum upper cluster algebra $\qUpClAlg(t_{0})$ with initial
seed $t_{0}$ is defined to be the intersection $\cap_{t\in\Delta_{t_{0}}^{+}}\seq_{t,t_{0}}^{*}\LP(t)$.

\end{Def}

It follows from Theorem \ref{thm:cluster_expansion} that we have
$\bQClAlg(t_{0})\subset\qClAlg(t_{0})\subset\qUpClAlg(t_{0})\subset\LP(t_{0})$.

Notice that the quantum cluster algebra $\qClAlg(t_{0})$ only depends
on $\tB(t_{0})$ and $\Lambda(t_{0})$.

By choosing a different initial seed $t$, we obtain the same set
of seeds $\Delta_{t}^{+}=\Delta_{t_{0}}^{+}$, but another quantum
cluster algebra $\qClAlg(t)$ contained in $\LP(t)$. The two algebras
are isomorphic such that $\qClAlg(t_{0})=\seq_{t,t_{0}}^{*}\qClAlg(t)$.

As in literature, we sometimes identify $\cF(t)$ and $\cF(t_{0})$
via $\seq_{t,t_{0}}^{*}$ and omit the symbols $t_{0}$ and $\seq_{t,t_{0}}^{*}$
for simplicity. Then we say that the (partially compactified)  quantum
cluster algebra $\bQClAlg$ is generated by the quantum cluster variables
$X_{i}(t)$.

\subsection{Injective-reachability\label{subsec:Injective-reachability}}

\begin{Def}[Injective-reachability {\cite{qin2017triangular}}]\label{def:inj_reachable}

A seed $t$ is said to be injective-reachable if there exists a seed
$t=\seq t'$ and a permutation $\sigma$ of the unfrozen vertex $I_{\ufv}$
such that we have

\begin{eqnarray}
\pr_{I_{\ufv}}g_{\sigma k}^{t}(t') & = & -f_{k}\label{eq:injective-reachable-condition}
\end{eqnarray}
 for all $k\in I_{\ufv}$. In this case, we denote $t'=t[1]$, $t=t'[-1]$.
We say $t[1]$ is shifted from $t$ and vice versa.

\end{Def}

Let there be given some seed $t\in\Delta^{+}$ and assume that it
is injective-reachable. Then all seeds in $\Delta^{+}$ are injective-reachable
(\cite[Proposition 5.1.4]{qin2017triangular}\cite[Theorem 3.2.1]{muller2015existence}).
In particular, $\mu_{k}t$ is injective-reachable such that $(\mu_{k}t)[1]=\mu_{\sigma k}\seq\mu_{k}(\mu_{k}t)$
for any $k\in I_{\ufv}$. For $k\in I_{\ufv}$, we denote the quantum
cluster variable 
\begin{align}
I_{k}(t) & :=\seq^{*}X_{\sigma k}(t[1])\in\LP(t).\label{eq:inj_cl_var}
\end{align}

\begin{Rem}

Injective-reachability is equivalent to the existence of a green to
red sequence in the sense of \cite{keller2011cluster}. This property
only depends on the principal $B$-matrix $B(t)$. When $B(t)$ is
skew-symmetric, it means that the shift function $[1]$ in the cluster
category and injective modules of the Jacobian algebra can be constructed
using mutations. In particular, $I_{k}(t)$ corresponds to the $k$-th
injective modules, whence the symbol ``$I$''. See \cite[Section 5]{qin2017triangular}\cite{qin2019bases}
for more details.

\end{Rem}

\begin{Rem}

It is worth reminding that \eqref{eq:injective-reachable-condition}
is a special equation that holds in $\Mc(t)$ where $t$ plays the
role of the initial seed. For a general seed $s\in\Delta^{+}$, we
have $\pr_{I_{\ufv}}g_{\sigma k}^{s}(t')\neq-\pr_{I_{\ufv}}g_{k}^{s}(t)\in\Mc(s)$,
because the $g$-vector is transformed under the tropical transformations.

\end{Rem}

Denote $\tB(t)^{T}\Lambda(t)=\left(\begin{array}{cc}
D' & 0\end{array}\right)$, $D'=\mathrm{diag}(\diag_{k})_{k\in I_{\ufv}}$. By Lemma \ref{lem:mutate_compatible_pair},
we have $\tB(t')^{T}\Lambda(t')=\left(\begin{array}{cc}
D' & 0\end{array}\right)$. Recall that $\diag_{i}b_{ik}=-\diag_{k}b_{ki}$ for $i,k\in I_{\ufv}$.

\begin{Lem}[{\cite[Proposition 2.3.3]{qin2019bases}}]\label{lem:inj_symmetrizer}

We have $\diag_{k}=\diag_{\sigma k}$ for any $k\in I_{\ufv}$.

\end{Lem}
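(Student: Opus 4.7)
The plan is to leverage two structural facts: the invariance of the compatible pair under mutation, and the tropical duality between $g$-vectors and $c$-vectors through the symmetrizer.

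First I would apply Lemma \ref{lem:mutate_compatible_pair}(3) inductively along the mutation sequence $\seq$ connecting $t$ and $t[1]$. This yields $\tB(t[1])^{T} \Lambda(t[1]) = (D' \mid 0)$ with the \emph{same} diagonal matrix $D' = \mathrm{diag}(\diag_{k})_{k\in I_\ufv}$. Hence the symmetrizer $\diag_{k}$ is a well-defined function of the vertex $k$ on the whole mutation class $\Delta^{+}$, independent of which seed in $\Delta^{+}$ one is looking at. In particular the entry $\diag_{\sigma k}$ at the $(\sigma k, \sigma k)$-slot of $D'$ has unambiguous meaning.

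Next I would invoke the Nakanishi--Zelevinsky tropical duality $G^{T} D' C = D'$ applied to the seed $t[1]$ relative to the initial seed $t$, where $G = G^{t}(t[1])_{I_\ufv}$ is the unfrozen $g$-matrix and $C = C^{t}(t[1])$ is the $c$-matrix. The injective-reachability hypothesis $\pr_{I_\ufv} g^{t}_{\sigma k}(t[1]) = -f_{k}$ identifies $G$ with the signed permutation $-P_{\sigma^{-1}}$. Solving the duality formula gives
\[
 C = -(D')^{-1} P_{\sigma^{-1}} D',
\]
a generalised permutation whose sole non-zero entry in column $\sigma k$ is $-\diag_{\sigma k}/\diag_{k}$ at row $k$. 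Because $c$-vectors are integer vectors, $\diag_{k} \mid \diag_{\sigma k}$ for every $k$. Applying the same reasoning to the reverse mutation sequence (which realises $t = (t[1])[-1]$ with permutation $\sigma^{-1}$) produces the opposite divisibility $\diag_{\sigma k} \mid \diag_{k}$, and positivity of the symmetrizers forces $\diag_{k} = \diag_{\sigma k}$.

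The main obstacle is reconciling the tropical duality formula with the paper's specific sign and indexing conventions, in particular keeping careful track of frozen versus unfrozen directions and of the orientation chosen for $g$- and $c$-vectors in Section \ref{sec:Basics-cluster-algebra}. A self-contained fallback, which avoids invoking tropical duality in a black-box fashion, is to work with the $q$-commutation
\[
 X_{\sigma k}(t[1]) \ast Y_{\sigma k}(t[1]) = v^{-2 \diag_{\sigma k}} \, Y_{\sigma k}(t[1]) \ast X_{\sigma k}(t[1]),
\]
which follows directly from Lemma \ref{lem:compatible_pair} applied at $t[1]$. Transporting this identity to $\LP(t)$ via $\seq^{*}$, the quantum cluster variable $\seq^{*} X_{\sigma k}(t[1])$ is pointed at $-f_{k}+(\text{frozen})$ by the injective-reachability hypothesis, and the leading $Y(t)$-direction of $\seq^{*} Y_{\sigma k}(t[1])$ is controlled by the $c$-vector $c^{t}_{\sigma k}(t[1])$. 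Comparing the $v$-power of the leading monomials on each side using Lemma \ref{lem:compatible_pair} again yields $\diag_{k} = \diag_{\sigma k}$ without any appeal to Nakanishi--Zelevinsky.
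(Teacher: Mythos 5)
The paper does not actually prove this lemma: it is quoted from \cite[Proposition 2.3.3]{qin2019bases}, so there is no internal proof to match against. Judged on its own, your argument is sound and is close in spirit to what the cited source does. The first half is fine: Lemma \ref{lem:mutate_compatible_pair} does give the same $D'$ at every seed of $\Delta^{+}$, and the duality $G^{T}D'C=D'$ holds for this $D'$ because $D'B$ is skew-symmetric ($\diag_i b_{ij}=-\diag_j b_{ji}$), so $D'$ is a legitimate skew-symmetrizer to feed into Nakanishi--Zelevinsky; with $G=-P_{\sigma^{-1}}$ this yields $c_{\sigma k}(t[1])=-(\diag_{\sigma k}/\diag_k)\,f_k$ and hence $\diag_k\mid\diag_{\sigma k}$ by integrality of $c$-vectors. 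Your ``fallback'' is in fact not an independent argument but precisely the proof of that duality inside the paper's framework: transporting $X_{\sigma k}(t[1])*Y_{\sigma k}(t[1])=v^{-2\diag_{\sigma k}}Y_{\sigma k}(t[1])*X_{\sigma k}(t[1])$ through $\seq^{*}$ and comparing leading $v$-powers via Lemma \ref{lem:compatible_pair} gives $\diag_k\,(c_{\sigma k}(t[1]))_k=-\diag_{\sigma k}$, which is the $(k,\sigma k)$ entry of $G^{T}D'C=D'$. As stated, the fallback only closes if you additionally know $c_{\sigma k}(t[1])=-e_k$ (which the paper does use later, via \cite[Proposition 3.3.8]{qin2019bases}); otherwise it gives the same divisibility as the first route.

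Two small points deserve tightening. First, the step ``apply the same reasoning to the reverse mutation sequence, which realises $t=(t[1])[-1]$ with permutation $\sigma^{-1}$'' silently uses that $G^{t[1]}(t)$ is the inverse signed permutation $-P_{\sigma}$; this is true but is itself a consequence of the chain rule for $G$-matrices (i.e.\ of sign-coherence), so it is an extra input rather than a formality. You can avoid it entirely: since $\sigma$ is a permutation of the finite set $I_{\ufv}$, iterating the divisibility $\diag_k\mid\diag_{\sigma k}\mid\cdots\mid\diag_{\sigma^{n}k}=\diag_k$ along the $\sigma$-orbit already forces equality. Second, be explicit that only $\pr_{I_{\ufv}}g_{\sigma k}^{t}(t[1])$ enters the duality pairing (the frozen components of the $g$-vector are killed by $\lambda(\cdot\,,p^{*}e_k)=-\diag_k f_k^{*}$), so the ``$+(\text{frozen})$'' ambiguity in the injective-reachability hypothesis is harmless.
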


\section{Prerequisites on dominance order and decomposition\label{sec:Prerequisites-dominance-order}}

This section mainly follows \cite{qin2017triangular}\cite{qin2019bases}.

\subsection{Dominance order and pointedness}

Fix an initial seed $t_{0}$ and $\Delta^{+}=\Delta_{t_{0}}^{+}$.
For any $t=E=(e_{i})_{i\in I}$ in $\Delta^{+}$, we define the following
partial order on $\Mc(t)$.

\begin{Def}[Dominance order]

For any $m,m'\in\Mc(t)$, we say $m'$ is dominated by $m$ (or inferior
than $m$) with respect to $t$, denoted by $m'\preceq_{t}m$, if
there exists some $n\in\Nufv^{\geq0}(t)=\oplus_{k\in I_{\ufv}}\N e_{k}$
such that $m'=m+p^{*}n$.

\end{Def}

Recall that we can represent $\Mc(t)$ and $\Nufv(t)$ by lattices
of (column) vectors:

\begin{align*}
 & \Mc(t)=\oplus_{i\in I}\Z f_{i}\simeq\Z^{I}\\
 & \Nufv(t)=\oplus_{k\in I_{\ufv}}\Z e_{k}\simeq\Z^{I_{\ufv}}.
\end{align*}
Then the dominance order reads as 
\begin{eqnarray*}
m' & = & m+\tB(t)n.
\end{eqnarray*}

\begin{Lem}[Finite interval {\cite[Lemma 3.1.2]{qin2017triangular}}]\label{lem:finite_interval}

For any $m,m'$ in $\Mc$, there exists finitely many $m''\in\Mc$
such that $m''\preceq_{t}m$ and $m''\succeq_{t}m'$.

\end{Lem}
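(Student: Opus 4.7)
The plan is to unpack the definition of $\preceq_t$ so that the interval becomes a set of lattice points in a compact box, and then invoke the injectivity assumption to bound the box.

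First I would write out what it means for $m''$ to lie in the interval. By definition, $m'' \preceq_t m$ means $m'' = m + p^*(n)$ for some $n \in \Nufv^{\geq 0}(t)$, while $m'' \succeq_t m'$ means $m' = m'' + p^*(n')$ for some $n' \in \Nufv^{\geq 0}(t)$. Combining these two relations and using that $p^*$ corresponds to left multiplication by $\tB(t)$ after identifying $\Nufv(t) \simeq \Z^{I_\ufv}$ and $\Mc(t) \simeq \Z^I$, the existence of such an $m''$ forces $m - m' = -\tB(t)(n + n')$ in $\Mc(t)$.

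Next I would exploit the injectivity assumption: the map $p^*\colon \Nufv \to \Mc$ is injective, i.e., the matrix $\tB(t)$ has full column rank $|I_\ufv|$. Therefore, if the interval is nonempty, there is at most one $n_0 \in \N^{I_\ufv}$ satisfying $\tB(t) n_0 = m' - m$; if no such $n_0$ exists, the interval is empty and we are done trivially. Otherwise, any $m''$ in the interval is of the form $m + p^*(n)$ for some $n \in \N^{I_\ufv}$ with $n_0 - n = n' \in \N^{I_\ufv}$, i.e. $0 \leq n_k \leq (n_0)_k$ componentwise. The number of such $n$ is the finite product $\prod_{k \in I_\ufv} \bigl(1 + (n_0)_k\bigr)$, and by the full-rank property the map $n \mapsto m + p^*(n)$ is injective on $\N^{I_\ufv}$, so distinct $n$ give distinct $m''$.

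There is no real obstacle here; the only point that must not be skipped is to justify that the equation $\tB(t) n_0 = m' - m$ has at most one solution, which is exactly the injectivity of $p^*|_{\Nufv}$ imposed globally in the paper. The statement and the finite-box argument work for any seed $t \in \Delta^+$, since the injectivity assumption is preserved under mutation by Lemma \ref{lem:mutate_compatible_pair}.
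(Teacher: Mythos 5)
Your proof is correct and is essentially the standard argument for this lemma (the paper itself only cites \cite[Lemma 3.1.2]{qin2017triangular} without reproducing a proof): unwind $\preceq_t$ into $m''=m+\tB(t)n$ with $n+n'=n_0$, use the injectivity assumption to pin down the unique candidate $n_0$, and count the lattice points in the box $0\leq n\leq n_0$. The only cosmetic remark is that the injectivity assumption concerns the abstract map $p^*\colon\Nufv\to\Mc$ and hence holds for every seed automatically, so no appeal to Lemma \ref{lem:mutate_compatible_pair} is needed.
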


Let $\cF(\kk)$ denote the fraction field of the base ring $\kk$.
Let there be given a formal sum $Z=\sum_{m\in\Mc(t)}b_{m}X^{m}$,
$b_{m}\in\cF(\kk)$. Its Laurent degree support is defined to be $\supp_{\Mc(t)}Z:=\{m\in\Mc(t)|b_{m}\neq0\}$.
Given $b\in\cF(\kk)$, $bZ$ is called a rescaling of $Z$.

Likewise, for a function $\alpha\in\Hom_{\mathrm{set}}(\Mc(t),\cF(\kk))$,
we define its support to be $\supp_{\Mc(t)}\alpha=\{m\in\Mc(t)|\alpha(m)\neq0\}$.

\begin{Def}[Degrees and pointedness]\label{def:pointed}

The formal sum $Z$ is said to have (leading) degree $g_{Z}\in\Mc(t)$,
denoted by $\deg^{t}Z=g_{Z}$, if $g_{Z}$ is the unique $\prec_{t}$-maximal
element in $\supp_{\Mc(t)}Z$. It is further said to be pointed at
$g_{Z}$ (or $g_{Z}$-pointed) if $b_{g_{Z}}=1$ or, equivalently,
it takes the following form

\begin{eqnarray*}
Z & = & X^{g_{Z}}\cdot(1+\sum_{n\in\Nufv^{>0}(t)}c_{n}Y^{n}),\ c_{n}=b_{g_{Z}+p^{*}n}.
\end{eqnarray*}

Similarly, $Z$ is said to have codegree $\eta$, denoted by $\codeg^{t}Z=\eta_{Z}$,
if $\eta_{Z}$ is the unique $\prec_{t}$-minimal element in $\supp_{\Mc(t)}Z$.
It is further said to be copointed at $\eta_{Z}$ if $b_{\eta_{Z}}=1$,
or, equivalently, it takes the following form

\begin{eqnarray*}
Z & = & X^{\eta_{Z}}\cdot(\sum_{n\in\Nufv^{>0}(t)}c'_{-n}Y^{-n}+1),\ c'_{-n}=b_{\eta_{Z}-p^{*}n}.
\end{eqnarray*}

\end{Def}

\begin{Def}[Normalization]

If $Z=\sum b_{m}X^{m}\in\hLP(t)$ has degree $g$, its normalization
is defined to be the pointed element $[Z]^{t}=b_{g}^{-1}Z\in\hLP(t)\otimes_{\kk}\cF(\kk)$.

\end{Def}

\begin{Def}[Bidegree]

If $Z$ has degree $g$ and codegree $\eta$, we say that it has bidegree
$(g,\eta)$. If it is pointed at $g$ and copointed at $\eta$, we
say that it is bipointed at bidegree $(g,\eta)$.

\end{Def}

\begin{Def}[$F$-function, support dimension]\label{def:F-dim}

Let there be given $Z=X^{m}\cdot\sum_{n\in\Nufv^{\geq0}(t)}c_{n}Y^{n}$
with degree $m$ (equivalently, $c_{0}\neq0$). Its $F$-function
is defined to be $F_{Z}=\sum c_{n}Y^{n}$. We define its $I$-support
to be the set of vertices $\supp_{I}Z=\{k\in I_{\ufv}|\exists c_{n}\neq0,\ n_{k}\neq0\}$.

If $Z$ has bidegree $(g,\eta)$, then its $F$-function is a polynomial
$F_{Z}=\sum_{0\leq n\leq f_{Z}}c_{n}Y^{n}$ with $c_{0}\neq0$, $c_{f_{Z}}\neq0$,
$\eta=g+p^{*}f_{Z}$, which we call the $F$-polynomial. In this case,
we define its support dimension to be $\suppDim Z=f_{Z}$.

\end{Def}

Following \cite{FominZelevinsky07}\cite{Tran09}, we also call the
leading degree $g_{Z}\in\Mc(t)$ the (extended) $g$-vector of $Z$.
Similarly, following \cite{fujiwara2018duality}, we can also call
the support dimension $f_{Z}$ the $f$-vector of $Z$.

Let there be given a subset $\domCone\subset\Mc(t)$.

\begin{Def}[Pointed set]

A $\domCone$-labeled set $\cS=\{S_{g}|g\in\domCone\}$ in $\hLP(t)$
is said to be $\domCone$-pointed, if each $S_{g}$ is $g$-pointed.

\end{Def}

\subsection{Unitriangularity}

Let there be given a $\domCone$-pointed set $\cS=\{S_{g}|g\in\domCone\}\subset\hLP(t)$.
Let $\mm$ denote a chosen non-unital subring of of $\kk$.

\begin{Def}[Unitriangular transition]

A formal sum $\sum_{m\in\domCone}b_{m}S_{m}$, $b_{m}\in\kk$ is said
to be $\prec_{t}$-unitriangular, if $\{m|b_{m}\neq0\}$ has a unique
$\prec_{t}$-maximal element $g$ and $b_{g}=1$.

The formal sum is further called $(\prec_{t},\mm)$-unitriangular,
if we further have $b_{m}\in\mm$ for any $m\neq g$.

A function $Z\in\hLP(t)$ is said to be $\prec_{t}$-unitriangular
to $\cS$ (respectively, $(\prec_{t},\mm)$-unitriangular to $\cS$)
if it has a formal sum decomposition in $\cS$ which is $\prec_{t}$-unitriangular
(respectively, $(\prec_{t},\mm)$-unitriangular).

A collection of functions $Z\in\hLP(t)$ is said to be $\prec_{t}$-unitriangular
to $\cS$ (respectively, $(\prec_{t},\mm)$-unitriangular to $\cS$)
if each function $Z$ is.

\end{Def}

Notice that $\prec_{t}$ restricts to a partial order on $\domCone\subset\Mc(t)$.

\begin{Lem}[Inverse transition {\cite[Lemma 3.1.11]{qin2017triangular}}]\label{lem:inverse_triangular_transition}

Let there be given two $\domCone$-pointed sets $\cS=\{\cS_{g}|g\in\domCone\}$
and $\can=\{\can_{g}|g\in\domCone\}$ in $\hLP(t)$. Then $\can$
is $(\prec_{t},\mm)$-unitriangular to $\cS$ if and only if $\cS$
is $(\prec_{t},\mm)$-unitriangular to $\can$.

\end{Lem}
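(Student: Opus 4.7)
The plan is to invert the triangular transition using a formal Neumann series, with the finite interval property (Lemma~\ref{lem:finite_interval}) ensuring that all relevant sums terminate. Since the two directions of the claimed equivalence are symmetric in $\cS$ and $\can$, I would only need to prove one implication; I will argue that if $\can$ is $(\prec_{t},\mm)$-unitriangular to $\cS$ then $\cS$ is $(\prec_{t},\mm)$-unitriangular to $\can$.

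Writing the hypothesis as
\begin{equation*}
\can_{g} \;=\; \cS_{g} + \sum_{m \prec_{t} g,\, m \in \domCone} A_{g,m}\, \cS_{m}, \qquad A_{g,m} \in \mm,
\end{equation*}
I would view $A = (A_{g,m})_{g,m \in \domCone}$ as an infinite matrix, strictly lower triangular with respect to $\prec_{t}$, with entries in $\mm$. The central step is to introduce the candidate inverse
\begin{equation*}
B_{g,m} \;=\; \sum_{k \geq 0}\ \sum_{g = g_{0} \succ_{t} g_{1} \succ_{t} \cdots \succ_{t} g_{k} = m} \prod_{i=1}^{k} (-A_{g_{i-1}, g_{i}}),
\end{equation*}
where the inner chains run in $\domCone$. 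Lemma~\ref{lem:finite_interval} bounds the length of any strictly descending chain from $g$ to $m$ in $\domCone$ by the size of the interval $\{m'' \in \Mc(t) : m \preceq_{t} m'' \preceq_{t} g\}$, which is finite; hence $B_{g,m}$ is actually a \emph{finite} sum of products of elements of $\mm$, giving $B_{g,g} = 1$ and $B_{g,m} \in \mm$ for $m \prec_{t} g$. A direct telescoping of the Neumann series then yields the matrix identity $B(I + A) = I$ entrywise, where each entry is a finite sum.

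The remaining step is to show that the formal expression
\begin{equation*}
T_{g} \;:=\; \sum_{m \preceq_{t} g,\, m \in \domCone} B_{g,m}\, \can_{m}
\end{equation*}
defines an element of $\hLP(t)$ and equals $\cS_{g}$. Since each $\can_{m}$ is pointed at $m$, its Laurent expansion is supported in $\{m' : m' \preceq_{t} m\}$, so for a fixed target degree $m' \in \Mc(t)$ the coefficient of $X^{m'}$ in $T_{g}$ only sees those $m$ lying in the finite set $[m', g] \cap \domCone$, once again by Lemma~\ref{lem:finite_interval}. This same finiteness justifies the Fubini-style interchange when one substitutes $\can_{m} = \sum_{m'} (I + A)_{m, m'}\, \cS_{m'}$ into $T_{g}$, after which $B(I + A) = I$ forces $T_{g} = \cS_{g}$. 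This gives precisely the desired $(\prec_{t}, \mm)$-unitriangular decomposition of $\cS_{g}$ in terms of $\can$.

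The main technical obstacle, modest as it is, lies in the bookkeeping: one must confirm that (i) each entry of the formal inverse $B$ is a finite sum in $\mm$, and (ii) the resulting formal series defining $T_{g}$ genuinely lives in $\hLP(t)$ so that the algebraic manipulation $T_{g} = \cS_{g}$ is valid. Both reduce to careful application of the finite interval property, which is essentially tailored to this kind of inversion; I do not expect any deeper difficulty.
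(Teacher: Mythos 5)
Your proof is correct and takes essentially the same approach as the paper: both invert the unitriangular transition matrix, with Lemma~\ref{lem:finite_interval} supplying all the required finiteness. The only difference is cosmetic — the paper builds the inverse coefficients recursively from the relation $\sum_{g''\preceq_{t}g'\preceq_{t}g}c_{g,g'}b_{g',g''}=0$, while you write the same inverse in closed form as a Neumann/chain sum.
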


\begin{proof}

Assume that $\can$ is $(\prec_{t},\mm)$-unitriangular to $\cS$.
Then, for each $g\in\domCone$, we have a $(\prec_{t},\mm)$-unitriangular
decomposition

\begin{eqnarray*}
\can_{g} & = & \sum b_{g,g'}\cS_{g'}
\end{eqnarray*}
where non-zero coefficients are given by: $b_{g,g}=1$, $b_{g,g'}\in\mm$
for $g'\prec_{t}g\in\domCone$.

Therefore, the transition matrix $(b_{g,g'})_{g,g'\in\domCone}$ is
a $\domCone\times\domCone$-matrix which is lower triangular with
respect to the partial order $\prec_{t}$ on $\domCone$, whose diagonal
entries are $1$ and non-diagonal entries belong to $\mm$. We try
to construct its inverse matrix $(c_{g,g'})_{g,g'\in\domCone}$. It
must be a lower triangular matrix such that $c_{g,g}=1$ for all $g$
and, for any $g''\prec_{t}g$, 
\begin{eqnarray*}
\sum_{g''\preceq_{t}g'\preceq_{t}g}c_{g,g'}b_{g',g''} & = & 0.
\end{eqnarray*}
Notice that this is a finite sum by Lemma \ref{lem:finite_interval}.
If $c_{g,g'}$ have been determined and belong to $\mm$ for $g'\succ_{t}g''$,
then $c_{g,g''}$ is uniquely determined and belongs to $\mm$. Recursively,
we determine $c_{g,g'}\in\mm$ for all $g,g'$ as in \cite[Lemma 3.1.11]{qin2017triangular}.
It follows that $\cS$ is $(\prec_{t},\mm)$-unitriangular to $\can$.

\end{proof}

\subsection{Dominance order decomposition}

Let there be given a $\Mc(t)$-pointed set $\cS=\{S_{g}|g\in\Mc(t)\}$
in $\hLP(t)$. For any $Z\in\hLP(t)$, \cite{qin2019bases} gives
an algorithm to decompose it into a well defined sum of $S_{g}$.

\begin{Definition-Lemma}[{\cite[4.1.1]{qin2019bases}}]\label{def:dominance_decomposition}

There exists a unique decomposition 
\begin{eqnarray*}
Z & = & \sum_{g\in\Mc(t)}\alpha_{t}(Z)(g)\cdot S_{g},\ \alpha_{t}(Z)\in\Hom_{\mathrm{set}}(\Mc(t),\kk)
\end{eqnarray*}
in $\widehat{\LP}(t)$ such that the support $\supp(\alpha_{t}(Z))=\{g|\alpha_{t}(Z)(g)\neq0\}$
has finitely many $\prec_{t}$-maximal elements. It is called the
$\prec_{t}$-decomposition of $Z$ in terms of $\cS$.

\end{Definition-Lemma}

This $\prec_{t}$-decomposition will be called the dominance order
decomposition with respect to the seed $t$. We recall that, by \cite[4.1.1]{qin2019bases},
the maximal elements in $\supp(\alpha_{t}(Z))$ are the same as the
$\prec_{t}$-maximal Laurent degrees in the Laurent degree support
$\supp_{\Mc(t)}Z$. 

Notice that $\cS$ is a topological $\kk$-module basis of $\hLP(t)$
in the sense of \cite[Lemma 2.4]{davison2019strong}.

Let $\cS'$ denote any subset of $\cS$, then it is $\domCone$-pointed
for some subset $\domCone\subset\Mc(t)$. Conversely, any $\domCone$-pointed
set $\cS'$ can be extended to a $\Mc(t_{0})$-pointed set by appending
more functions such as Laurent monomials. If the $\prec_{t}$-decomposition
of $Z$ in terms of $\cS$ only has elements in $\cS'$ appearing,
we call it the $\prec_{t}$-decomposition of $Z$ in terms of $\cS'$.

\begin{Lem}[{\cite[Lemma 3.1.10]{qin2017triangular}\cite[4.1.1]{qin2019bases}}]\label{lem:finite_decomposition_triangular}

If $Z\in\hLP(t)$ has a finite decomposition in terms of $\cS'$,
then the decomposition is the $\prec_{t}$-decomposition. In particular,
if $Z$ is pointed, then the decomposition is $\prec_{t}$-unitriangular.

\end{Lem}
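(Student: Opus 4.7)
The plan is to derive both assertions from the uniqueness portion of Definition-Lemma \ref{def:dominance_decomposition}, together with the fact (recorded immediately after that definition-lemma) that the $\prec_{t}$-maximal elements of $\supp(\alpha_{t}(Z))$ coincide with the $\prec_{t}$-maximal Laurent degrees in $\supp_{\Mc(t)}Z$.

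For the first assertion, write the given finite decomposition as $Z=\sum_{g\in\domCone'}b_{g}S_{g}$ with $\{g:b_{g}\neq 0\}\subset\domCone'$ finite. Extend $\cS'$ to a $\Mc(t)$-pointed set $\cS$ by adjoining further pointed functions (e.g.\ Laurent monomials $X^{m}$) as described just before the lemma; the same formula exhibits $Z$ as a finite decomposition in $\cS$. A finite support trivially has only finitely many $\prec_{t}$-maximal elements, so this decomposition satisfies the defining property of the $\prec_{t}$-decomposition in Definition-Lemma \ref{def:dominance_decomposition}, and uniqueness forces it to be that decomposition. Since only elements of $\cS'$ appear, this is by definition the $\prec_{t}$-decomposition in terms of $\cS'$.

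For the ``in particular'' assertion, assume in addition that $Z$ is $g_{Z}$-pointed. Let $\cM$ be the set of $\prec_{t}$-maximal elements of $\{g:b_{g}\neq 0\}$, which is a finite set. I claim every $m\in\cM$ lies in $\supp_{\Mc(t)}Z$ as a $\prec_{t}$-maximal Laurent degree. Indeed, each $S_{g}$ is $g$-pointed, so $\supp_{\Mc(t)}(S_{g})\subset\{m'\in\Mc(t):m'\preceq_{t}g\}$; hence only those $S_{g}$ with $g\succeq_{t}m$ can contribute to the coefficient of $X^{m}$ in $Z$, and by maximality of $m$ in $\{g:b_{g}\neq 0\}$ the only such contribution is $b_{m}\cdot 1$, so the coefficient of $X^{m}$ in $Z$ equals $b_{m}\neq 0$. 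Pointedness of $Z$ then forces $|\cM|=1$ and $\cM=\{g_{Z}\}$, while the identity $b_{g_{Z}}=[X^{g_{Z}}]Z=1$ is exactly the unitriangularity condition.

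The argument is essentially bookkeeping; the only mildly subtle step is ruling out cancellation at a maximal index $m\in\cM$, which is immediate once one observes that the other $S_{g}$ with $b_{g}\neq 0$ satisfy $g\not\succeq_{t} m$ and therefore cannot contribute to $X^{m}$. I do not anticipate a genuine obstacle beyond being careful about which macros refer to finite vs.\ infinite supports.
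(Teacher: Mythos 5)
Your proof is correct, and it is essentially the intended argument: the paper gives no proof of this lemma (it only cites \cite[Lemma 3.1.10]{qin2017triangular} and \cite[4.1.1]{qin2019bases}), and your route — invoking the uniqueness clause of Definition-Lemma \ref{def:dominance_decomposition} after extending $\cS'$ to an $\Mc(t)$-pointed set, then identifying the maximal elements of the (finite) support with the maximal Laurent degrees of $Z$ to get unitriangularity from pointedness — is exactly the standard argument underlying the cited results. The cancellation step you flag is handled correctly, and for the second part you could alternatively have cited the remark following Definition-Lemma \ref{def:dominance_decomposition} that the $\prec_{t}$-maximal elements of $\supp(\alpha_{t}(Z))$ coincide with the $\prec_{t}$-maximal Laurent degrees of $Z$.
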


Let there be given a ring $R$ containing $\kk$ as a subring. Denote
$\hLP(t)_{R}=\hLP(t)\otimes_{\kk}R$. Replacing $\kk$ by $R$ in
Definition-Lemma \ref{def:dominance_decomposition}, we can define
the unique $\prec_{t}$-decomposition for $Z'\in\hLP(t)_{R}$ in terms
of a given $\Mc(t)$-pointed set with coefficients in $R$. The following
statement will be convenient for latter proofs.

\begin{Lem}\label{lem:decomposition_subring}

If $Z\in\hLP(t)$ has a finite decomposition in terms of $\cS'$ in
$\hLP(t)_{R}$, then it is the $\prec_{t}$-decomposition in $\hLP(t)$
with coefficients in $\kk$.

\end{Lem}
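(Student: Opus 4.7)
The plan is to exploit the uniqueness of the $\prec_t$-decomposition in the ambient module $\hLP(t)_R$, after observing that both the given finite decomposition and the $\prec_t$-decomposition with $\kk$-coefficients live inside $\hLP(t)_R$ and must therefore agree.

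First I would invoke Definition-Lemma \ref{def:dominance_decomposition} with $R$ in place of $\kk$ to get uniqueness of the $\prec_t$-decomposition in $\hLP(t)_R$ in terms of the $\Mc(t)$-pointed set (any extension of $\cS'$ to a full $\Mc(t)$-pointed set, obtained by appending Laurent monomials as described in the paragraph just before Lemma \ref{lem:finite_decomposition_triangular}). Then, since the given finite decomposition of $Z$ in terms of $\cS'$ has finite support in $\Mc(t)$, Lemma \ref{lem:finite_decomposition_triangular} applied over $R$ tells me that this finite expression is already the $\prec_t$-decomposition of $Z$ in $\hLP(t)_R$.

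Next I would apply Definition-Lemma \ref{def:dominance_decomposition} to $Z \in \hLP(t)$ itself (over $\kk$) to obtain a $\prec_t$-decomposition of $Z$ with coefficients $\alpha_t(Z)(g) \in \kk$. Viewed inside $\hLP(t)_R$ via the inclusion $\kk \hookrightarrow R$, this is still a decomposition whose coefficient support has only finitely many $\prec_t$-maximal elements, hence it is also a $\prec_t$-decomposition of $Z$ over $R$. The uniqueness established in the previous paragraph then forces it to coincide, term by term, with the given finite decomposition. Consequently the $R$-coefficients of the finite decomposition actually lie in $\kk$, and the decomposition is the $\prec_t$-decomposition in $\hLP(t)$ as claimed.

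There is no real obstacle here; the only subtlety is checking that the ``finitely many $\prec_t$-maximal elements'' condition behaves well under extension of scalars $\kk \subset R$, which is immediate because the support is a subset of $\Mc(t)$ independent of the ring of coefficients. So the proof is essentially a base-change argument for the uniqueness clause in Definition-Lemma \ref{def:dominance_decomposition}, combined with Lemma \ref{lem:finite_decomposition_triangular} used over $R$.
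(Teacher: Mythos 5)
Your proposal is correct and follows essentially the same route as the paper: both arguments use Lemma \ref{lem:finite_decomposition_triangular} over $R$ to identify the given finite decomposition with the $\prec_{t}$-decomposition in $\hLP(t)_{R}$, then observe that the $\kk$-coefficient $\prec_{t}$-decomposition of $Z$ (after extending $\cS'$ to a full $\Mc(t)$-pointed set) remains a valid $\prec_{t}$-decomposition over $R$, and conclude by uniqueness. Your added remark that the finiteness-of-maximal-elements condition is insensitive to base change is a fair, if implicit, point in the paper's own proof.
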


\begin{proof}

On the one hand, by Lemma \ref{lem:finite_decomposition_triangular},
the finite decomposition of $Z$ in terms of $\cS'$ is the $\prec_{t}$-decomposition
in $\hLP(t)_{R}$.

On the other hand. Extend $\cS'$ to a $\Mc(t)$-pointed set $\cS\subset\hLP(t)$.
Working in $\hLP(t)$, $Z$ has a $\prec_{t}$-decomposition in terms
of $\cS$ with coefficients in $\kk$. By the uniqueness of the $\prec_{t}$-decomposition,
this is also the $\prec_{t}$-decomposition of $Z$ in terms of $\cS$
in $\hLP(t)_{R}$. 

The above two decomposition are the same by the uniqueness of $\prec_{t}$-decomposition.
The claim follows.

\end{proof}

\subsection{Change of seeds}

Let there be given seeds $t=\overleftarrow{\mu}_{t,t'}t'$ in $\Delta^{+}$.
For simplicity, we denote $g'=\phi_{t',t}g$ for any $g\in\Mc(t)$. 

\begin{Def}[Compatibly pointedness]\label{def:compatibly_pointed}

A Laurent polynomial $Z\in\LP(t)\cap\seq_{t',t}^{*}\LP(t')$ is said
to be compatibly pointed at $t,t'$ if $Z$ is $g_{Z}$-pointed for
some $g_{Z}\in\Mc(t)$ and $\overleftarrow{\mu}_{t,t'}^{*}Z$ is a
$g_{Z}'$-pointed Laurent polynomial in $\LP(t')$.

Given a subset $\Delta'\subset\Delta^{+}$ and $t\in\Delta'$, $Z\in\cap_{t'\in\Delta'}\seq_{t',t}^{*}\LP(t')$
is said to be compatibly pointed at $\Delta'$ if it is compatibly
pointed at any pair of seeds from $\Delta'$.

Given a subset $\domCone\subset\Mc(t)$, a $\domCone$-pointed set
$\cS=\{S_{g}|g\in\domCone\}\subset\LP(t)$ is said to be compatibly
pointed at $t$ and $t'$, if all $S_{g}$ do. Similarly, it is said
to be compatibly pointed at $\Delta'$ if all $S_{g}$ do.

A $\domCone$-pointed set $\cS=\{S_{g}|g\in\domCone\}$ in $\LP(t)$
and a $\phi_{t',t}\domCone$-pointed set $\cS'=\{S'_{g'}|g'\in\phi_{t',t}\domCone\}$
in $\LP(t')$ are said to be compatible, if $\seq_{t,t'}^{*}S_{g}=S'_{\phi_{t',t}g}$
for all $g$.

\end{Def}

It might be possible to rephrase Definition \ref{def:compatibly_pointed}
for suitable formal Laurent series. But Laurent polynomial cases suffice
for our purpose in this paper.

Now assume that $t=\mu_{k}t'$ in $\Delta^{+}$ for some $k\in I_{\ufv}$.
We further assume that the given $\Mc(t)$-pointed set $\cS=\{S_{g}|g\in\Mc(t)\}$
consists of elements from $\LP(t)\cap(\mu_{k}^{*})^{-1}\LP(t')$ compatibly
pointed at $t,t'$. Then the corresponding dominance order decomposition
enjoys the following invariance property.

\begin{Prop}[Mutation-invariance {\cite[Proposition 4.2.1]{qin2019bases}}]\label{prop:invariance_dominance_decomposition}

For any Laurent polynomial $Z\in\LP(t)\cap(\mu_{k}^{*})^{-1}\LP(t')$,
its $\prec_{t}$-decomposition in terms of $\cS$ in $\LP(t)$ and
the $\prec_{t'}$-decomposition of $\mu_{k}^{*}Z$ in terms of $\mu_{k}^{*}\cS$
in $\LP(t')$ have the same coefficients:

\begin{eqnarray*}
\alpha_{t}(Z)(g) & = & \alpha_{t'}(\mu_{k}^{*}Z)(\phi_{t',t}g),\ \forall g\in\Mc(t).
\end{eqnarray*}

\end{Prop}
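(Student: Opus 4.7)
The plan is to apply $\mu_k^*$ to the $\prec_t$-decomposition of $Z$, use the compatibility hypothesis $\mu_k^* S_g = S'_{\phi_{t',t} g}$ to rewrite the result in terms of $\mu_k^* \cS$, and then invoke the uniqueness clause of Definition-Lemma \ref{def:dominance_decomposition} to identify the outcome with the $\prec_{t'}$-decomposition of $\mu_k^* Z$. Matching coefficients will yield $\alpha_t(Z)(g) = \alpha_{t'}(\mu_k^* Z)(\phi_{t',t} g)$ for every $g$.

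Concretely, starting from $Z = \sum_{g \in \Mc(t)} \alpha_t(Z)(g)\, S_g$ in $\widehat{\LP}(t)$, the natural candidate identity is
\[
\mu_k^* Z \;=\; \sum_{g \in \Mc(t)} \alpha_t(Z)(g)\, S'_{\phi_{t',t} g},
\]
and two things must be checked: that this formal sum defines a bona fide element of $\widehat{\LP}(t')$, and that its support has only finitely many $\prec_{t'}$-maximal elements, so that uniqueness applies.

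The main obstacle is that $\phi_{t',t}$ is only piecewise linear and does not globally respect the dominance orders, so one cannot transport $\prec_t$-maximal chains to $\prec_{t'}$-maximal chains by functoriality. To get around this I would exploit that both $Z$ and $\mu_k^* Z$ are Laurent polynomials with \emph{finite} Laurent-degree supports. Using the fact recalled after Definition-Lemma \ref{def:dominance_decomposition}, namely that the $\prec_t$-maximal elements of $\supp(\alpha_t(Z))$ coincide with the $\prec_t$-maximal Laurent degrees of $Z$ in $\LP(t)$, I would verify directly from the explicit piecewise-linear formula for $\phi_{t',t}$ together with the sign-coherent mutation rule \eqref{eq:X_mutation} that $\phi_{t',t}$ bijects the $\prec_t$-maximal Laurent degrees of $Z$ with the $\prec_{t'}$-maximal Laurent degrees of $\mu_k^* Z$; pointedness of each $S_g$ is crucial here to rule out cancellations at leading degrees.

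With this correspondence in hand, I would conclude by a peel-off iteration. Subtract the finitely many $\prec_t$-maximal contributions to form $Z_1 := Z - \sum_{g\ \prec_t\text{-max}} \alpha_t(Z)(g)\, S_g$, which still lies in $\LP(t) \cap (\mu_k^*)^{-1}\LP(t')$ and has strictly fewer top Laurent terms on both sides of $\mu_k^*$; then repeat. Lemma \ref{lem:finite_interval} guarantees that below any fixed degree only finitely many intermediate degrees can contribute, so the candidate sum converges termwise in $\widehat{\LP}(t')$ and its support has only finitely many $\prec_{t'}$-maximal elements, namely the images under $\phi_{t',t}$ of those produced at the first peel-off step. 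The uniqueness of the $\prec_{t'}$-decomposition from Definition-Lemma \ref{def:dominance_decomposition} then identifies this candidate with $\mu_k^* Z = \sum_{g'} \alpha_{t'}(\mu_k^* Z)(g')\, S'_{g'}$, giving the desired coefficient identity.
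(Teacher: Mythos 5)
The paper gives no proof of this proposition itself (it is imported from \cite[Proposition 4.2.1]{qin2019bases}), so your argument must stand on its own. Your skeleton --- transport the $\prec_{t}$-decomposition by $\mu_{k}^{*}$, rewrite it via compatible pointedness, and invoke the uniqueness clause of Definition-Lemma \ref{def:dominance_decomposition} --- is the natural one, but the step you single out as the crux is false: $\phi_{t',t}$ does \emph{not} take the $\prec_{t}$-maximal Laurent degrees of $Z$ to the $\prec_{t'}$-maximal Laurent degrees of $\mu_{k}^{*}Z$. Already $Z=1+Y_{k}(t)$ is a counterexample: its Laurent support is $\{0,p^{*}e_{k}\}$ with $p^{*}e_{k}\prec_{t}0$, so its unique $\prec_{t}$-maximal degree is $0$ and $\phi_{t',t}(0)=0$; yet by the mutation rule \eqref{eq:Y_mutation} its transport is $1+Y_{k}(t')^{-1}$, with support $\{0,-p^{*}e_{k}'\}$ and $0\prec_{t'}-p^{*}e_{k}'$, so its unique $\prec_{t'}$-maximal degree is $-p^{*}e_{k}'\neq0$. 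A cluster instance: in Example \ref{eg:A2} the element $X_{1}+X_{1}X_{2}=X^{f_{1}}(1+Y_{1})$ is pointed at $f_{1}$, while its transport under $\mu_{1}$ equals $X(t')^{-f_{1}'+2f_{2}'}+2X(t')^{-f_{1}'+f_{2}'}+X(t')^{-f_{1}'}$, whose maximal degree is $-f_{1}'+2f_{2}'$, not $\phi(f_{1})=-f_{1}'+f_{2}'$. This is exactly why compatible pointedness is a genuine hypothesis on $\cS$ rather than an automatic property (cf.\ Proposition \ref{prop:support_compatible_equivalent}).

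Consequently your peel-off does not run in lockstep on the two sides. Subtracting $\sum\alpha_{t}(Z)(g)S_{g}$ over the $\prec_{t}$-maximal $g$ removes the top Laurent terms of $Z$, but the corresponding $\mu_{k}^{*}S_{g}$ are pointed at the degrees $\phi_{t',t}(g)$, which need not be $\prec_{t'}$-maximal in $\supp_{\Mc(t')}\mu_{k}^{*}Z$: in the example above the $\prec_{t'}$-maximal degree of the transported element arises only as $\phi_{t',t}(g)$ for a $g$ peeled off at the \emph{second} step. So the $\prec_{t'}$-maximal elements of the candidate support are not ``the images of those produced at the first peel-off step,'' and your verification of the finiteness condition required by Definition-Lemma \ref{def:dominance_decomposition} --- hence the appeal to uniqueness --- collapses. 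Establishing that the transported formal sum converges in $\hLP(t')$ and that $\phi_{t',t}(\supp\alpha_{t}(Z))$ has only finitely many $\prec_{t'}$-maximal elements is the real technical content of the proposition; it requires a finer analysis of how $\phi_{t',t}$ interacts with $\prec_{t}$-lower sets, as in \cite{qin2019bases}, and cannot be reduced to a term-by-term correspondence of maximal degrees.
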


\begin{Def}[Degree transformation \cite{qin2019bases}]

We define the linear map $\psi_{t',t}:\Mc(t)\rightarrow\Mc(t')$ such
that $\psi_{t',t}(\sum g_{i}f_{i})=\sum g_{i}\phi_{t',t}f_{i}$ for
any $(g_{i})\in\Z^{I}$.

\end{Def}

\reviseStart

Note that $\psi_{t',t}$ is bijective. By \cite[Lemma 3.3.4]{qin2019bases},
the mutation map $\seq_{t,t'}^{*}:\cF(t)\simeq\cF(t')$ induces a
natural embedding $\seq_{t,t'}^{*}$ from $\LP(t)$ to $\hLP(t')$
called the formal Laurent series expansion. By \cite[Lemma 3.3.7]{qin2019bases},
$\psi_{t',t}$ describes the change of degrees of Laurent monomials
under this embedding. 

\reviseEnd

\begin{Def}[Support dimensions {\cite{qin2019bases}}]

Assume $t$ is injective-reachable. For any $g\in\Mc(t)$, if there
exists some $n\in N_{\ufv}^{\geq0}(t)\simeq\N^{I_{\ufv}}$ such that
\begin{eqnarray*}
\psi_{t[-1],t}^{-1}\phi_{t[-1],t}g & = & g+\tB(t)\cdot n,
\end{eqnarray*}
we say $g$ has the support dimension $\suppDim g=n$.

\end{Def}

Let there be given injective-reachable seeds $t=\seq t[-1]$.

\begin{Prop}[{\cite[Proposition 3.4.7]{qin2019bases}}]\label{prop:support_compatible_equivalent}

Let there be given a $g$-pointed element $Z\in\LP(t)\cap(\seq^{*})^{-1}\LP(t[-1])$,
$g\in\Mc(t)$. Then $Z$ is compatibly pointed at $t,t[-1]$ if and
only if it is bipointed with support dimension $\suppDim g$.

\end{Prop}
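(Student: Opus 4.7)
The plan is to reduce both implications to the degree/codegree swap in Proposition~\ref{prop:swap_order_pointedness}(2), combined with the observation that $g$-pointedness forces every support element to lie in $g+\tB(t)\cdot\N^{I_{\ufv}}$.

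First I would record the structural remark: since $Z$ is $g$-pointed, we may write $Z = X(t)^{g}\cdot(1+\sum_{n\in\Nufv^{>0}(t)} c_{n}Y(t)^{n})$, so every element of $\supp_{\Mc(t)}Z$ has the form $g+p^{*}n = g+\tB(t)n$ with $n\in\N^{I_{\ufv}}$. In particular, if $Z$ possesses a codegree $\eta$, it is necessarily of the form $\eta = g+\tB(t)n$ for some $n\geq 0$.

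For the forward direction, assume $Z$ is compatibly pointed at $t,t[-1]$. By the definition of compatible pointedness, $\seq^{*}Z\in\LP(t[-1])$ is pointed at $\phi_{t[-1],t}g$. Applying Proposition~\ref{prop:swap_order_pointedness}(2), this is equivalent to $Z$ being copointed at some $\eta\in\Mc(t)$ with $\psi_{t[-1],t}\eta = \phi_{t[-1],t}g$, hence $\eta = \psi_{t[-1],t}^{-1}\phi_{t[-1],t}g$. Combined with the structural remark, this forces the equation $\psi_{t[-1],t}^{-1}\phi_{t[-1],t}g - g = \tB(t)n$ to admit a solution $n\in\N^{I_{\ufv}}$. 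By the very definition of $\suppDim g$, such $n$ is then $\suppDim g$, and $Z$ is bipointed at $(g,\,g+\tB(t)\suppDim g)$. For the reverse direction, assume $Z$ is bipointed with support dimension $\suppDim g$; then $Z$ is copointed at $\eta := g+\tB(t)\suppDim g$, which by the defining property of $\suppDim g$ equals $\psi_{t[-1],t}^{-1}\phi_{t[-1],t}g$. Applying Proposition~\ref{prop:swap_order_pointedness}(2) in the reverse direction, $\seq^{*}Z$ is pointed at $\psi_{t[-1],t}\eta = \phi_{t[-1],t}g$, which is exactly compatible pointedness at $t,t[-1]$.

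I do not anticipate a serious obstacle: the argument is essentially packaging the degree/codegree swap together with the support constraint imposed by pointedness. The only bookkeeping point to watch is the identification between $p^{*}$ acting on $\Nufv^{\geq 0}(t)$ and left multiplication by $\tB(t)$ in coordinates, as well as the fact that $\psi_{t[-1],t}^{-1}\phi_{t[-1],t}g - g$ is what defines $\suppDim g$ whenever it lies in $\tB(t)\cdot\N^{I_{\ufv}}$; both are built into the setup from Section~\ref{sec:Prerequisites-dominance-order}.
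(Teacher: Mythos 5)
Your proof is correct and follows essentially the same route as the paper: both directions reduce to the degree/codegree swap of Proposition~\ref{prop:swap_order_pointedness}(2) together with the defining equation of $\suppDim g$. Your structural remark (that $g$-pointedness already forces the codegree to lie in $g+\tB(t)\cdot\N^{I_{\ufv}}$, so the $n$ produced in the forward direction is automatically nonnegative and hence is $\suppDim g$) is a small point the paper's proof leaves implicit, but it does not change the argument.
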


\begin{proof}

Assume $Z$ is copointed at $\eta=g+p^{*}\suppDim g$. By \cite[Propositions 3.3.9, 3.3.10]{qin2019bases},
we deduce that $\deg^{t[-1]}\seq^{*}Z=\psi_{t[-1],t}\eta=\phi_{t[-1],t}g$,
where the last equality follows from the definition of $\suppDim g$. 

Conversely, if $Z$ is compatibly pointed at $t,t[-1]$, then \cite[Propositions 3.3.9, 3.3.10]{qin2019bases}
implies that $Z$ must be copointed at some $\eta$, such that $\psi_{t[-1],t}\eta=\phi_{t[-1],t}g$.
It follows from definition that $\eta=g+p^{*}\suppDim g$.

\end{proof}

In the proof of Proposition \ref{prop:compatible_to_admissible},
we will need the fact that the cluster monomials are compatibly pointed,
which is a consequence of the sign-coherence of $g$-vectors \cite{DerksenWeymanZelevinsky09}\cite{gross2018canonical},
see \cite[Remark 7.13]{FominZelevinsky07}. By Proposition \ref{prop:support_compatible_equivalent},
this fact implies that the support dimension of a $g$-pointed cluster
monomial (Definition \ref{def:F-dim}) provides $\suppDim g$. 

\begin{Rem}\label{rem:suppdim_theta_func}

Although we do not need the following result for proving the main
Theorem (Theorem \ref{thm:main_intro}), it is worth remarking that
the $\theta$-basis in \cite{gross2018canonical} provides the existence
of $\suppDim g$ for all $g$, where the initial seed $t$ is assumed
to be injective-reachable. In particular, $\suppDim g$ only depends
on the principal part $B(t)$.

In view of the Fock-Goncharov dual basis conjecture, the dimensions
$\suppDim g$ should be understood as the correct support dimension
possessed by good basis elements, see \cite{qin2019bases} for more
details.

\end{Rem}

\section{Similar seeds and a correction technique\label{sec:Similar-seeds-and-correction}}

\subsection{Similar seeds \label{subsec:similar-seeds}}

Following \cite{Qin12}\cite[Section 4]{qin2017triangular}, we consider
similar seeds with the same principal $B$-matrix. Let there be given
fixed data such as $I,I_{\ufv},\Mc$ and $N_{\ufv}$ in Section \ref{sec:Basics-cluster-algebra}.
The following definition is more restricted than those in \cite[Section 4]{qin2017triangular}
but it suffices for our purpose.

\begin{Def}[Similar seeds]

Let $\sigma$ denote a permutation of $I_{\ufv}$. Two seeds $t^{(1)}$,
$t^{(2)}$ (not necessarily related by mutations) are called similar
up to $\sigma$ if $b_{\sigma(i)\sigma(j)}(t^{(2)})=b_{ij}(t^{(1)})$
for any $i,j\in I_{\ufv}$. Two quantum seeds are called similar up
to $\sigma$ if we further have $\diag_{k}(t^{(1)})=\diag_{\sigma k}(t^{(2)})$
for all $k\in I_{\ufv}$.

\end{Def}

For simplicity, we replace the symbol $(t^{(r)})$ by $^{(r)}$ when
describing data associated to the seeds $t^{(r)}$, $r=1,2$. 

The permutation $\sigma$ on $I_{\ufv}$ induces a linear map $\sigma:N_{\ufv}^{(1)}\simeq N_{\ufv}^{(2)}$
such that $\sigma e_{k}^{(1)}=e_{\sigma k}^{(2)}$ where $e_{k}^{(r)}$
are the $k$-th unit vector in $N_{\ufv}{}^{(r)}$, $r=1,2$. Extend
$\sigma$ by identity to a permutation $\sigma$ on $I$. Then we
obtain a linear map $\sigma:\Mc(t{}^{(1)})\simeq\Mc(t^{(2)})$ such
that $\sigma f_{i}^{(1)}=f_{\sigma i}^{(2)}$ where $f_{i}^{(r)}$
are the $i$-th unit vectors in $\Mc(t^{(r)})$, $r=1,2$.

Make the natural identification $\Mc(t^{(r)})\simeq\Z^{I}$ and $N_{\ufv}^{(r)}\simeq\Z^{I_{\ufv}}$.
We have the following result.

\begin{Lem}\label{lem:compare_order_similar_seeds}

If $\eta=g+\tB^{(1)}n$, for $n\in\Z^{I_{\ufv}}$, then $\sigma\eta=\sigma g+\tB^{(2)}\sigma n+u$
for some $u\in\Z^{I_{\fv}}$. In particular, $\eta\prec_{t^{(1)}}g$
implies that $\sigma\eta\prec_{t^{(2)}}(\sigma g+u)$ for some $u\in\Z^{I_{\fv}}$.

\end{Lem}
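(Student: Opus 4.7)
The plan is to unpack the dominance relation in coordinates and track what the permutation $\sigma$ does to each row of the extended $B$-matrix. Under the identification $\Mc(t^{(r)}) \simeq \Z^{I}$ and $N_{\ufv}^{(r)} \simeq \Z^{I_{\ufv}}$, the condition $\eta = g + \tB^{(1)} n$ is exactly the coordinate form of $\eta - g = p^{(1)*}(n)$. Applying the bijection $\sigma \colon \Mc(t^{(1)}) \simeq \Mc(t^{(2)})$, I get $\sigma\eta - \sigma g = \sigma(\tB^{(1)} n)$, so everything reduces to comparing the two column vectors $\sigma(\tB^{(1)} n)$ and $\tB^{(2)}(\sigma n)$ inside $\Z^{I}$.

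Next I would do this comparison entry by entry. Since $\sigma$ permutes $I_{\ufv}$ and fixes $I_{\fv}$, the $j$-th component of $\sigma(\tB^{(1)} n)$ equals $\sum_{k \in I_{\ufv}} b^{(1)}_{\sigma^{-1}j,\,k}\, n_k$, while the $j$-th component of $\tB^{(2)}(\sigma n)$ equals $\sum_{k \in I_{\ufv}} b^{(2)}_{j,\,\sigma k}\, n_k$ (after re-indexing $k' = \sigma k$). For $j \in I_{\ufv}$ the similarity hypothesis $b^{(2)}_{\sigma i, \sigma k} = b^{(1)}_{i,k}$ applied with $i = \sigma^{-1} j$ gives exactly $b^{(1)}_{\sigma^{-1}j,\,k} = b^{(2)}_{j,\,\sigma k}$, so the two vectors agree on the $I_{\ufv}$-coordinates. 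For $j \in I_{\fv}$ no such control is available, so the discrepancy
\[
u \;:=\; \sigma(\tB^{(1)} n) \;-\; \tB^{(2)}(\sigma n)
\]
is supported only on $I_{\fv}$, i.e.\ $u \in \Z^{I_{\fv}}$. Rearranging yields the first claim $\sigma\eta = \sigma g + \tB^{(2)}(\sigma n) + u$.

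For the ``in particular'' part, note that if $\eta \prec_{t^{(1)}} g$ then $n \in \N^{I_{\ufv}}$ with $n \neq 0$, so $\sigma n \in \N^{I_{\ufv}}$ is also nonzero. Rewriting the first claim as $\sigma\eta = (\sigma g + u) + p^{(2)*}(\sigma n)$ exhibits $\sigma\eta$ as $(\sigma g + u)$ plus a strictly positive element in $p^{(2)*}(N_{\ufv}^{\geq 0})$, which is precisely $\sigma\eta \prec_{t^{(2)}} \sigma g + u$.

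The main (very mild) obstacle is purely bookkeeping: keeping straight that $\sigma$ acts on $I$ by permuting $I_{\ufv}$ and fixing $I_{\fv}$, and correctly converting ``$\sigma$ applied to a column vector'' into a relabeling of coordinates. Once the indexing is set up, the equality on $I_{\ufv}$-rows is a direct application of the similarity definition, and the correction term $u$ arises as the unavoidable error on the frozen rows where the hypothesis gives no information.
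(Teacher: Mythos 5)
Your proof is correct and follows essentially the same route as the paper's: both reduce to the identity $\sigma(\tB^{(1)}n)=\tB^{(2)}(\sigma n)+u$ with $u\in\Z^{I_{\fv}}$, obtained by matching the unfrozen rows via the similarity condition $b^{(2)}_{\sigma i,\sigma k}=b^{(1)}_{ik}$ and absorbing the uncontrolled frozen rows into $u$ (the paper phrases this on basis vectors $e_k^{(1)}$ and invokes linearity, while you work coordinatewise, which is the same computation).
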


\begin{proof}

We have that $\sigma\tB^{(1)}e_{k}^{(1)}=\sigma\sum b_{ik}^{(1)}f_{i}^{(1)}=\sum_{i\in I}b_{ik}^{(1)}f_{\sigma i}^{(2)}=\sum_{i\in I}b_{\sigma i,\sigma k}^{(2)}f_{\sigma i}^{(2)}+u=\tB^{(2)}e_{\sigma k}^{(2)}+u=\tB^{(2)}\sigma e_{k}^{(1)}+u$
for some $u\in\Z^{I_{\fv}}$. The claim follows by the linearity of
$\sigma$.

\end{proof}

Let $\seq=\mu_{k_{r}}\cdots\mu_{k_{1}}$ denote any given mutation
sequence. Define $\sigma\seq=\mu_{\sigma k_{r}}\cdots\mu_{\sigma k_{1}}$.
The following result is a consequence of Theorem \ref{thm:cluster_expansion}.

\begin{Thm}[Quantum $F$-polynomial]\label{thm:similar_cluster_variable}

For any $j\in I$, denote the (quantum) cluster variable $\seq^{*}X_{j}(\seq t^{(1)})\in\qClAlg(t^{(1)})$
by

\begin{eqnarray*}
 &  & \seq^{*}X_{j}(\seq t^{(1)})=X(t^{(1)})^{g^{(1)}}\cdot\sum_{n\in\N^{I_{\ufv}}}c_{n}Y(t^{(1)})^{n}
\end{eqnarray*}
where $g^{(1)}\in\Z^{I}\simeq\Mc(t^{(1)})$, $c_{n}\in\kk$. Then
there exists some frozen factor $p\in\cRing$ such that the quantum
cluster variable $(\sigma\seq)^{*}X_{\sigma j}((\sigma\seq)t^{(2)})\in\qClAlg(t^{(2)})$
takes the form
\begin{eqnarray*}
 & (\sigma\seq)^{*}X_{\sigma j}((\sigma\seq)t^{(2)})= & p\cdot X(t^{(2)})^{\sigma g^{(1)}}\cdot\sum_{n\in\N^{I_{\ufv}}}c_{n}Y(t^{(2)})^{\sigma n}.
\end{eqnarray*}

\end{Thm}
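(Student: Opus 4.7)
The plan is to apply Theorem \ref{thm:cluster_expansion} to both quantum cluster variables and then to match the resulting data via the similarity of the two seeds. First, I would write the Laurent expansion of the second variable in the form
$$(\sigma\seq)^* X_{\sigma j}((\sigma\seq) t^{(2)}) = X(t^{(2)})^{g^{(2)}} \cdot \sum_{m \in \N^{I_{\ufv}}} c'_m Y(t^{(2)})^m,$$
noting that by Theorem \ref{thm:cluster_expansion} both $\pr_{I_{\ufv}} g^{(2)}$ and the coefficients $c'_m$ are determined solely by the principal part $B(t^{(2)})$, the symmetrizers $(\diag_k(t^{(2)}))_{k \in I_{\ufv}}$, and the mutation sequence $\sigma \seq$.

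Next, I would exploit the similarity up to $\sigma$: the equalities $b_{ij}(t^{(1)}) = b_{\sigma i, \sigma j}(t^{(2)})$ for $i,j \in I_{\ufv}$ and $\diag_k(t^{(1)}) = \diag_{\sigma k}(t^{(2)})$ identify the data determining $(\pr_{I_{\ufv}} g^{(1)}, c_n)$ with the $\sigma$-relabeling of the data determining $(\pr_{I_{\ufv}} g^{(2)}, c'_m)$. The signs of the individual mutations are chosen from the $c$-vectors relative to the initial seeds, and since $C$-matrices depend only on the principal $B$-matrix together with the mutation word, the similarity forces the sign at the $s$-th step of $\seq$ (with respect to $t^{(1)}$) to agree with the sign at the $s$-th step of $\sigma\seq$ (with respect to $t^{(2)}$). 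Combining these observations, I would conclude $\pr_{I_{\ufv}} g^{(2)} = \sigma(\pr_{I_{\ufv}} g^{(1)})$ and $c'_{\sigma n} = c_n$ for every $n \in \N^{I_{\ufv}}$.

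Finally, substituting and reindexing $m = \sigma n$ yields
$$(\sigma\seq)^* X_{\sigma j}((\sigma\seq) t^{(2)}) = X(t^{(2)})^{g^{(2)}} \cdot \sum_{n} c_n Y(t^{(2)})^{\sigma n}.$$
Since $g^{(2)}$ and $\sigma g^{(1)}$ agree on the unfrozen coordinates, their difference $u := g^{(2)} - \sigma g^{(1)}$ lies in $\Z^{I_{\fv}}$, so setting $p := X(t^{(2)})^u \in \cRing$ gives the stated identity.

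The main subtlety lies in the second step: I need to justify carefully that the signs used along $\seq$ (determined by sign-coherence of $c$-vectors) really do correspond to those used along $\sigma\seq$ under the permutation $\sigma$, so that Theorem \ref{thm:cluster_expansion} can be applied symmetrically to the two sides. Once this is verified, the remaining argument is essentially a bookkeeping of indices and a cancellation into a frozen monomial.
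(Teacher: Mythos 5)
Your proposal is correct and is essentially the paper's intended argument: the paper states this theorem as an immediate consequence of Theorem \ref{thm:cluster_expansion}, and your write-up is precisely the fleshing-out of that remark, including the one point that genuinely needs checking (that the $C$-matrices, and hence the mutation signs, depend only on the principal $B$-matrix and the word, so they transport under $\sigma$ — exactly the fact the paper itself invokes later in the proof of Proposition \ref{prop:mutation_similar_elements}).
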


\subsection{A correction technique\label{subsec:Correction-technique}}

Following \cite{Qin12}\cite[Section 4]{qin2017triangular}, we explain
how to translate algebraic relations from one seed to a similar one,
called a correction technique. Let there be given similar quantum
seeds $t^{(1)}$, $t^{(2)}$ as before, which are not necessarily
related by mutations.

For $r=1,2$, given $g^{(r)}$-pointed function $Z^{(r)}\in\hLP(t^{(r)})$,
where $g^{(r)}\in\Z^{I}\simeq\Mc(t^{(r)})$. Then $Z^{(r)}$ takes
the form $X(t^{(r)})^{g^{(r)}}\cdot\sum_{n\in\N^{I_{\ufv}}}c_{n}^{(r)}Y(t^{(r)})^{n}$,
where $c_{0}^{(r)}=1$, $c_{n}^{(r)}\in\kk$. Recall that we have
the natural projection $\pr_{I_{\ufv}}:\Z^{I}\rightarrow\Z^{I_{\ufv}}$.

\begin{Def}

$Z^{(1)}$ and $Z^{(2)}$ are called similar (as pointed functions)
if $\pr_{I_{\ufv}}g^{(2)}=\sigma\pr_{I_{\ufv}}g^{(1)}$ and $c_{\sigma n}^{(2)}=c_{n}^{(1)}$
for $n\in\N^{I_{\ufv}}$.

\end{Def}

It follows from definition that similar pointed Laurent polynomials
have the same support dimension up to the permutation $\sigma$ on
$I_{\ufv}$.

\begin{Thm}[Correction technique {\cite[Theorem 4.2.1]{qin2017triangular}}]\label{thm:correction_technique}

Let there be given pointed functions $M_{0}^{(r)},M_{1}^{(r)},\cdots,M_{s}^{(r)}$
and (possibly infinitely many) $Z_{j}^{(r)}$ in $\hLP(t^{(r)})$,
$r=1,2$, such that $M_{i}^{(1)}$ and $M_{i}^{(2)}$ are similar,
$Z_{j}^{(1)}$ and $Z_{j}^{(2)}$ are similar. Assume that all $Z_{j}^{(1)}$
satisfy $\deg^{t^{(1)}}Z_{j}^{(1)}=\deg^{t^{(1)}}Z_{0}^{(1)}+\tB(t^{(1)})u_{j}$
for some $u_{j}\in\N^{I_{\ufv}}$, and we have the following algebraic
equations in $\hLP(t^{(1)})$

\begin{align*}
M_{0}^{(1)} & =[M_{1}^{(1)}*\cdots*M_{s}^{(1)}]^{t^{(1)}}\\
c_{0}Z_{0}^{(1)} & =\sum_{j\neq0}c_{j}Z_{j}^{(1)}
\end{align*}
such that $c_{j}\in\kk$, $c_{0}\neq0$. Then we have the following
algebraic equations in $\hLP(t^{(2)})$:

\begin{align*}
M_{0}^{(2)} & =p_{M}\cdot[M_{1}^{(2)}*\cdots*M_{s}^{(2)}]^{t^{(2)}}\\
c_{0}Z_{0}^{(2)} & =\sum_{j\neq0}c_{j}p_{j}\cdot Z_{j}^{(2)}
\end{align*}
where the correction coefficients $p_{M},p_{j}\in\cRing$ are determined
by
\begin{align*}
\deg^{t^{(2)}}p_{M} & =\deg^{t^{(2)}}M_{0}^{(2)}-\sum_{i=1}^{s}\deg^{t^{(2)}}M_{i}^{(2)}\\
\deg^{t^{(2)}}p_{j} & =\deg^{t^{(2)}}Z_{0}^{(2)}+\tB(t^{(2)})\sigma u_{j}-\deg^{t^{(2)}}Z_{j}^{(2)}.
\end{align*}

\end{Thm}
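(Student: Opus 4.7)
The plan is to transport the given algebraic identities from $\hLP(t^{(1)})$ to $\hLP(t^{(2)})$ along the similarity permutation $\sigma$, absorbing all discrepancies into Laurent monomials in the frozen variables. The crucial observation is that the data ``seen'' by similarity is exactly the unfrozen data: the $I_{\ufv}$-parts of $g$-vectors match via $\sigma$, the $Y$-monomial expansions match via $\sigma$, and the restricted bilinear form $\lambda|_{\Nufv}$ given by $\lambda(e_i,e_k)=\diag_i b_{ik}$ (Lemma~\ref{lem:compatible_pair}(3)) is $\sigma$-invariant because $\diag_k^{(1)}=\diag_{\sigma k}^{(2)}$ and $b_{ij}^{(1)}=b_{\sigma i,\sigma j}^{(2)}$ by the similarity hypothesis. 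Consequently the $\sigma$-induced map identifies the twisted $Y$-variable subalgebras of the two completed Laurent polynomial rings, so that any algebraic relation expressed purely in terms of $F$-polynomials inside these subalgebras transfers $\sigma$-equivariantly.

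For the multiplicative equation, I would compute $N^{(2)}:=[M_1^{(2)}*\cdots*M_s^{(2)}]^{t^{(2)}}$ directly. Writing each $M_i^{(2)}$ as $X(t^{(2)})^{g_i^{(2)}}\cdot F_{M_i^{(2)}}(Y(t^{(2)}))$ and moving the $X$-factors to the left produces a pointed function whose quantum coefficients involve $v$-powers coming from $\lambda|_{\Nufv}$ and from pairings of the form $\lambda(f_j,p^*n)$, both of which behave $\sigma$-equivariantly up to contributions from frozen indices (Lemmas~\ref{lem:compatible_pair}, \ref{lem:compatible_poisson-M}). Hence $N^{(2)}$ has the same $F$-polynomial as $M_0^{(2)}$ (obtained from that of $M_0^{(1)}$ via $\sigma$) and its leading $g$-vector shares the $\pr_{I_{\ufv}}$-part with $\deg^{t^{(2)}}M_0^{(2)}$. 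Two pointed functions agreeing on both the $F$-polynomial and the unfrozen part of the leading $g$-vector differ only by a single frozen Laurent monomial, which furnishes $p_M\in\cRing$ of the claimed degree.

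For the linear equation, I would write $Z_j^{(1)}=X(t^{(1)})^{\deg^{t^{(1)}}Z_0^{(1)}}*H_j^{(1)}$, where $H_j^{(1)}$ lies in the completed $Y$-variable subalgebra of $\hLP(t^{(1)})$ with leading $Y$-monomial proportional to $Y(t^{(1)})^{u_j}$; this decomposition is available precisely because the dominance hypothesis $\deg^{t^{(1)}}Z_j^{(1)}=\deg^{t^{(1)}}Z_0^{(1)}+\tB(t^{(1)})u_j$ forces the shift to lie in the $Y$-subalgebra. The given linear relation then reduces to a relation purely among the $H_j^{(1)}$ inside this subalgebra. Applying the $\sigma$-transport of paragraph one (with $u_j$ replaced by $\sigma u_j$) yields the analogous relation in the $Y$-variable subalgebra of $\hLP(t^{(2)})$, and left-multiplying by $X(t^{(2)})^{\deg^{t^{(2)}}Z_0^{(2)}}$ gives an identity $c_0 Z_0^{(2)}=\sum c_j\widetilde{Z}_j^{(2)}$ in which $\widetilde{Z}_j^{(2)}$ is pointed at $\deg^{t^{(2)}}Z_0^{(2)}+\tB(t^{(2)})\sigma u_j$ and is similar to $Z_j^{(1)}$. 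Since $Z_j^{(2)}$ is also similar to $Z_j^{(1)}$ by hypothesis, $\widetilde{Z}_j^{(2)}$ and $Z_j^{(2)}$ differ only by a frozen monomial, yielding the desired $p_j\in\cRing$ of the claimed degree.

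The main technical obstacle is the bookkeeping of $v$-powers in the expansion of the noncommutative twisted products: one must verify that every quantum coefficient appearing depends only on $\lambda$-pairings that are $\sigma$-invariant modulo frozen contributions, so that the $\sigma$-identification on the unfrozen data actually lifts to a genuine isomorphism of the relevant subalgebras. This reduces entirely to the similarity hypotheses on $\tB$ and $D'$, and the frozen discrepancies supply precisely the correction factors $p_M$ and $p_j$.
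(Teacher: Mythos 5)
The paper itself gives no proof of this theorem; it is quoted verbatim from \cite[Theorem 4.2.1]{qin2017triangular}, so there is no internal argument to compare against. Your proposal is a correct reconstruction of the standard proof: the key points you isolate — that $\lambda(m,p^{*}n)=-\sum_{k\in I_{\ufv}}m_{k}\diag_{k}n_{k}$ and $\lambda(p^{*}n,p^{*}n')$ depend only on the unfrozen data $(\pr_{I_{\ufv}}m,\, n,\, \diag_{k},\, b_{ik})$ matched by the similarity hypothesis, so that normalized twisted products and $F$-function identities transfer $\sigma$-equivariantly while the frozen parts of the leading degrees supply $p_{M}$ and $p_{j}$ — are exactly the ones the argument rests on, and your use of the dominance hypothesis $\deg^{t^{(1)}}Z_{j}^{(1)}=\deg^{t^{(1)}}Z_{0}^{(1)}+\tB(t^{(1)})u_{j}$ to factor the linear relation through the completed $Y$-subalgebra is the correct role of that hypothesis.
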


\begin{Lem}\label{lem:similar_inverse}

If $Z^{(1)}$and $Z^{(2)}$ are similar, so are their inverses $(Z^{(1)})^{-1}$
and $(Z^{(2)})^{-1}$.

\end{Lem}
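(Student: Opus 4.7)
The plan is to factor each pointed function as $Z^{(r)} = X(t^{(r)})^{g^{(r)}} * U^{(r)}$, where $U^{(r)} = 1 + (\text{higher order in } Y)$ is a unit in the completed $Y$-subalgebra $\widehat{\kk[Y_k]_{k\in I_\ufv}}$ of $\hLP(t^{(r)})$. Since $c_0^{(r)} = 1$ and $X^g \cdot Y^n = v^{-\lambda(g, p^* n)} X^g * Y^n$, such a factorization exists with $U^{(r)} = \sum_n v^{-\lambda(g^{(r)}, p^* n)} c_n^{(r)} Y^n$, and inverting gives $(Z^{(r)})^{-1} = (U^{(r)})^{-1} * X(t^{(r)})^{-g^{(r)}}$. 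The task thereby reduces to comparing the inverses $(U^{(r)})^{-1}$ inside the two $Y$-subalgebras.

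Next I would exhibit an algebra isomorphism $\sigma_*$ between these $Y$-subalgebras sending $Y(t^{(1)})^n \mapsto Y(t^{(2)})^{\sigma n}$ and verify $\sigma_* U^{(1)} = U^{(2)}$. The isomorphism is well-defined because, by Lemma \ref{lem:compatible_poisson-M}, the twisted product is governed by the form $\lambda(e_i, e_j) = \diag_i b_{ij}$ on $N_\ufv$, which is preserved under $\sigma$ by the very definition of similar quantum seeds. Matching of the coefficient series uses the hypothesis $c_{\sigma n}^{(2)} = c_n^{(1)}$ together with the identity $\lambda^{(2)}(g^{(2)}, p^* \sigma n) = \lambda^{(1)}(g^{(1)}, p^* n)$, which follows from Lemma \ref{lem:compatible_pair}(2) (giving $\lambda(g, p^* n) = -\sum_k \diag_k g_k n_k$) combined with $\pr_{I_\ufv} g^{(2)} = \sigma \pr_{I_\ufv} g^{(1)}$ and $\diag_{\sigma k}^{(2)} = \diag_k^{(1)}$. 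Applying $\sigma_*$ to the equation $U^{(1)} * (U^{(1)})^{-1} = 1$ then yields $(U^{(2)})^{-1} = \sigma_*(U^{(1)})^{-1}$.

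To conclude the similarity of $(Z^{(r)})^{-1}$ as pointed functions, I would convert $(U^{(r)})^{-1} * X(t^{(r)})^{-g^{(r)}}$ back into the multiplicative form $X(t^{(r)})^{-g^{(r)}} \cdot \sum_n a_n^{(r)} Y^n$. The only delicate point, and the main obstacle I anticipate, is bookkeeping the $v$-power corrections of the shape $v^{-\lambda(p^* n, -g^{(r)})}$ produced when passing between the twisted product $*$ and the commutative product $\cdot$. By antisymmetry of $\lambda$, these corrections are controlled by the same identity used above and therefore match under $\sigma$, yielding $a_{\sigma n}^{(2)} = a_n^{(1)}$. Combined with the evident $\pr_{I_\ufv}(-g^{(2)}) = \sigma \pr_{I_\ufv}(-g^{(1)})$, this is exactly the similarity required.
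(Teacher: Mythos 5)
Your proposal is correct and takes essentially the same route as the paper's proof: factor $Z^{(r)}=X(t^{(r)})^{g^{(r)}}*G^{(r)}$ with $G^{(r)}$ a unit in the completed quantum torus of $Y$-variables (the $v$-power $v^{\sum_k \diag_k g_k^{(r)} n_k}=v^{-\lambda(g^{(r)},p^*n)}$ matching the paper's coefficient $b_n^{(r)}$), observe that similarity of the quantum seeds makes $Y(t^{(1)})^n\mapsto Y(t^{(2)})^{\sigma n}$ an algebra isomorphism so that the inverses of $G^{(1)}$ and $G^{(2)}$ correspond, and then convert $(G^{(r)})^{-1}*X(t^{(r)})^{-g^{(r)}}$ back to pointed form. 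The paper delegates the final step to the correction technique ``or direct computation''; you carry out the direct computation, with the $v$-power bookkeeping handled correctly by the same identity $\lambda^{(2)}(g^{(2)},p^*\sigma n)=\lambda^{(1)}(g^{(1)},p^*n)$.
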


\begin{proof}

For $r=1,2$, we have 
\begin{align*}
Z^{(r)} & =X(t^{(r)})^{g^{(r)}}\cdot\sum c_{n}^{(r)}Y(t^{(r)})^{n}\\
 & =X(t^{(r)})^{g^{(r)}}*\sum v^{\sum_{k\in I_{\ufv}}\diag_{k}g_{k}^{(r)}n_{k}}c_{n}^{(r)}Y(t^{(r)})^{n}\\
 & =:X(t^{(r)})^{g^{(r)}}*\sum_{n\in\N^{I_{\ufv}}}b_{n}^{(r)}Y(t^{(r)})^{n}\\
 & =:X(t^{(r)})^{g^{(r)}}*G^{(r)}
\end{align*}

Because $t^{(1)},t^{(2)}$ are similar and $Z^{(1)},Z^{(2)}$ are
similar, we obtain that $G^{(1)}$and $G^{(2)}$ are similar. It follows
that $(G^{(1)})^{-1}$ and $(G^{(2)})^{-1}$ are similar formal series
in the variables $Y_{k}(t^{(1)})$ and $Y_{\sigma k}(t^{(2)})$, $k\in I_{\ufv}$,
respectively. Therefore, the inverses $Z^{(r)}=(G^{(r)})^{-1}*X(t^{(r)})^{-g^{(r)}}$
are similar by Theorem \ref{thm:correction_technique} or direct computation.

\end{proof}

Let there be given any mutation sequence $\seq$. Denote $s^{(1)}=\seq t^{(1)}$
and $s^{(2)}=(\sigma\seq)t^{(2)}$. 

\begin{Lem}[{\cite[Lemma 4.2.2]{qin2017triangular}}]\label{lem:similar_cluster_mutation}

The following statements are true.

(1) $s^{(1)}$ and $s^{(2)}$ are similar quantum seeds up to the
permutation $\sigma$.

(2) The quantum cluster monomials $\seq^{*}X(s^{(1)})^{m}\in\LP(t^{(1)})$
and $(\sigma\seq)^{*}X(s^{(2)})^{\sigma m}\in\LP(t^{(2)})$, $m\in\N^{I}$,
are similar.

\end{Lem}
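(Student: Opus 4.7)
The plan is induction on the length $\ell$ of the mutation sequence $\seq = \mu_{k_\ell} \circ \cdots \circ \mu_{k_1}$, establishing part (1) first and then deducing part (2) from it. The base case $\ell = 0$ is the hypothesis of the lemma.

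For part (1), assume inductively that the $(\ell-1)$-th intermediate seeds are similar up to $\sigma$, and apply $\mu_{k_\ell}$ to the first and $\mu_{\sigma k_\ell}$ to the second. The mutation formula for the entries $b'_{ij}$ with $i,j \in I_{\ufv}$ involves only the unfrozen entries of the previous $B$-matrix; the signs $\varepsilon$ are determined by the c-vectors of the initial seed (Definition \ref{def:C-matrix}), which depend only on the principal initial $B$-matrix and therefore match under $\sigma$ for similar initial seeds. Hence the unfrozen $B$-matrices remain matched under $\sigma$ after one more mutation. The preservation of the symmetrizers $(\diag_k)_{k \in I_{\ufv}}$ is guaranteed by Lemma \ref{lem:mutate_compatible_pair}(3), which shows that the diagonal matrix $D'$ is invariant under mutation of a compatible pair. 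By Lemma \ref{lem:same_conn_components} the connectedness of $I_{\ufv}$ is also preserved, so the data of similar quantum seeds is reproduced after the $\ell$-th mutation.

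For part (2), the case where $m = f_j$ is a single cluster variable is essentially Theorem \ref{thm:similar_cluster_variable}: the two Laurent expansions in $\LP(t^{(1)})$ and $\LP(t^{(2)})$ differ only by a frozen factor $p_j \in \cRing$, which is exactly the allowed discrepancy in the definition of similar pointed functions. For general $m \in \N^I$, write the quantum cluster monomial inside $\LP(s^{(r)})$ as a normalized $*$-ordered product
\begin{equation*}
X(s^{(r)})^m = \bigl[X_1(s^{(r)})^{m_1} * \cdots * X_{|I|}(s^{(r)})^{m_{|I|}}\bigr]^{s^{(r)}};
\end{equation*}
the normalization scalar depends only on the unfrozen part of $\Lambda(s^{(r)})$, which is preserved by part (1). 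Pulling back via the algebra isomorphism $\seq^{*}$, respectively $(\sigma\seq)^{*}$, each factor becomes similar to its counterpart by the single-variable case just established, and applying the correction technique (Theorem \ref{thm:correction_technique}) to these families of similar pointed factors yields that $\seq^{*} X(s^{(1)})^m$ and $(\sigma\seq)^{*} X(s^{(2)})^{\sigma m}$ coincide as pointed Laurent polynomials up to a correction factor in the frozen subring $\cRing$, confirming their similarity.

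The main obstacle is to keep careful bookkeeping of the quasi-commutation $v$-scalars produced by $*$-products within $\LP(t^{(r)})$ and to verify that every correction factor furnished by Theorem \ref{thm:correction_technique} lies in $\cRing$; this reduces, via Lemma \ref{lem:compatible_pair} and Lemma \ref{lem:compare_order_similar_seeds}, to the fact (ensured by part (1)) that the skew-symmetric bilinear forms $\lambda(s^{(r)})$ agree on unfrozen indices after applying $\sigma$, so the only ambiguities are concentrated in the frozen directions.
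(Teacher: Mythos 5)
Your proof is correct and follows the expected route: part (1) by induction along the mutation sequence, using that the mutation of the principal $B$-matrix depends only on the principal part (and is sign-independent) and that the symmetrizers $\diag_{k}$ are preserved by Lemma \ref{lem:mutate_compatible_pair}; part (2) by combining the single-variable case (Theorem \ref{thm:similar_cluster_variable}) with the correction technique (Theorem \ref{thm:correction_technique}) for normalized products, which is exactly how the paper handles products of similar elements elsewhere (e.g.\ Lemma \ref{lem:similar_Laurent_monomial_mutation}), the lemma itself being cited from \cite[Lemma 4.2.2]{qin2017triangular} without proof. One small inaccuracy: the normalization scalar of $[X_{1}(s^{(r)})^{m_{1}}*\cdots*X_{|I|}(s^{(r)})^{m_{|I|}}]^{s^{(r)}}$ involves all of $\Lambda(s^{(r)})$, not only its unfrozen part, but this is harmless because similarity is insensitive both to the overall scalar (normalization makes each side pointed) and to frozen factors.
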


As a consequence of Lemma \ref{lem:similar_cluster_mutation}, we
obtain the following result.

\begin{Lem}\label{lem:similar_Laurent_monomial_mutation}

For any $m\in\Z^{I_{\ufv}}$, the pointed functions $\seq^{*}X(s^{(1)})^{m}\in\hLP(t^{(1)})$
and $(\sigma\seq)^{*}X(s^{(2)})^{\sigma m}\in\hLP(t^{(2)})$ are similar.

\end{Lem}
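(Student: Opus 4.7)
The plan is to reduce the statement to Lemmas \ref{lem:similar_cluster_mutation} and \ref{lem:similar_inverse} by splitting $m$ into its positive and negative parts. Write $m = m^+ - m^-$ with $m^\pm \in \N^{I_{\ufv}}$. Inside the quantum torus $\LP(s^{(r)})$ one has the identity
$$X(s^{(r)})^m \;=\; v^{c_r}\, X(s^{(r)})^{m^+} * \bigl(X(s^{(r)})^{m^-}\bigr)^{-1}, \qquad c_r := -\lambda_{s^{(r)}}(m^+, -m^-).$$
Since $s^{(1)}$ and $s^{(2)}$ are similar quantum seeds (Lemma \ref{lem:similar_cluster_mutation}(1)), the skew form $\lambda_{s^{(r)}}$ restricted to the unfrozen sublattice is preserved under the permutation $\sigma$, whence $c_1 = c_2$.

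Applying $\seq^*$ on side $(1)$ and $(\sigma\seq)^*$ on side $(2)$ converts the above into identities in $\hLP(t^{(r)})$ between pointed formal Laurent series. By Lemma \ref{lem:similar_cluster_mutation}(2), the quantum cluster monomials $\seq^*X(s^{(1)})^{m^\pm}$ and $(\sigma\seq)^*X(s^{(2)})^{\sigma m^\pm}$ are pairwise similar; since each of them has leading coefficient $1$, its inverse is a well-defined pointed element of $\hLP(t^{(r)})$, and by Lemma \ref{lem:similar_inverse} the two inverses $\bigl(\seq^*X(s^{(1)})^{m^-}\bigr)^{-1}$ and $\bigl((\sigma\seq)^*X(s^{(2)})^{\sigma m^-}\bigr)^{-1}$ are also similar.

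It then remains to verify that the twisted product of two pairs of similar pointed functions is again similar. Writing each factor as $X^{g}\cdot F(Y)$, the coefficient of $Y^n$ in $X^{-g_A^{(r)}-g_B^{(r)}}\cdot(A^{(r)}*B^{(r)})$ is a convolution of the coefficients of $F_A^{(r)}, F_B^{(r)}$, weighted by twisting scalars of the form $v^{\lambda_{t^{(r)}}(f_i, p^*e_k)} = v^{-\delta_{ik}\diag_k}$ provided by Lemma \ref{lem:compatible_pair}(1). Since $\diag_k(t^{(2)}) = \diag_{\sigma k}(t^{(1)})$ and the coefficients of $F_A^{(r)}, F_B^{(r)}$ agree after the substitution $n \mapsto \sigma n$, the coefficient of $Y^n$ on side $(1)$ matches that of $Y^{\sigma n}$ on side $(2)$; the $\pr_{I_{\ufv}}$-projections of the leading $X$-degrees add and hence transform correctly under $\sigma$. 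Combined with the equality $c_1 = c_2$ from the first paragraph, this yields the asserted similarity of $\seq^*X(s^{(1)})^m$ and $(\sigma\seq)^*X(s^{(2)})^{\sigma m}$. The main obstacle is the bookkeeping of the $v$-twisting in this last step, but it is routine given the matching of the structural data under $\sigma$ and the explicit form of $\lambda$ from Lemma \ref{lem:compatible_pair}; note that the frozen parts of the leading degrees are not constrained by similarity, which is consistent with the appearance of unaccounted frozen factors in results such as Theorem \ref{thm:similar_cluster_variable}.
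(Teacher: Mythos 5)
Your proof follows the same route as the paper's: decompose $m$ into its positive and negative parts, invoke Lemma \ref{lem:similar_cluster_mutation} for the cluster-monomial factors and Lemma \ref{lem:similar_inverse} for the inverse, and then conclude that the normalized twisted product of similar pointed functions is similar. The only divergence is in the last step, where the paper simply cites the correction technique (Theorem \ref{thm:correction_technique}), whereas you re-derive the relevant special case by a direct convolution computation; that computation is fine, since the twisting scalars that arise when commuting $Y^{n}$ past a leading monomial depend only on the $\diag_{k}$ and the unfrozen components of the degrees, all of which match under $\sigma$. One caveat: your claim that $c_{1}=c_{2}$ because ``the skew form $\lambda_{s^{(r)}}$ restricted to the unfrozen sublattice is preserved under $\sigma$'' is not justified. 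Similarity of quantum seeds only fixes the $B$-matrix and the $\diag_{k}$; the entries $\Lambda_{ij}=\lambda(f_{i},f_{j})$ for unfrozen $i,j$ are not determined by these data (compatible $\Lambda$'s are not unique), so $\lambda_{s^{(1)}}(m^{+},m^{-})$ need not equal $\lambda_{s^{(2)}}(\sigma m^{+},\sigma m^{-})$. Fortunately the equality is also unnecessary: $X(s^{(r)})^{m}$ and hence $\seq^{*}X(s^{(r)})^{m}$ are pointed by construction, so $v^{c_{r}}$ is automatically the normalizing scalar on each side, and similarity is a statement about the normalized coefficients only. With that claim deleted, your argument is correct.
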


\begin{proof}

\reviseStart

By Lemma \ref{lem:similar_cluster_mutation}, we know that $M_{1}^{(r)}:=(\sigma^{r-1}\seq)^{*}X(s^{(r)})^{[\sigma^{r-1}m]_{+}}$,
$r=1,2$, are similar, and $(\sigma^{r-1}\seq)^{*}X(s^{(r)})^{[-\sigma^{r-1}m]_{+}}$,
$r=1,2$, are also similar. Then Lemma \ref{lem:similar_inverse}
implies that the inverse $M_{2}^{(r)}:=(\sigma^{r-1}\seq)^{*}X(s^{(r)})^{-[-\sigma^{r-1}m]_{+}}$,
$r=1,2$, are similar. Finally, let us use Theorem \ref{thm:correction_technique}.
Denote 
\begin{align*}
M_{0}^{(1)} & :=\seq^{*}X(s^{(1)})^{m}\\
 & =[\seq^{*}X(s^{(1)})^{[m]_{+}}*\seq^{*}X(s^{(1)})^{-[-m]_{+}}]^{s^{(1)}}\\
 & =[M_{1}^{(1)}*M_{2}^{(1)}]^{s^{(1)}}.
\end{align*}
Take any $M_{0}^{(2)}$ in $\hLP(t^{(2)})$ such that it is similar
to $M_{0}^{(1)}$. Then Theorem \ref{thm:correction_technique} implies
that we have $M_{0}^{(2)}=p\cdot[M_{1}^{(2)}*M_{2}^{(2)}]^{s^{(2)}}$
for some frozen factor $p\in\cRing$. It follows that $[M_{1}^{(2)}*M_{2}^{(2)}]^{s^{(2)}}$
is similar to $M_{0}^{(1)}$. The desired claim follows from the equality
$[M_{1}^{(2)}*M_{2}^{(2)}]^{s^{(2)}}=(\sigma\seq)^{*}X(s^{(2)})^{\sigma m}$.\reviseEnd

\end{proof}

The following natural result is not needed for the main Theorem, though
it will be used in the proof of Proposition \ref{prop:compatible_to_chain_bases}.

\begin{Prop}\label{prop:mutation_similar_elements}

Denote $t^{(1)}=\seq t'^{(1)}$ and $t^{(2)}=(\sigma\seq)t'{}^{(2)}$.
Let there be given similar pointed functions $Z^{(1)}\in\LP(t^{(1)})\cap(\seq^{*})^{-1}\LP(t'^{(1)})$
and $Z^{(2)}\in\LP(t^{(2)})$ such that  $\seq^{*}Z^{(1)}$ is pointed.
Then $(\sigma\seq)^{*}Z^{(2)}$ is contained in $\LP(t'^{(2)})$,
is pointed, and is similar to $\seq^{*}Z^{(1)}$.

\end{Prop}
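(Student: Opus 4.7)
\emph{Proof plan.} The strategy is to start from the finite pointed expansion of $Z^{(1)}$ in $\LP(t^{(1)})$, transfer it to the $t^{(2)}$-side using the similarity of $Z^{(1)}$ and $Z^{(2)}$, then apply $(\sigma\seq)^{*}$ and check that the result has the required finiteness and pointedness. The main tools are Lemma \ref{lem:similar_Laurent_monomial_mutation} (for transferring mutation images of Laurent monomials to similar seeds) and the correction technique, Theorem \ref{thm:correction_technique} (for transferring finite algebraic identities).

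Since $Z^{(1)}\in\LP(t^{(1)})$ is pointed, its $F$-polynomial is finite, so
\[
Z^{(1)} = X(t^{(1)})^{g^{(1)}}\cdot\sum_{n\in S}c_{n}\,Y(t^{(1)})^{n},\qquad c_{0}=1,\ |S|<\infty.
\]
By similarity, $Z^{(2)} = X(t^{(2)})^{g^{(2)}}\cdot\sum_{n\in S}c_{n}Y(t^{(2)})^{\sigma n}$ with $g^{(2)} = \sigma g^{(1)} + u$ for some $u\in\Z^{I_{\fv}}$. Lemma \ref{lem:similar_cluster_mutation}(1) gives that the seeds ${t'}^{(1)},{t'}^{(2)}$ are similar up to $\sigma$. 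Applying Lemma \ref{lem:similar_Laurent_monomial_mutation} with the role of the ``starting'' seed played by ${t'}^{(r)}$ and that of the ``ending'' seed by $t^{(r)}$ (since $t^{(r)} = \seq^{(r)}{t'}^{(r)}$ with $\seq^{(1)}=\seq$, $\seq^{(2)}=\sigma\seq$), we obtain that $\seq^{*}X(t^{(1)})^{m}\in\hLP({t'}^{(1)})$ and $(\sigma\seq)^{*}X(t^{(2)})^{\sigma m}\in\hLP({t'}^{(2)})$ are similar pointed functions for every $m\in\Mc(t^{(1)})$.

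Combining this termwise similarity with the correction technique applied to the normalized-product form of $Z^{(1)}$ yields, after pushforward by $(\sigma\seq)^{*}$ and using that frozen variables are invariant under mutation, a finite expression
\[
(\sigma\seq)^{*}Z^{(2)} = \sum_{n\in S}c_{n}\,p_{n}\cdot X({t'}^{(2)})^{m'_{n}}\in\LP({t'}^{(2)}),
\]
with $p_{n}\in\cRing$ explicit frozen monomials; this proves the first claim. Pointedness and similarity then follow from Lemma \ref{lem:compare_order_similar_seeds}: the $\prec_{{t'}^{(1)}}$-maximality of $g'=\deg^{{t'}^{(1)}}\seq^{*}Z^{(1)}$ translates into $\prec_{{t'}^{(2)}}$-maximality of $\sigma g' + u_{0}$ for a suitable $u_{0}\in\Z^{I_{\fv}}$, and the $F$-polynomial coefficients $c_{n}$ of $(\sigma\seq)^{*}Z^{(2)}$ match those of $\seq^{*}Z^{(1)}$ under $\sigma$ by construction. \emph{The main obstacle} is the careful bookkeeping of the frozen monomial factors introduced at every step---which similarity alone cannot pin down---and verifying that they combine consistently so that the final sum is a genuine pointed Laurent polynomial with the expected leading degree rather than a mere formal Laurent series; this is ultimately controlled by the degree-matching formulas in Theorem \ref{thm:correction_technique} together with the invariance of frozen variables under mutation.
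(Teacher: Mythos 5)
Your overall route matches the paper's: expand $Z^{(1)}$ and $Z^{(2)}$ into sums of Laurent monomials, use Lemma \ref{lem:similar_Laurent_monomial_mutation} to get termwise similarity of the mutated monomials, and invoke the correction technique. But there is a genuine gap at the decisive step. The displayed identity
$(\sigma\seq)^{*}Z^{(2)}=\sum_{n\in S}c_{n}p_{n}\cdot X(t'^{(2)})^{m'_{n}}$
is not what the pushforward produces: each term $(\sigma\seq)^{*}\bigl((X^{(2)})^{\sigma m}(Y^{(2)})^{\sigma n}\bigr)$ is in general an \emph{infinite} formal Laurent series in $\hLP(t'^{(2)})$, not a single Laurent monomial, so the finiteness of the index set $S$ does not give $(\sigma\seq)^{*}Z^{(2)}\in\LP(t'^{(2)})$. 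This membership is precisely the nontrivial part of the statement, and your argument never establishes it. Moreover, you cannot apply Theorem \ref{thm:correction_technique} with $Z_{0}^{(2)}=(\sigma\seq)^{*}Z^{(2)}$ directly, since the theorem requires $Z_{0}^{(2)}$ to be a pointed function similar to $Z_{0}^{(1)}=\seq^{*}Z^{(1)}$ --- which is exactly what is being proved.

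The missing idea is the paper's auxiliary element: one first writes down, by hand, a pointed Laurent \emph{polynomial} $V\in\LP(t'^{(2)})$ similar to the Laurent polynomial $\seq^{*}Z^{(1)}$ (this exists because similarity only prescribes the unfrozen part of the degree and the $F$-polynomial coefficients). The correction technique, applied to the identity $\seq^{*}Z^{(1)}=p_{1}\sum_{n}c_{n}\seq^{*}\bigl((X^{(1)})^{m}(Y^{(1)})^{n}\bigr)$, then yields $V=p_{3}\sum_{n}c_{n}(\sigma\seq)^{*}\bigl((X^{(2)})^{\sigma m}(Y^{(2)})^{\sigma n}\bigr)$ for some $p_{3}\in\cRing$; comparing with $(\sigma\seq)^{*}Z^{(2)}=p_{2}\sum_{n}c_{n}(\sigma\seq)^{*}(\cdots)$ gives $(\sigma\seq)^{*}Z^{(2)}=p_{2}p_{3}^{-1}V$, which is a pointed Laurent polynomial similar to $\seq^{*}Z^{(1)}$. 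You should also verify the degree hypothesis of Theorem \ref{thm:correction_technique} for the terms $\seq^{*}\bigl((X^{(1)})^{m}(Y^{(1)})^{n}\bigr)$; the paper does this via the $C$-matrix computation $\deg\seq^{*}(Y^{(i)})^{n}=\deg(Y'^{(i)})^{C^{(i)}n}$ together with $\sigma C^{(1)}n=C^{(2)}\sigma n$, a point your write-up passes over.
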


\begin{proof}

Let us omit the symbol $t$ for simplicity, and denote $X(t'^{(i)})$,
$Y(t'^{(i)})$ by $X'^{(i)}$, $Y'^{(i)}$ respectively.

Denote $Z^{(1)}=p_{1}\cdot\sum_{n\in\N^{I_{\ufv}}}c_{n}(X^{(1)})^{m}\cdot(Y^{(1)})^{n}$
for $m\in\Z^{I_{\ufv}}$, $p_{1}\in\cRing$, $c_{n}\in\kk$, $c_{0}=1$.
Then the similar element $Z^{(2)}$ takes the form $Z^{(2)}=p_{2}\cdot\sum c_{n}(X{}^{(2)})^{\sigma m}\cdot(Y{}^{(2)})^{\sigma n}$,
$p_{2}\in\cRing$. Applying the mutation $\seq^{*}$, we obtain the
following equality in $\hLP(t'^{(1)})$:

\begin{eqnarray*}
\seq^{*}Z^{(1)} & = & p_{1}\cdot\sum c_{n}\seq^{*}((X^{(1)})^{m}\cdot(Y{}^{(1)})^{n}).
\end{eqnarray*}
Similarly, we have the following equality in $\hLP(t'^{(2)})$:

\begin{eqnarray*}
(\sigma\seq)^{*}Z^{(2)} & = & p_{2}\cdot\sum c_{n}(\sigma\seq)^{*}((X^{(2)})^{\sigma m}\cdot(Y{}^{(2)})^{\sigma n}).
\end{eqnarray*}

Denote $\deg^{t^{'(i)}}$ by $\deg^{(i)}$ for simplicity. By \cite[Proposition 3.3.8]{qin2019bases},
we have $\deg^{(i)}\seq^{*}(Y^{(i)})^{n}=\deg^{(i)}(Y'^{(i)})^{C^{(i)}n}$,
where $C^{(i)}:=C^{t'^{(i)}}(t{}^{(i)})$ is the $C$-matrix associated
to the seed $t^{(i)}$ with respect to the initial seed $t'^{(i)}$,
$i=1,2$.

By construction of the $C$-matrices (Definition \ref{def:C-matrix})
and the similarity between $t^{(1)}$, $t^{(2)}$, we have $C_{ij}^{(1)}=C_{\sigma i,\sigma j}^{(2)}$.
It follows that $\sigma(C^{(1)}n)=C^{(2)}\sigma n$, $n\in\Z^{I_{\ufv}}$
(see the proof of Lemma \ref{lem:compare_order_similar_seeds}). Denote
$C^{(1)}=C$ and $g^{(i)}=\deg^{(i)}(\sigma^{i-1}\seq)^{*}(X{}^{(i)})^{\sigma^{i-1}m}$.
Then $(\sigma^{i-1}\seq)^{*}((X^{(i)})^{\sigma^{i-1}m}\cdot(Y^{(i)})^{\sigma^{i-1}n})$
is pointed at $g^{(i)}+\deg^{(i)}(Y'^{(i)})^{\sigma^{i-1}Cn}$, where
$i=1,2$.

Notice that $(\sigma\seq)^{*}((X^{(2)})^{\sigma m}\cdot(Y{}^{(2)})^{\sigma n})$
are similar to $\seq^{*}((X^{(1)})^{m}\cdot(Y{}^{(1)})^{n})$ by Lemma
\ref{lem:similar_Laurent_monomial_mutation}. Let $V$ denote a pointed
function in $\LP(t'^{(2)})$ similar to $\seq^{*}Z^{(1)}$. Then the
correction technique (Theorem \ref{thm:correction_technique}) implies
that there exists some $p_{3}\in\cRing$ such that the following holds
in $\hLP(t'^{(2)})$:

\begin{eqnarray*}
V & = & p_{3}\cdot\sum c_{n}(\sigma\seq)^{*}((X^{(2)})^{\sigma m}\cdot(Y{}^{(2)})^{\sigma n}).
\end{eqnarray*}
It follows that we have $(\sigma\seq)^{*}Z^{(2)}=V\cdot p_{3}^{-1}\cdot p_{2}$,
which is pointed and similar to the Laurent polynomial $\seq^{*}Z^{(1)}$.

\end{proof}

\section{Cluster twist automorphisms\label{sec:Twist-automorphisms}}

Twist automorphisms exist on unipotent cells and is related to Chamber
Ansatz problem, see \cite[Section 1.3]{kimura2017twist} for more
details and a quantum analogue on quantum unipotent cells. In this
section, for general quantum cluster algebras not necessarily having
a Lie theoretic background, we consider an analogous notion. We call
our construction cluster twist automorphisms or twist automorphisms
for short.

\subsection{Twist automorphisms passing through similar seeds\label{subsec:Twist-automorphisms}}

Let there be given quantum seeds $t'=\seq t\in\Delta^{+}$ such that
they are similar up to a permutation $\sigma$. Assume that we have
an algebra homomorphism $\var^{t}:\cF(t)\simeq\cF(t')$ which preserves
the twisted product. In view of Theorem \ref{thm:similar_cluster_variable}
and the correction technique in Section \ref{subsec:Correction-technique},
we want it to send $Y$-variables to $Y$-variables as follows.

\begin{Def}\label{def:variation_map}

The algebra isomorphism $\var^{t}:\cF(t)\simeq\cF(t')$ is called
a variation map if, for any $k\in I_{\ufv}$, there exists some $p_{k}\in\cRing$
such that

\begin{eqnarray*}
\var^{t}(X_{k}) & = & p_{k}\cdot X_{\sigma k}\\
\var^{t}(Y_{k}(t)) & = & Y_{\sigma k}(t')\\
\var^{t}(\cRing) & = & \cRing.
\end{eqnarray*}

\end{Def}

By definition, if $\var^{t}$ is a variation map, so does its inverse.

We will be interested in the case that $\var^{t}$ is the map induced
from a linear bijection $\var^{t}:\Mc(t)\simeq\Mc(t')$ such that 

\begin{eqnarray*}
\var^{t}(f_{k}) & = & f_{\sigma k}'+u_{k}\\
\var^{t}(\sum_{i\in I}b_{ik}f_{i}) & = & \sum_{i\in I}b_{\sigma i,\sigma k}'f_{\sigma i}'\\
\var^{t}(\oplus_{j\in I_{\fv}}\Z f_{j}) & = & \oplus_{j\in I_{\fv}}\Z f_{j}'
\end{eqnarray*}
where $f_{i}$, $f_{i}'$ are the $i$-th unit vectors in $\Mc(t)$
and $\Mc(t')$ respectively, such that $u_{k}\in\Z^{I_{\fv}}$ and
$p_{k}=X^{u_{k}}$. This linear map has the following property by
definition.

\begin{Lem}\label{lem:similar_dominance_order}

If $g'\prec_{t}g$ in $\Mc(t)$, then $\var^{t}g'\prec_{t'}\var^{t}g$
in $\Mc(t')$.

\end{Lem}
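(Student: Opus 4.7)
The plan is to unwind the definition of $\prec_t$ and apply the linear map $\var^{t}$ term by term, using the fact that its action on the image of $p^{*}$ matches the corresponding image in $\Mc(t')$ via the permutation $\sigma$.

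First I would write out what $g' \prec_t g$ means in coordinates: there exists $n = \sum_{k \in I_{\ufv}} n_k e_k \in N_{\ufv}^{\geq 0}(t)$ (so all $n_k \in \N$) such that
\[
g' \;=\; g + p^{*}(n) \;=\; g + \sum_{k \in I_{\ufv}} n_k \sum_{i \in I} b_{ik} f_i.
\]
Applying the linear map $\var^{t}$ and using the second displayed property of a variation map, each summand transforms as
\[
\var^{t}\Bigl(\sum_{i \in I} b_{ik} f_i\Bigr) \;=\; \sum_{i \in I} b'_{\sigma i,\sigma k} f'_{\sigma i}.
\]

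Next I would identify the right-hand side with an element of $p^{*}(N_{\ufv}^{\geq 0}(t'))$. By the definition of $p^{*}$ for the seed $t'$, we have $p^{*}(e'_{\sigma k}) = \sum_{j \in I} b'_{j,\sigma k} f'_j$; reindexing $j = \sigma i$ (legitimate since $\sigma$ is a bijection on $I$) gives exactly $\sum_i b'_{\sigma i, \sigma k} f'_{\sigma i}$. Hence $\var^{t}(p^{*}(e_k)) = p^{*}(e'_{\sigma k})$. Setting $n' := \sum_k n_k e'_{\sigma k}$, the non-negativity of the coefficients $n_k$ is preserved (the permutation $\sigma$ merely relabels basis vectors of $N_{\ufv}(t')$), so $n' \in N_{\ufv}^{\geq 0}(t')$.

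Combining these, linearity of $\var^{t}$ yields
\[
\var^{t}(g') \;=\; \var^{t}(g) + \sum_{k \in I_{\ufv}} n_k\, p^{*}(e'_{\sigma k}) \;=\; \var^{t}(g) + p^{*}(n'),
\]
which is precisely the statement $\var^{t}(g') \prec_{t'} \var^{t}(g)$. There is no serious obstacle here: the entire content is that the defining equations of $\var^{t}$ are exactly rigged so that $p^{*}$ of the standard non-negative cone is sent to $p^{*}$ of the standard non-negative cone in the target, via $\sigma$. The frozen-direction condition $\var^{t}(\bigoplus_{j \in I_{\fv}} \Z f_j) = \bigoplus_{j \in I_{\fv}} \Z f'_j$ is not needed for this lemma, since the dominance order only moves along unfrozen columns of $\tB$.
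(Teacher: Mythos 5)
Your proof is correct and is exactly the definitional verification the paper has in mind (the paper simply asserts the lemma "follows by definition" of the linear map $\var^{t}$). The key identity $\var^{t}(p^{*}e_{k})=p^{*}(e'_{\sigma k})$, obtained from the second defining property by reindexing along the bijection $\sigma$, together with the preservation of non-negative coefficients, is all that is needed.
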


Recall that we have the mutation birational map $\seq^{*}:\cF(t')\simeq\cF(t)$.

\begin{Def}[Cluster twist automorphism]\label{def:twist_automorphism}

The composition $\tw^{t}=\seq^{*}\var^{t}:\cF(t)\simeq\cF(t)$ is
called a cluster twist automorphism passing through $t'$ (twist automorphism
for short). Here, $\var^{t}:\cF(t)\simeq\cF(t')$ is called its variation
part and $\seq^{*}:\cF(t')\simeq\cF(t)$ its mutation part.

\end{Def}

Notice that $t=\seq^{-1}t'$. If $\var^{t}$ is a variation map, then
$\var^{t'}:=(\var^{t})^{-1}:\cF(t')\simeq\cF(t)$ is a variation map
too. Similar to the definition of the twist automorphism $\tw^{t}$,
it is natural to consider the composition in the opposite order $\var^{t'}(\seq^{-1})^{*}:\cF(t)\simeq\cF(t)$.
It turns out that $\var^{t'}(\seq^{-1})^{*}=(\tw^{t})^{-1}$, since
$(\seq^{-1})^{*}=(\seq^{*})^{-1}$.

\begin{Lem}\label{lem:twist_permute_loc_cl_monom}

A twist automorphism $\tw^{t}$ is a permutation on the set of the
localized quantum cluster monomials.

\end{Lem}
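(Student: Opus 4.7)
The plan is to prove directly that $\tw^{t}$ maps every quantum cluster variable $X_{j}(s)$, for $s=\seq'\, t\in\Delta^{+}$ and $j\in I$, to a frozen-factor rescaling $p\cdot X_{\sigma j}(\sigma s)$ of another quantum cluster variable, where we write $\sigma s:=(\sigma\seq')t'$. Granting this, an arbitrary localized quantum cluster monomial $X(s)^{m}$ (with $m_{i}\geq 0$ for $i\in I_{\ufv}$) is a twisted product of cluster variables $X_{i}(s)$ and of inverses of the frozen ones; since $\tw^{t}$ is an algebra isomorphism preserving the twisted product, this gives $\tw^{t}(X(s)^{m})=p\cdot X(\sigma s)^{\sigma m}$ for some $p\in\cRing$, again a localized quantum cluster monomial. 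Bijectivity on the set of localized cluster monomials will then follow by running the same argument for the inverse $(\tw^{t})^{-1}=(\var^{t})^{-1}(\seq^{*})^{-1}$, in which the variation part $\var^{t'}=(\var^{t})^{-1}$ intertwines $\sigma^{-1}$ and the roles of $t,t'$ are swapped.

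The core step is an expansion computation. By Theorem \ref{thm:cluster_expansion}, one has $(\seq')^{*}X_{j}(s)=X(t)^{g}\cdot\sum_{n\in\N^{I_{\ufv}}}c_{n}Y(t)^{n}\in\LP(t)$ with coefficients $c_{n}$ depending only on $B(t)$, $(\diag_{k})_{k\in I_{\ufv}}$ and $\seq'$. Applying $\var^{t}$ and using that it sends $X_{k}(t)$ to $p_{k}\cdot X_{\sigma k}(t')$, sends $Y_{k}(t)$ to $Y_{\sigma k}(t')$, preserves $\cRing$, and preserves the twisted product, one obtains in $\LP(t')$ an element of the form $q\cdot X(t')^{\sigma g}\cdot\sum_{n}c_{n}Y(t')^{\sigma n}$ for some $q\in\cRing$. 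Since $t,t'$ are similar quantum seeds up to $\sigma$ with matching symmetrizers $\diag_{k}=\diag_{\sigma k}$, Theorem \ref{thm:similar_cluster_variable} asserts that the similar cluster variable $(\sigma\seq')^{*}X_{\sigma j}(\sigma s)\in\LP(t')$ admits exactly the same expansion up to a frozen factor. Hence $\var^{t}((\seq')^{*}X_{j}(s))=p''\cdot(\sigma\seq')^{*}X_{\sigma j}(\sigma s)$ for some $p''\in\cRing$, and then applying $\seq^{*}\colon\cF(t')\to\cF(t)$, which identifies $X_{\sigma j}(\sigma s)\in\LP(t')$ with its Laurent expansion in $\LP(t)$ and fixes frozen factors, yields $\tw^{t}((\seq')^{*}X_{j}(s))=p\cdot X_{\sigma j}(\sigma s)\in\LP(t)$, as required.

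The main obstacle is the clean extraction of the frozen factor $q$ in the middle step: one must simultaneously keep track of the frozen coordinates of $g$, the monomials $p_{k}=X^{u_{k}}$ appearing in Definition \ref{def:variation_map}, and the quantum twists coming from the twisted product. The correction technique of Theorem \ref{thm:correction_technique}, together with Lemma \ref{lem:similar_Laurent_monomial_mutation}, is tailor-made for this bookkeeping and reduces the identification to comparing the coefficients of the quantum $F$-polynomials, which are identical on both sides by the construction of $\var^{t}$ and by Theorem \ref{thm:similar_cluster_variable}. Once that is in place, the extension from cluster variables to cluster monomials via the twisted product and the symmetric argument for $(\tw^{t})^{-1}$ that gives surjectivity onto the set of localized quantum cluster monomials are routine.
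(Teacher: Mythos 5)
Your proposal is correct and follows essentially the same route as the paper's proof: the key step in both is that the variation part $\var^{t}$ sends a (localized) quantum cluster monomial to a frozen-factor rescaling of the similar quantum cluster monomial via Theorem \ref{thm:similar_cluster_variable}, the mutation part $\seq^{*}$ is just Laurent expansion, and bijectivity follows by running the same argument for $(\tw^{t})^{-1}=(\var^{t})^{-1}(\seq^{*})^{-1}$. You merely spell out more explicitly (at the level of a single cluster variable and its quantum $F$-polynomial) what the paper compresses into one citation.
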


\begin{proof}

Let there be given any localized quantum cluster monomial $Z\in\clAlg(t)\subset\LP(t)$.
By the definition of a variation map and Theorem \ref{thm:similar_cluster_variable},
$\var^{t}(Z)$ is a localized quantum cluster monomial in $\clAlg(t')\subset\LP(t')$.
Then $\seq^{*}\var^{t}(Z)$ becomes its Laurent expansion in $\clAlg(t)=\seq^{*}\clAlg(t')\subset\LP(t)$. 

The map $\tw^{t}$ is a permutation because its inverse $(\tw^{t})^{-1}=(\var^{t})^{-1}(\seq^{*})^{-1}$
sends localized quantum monomials to localized quantum cluster monomials
by similar arguments as above.

\end{proof}

\begin{Rem}

By definition, given a seed $t$, a twist automorphism might not be
unique, and its existence is not yet clear. We will discuss general
twist automorphisms elsewhere.

\end{Rem}

Next, assume that a twist automorphism $\tw^{t}$ on $\cF(t)$ is
given, we want to show that it gives rise to twist automorphisms for
all seeds.

For any $k\in I_{\ufv}$ and $t\in\Delta^{+}$, consider seeds $s=\mu_{k}t,s'=\mu_{\sigma k}t'\in\Delta^{+}$.
Then we have $s'=\mu_{\sigma k}\seq\mu_{k}(s)=:\seqnu s$ and the
corresponding mutation birational map $(\seqnu)^{*}:\cF(s')\simeq\cF(s)$.
Recall that we also have isomorphisms $\mu_{k}^{*}:\cF(s)\simeq\cF(t)$,
$\mu_{\sigma k}^{*}:\cF(s')\simeq\cF(t')$. Then $\tw^{t}$ gives
rise to an automorphism $\tw^{s}=(\mu_{k}^{*})^{-1}\tw^{t}\mu_{k}^{*}$
on $\cF(s)$. Moreover, we have the decomposition $\tw^{s}=(\seqnu)^{*}\var^{s}$
where we define the algebra isomorphism $\var^{s}:=(\mu_{\sigma k}^{*})^{-1}\var^{t}(\mu_{k}^{*})$.
We summarize our construction in the following commutative diagrams.

\begin{eqnarray*}
\begin{array}{ccc}
s' & \xleftarrow{\seqnu} & s\\
\uparrow\mu_{\sigma k} &  & \uparrow\mu_{k}\\
t' & \xleftarrow{\seq} & t
\end{array}
\end{eqnarray*}

\begin{eqnarray*}
\begin{array}{ccccc}
\cF(s) & \xrightarrow{\var^{s}} & \cF(s') & \xrightarrow{\seqnu{}^{*}} & \cF(s)\\
\downarrow\mu_{k}^{*} &  & \downarrow\mu_{\sigma k}^{*} &  & \downarrow\mu_{k}^{*}\\
\cF(t) & \xrightarrow{\var^{t}} & \cF(t') & \xrightarrow{\seq^{*}} & \cF(t)
\end{array}
\end{eqnarray*}

\begin{Prop}\label{prop:adjacent_variantion_map}

The map $\var^{s}:\cF(s)\simeq\cF(s')$ for $s=\mu_{k}t$ is a variation
map in the sense of Definition \ref{def:variation_map}. 

\end{Prop}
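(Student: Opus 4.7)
The plan is to verify the three defining properties of Definition \ref{def:variation_map} for the composition $\var^s = (\mu_{\sigma k}^*)^{-1} \var^t \mu_k^*$, exploiting that the adjacent seeds $s = \mu_k t$ and $s' = \mu_{\sigma k} t'$ are themselves similar up to $\sigma$ by Lemma \ref{lem:similar_cluster_mutation}. Throughout, $k$ denotes the fixed mutation direction and $j$ the bound index running over $I_{\ufv}$ in Definition \ref{def:variation_map}.

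For preservation of $\cRing$: mutation at an unfrozen vertex fixes every frozen variable, so $\mu_k^*$ and $\mu_{\sigma k}^*$ both restrict to the identity on $\cRing$, which combined with $\var^t(\cRing) = \cRing$ gives $\var^s(\cRing) = \cRing$. For the $Y$-variables, the mutation rule \eqref{eq:Y_mutation} expresses $\mu_k^*(Y_j(s))$ as a universal quantum binomial in the $Y_i(t)$ whose only data are the entries $b_{ki}(t)$ and $v_k = v^{\diag_k(t)}$. The analogous formula for $\mu_{\sigma k}^*(Y_{\sigma j}(s'))$ uses $b_{\sigma k,\sigma i}(t')$ and $v^{\diag_{\sigma k}(t')}$, which coincide with the previous data via $\sigma$ by the similarity of $t$ and $t'$. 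Since $\var^t(Y_i(t)) = Y_{\sigma i}(t')$ and $\var^t$ preserves the twisted product, applying $\var^t$ to $\mu_k^*(Y_j(s))$ produces $\mu_{\sigma k}^*(Y_{\sigma j}(s'))$, and composing with $(\mu_{\sigma k}^*)^{-1}$ gives $\var^s(Y_j(s)) = Y_{\sigma j}(s')$.

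For the unfrozen cluster variables $X_j(s)$, the case $j \neq k$ is immediate: $\mu_k^*(X_j(s)) = X_j(t)$ and $\mu_{\sigma k}^*(X_{\sigma j}(s')) = X_{\sigma j}(t')$, so the hypothesis $\var^t(X_j(t)) = p_j X_{\sigma j}(t')$ transports directly to $\var^s(X_j(s)) = p_j X_{\sigma j}(s')$. The main obstacle is $j = k$: here $\mu_k^*(X_k(s))$ is the normalized two-term right-hand side of the exchange relation \eqref{eq:exchange_relation}. The plan is to apply $\var^t$ to this relation and use that its structure constants — the $b$-matrix entries and the symmetrizers $\diag_i$ — are preserved by $\sigma$ under similarity, together with $\var^t(X_i(t)) = p_i X_{\sigma i}(t')$ and the preservation of the twisted product, to identify the image with a $\cRing$-multiple of the exchange relation at $t'$ in direction $\sigma k$. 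An efficient way to bypass the $v$-power bookkeeping is to invoke the correction technique (Theorem \ref{thm:correction_technique}) applied to the similar pointed factors appearing on each side, which forces the two monomials in the image to share a common frozen prefactor $q \in \cRing$. One then concludes $\var^t \mu_k^*(X_k(s)) = q \cdot \mu_{\sigma k}^*(X_{\sigma k}(s'))$, hence $\var^s(X_k(s)) = q \cdot X_{\sigma k}(s')$, completing the verification.
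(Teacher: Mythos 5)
Your proposal is correct and takes essentially the same route as the paper: both verify the three defining properties case by case, using that the $Y$-mutation rule is universal in the $B$-matrix data shared by the similar seeds, that the case $j\neq k$ is immediate, and the exchange relation together with $\diag_{k}=\diag_{\sigma k}$ for $j=k$. The only cosmetic difference is that you outsource the $v$-power bookkeeping in the $j=k$ case to the correction technique, whereas the paper just computes $\var^{t}\mu_{k}^{*}X_{k}(s)$ directly as a monomial times $(1+v_{k}^{-1}Y_{\sigma k}(t'))$.
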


\begin{proof}

We check the defining properties of a variation map one by one, using
the mutation rules for $X$-variables and $Y$-variables.

Take any $i\in I_{\ufv}$. By the mutation rule of $Y$-variables,
$\mu_{k}^{*}Y_{i}(s)$ is a function of $Y_{i}(t)$ and $Y_{k}(t)$.
Similarly, $\mu_{\sigma k}^{*}Y_{\sigma i}(s')$ is a function of
$Y_{\sigma i}(t')=\var^{t}Y_{i}(t)$ and $Y_{\sigma k}(t')=\var^{t}Y_{k}(t)$.
Moreover, the two functions are the same because they only depends
on the principal $B$-matrices associated to the similar seeds $t$
and $t'$, see \eqref{eq:Y_mutation}. It follows that $\var^{t}\mu_{k}^{*}Y_{i}(s)=\mu_{\sigma k}^{*}Y_{\sigma i}(s')$.
Therefore, $Y_{\sigma i}(s')=(\mu_{\sigma k}^{*})^{-1}\var^{t}\mu_{k}^{*}Y_{i}(s)=\var^{s}Y_{i}(s)$.

If $i\neq k$, we get $\var^{t}\mu_{k}^{*}X_{i}(s)=\var^{t}X_{i}(t)=p_{i}\cdot X_{\sigma i}(t')=\mu_{\sigma k}^{*}(p_{i}\cdot X_{\sigma i}(s'))$.
Therefore, $\var^{s}X_{i}(s)=p_{i}\cdot X_{\sigma i}(s')$ where $p_{i}\in\cRing$.

For $i=k$, we have 
\begin{align*}
\var^{t}\mu_{k}^{*}X_{k}(s) & =\var^{t}(X(t)^{-f_{k}+\sum_{j\in I}[-b_{jk}]_{+}f_{j}}*(1+v_{k}^{-1}Y_{k}(t))\\
 & =\var^{t}X(t)^{-f_{k}+\sum_{j\in I}[-b_{j,k}]_{+}f_{j}}*\var^{t}(1+v_{k}^{-1}Y_{k}(t))\\
 & =X(t')^{u-f_{\sigma k}'+\sum_{j\in I}[-b_{\sigma j,\sigma k}']_{+}f_{\sigma j}'}*(1+v_{k}^{-1}Y_{\sigma k}(t'))
\end{align*}
for some $u\in\Z^{I_{\fv}}$ determined by $\var^{t}$. Notice that
we have $\diag_{k}=\diag_{\sigma k}$. Then the above result turns
out to be

\begin{eqnarray*}
\var^{t}\mu_{k}^{*}X_{k}(s) & = & X(t')^{u-f_{\sigma k}'+\sum_{j\in I}[-b_{\sigma j,\sigma k}']_{+}f_{\sigma j}'}*(1+v_{\sigma k}^{-1}Y_{\sigma k}(t'))\\
 & = & p\cdot\mu_{\sigma k}^{*}X_{\sigma k}(s')
\end{eqnarray*}
where $p=X^{u}$. It follows that $\var^{s}X_{k}(s)=p\cdot X_{\sigma k}(s')$.

Finally, we have $\var^{t}\mu_{k}^{*}(\cRing)=\var^{t}\cRing=\cRing=\mu_{\sigma k}^{*}\cRing$,
which implies $\var^{s}\cRing=\cRing$.

\end{proof}

As a consequence, $\tw^{s}$ is a twist automorphism on $\cF(s)$.
Therefore, once we are given a twist automorphism $\tw^{t}$ on $\cF(t)$
for some seed $t$, we obtain twist automorphisms $\tw^{s}$ on $\cF(s)$
for all seeds $s\in\Delta^{+}$ by repeatedly using Proposition \ref{prop:adjacent_variantion_map}.
We deduce the following result.

\begin{Thm}\label{thm:global_twist_automorphism}

The twist automorphism $\tw^{t}$ on $\cF(t)$ gives rise to a twist
automorphism $\tw^{s}=(\seq_{t,s}^{*})^{-1}\tw^{t}\seq_{t,s}^{*}$
on $\cF(s)$ for any seed $s=\seq_{s,t}t\in\Delta^{+}$.

\end{Thm}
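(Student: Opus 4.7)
The plan is to prove the theorem by induction on the length of any mutation sequence from $t$ to $s$, using Proposition \ref{prop:adjacent_variantion_map} as the one-step engine and Lemma \ref{lem:composition_mutation_maps} to handle independence of the chosen sequence. For the base case $s=t$, take $\tw^{s}=\tw^{t}$; the claimed conjugation formula is trivial since $\seq_{t,t}^{*}$ is the identity by Lemma \ref{lem:composition_mutation_maps}.

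For the inductive step, suppose we have already constructed a twist automorphism $\tw^{s}$ on $\cF(s)$, with decomposition $\tw^{s}=\seq_{s,s'}^{*}\var^{s}$ through a similar seed $s'$, and assume $\tw^{s}=(\seq_{t,s}^{*})^{-1}\tw^{t}\seq_{t,s}^{*}$. For any $k\in I_{\ufv}$, set $u=\mu_{k}s$ and $u'=\mu_{\sigma k}s'$. Applying Proposition \ref{prop:adjacent_variantion_map} to $\tw^{s}$ produces a variation map $\var^{u}:=(\mu_{\sigma k}^{*})^{-1}\var^{s}\mu_{k}^{*}$ and a twist automorphism
\[
\tw^{u}=\seq_{u,u'}^{*}\var^{u}=(\mu_{k}^{*})^{-1}\tw^{s}\mu_{k}^{*}
\]
on $\cF(u)$ passing through $u'$. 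Extending the mutation path from $t$ to $s$ by $\mu_{k}$, we may take $\seq_{t,u}^{*}=\seq_{t,s}^{*}\mu_{k}^{*}$, so that substitution into the inductive hypothesis yields
\[
\tw^{u}=(\mu_{k}^{*})^{-1}(\seq_{t,s}^{*})^{-1}\tw^{t}\seq_{t,s}^{*}\mu_{k}^{*}=(\seq_{t,u}^{*})^{-1}\tw^{t}\seq_{t,u}^{*},
\]
completing the induction.

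The main subtlety, and the only real obstacle, is well-definedness: a priori different mutation paths from $t$ to $s$ might produce different $\tw^{s}$. This is resolved precisely by the conjugation formula, together with Lemma \ref{lem:composition_mutation_maps}, which asserts that the composite birational map $\seq_{t,s}^{*}$ depends only on the endpoints $t,s$ and not on the chosen sequence. Thus the formula $\tw^{s}=(\seq_{t,s}^{*})^{-1}\tw^{t}\seq_{t,s}^{*}$ defines $\tw^{s}$ unambiguously, and the inductive construction shows it is indeed a twist automorphism in the sense of Definition \ref{def:twist_automorphism}, with similar seed $s'$ obtained by transporting $t'$ along the parallel $\sigma$-shifted mutation sequence.
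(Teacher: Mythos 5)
Your proof is correct and follows essentially the same route as the paper: the paper obtains the theorem by iterating Proposition \ref{prop:adjacent_variantion_map} along a mutation sequence from $t$ to $s$, exactly as in your inductive step, with Lemma \ref{lem:composition_mutation_maps} guaranteeing that the conjugation formula is independent of the chosen path. Your explicit treatment of the base case and of well-definedness only makes precise what the paper leaves implicit.
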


Notice that the twist automorphisms $\tw^{s}$ in Theorem \ref{thm:global_twist_automorphism}
become the same automorphism once we identify the isomorphic fraction
fields $\cF(s)$ by mutations, $s\in\Delta^{+}$. Correspondingly,
we can use $\tw$ to denote the twist automorphism.

\subsection{Twist automorphism of Donaldson-Thomas type\label{subsec:Twist-automorphism-DT-type}}

In this paper, we will mainly be interested in the case when the pair
of similar seeds is given by $t'=\seq t$ such that $t'=t[1]$. Notice
that $t[1]$ and $t$ are similar as quantum seeds by Lemma \ref{lem:inj_symmetrizer}.

Assume we are given a twist automorphism $\tw^{t}=\seq^{*}\var^{t}$
on $\cF(t)$ passing through $t'=t[1]$, where $\var^{t}:\cF(t)\simeq\cF(t')$
is the variation part. Then for any $k\in I_{\ufv}$ and $s=\mu_{k}t$,
$s'=\mu_{\sigma k}t'$, we have $s'=s[1]$, see \cite[Proposition 5.1.4]{qin2017triangular}\cite[Theorem 3.2.1]{muller2015existence}.
Therefore, the twist automorphism $\tw^{s}$ passes through $s[1]$
as well. Repeat this argument, we obtain that for all $s\in\Delta^{+}$,
the twist automorphisms $\tw^{s}$ pass through $s[1]$.

We say such a twist automorphism $\tw$ to be of Donaldson-Thomas
type (DT-type for short).

\begin{Lem}\label{lem:twist_injective_variable}

Let there be given seeds $t[1]=\seq t$ and an associated twist automorphism
$\tw$ that passes through $t[1]$. Then, for any $k\in I_{\ufv}$,
there exists a frozen factor $p_{k}\in\cRing$ such that 
\begin{eqnarray}
\tw X_{k}(t) & = & p_{k}\cdot I_{k}(t)\label{eq:twist_cl_var}
\end{eqnarray}
where the cluster variable $I_{k}(t)$ is given by \eqref{eq:inj_cl_var}. 

\end{Lem}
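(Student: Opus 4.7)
The plan is to unwind definitions. Recall from Definition \ref{def:twist_automorphism} that the given twist automorphism passing through $t[1]=\seq t$ decomposes as $\tw = \tw^{t} = \seq^{*}\circ \var^{t}$, where $\var^{t}:\cF(t)\simeq \cF(t[1])$ is a variation map in the sense of Definition \ref{def:variation_map}. The defining property of a variation map guarantees, for the given $k\in I_{\ufv}$, the existence of a frozen factor $p_{k}\in\cRing$ such that
\[
\var^{t}(X_{k}(t)) \;=\; p_{k}\cdot X_{\sigma k}(t[1]).
\]

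Next I apply the mutation part $\seq^{*}:\cF(t[1])\simeq \cF(t)$ to both sides. Since the frozen factor $p_{k}$ is a Laurent monomial in the frozen variables $X_{j}$ with $j\in I_{\fv}$, and frozen variables are fixed under any mutation at an unfrozen vertex (by the mutation rule \eqref{eq:X_mutation}, and hence by the iterated mutation $\seq$), one has $\seq^{*}(p_{k})=p_{k}$ after the usual identification of frozen variables across seeds. Thus
\[
\tw\, X_{k}(t) \;=\; \seq^{*}\bigl(\var^{t}(X_{k}(t))\bigr) \;=\; p_{k}\cdot \seq^{*}\bigl(X_{\sigma k}(t[1])\bigr).
\]

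Finally, by the very definition \eqref{eq:inj_cl_var} of the quantum cluster variable $I_{k}(t)$, the Laurent expansion $\seq^{*}X_{\sigma k}(t[1])\in\LP(t)$ is exactly $I_{k}(t)$. Substituting yields $\tw\, X_{k}(t)=p_{k}\cdot I_{k}(t)$, as required. There is essentially no obstacle here: the statement is a direct consequence of the definitions, with the only point to verify being the elementary fact that mutation sequences fix the frozen factor group $\cRing$, which is built into the mutation formulas recalled in Section \ref{sec:Basics-cluster-algebra}.
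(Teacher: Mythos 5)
Your proof is correct and follows exactly the paper's own argument: apply the defining property of the variation map to get $\var^{t}(X_{k}(t))=p_{k}\cdot X_{\sigma k}(t[1])$, then apply $\seq^{*}$ and invoke the definition \eqref{eq:inj_cl_var} of $I_{k}(t)$. The paper compresses this into two sentences; you have merely spelled out the same steps, including the (correct) observation that frozen factors are preserved by the mutation maps.
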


\begin{proof}

The variation map $\var^{t}$ identify $X_{k}(t)$ with $p_{k}\cdot X_{\sigma k}(t[1])$
for some $p_{k}\in\cRing$. The claim follows from definitions of
$\tw$ and $I_{k}(t)$.

\end{proof}

\begin{Rem}\label{rem:twist_automorphism_meaning}

We call a twist automorphism $\tw=\seq^{*}\var^{t}$ on $\cF(t)$
passing through $t[1]$ to be of Donaldson-Thomas type for the following
reason. When $B(t)$ is skew-symmetric, there is an associated Jacobian
algebra of a quiver with potential \cite{DerksenWeymanZelevinsky09}
in the categorification approach to cluster algebras. Then the mutation
part $\seq^{*}$ of $\tw$ is often called the Donaldson-Thomas transformation,
and it is categorified by the shift functor in the associated $2$-Calabi-Yau
category, see the inspiring work \cite{Nagao10} for more details.

It would be desirable to further investigate the twist automorphism
in categories. In general, it is unclear how to categorify $\seq^{*}$
for skew-symmetrizable $B(t)$. Moreover, we do not know the categorical
meaning of the variation map.

\end{Rem}

\section{Triangular bases\label{sec:Triangular-bases}}

In this section, we prove some general results concerning triangular
bases and their relation with twist automorphisms. We will apply the
results to cluster algebras arising from quantum unipotent subgroups
in Section \ref{sec:Dual-canonical-bases-results}.

We consider the quantum case $(\kk,v)=(\Z[q^{\pm\Hf}],q^{\Hf})$,
where $q^{\Hf}$ is a formal parameter. Let $\mm=v^{-1}\Z[v^{-1}]$
denote the chosen non-unital subring of $\kk$. Recall that we have
the automorphism of $\kk$ as a $\Z$-module such that $\overline{v}=v^{-1}$,
which extends to the bar involution ($\Z$-linear anti-automorphism)
on the quantum algebra $\hLP(t)$ such that $\overline{vX^{m}}=v^{-1}X^{m}$.
By bar-invariance we mean invariant under this bar involution.

Let $t$ denote a given quantum seed in $\Delta^{+}$.

\subsection{Triangular functions}

Let there be given a $\Mc(t)$-pointed set $\cS=\{S_{g}|g\in\Mc(t)\}$
in the formal Laurent polynomial ring $\hLP(t)$. Then any $Z\in\widehat{\LP}(t)$
has a unique $\prec_{t}$-decomposition in terms of $\cS$ (Definition
\ref{def:dominance_decomposition}). Notice that $\cS$ is a topological
basis in the sense of \cite{davison2019strong}.\reviseStart We choose
such a topological basis $\cS$ and call it the set of distinguished
functions in $\hLP(t)$. \reviseEnd

\begin{Def}[Triangular functions]

For any $g\in\Mc(t)$, the triangular function $\can_{g}$ with respect
to the set of distinguished functions $\cS$ is the $g$-pointed bar-invariant
element in $\hLP(t)$, such that its $\prec_{t}$-decomposition in
terms of $\cS$ is $(\prec_{t},\mm)$-unitriangular.

\end{Def}

Before constructing the triangular functions, we first prove the following
general statement for constructing a bar-invariant difference.

\begin{Prop}[Bar-invariant difference]\label{prop:construct_bar_inv_func}

For any $Z=\sum b_{m}X^{m}\in\hLP(t)$, there exists a unique formal
sum $\sum\alpha_{g}S_{g}$, $\alpha_{g}\in\mm$, with finitely many
$\prec_{t}$-maximal degrees in $\{g|\alpha_{g}\neq0\}$, such that
$Z-\sum\alpha_{g}S_{g}$ is bar-invariant.

\end{Prop}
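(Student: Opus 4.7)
The plan is to construct $(\alpha_g)$ by a Kazhdan--Lusztig-style triangular algorithm with respect to $\prec_t$, in the spirit of the classical construction of a bar-invariant basis from an almost bar-invariant one. First I would apply Definition-Lemma~\ref{def:dominance_decomposition} to rewrite $Z=\sum_g \beta_g S_g$ with $\beta_g \in \kk$ and only finitely many $\prec_t$-maximal elements in $\supp(\beta)$. Since $\overline{X^g}=X^g$ under the paper's convention, each $\overline{S_g}$ is still $g$-pointed, so its $\prec_t$-decomposition takes the form $\overline{S_g} = S_g + \sum_{g' \prec_t g} c_{g,g'} S_{g'}$ with $c_{g,g'} \in \kk$. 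Consequently $\overline{Z}$ has an explicit $\prec_t$-decomposition $\sum_g \gamma_g S_g$ computable from the $\beta_g$ and $c_{g,g'}$.

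Next I would solve $W := Z - \sum_g \alpha_g S_g = \overline{W}$ by processing degrees in top-down order relative to $\prec_t$. A direct expansion shows that the $S_{g^*}$-coefficient of the $\prec_t$-decomposition of $W - \overline{W}$ is
\begin{equation*}
\delta_{g^*} \;=\; R_{g^*} - \alpha_{g^*} + \overline{\alpha_{g^*}}, \qquad R_{g^*} := (\beta_{g^*} - \gamma_{g^*}) + \sum_{g \succ_t g^*} \overline{\alpha_g}\, c_{g,g^*}.
\end{equation*}
Assume $\alpha_g$ has been determined for all $g \succ_t g^*$ so that $\delta_g = 0$. Lemma~\ref{lem:finite_interval} together with the finite-maximality of $\supp(\beta)\cup\supp(\gamma)$ guarantees that only finitely many $g \succ_t g^*$ contribute to $R_{g^*}$. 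The equation $\delta_{g^*}=0$ becomes $\alpha_{g^*} - \overline{\alpha_{g^*}} = R_{g^*}$, and since $\alpha \mapsto \alpha - \overline{\alpha}$ is a $\Z$-linear bijection from $\mm = v^{-1}\Z[v^{-1}]$ onto the subgroup of bar-antisymmetric elements of $\kk$, this has a unique solution $\alpha_{g^*} \in \mm$ provided $R_{g^*}$ is bar-antisymmetric.

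The crucial point is therefore the bar-antisymmetry of $R_{g^*}$. For this I would record the general identity: for any bar-antisymmetric $X = \sum_g \delta_g S_g \in \hLP(t)$, comparing $\prec_t$-decomposition coefficients in $X = -\overline{X}$ and using the expansion of $\overline{S_g}$ above yields
\begin{equation*}
\delta_{g'} + \overline{\delta_{g'}} \;=\; -\sum_{g \succ_t g'} \overline{\delta_g}\, c_{g,g'} \qquad \forall\, g' \in \Mc(t).
\end{equation*}
Applied to the manifestly bar-antisymmetric $W - \overline{W}$ at stage $g^*$, the inductive vanishing $\delta_g = 0$ for $g \succ_t g^*$ collapses the right-hand side to $0$, hence $\delta_{g^*} + \overline{\delta_{g^*}} = 0$. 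On the other hand, direct inspection of the formula for $\delta_{g^*}$ gives $\delta_{g^*} + \overline{\delta_{g^*}} = R_{g^*} + \overline{R_{g^*}}$, so $R_{g^*}$ is bar-antisymmetric as required.

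Finally, the $\prec_t$-maxima of $\supp(\alpha)$ are contained in those of $\supp(\beta)\cup\supp(\gamma)$, giving the claimed finiteness. Uniqueness follows by the same top-down argument applied to the difference $\alpha - \alpha'$ of two solutions: the corresponding sum is bar-invariant with $\cS$-coefficients in $\mm$, and the analogous identity (with the sign flipped) forces each successive top coefficient to be bar-invariant; but the only bar-invariant element of $\mm$ is $0$, so the coefficients vanish recursively. The main obstacle is the bar-symmetry bookkeeping --- since $\cS$ is not bar-invariant, the bar involution on $\hLP(t)$ does not act diagonally on $\cS$-coefficients, and one must carefully track the mixing induced by the $c_{g,g'}$. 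Once this identity is in place, the rest is a direct adaptation of the classical canonical basis construction.
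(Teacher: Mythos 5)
Your argument is correct in substance, but it takes a genuinely different and noticeably heavier route than the paper's. The paper never expands $Z$ in terms of $\cS$ at all: it works directly with the Laurent coefficients $b_{m}$ of $X^{m}$. Since each Laurent monomial $X^{m}$ is bar-invariant, the bar involution acts coefficient-wise on the Laurent expansion and there is no mixing to track. At each stage one takes the finitely many $\prec_{t}$-maximal Laurent degrees $g^{(i)}$ of the current remainder, picks the unique $\alpha_{g^{(i)}}\in\mm$ with $\overline{b_{g^{(i)}}}-b_{g^{(i)}}=\overline{\alpha_{g^{(i)}}}-\alpha_{g^{(i)}}$ (solvable for \emph{any} $b_{g^{(i)}}$, because subtracting $\alpha_{g^{(i)}}S_{g^{(i)}}$ shifts the Laurent coefficient at $g^{(i)}$ by $\alpha_{g^{(i)}}$ itself, not by $\alpha_{g^{(i)}}-\overline{\alpha_{g^{(i)}}}$), subtracts $\alpha_{g^{(i)}}S_{g^{(i)}}$ together with the now bar-invariant monomials $(b_{g^{(i)}}-\alpha_{g^{(i)}})X^{g^{(i)}}$, and recurses on the strictly lower part; the difference $Z-\sum\alpha_{g}S_{g}$ is then a formal sum of bar-invariant Laurent monomials. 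Your version expands everything in $\cS$, where $\overline{S_{g}}\neq S_{g}$, and must therefore prove the Kazhdan--Lusztig-type identity $\delta_{g'}+\overline{\delta_{g'}}=-\sum_{g\succ_{t}g'}\overline{\delta_{g}}\,c_{g,g'}$ in order to see that each $R_{g^{*}}$ is bar-antisymmetric. That identity is correct, but this entire layer of bookkeeping is exactly what the paper's choice of coordinates makes unnecessary.

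Two points need tightening. First, there is a circularity: you justify the antisymmetry of $R_{g^{*}}$ by applying the identity to $W-\overline{W}$, but $W$ involves all the $\alpha_{g}$, including those at stages your recursion has not yet reached, and the solvability of $\alpha_{g^{*}}-\overline{\alpha_{g^{*}}}=R_{g^{*}}$ in $\mm$ is exactly what is in question. The clean fix is to apply the identity to the truncated element $W_{F}:=Z-\sum_{g\succ_{t}g^{*}}\alpha_{g}S_{g}$, which involves only already-determined coefficients: its $S_{g^{*}}$-coefficient in the decomposition of $W_{F}-\overline{W_{F}}$ is precisely $R_{g^{*}}$, its coefficients at all $g\succ_{t}g^{*}$ vanish by the inductive hypothesis, and $W_{F}-\overline{W_{F}}$ is manifestly bar-antisymmetric, so the identity yields $R_{g^{*}}+\overline{R_{g^{*}}}=0$ directly. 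Second, the claim that the $\prec_{t}$-maxima of $\supp(\alpha)$ are contained in those of $\supp(\beta)\cup\supp(\gamma)$ is stronger than what your recursion gives; what it gives (and what you need) is that $\supp(\alpha)$ lies in the union of the down-sets of those finitely many maxima, which suffices for the finiteness requirement as in \cite[4.1.1]{qin2019bases}. Your uniqueness argument is fine and coincides with the paper's (compare Lemma \ref{lem:tri_func_unique}).
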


\begin{proof}

Similar to the construction of $\prec_{t}$-decomposition \cite[Section 4]{qin2019bases},
we give an algorithm for the construction by tracking the Laurent
degrees from larger ones to smaller ones.

For any $Z=\sum b_{m}X^{m}\in\hLP(t)$, denote its (finitely many)
$\prec_{t}$-maximal degrees by $g^{(i)}$. For each corresponding
coefficient $b_{g^{(i)}}$, there exists a unique $\alpha_{g^{(i)}}\in\mm$
such that $b_{g^{(i)}}-\alpha_{g^{(i)}}$ is bar-invariant. More precisely,
$\alpha_{g^{(i)}}$ is the unique solution in $\mm=v^{-1}\Z[v^{-1}]$
such that 
\begin{eqnarray*}
\overline{b_{g^{(i)}}}-b_{g^{(i)}} & = & \overline{\alpha_{g^{(i)}}}-\alpha_{g^{(i)}}.
\end{eqnarray*}
Notice that, $\alpha_{g^{(i)}}\neq0$ if and only if $b_{g^{(i)}}$
is not bar-invariant.

The $\prec_{t}$-maximal degree terms in the difference $Z-\sum\alpha_{g^{(i)}}S_{g}$
have bar-invariant coefficients $b_{g^{(i)}}-\alpha_{g^{(i)}}$. Then
we obtain a bar-invariant term $\sum(b_{g^{(i)}}-\alpha_{g^{(i)}})X^{g^{(i)}}$.
Let us proceed with the difference $Z'=Z-\sum\alpha_{g^{(i)}}S_{g^{(i)}}-\sum(b_{g^{(i)}}-\alpha_{g^{(i)}})X^{g^{(i)}}\in\hLP(t)$.
Repeating this (possibly infinite) process, we obtain a formal sum
$Z-\sum\alpha_{g}S_{g}$ which equals the formal sum of the bar-invariant
terms, where each term is of the form $\sum(b_{g^{(i)}}-\alpha_{g^{(i)}})X^{g^{(i)}}$.

\end{proof}

\begin{Thm}[Existence of triangular functions]\label{thm:construct_tri_func}

For any $g\in\Mc(t)$, the triangular function $\can_{g}$ with respect
to $\cS$ exists.

\end{Thm}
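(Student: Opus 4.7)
The plan is to apply Proposition \ref{prop:construct_bar_inv_func} with input $Z = S_g$. This will produce a formal sum $\sum_{g'} \alpha_{g'} S_{g'}$ with $\alpha_{g'} \in \mm$ such that $\can_g := S_g - \sum_{g'} \alpha_{g'} S_{g'}$ is bar-invariant, and the work reduces to checking that this element has the required triangular form.

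First, I would inspect the recursive algorithm in the proof of Proposition \ref{prop:construct_bar_inv_func} at the initial stage. Since $S_g$ is $g$-pointed, the unique $\prec_t$-maximal Laurent degree in $\supp_{\Mc(t)} S_g$ is $g$ itself, with Laurent coefficient $1 \in \kk$. As $1$ is already bar-invariant, uniqueness of the solution in $\mm = v^{-1}\Z[v^{-1}]$ forces $\alpha_g = 0$. Consequently, after the first step the residual $S_g - X^g$ is supported on strictly smaller Laurent degrees (those of the form $g + p^*n$ for $n \in \Nufv^{>0}(t)$), and the algorithm proceeds entirely within $\{g' : g' \prec_t g\}$. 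Hence the formal sum produced by the algorithm is of the form $\sum_{g' \prec_t g} \alpha_{g'} S_{g'}$ with $\alpha_{g'} \in \mm$, whose support has only finitely many $\prec_t$-maximal elements by the construction.

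Next, I would verify the three defining properties of the triangular function. Bar-invariance of $\can_g$ holds by construction. Pointedness at $g$ follows because the $\prec_t$-maximum of $\supp_{\Mc(t)}\can_g$ is contributed solely by the $S_g$ term (each correction $S_{g'}$ is supported below $g' \prec_t g$), with Laurent coefficient $1$. The $\prec_t$-decomposition
$$ \can_g = 1 \cdot S_g + \sum_{g' \prec_t g} (-\alpha_{g'}) S_{g'} $$
is then $(\prec_t, \mm)$-unitriangular, since $-\alpha_{g'} \in \mm$ for all $g' \prec_t g$. This is the desired triangular function.

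The only potential obstacle is the convergence of the infinite cascade of corrections, but this is already handled by Proposition \ref{prop:construct_bar_inv_func} together with the finite-interval property (Lemma \ref{lem:finite_interval}), which guarantees that each Laurent coefficient of $\can_g$ is determined by finitely many correction steps. If desired, uniqueness of $\can_g$ follows by a standard argument: the difference of two candidates lies in $\sum_{g' \prec_t g} \mm \cdot S_{g'}$ and is bar-invariant, so at a $\prec_t$-maximal $g^*$ in its support the Laurent coefficient of $X^{g^*}$ equals the corresponding $\mm$-scalar (no other term reaches $g^*$), which must then be both bar-invariant and in $\mm$, hence zero.
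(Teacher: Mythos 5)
Your proof is correct and follows essentially the same route as the paper: apply Proposition \ref{prop:construct_bar_inv_func} to $S_g$, observe that the leading coefficient $1$ forces $\alpha_g=0$, and conclude that the resulting bar-invariant difference is the $(\prec_t,\mm)$-unitriangular, $g$-pointed element sought. The extra verifications you include (pointedness, the explicit unitriangular decomposition, uniqueness) are consistent with the paper, which handles uniqueness separately in Lemma \ref{lem:tri_func_unique}.
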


\begin{proof}

By applying Proposition \ref{prop:construct_bar_inv_func} to $S_{g}$
and the $\Mc(t)$-pointed set $\cS$ , we can find unique $\alpha_{g'}\in\mm$
for all $g'\preceq_{t}g$ such that the difference $S_{g}-\sum_{g'}\alpha_{g'}S_{g'}$
is bar-invariant. Moreover, since the leading term of $S_{g}$ has
the bar-invariant coefficient $1$, we must have $\alpha_{g}=0\in\mm$.
Now we obtain the triangular function $\can_{g}=S_{g}-\sum_{g'\prec_{t}g}\alpha_{g'}S_{g'}$,
$\alpha_{g'}\in\mm$.

\end{proof}

\begin{Lem}[Uniqueness]\label{lem:tri_func_unique}

For any $g\in\Mc(t)$, the triangular function $\can_{g}$ with respect
to $\cS$ is unique.

\end{Lem}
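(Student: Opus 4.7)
The plan is a standard ``bar invariance forces triviality'' argument. Suppose we have two triangular functions $\can_g$ and $\can_g'$ at the same degree $g$. Write their $(\prec_t,\mm)$-unitriangular decompositions
\[
\can_g = S_g + \sum_{g' \prec_t g} \alpha_{g'} S_{g'}, \qquad \can_g' = S_g + \sum_{g' \prec_t g} \alpha'_{g'} S_{g'},
\]
with $\alpha_{g'},\alpha'_{g'} \in \mm = v^{-1}\Z[v^{-1}]$. Their difference
\[
D := \can_g - \can_g' = \sum_{g' \prec_t g} \beta_{g'} S_{g'}, \qquad \beta_{g'} := \alpha_{g'} - \alpha'_{g'} \in \mm,
\]
is bar-invariant (as a difference of bar-invariant functions). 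By Definition-Lemma \ref{def:dominance_decomposition} applied to $\cS$, the above formal sum is in fact the $\prec_t$-decomposition of $D$ in terms of $\cS$, so the support $\{g' : \beta_{g'} \neq 0\}$ has only finitely many $\prec_t$-maximal elements.

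Assume for contradiction that $D \neq 0$. Pick a $\prec_t$-maximal element $g^\ast$ of the support. Since each $S_{g'}$ is $g'$-pointed (its Laurent degree support lies in $\{m : m \preceq_t g'\}$), and since $g^\ast \not\preceq_t g'$ for any other $g'$ in the support by the maximality of $g^\ast$, the only contribution to the Laurent coefficient of $X^{g^\ast}$ in $D$ comes from the leading term of $S_{g^\ast}$. Thus this coefficient equals $\beta_{g^\ast}$.

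Bar-invariance of $D$ then forces $\beta_{g^\ast} \in \mm$ to be bar-invariant. But
\[
\mm \cap \overline{\mm} = v^{-1}\Z[v^{-1}] \cap v\Z[v] = 0,
\]
so $\beta_{g^\ast} = 0$, contradicting $g^\ast \in \supp(\beta)$. Hence $D = 0$ and $\can_g = \can_g'$. There is no real obstacle here; the only subtle point is invoking the finiteness of $\prec_t$-maximal elements in the support (supplied by Definition-Lemma \ref{def:dominance_decomposition}) so that a maximal $g^\ast$ is available to anchor the bar-invariance contradiction.
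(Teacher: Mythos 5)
Your proof is correct and follows essentially the same route as the paper: form the difference, note it is bar-invariant and lies in $\sum_{g'\prec_t g}\mm\, S_{g'}$, and use that a $\prec_t$-maximal coefficient would have to be a bar-invariant element of $\mm=v^{-1}\Z[v^{-1}]$, hence zero. You simply spell out in more detail the step the paper states tersely (that the $\prec_t$-maximal Laurent coefficient of the difference equals $\beta_{g^\ast}$), which is a fine elaboration rather than a different argument.
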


\begin{proof}

Let there be given two such triangular functions $\can_{g}$ and $\can_{g}'$.
Then $\can_{g}-\can_{g}'=\sum_{g'\prec_{t}g}\beta_{g'}S_{g'}$ for
some $\beta_{g'}\in\mm$. If the difference does not vanish, the coefficients
of its $\prec_{t}$-maximal degrees take the form $0\neq\beta_{g'}\in\mm$.
But $\can_{g}-\can_{g}'$ is also bar-invariant. It follows that $\beta_{g'}=0$,
a contradiction.

\end{proof}

\begin{Rem}

Recursive constructions similar to those for Theorem \ref{thm:construct_tri_func}
have been used for studying well-known bases in representation theory
\cite[Lemma 8.4]{Nakajima04}\cite[7.10]{Lusztig90}.

Notice that the construction of the triangular functions depends on
the chosen set $\cS$ of the distinguished functions. For the purpose
in this paper, our chosen set of distinguished functions will be $\Inj^{t}$
defined in \eqref{eq:inj_function}. One might also consider other
distinguished functions. For example, by choosing the dual PBW bases
or the standard modules of quantum affine algebras, one can show that
the corresponding triangular functions are the dual canonical bases
or the characters of the simple modules, see \cite[Section 9.1]{qin2017triangular}. 

It is worth reminding that, in our situation, the chosen set $\cS$
is often NOT a basis of the algebra that we consider, unlike the dual
PBW bases or the bases consisting of standard modules.

\end{Rem}

\subsection{The triangular bases}

Given a subalgebra $\midAlg(t)$ of the quantum upper cluster algebra
$\qUpClAlg(t)$, we have $\midAlg(t)\subset\qUpClAlg(t)\subset\LP(t)$.
Under the isomorphism $\seq_{t,t'}^{*}:\cF(t)\simeq\cF(t')$ for any
given $t'=\seq_{t',t}t\in\Delta^{+}$, we obtain that $\midAlg(t'):=\seq_{t,t'}^{*}\midAlg(t)$
satisfies $\midAlg(t')\subset\qUpClAlg(t')\subset\LP(t')$. For simplicity,
we might omit $\seq_{t,t'}^{*}$ and the choice of initial seed, and
simply write $\midAlg\subset\qUpClAlg$.

We further assume that $t$ is injective-reachable (Section \ref{subsec:Injective-reachability}).

\begin{Def}[Triangular bases \cite{qin2017triangular}]\label{def:triangular_basis}

We say a $\kk$-basis $\can$ of the subalgebra $\midAlg(t)\subset\qUpClAlg(t)$
is a triangular basis with respect to the seed $t$, if the following
conditions hold:

\begin{enumerate}

\item (Bar invariance) The basis elements in $\can$ are invariant
under the bar involution.

\item The quantum cluster monomials in $t$ and $t[1]$ are contained
in $\can$.

\item(Parametrization) $\can$ is $\Mc(t)$-pointed, i.e., it takes
the form $\can=\{\can_{g}|g\in\Mc(t)\}$, such that $\can_{g}$ is
$g$-pointed.

\item(Triangularity) For any basis elements $\can_{g}$ and $i\in I$,
the decomposition of the normalized product $[X_{i}(t)*\can_{g}]^{t}$
in terms of $\can$ is $(\prec_{t},\mm)$-unitriangular:

\begin{eqnarray*}
[X_{i}(t)*\can_{g}]^{t} & = & \can_{g+f_{i}}+\sum_{g'\prec_{t}g+f_{i}}b_{g'}\can_{g'},\ b_{g'}\in\mm.
\end{eqnarray*}

\end{enumerate}

\end{Def}

A basis $\can$ of $\midAlg(t)$ naturally gives rise to a basis $\seq_{t,t'}^{*}\can$
of $\midAlg(t')$ for any seeds $t=\seq_{t,t'}t'$.

\begin{Def}[Common triangular bases \cite{qin2017triangular}]

The triangular basis $\can$ of $\midAlg(t)$ with respect to the
seed $t$ is said to be the common triangular basis, if $\seq_{t,t'}^{*}\can$
are the triangular bases of $\midAlg(t')$ for all seeds $t'\in\Delta^{+}$
respectively and they are pairwise compatible.

\end{Def}

Let $\pr_{I_{\ufv}}$ and $\pr_{I_{\fv}}$ denote the natural projection
from $\Mc(t)\simeq\Z^{I}$ to $\Z^{I_{\ufv}}$ and $\Z^{I_{\fv}}$
respectively. For any $g=(g_{i})_{i\in I}\in\Mc(t)=\oplus\Z f_{i}$,
let $[g]_{+}$ denote $([g_{i}]_{+})_{i}$. For any $m\in\N^{I_{\ufv}}$,
let $I^{m}$ denote the normalization of the twisted product $[\prod I_{_{k}}^{m_{k}}(t)]^{t}$
in $\LP(t)$, where the cluster variables $I_{k}(t)$ are given by
\eqref{eq:inj_cl_var}. Then the twisted product $X^{[\pr_{I_{\ufv}}g]_{+}}*I^{[-\pr_{I_{\ufv}}g]_{+}}$
has degree $g+u$ for some $u\in\Z^{I_{\fv}}$. We define the following
$g$-pointed function in $\LP(t)$:

\begin{eqnarray}
\Inj_{g}^{t} & := & [p_{g}*X(t)^{[\pr_{I_{\ufv}}g]_{+}}*I(t)^{[-\pr_{I_{\ufv}}g]_{+}}]^{t}\label{eq:inj_function}
\end{eqnarray}
where $p_{g}=X(t)^{-u}\in\cRing$.

For the rest of the paper, we choose the set of distinguished functions
in $\hLP(t)$ to be

\begin{eqnarray*}
\Inj^{t} & = & \{\Inj_{g}^{t}|g\in\Mc(t)\}.
\end{eqnarray*}

\begin{Def}[Weakly triangular basis {\cite[Definition 6.3.1]{qin2017triangular}}]

Let $\can^{t}$ denote the set of triangular functions $\{\can_{g}|g\in\Mc(t)\}$
with respect to the set of distinguished functions $\Inj^{t}$. If
$\can^{t}$ is a $\kk$-basis of $\midAlg(t)\subset\qUpClAlg(t)$,
we call it the weakly triangular basis of $\midAlg(t)$ with respect
to the seed $t$.

\end{Def}

By Lemma \ref{lem:tri_func_unique}, the weakly triangular basis is
unique if it exists. Notice that a weakly triangular basis contains
the quantum cluster monomials in $t,t[1]$. If it further satisfies
condition (4) in Definition \ref{def:triangular_basis}, it becomes
the triangular basis.

\begin{Lem}[{\cite[Lemma 6.3.2]{qin2017triangular}}]

If $\midAlg$ has a triangular basis $\can$ for seed $t$, then $\can$
is the weakly triangular basis $\can^{t}$. In particular, the triangular
basis is unique.

\end{Lem}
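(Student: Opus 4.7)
The plan is to verify that each element $\can_g$ of the triangular basis $\can$ coincides with the corresponding triangular function $\can^t_g$. By Theorem~\ref{thm:construct_tri_func} and Lemma~\ref{lem:tri_func_unique}, $\can^t_g$ is characterized as the unique bar-invariant $g$-pointed element of $\hLP(t)$ whose $\prec_t$-decomposition in $\Inj^t$ is $(\prec_t,\mm)$-unitriangular. Conditions (1) and (3) of Definition~\ref{def:triangular_basis} immediately give that $\can_g$ is bar-invariant and $g$-pointed, so the remaining task is to verify that the $\prec_t$-decomposition of $\can_g$ in $\Inj^t$ is $(\prec_t,\mm)$-unitriangular. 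By Lemma~\ref{lem:inverse_triangular_transition}, this is equivalent to showing that each $\Inj_g^t$ admits a $(\prec_t,\mm)$-unitriangular expansion in $\can$.

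Unpacking the definition \eqref{eq:inj_function}, $\Inj_g^t = [p_g * X(t)^{[\pr_{I_\ufv} g]_+} * I(t)^{[-\pr_{I_\ufv} g]_+}]^t$ is a normalized twisted product of a frozen Laurent monomial $p_g\in\cRing$, a quantum cluster monomial in the seed $t$, and (the Laurent expansion in $\LP(t)$ of) a quantum cluster monomial in the seed $t[1]$. By condition (2) both cluster monomials lie in $\can$, and they coincide with specific basis elements by the parametrization in condition (3). I would compute $\Inj_g^t$ iteratively: starting from the basis element $I(t)^{[-\pr_{I_\ufv} g]_+}\in\can$, perform normalized left-multiplication by the cluster variables $X_i(t)$ comprising $X(t)^{[\pr_{I_\ufv} g]_+}$ one at a time, then multiply by $p_g$ using Lemma~\ref{lem:factorization_triangular}(2). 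Condition (4) of Definition~\ref{def:triangular_basis} supplies a $(\prec_t,\mm)$-unitriangular expansion in $\can$ at each intermediate step.

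The main technical obstacle is to carry the $(\prec_t,\mm)$-unitriangularity through the iteration. Each renormalization step introduces $v$-power correction factors on the subleading terms of the form $v^{\lambda(f_i,g'-g_0)}$, where $g_0$ denotes the previous leading degree and $g_0-g' = p^*n$ for some $n\in\Nufv^{\geq 0}$; Lemma~\ref{lem:compatible_pair}(1) computes these factors explicitly in terms of the symmetrizers $\diag_i$. The careful accounting showing that the accumulated coefficient of each $\can_{g''}$ remains in $\mm$ throughout the iteration is the principal bookkeeping content of the argument.

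Once the $(\prec_t,\mm)$-unitriangular expansion of $\Inj_g^t$ in $\can$ is established, Lemma~\ref{lem:inverse_triangular_transition} yields the inverse $(\prec_t,\mm)$-unitriangular expansion of $\can_g$ in $\Inj^t$. Combined with bar-invariance and $g$-pointedness, Lemma~\ref{lem:tri_func_unique} identifies $\can_g$ with $\can^t_g$, whence $\can = \can^t$. Uniqueness of the triangular basis then follows at once: any two triangular bases for the seed $t$ must both coincide with the uniquely determined weakly triangular basis $\can^t$.
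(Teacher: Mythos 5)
Your strategy is the right one and is essentially the proof of \cite[Lemma 6.3.2]{qin2017triangular} (the present paper only cites that result): show each $\Inj^{t}_{g}$ is $(\prec_{t},\mm)$-unitriangular to $\can$ using conditions (2) and (4) of Definition \ref{def:triangular_basis} together with Lemma \ref{lem:factorization_triangular}(2), invert via Lemma \ref{lem:inverse_triangular_transition}, and conclude by the uniqueness of triangular functions (Lemma \ref{lem:tri_func_unique}). The iterative step you defer as ``bookkeeping'' is exactly the content of the Substitution Lemma \ref{lem:substitution} specialized to the present situation, so the architecture is sound.

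One concrete correction: you state that $g_{0}-g'=p^{*}n$ with $n\in\Nufv^{\geq0}$. Under the paper's convention, $g'\prec_{t}g_{0}$ means $g'=g_{0}+p^{*}n$, i.e.\ $g'-g_{0}=p^{*}n$ --- the opposite sign. This matters: the renormalization factor picked up by the coefficient of $\can_{g'}$ is $v^{\lambda(f_{i},g'-g_{0})}=v^{\lambda(f_{i},p^{*}n)}=v^{-n_{i}\diag_{i}}$ by Lemma \ref{lem:compatible_pair}(1), a \emph{non-positive} power of $v$, so $\mm\cdot v^{\lambda(f_{i},g'-g_{0})}\subseteq\mm$ and the unitriangularity survives each step. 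With your sign the factor would be $v^{+n_{i}\diag_{i}}$ and the containment in $\mm$ would fail, so the estimate you flag as the principal obstacle only closes once the sign is fixed. With that repaired, and noting that a $(\prec_{t},\mm)$-unitriangular formal sum has a unique $\prec_{t}$-maximal term and hence is the $\prec_{t}$-decomposition of Definition-Lemma \ref{def:dominance_decomposition}, your argument is complete.
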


\cite[Lemma 6.2.1]{qin2017triangular} and the correction technique
(Theorem \ref{thm:correction_technique}) implies the following result.

\begin{Lem}[{\cite[Lemma 6.3.4]{qin2017triangular}}]\label{lem:similar_triangular_basis}\reviseStart

Let there be given two similar quantum seeds $t$ and $t'$ (not necessarily
related by mutations). Assume that a subalgebra $\midAlg(t)\subset\qUpClAlg(t)$
possesses the weakly triangular basis (resp. triangular basis) $\can^{t}$
with respect to the seed $t$. Let $\can^{t'}$ denote the set of
all pointed functions in $\LP(t')$ similar to the elements of $\can^{t}$.
Let $\midAlg(t')$ denote the $\kk$-module spanned by $\can^{t'}$.
Then $\midAlg(t')$ is a $\kk$-subalgebra of $\qUpClAlg(t')$ and
$\can^{t'}$ is its weakly triangular basis (resp. triangular basis)
with respect to $t'$.\reviseEnd

\end{Lem}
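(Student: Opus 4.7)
The plan is to construct $\can^{t'}$ element by element from $\can^{t}$ via the correction technique (Theorem~\ref{thm:correction_technique}) and then verify each defining property separately. For each $g\in\Mc(t)$, let $\can^{t'}_{g'}$ denote the unique pointed function in $\LP(t')$ similar to $\can^{t}_{g}$, with the frozen part of $g'$ fixed by a normalization convention chosen so that the transferred decomposition below reads $\can^{t'}_{g'}=\Inj^{t'}_{g'}+\cdots$. Since $\can^{t}_{g}$ has polynomial $F$-function and similarity of pointed functions preserves the coefficients $c_{n}$ (under $n\mapsto\sigma n$), $\can^{t'}_{g'}$ is a Laurent polynomial; bar-invariance is automatic, since those $c_{n}\in\kk$ are bar-invariant, inherited from $\can^{t}_{g}$.

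For the $(\prec_{t'},\mm)$-unitriangularity of $\can^{t'}_{g'}$ to $\Inj^{t'}$, I start from $\can^{t}_{g}=\Inj^{t}_{g}+\sum_{h\prec_{t}g}b_{h}\Inj^{t}_{h}$ with $b_{h}\in\mm$. Lemmas~\ref{lem:similar_cluster_mutation} and~\ref{lem:similar_Laurent_monomial_mutation}, applied to the injective cluster variables $I_{k}(t)$ and to the twisted-product expressions defining $\Inj^{t}$ in~\eqref{eq:inj_function}, show that each $\Inj^{t}_{h}$ has a similar pointed function of the form $\Inj^{t'}_{h''}$ with $\pr_{I_{\ufv}}h''=\sigma\pr_{I_{\ufv}}h$. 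Applying Theorem~\ref{thm:correction_technique} translates the decomposition to $\hLP(t')$; the frozen-factor corrections it produces are absorbed into $\Inj^{t'}$ indices via $[X^{u}*\Inj^{t'}_{h}]^{t'}=\Inj^{t'}_{h+u}$, and Lemma~\ref{lem:compare_order_similar_seeds} ensures that $h\prec_{t}g$ transfers to $h''\prec_{t'}g'$.

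For closure of the $\kk$-span $\midAlg(t'):=\kk\cdot\can^{t'}$ under multiplication, I expand $[\can^{t}_{g_{1}}*\can^{t}_{g_{2}}]^{t}=\sum_{j}\alpha_{j}\can^{t}_{g_{j}}$ in $\midAlg(t)$ (a finite sum) and apply Theorem~\ref{thm:correction_technique} twice: once to the normalized twisted-product identity and once to the finite linear combination. After absorbing frozen factors into indices, $[\can^{t'}_{g_{1}'}*\can^{t'}_{g_{2}'}]^{t'}$ becomes a finite $\kk$-linear combination of $\can^{t'}$ elements. Membership in $\qUpClAlg(t')$ is then established by transferring the Laurent-polynomial property across every seed: any mutation sequence $\seq$ on $t$ producing $s\in\Delta_{t}^{+}$ is paired with the parallel sequence $\sigma\seq$ on $t'$ producing a similar seed $s'\in\Delta_{t'}^{+}$ (Lemma~\ref{lem:similar_cluster_mutation}), and the Laurent expansion of $\can^{t}_{g}$ in $\LP(s)$ (existing because $\can^{t}_{g}\in\qUpClAlg(t)$) transfers via similarity (Lemma~\ref{lem:similar_Laurent_monomial_mutation} together with Theorem~\ref{thm:correction_technique}) to a Laurent expansion of $\can^{t'}_{g'}$ in $\LP(s')$. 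Combined with Lemma~\ref{lem:tri_func_unique} (uniqueness of triangular functions), $\can^{t'}$ is the weakly triangular basis of $\midAlg(t')$. In the triangular-basis case, applying Theorem~\ref{thm:correction_technique} to condition (4) of Definition~\ref{def:triangular_basis} on the $t$-side yields the analogous identity for $\can^{t'}$, promoting it to a triangular basis.

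The main technical obstacle is bookkeeping of frozen factors: since similarity constrains only the unfrozen part of leading degrees, each application of the correction technique introduces frozen-factor corrections whose reinterpretation as index shifts of $\can^{t'}$ or $\Inj^{t'}$ must be tracked carefully. This reinterpretation is always possible because both families depend on their index only through a frozen shift once the unfrozen part is pinned down, so such corrections accumulate harmlessly into the relevant index parameter.
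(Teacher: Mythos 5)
Your overall route --- transport each basis element by similarity, use the correction technique to carry the $(\prec_{t},\mm)$-unitriangular decomposition in terms of $\Inj^{t}$ over to $\Inj^{t'}$, absorb the frozen correction factors into indices, and invoke uniqueness of triangular functions --- is exactly the argument the paper intends (it gives no proof, only the remark that the statement follows from Lemma~\ref{lem:factorization_triangular} and Theorem~\ref{thm:correction_technique}, citing \cite{qin2017triangular}). The bar-invariance, unitriangularity and product-closure steps are sound; in particular, note that what actually guarantees the transferred decomposition is again $\prec_{t'}$-unitriangular is the correction technique's formula $\deg^{t'}(p_{j}Z_{j}^{(2)})=\deg^{t'}Z_{0}^{(2)}+\tB(t')\sigma u_{j}\preceq_{t'}\deg^{t'}Z_{0}^{(2)}$, not Lemma~\ref{lem:compare_order_similar_seeds} by itself, which only controls the order up to an unspecified frozen shift --- your ``absorbed into indices'' remark papers over this, but the conclusion is correct.

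The one genuine soft spot is the containment $\midAlg(t')\subset\qUpClAlg(t')$. The tools you cite do not cover it: Proposition~\ref{prop:mutation_similar_elements} requires the mutated image $\seq^{*}Z^{(1)}$ to be \emph{pointed}, whereas $\seq_{t,s}^{*}\can_{g}^{t}$ for an arbitrary seed $s\in\Delta_{t}^{+}$ is merely a Laurent polynomial, possibly with several $\prec_{s}$-maximal degrees, so the correction technique (stated only for pointed data) cannot be applied to the identity $\seq_{t,s}^{*}\can_{g}^{t}=\sum_{n}c_{n}\seq_{t,s}^{*}X(t)^{g+p^{*}n}$ with $Z_{0}^{(1)}=\seq_{t,s}^{*}\can_{g}^{t}$. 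The naive ``cancellations transfer by similarity'' reading also fails on its face: similarity pins down only the unfrozen parts of Laurent degrees, so two formal-series terms that cancel in $\hLP(s)$ may land on distinct frozen degrees in $\hLP(s')$ and survive. To close this you need either a non-pointed strengthening of Proposition~\ref{prop:mutation_similar_elements} (checking, as in its proof via $C$-matrices, that the frozen correction factors are independent of $n$, so that the raw sum $\sum_{n}c_{n}(\sigma\seq)^{*}X(t')^{g'+p^{*}\sigma n}$ collapses to a Laurent polynomial whenever the $s$-side does), or a reduction of $\qUpClAlg(t')$ to the intersection over $t'$ and its adjacent seeds. Everything else stands.
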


We recall some useful properties of the set of distinguished functions
$\Inj^{t}$ and the (weakly) triangular basis $\can^{t}$.

\begin{Lem}[Substitution {\cite[Lemma 6.2.4]{qin2017triangular}} ]\label{lem:substitution}

Assume that $[p*X(t)^{d_{X}}*I(t)^{d_{I}}]^{t}$ is $(\prec_{t},\mm)$-unitriangular
to $\Inj^{t}$ for any $p\in\cRing$, $d_{X},d_{I}\in\N^{I_{\ufv}}$.
If a pointed function $Z\in\hLP(t)$ is $(\prec_{t,}\mm)$-unitriangular
to $\Inj^{t}$, so is the product $[p*X(t)^{d_{X}}*Z*I(t)^{d_{I}}]^{t}$.

\end{Lem}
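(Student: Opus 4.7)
The plan is to expand $Z$ via its $(\prec_t,\mm)$-unitriangular decomposition in $\Inj^t$, distribute the twisted multiplication, and re-expand each resulting factor using the hypothesis; the crux is then to verify that the compounded coefficients still lie in $\mm$.

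Let $g=\deg^t Z$ and write $Z=\Inj_g^t+\sum_{g'\prec_t g}a_{g'}\Inj_{g'}^t$ with $a_{g'}\in\mm$. For each $g'\preceq_t g$, the pointed product $p*X(t)^{d_X}*\Inj_{g'}^t*I(t)^{d_I}$ has degree $H(g'):=\deg p+d_X+g'+\deg^t I(t)^{d_I}$ and leading coefficient $v^{\gamma(g')}$ for some $\gamma(g')$ dictated by $\lambda$. Expanding $\gamma$ by bilinearity and using antisymmetry gives $\gamma(g')-\gamma(g)=\lambda\bigl(\deg p+d_X-\deg^t I(t)^{d_I},\,g'-g\bigr)$. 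Writing $g'-g=p^*n$ with $n\in\Nufv^{\geq 0}$ and invoking Lemma~\ref{lem:compatible_pair}(2) yields
\[
\gamma(g')-\gamma(g)=-\sum_{k\in I_\ufv}n_k\,\diag_k\,\bigl(\deg p+d_X-\deg^t I(t)^{d_I}\bigr)_k.
\]
Now $\deg p\in\Z^{I_\fv}$ has vanishing $I_\ufv$-components, and the injective-reachability condition \eqref{eq:injective-reachable-condition} gives $\pr_{I_\ufv}\deg^t I_k(t)=-f_k$, so $(\deg p+d_X-\deg^t I(t)^{d_I})_k=d_{X,k}+d_{I,k}\geq 0$; therefore $\gamma(g')-\gamma(g)\leq 0$ and $a_{g'}v^{\gamma(g')-\gamma(g)}\in\mm$ whenever $g'\prec_t g$.

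By linearity and normalization,
\[
[p*X(t)^{d_X}*Z*I(t)^{d_I}]^t=\sum_{g'\preceq_t g}a_{g'}\,v^{\gamma(g')-\gamma(g)}\,[p*X(t)^{d_X}*\Inj_{g'}^t*I(t)^{d_I}]^t,
\]
with the convention $a_g=1$. Unpacking $\Inj_{g'}^t=[p_{g'}*X(t)^{[\pr_{I_\ufv}g']_+}*I(t)^{[-\pr_{I_\ufv}g']_+}]^t$ and absorbing the quasi-commutation $v$-powers into normalization, each factor on the right takes the form $[p'*X(t)^{d_X'}*I(t)^{d_I'}]^t$, so the hypothesis expands it as $\Inj_{H(g')}^t+\sum_{h\prec_t H(g')}b_{g',h}\Inj_h^t$ with $b_{g',h}\in\mm$. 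Since $g'\mapsto H(g')$ is an affine translation, $g'\prec_t g$ implies $H(g')\prec_t H(g)$, so the full expansion of $[p*X(t)^{d_X}*Z*I(t)^{d_I}]^t$ in $\Inj^t$ has leading term $\Inj_{H(g)}^t$ arising uniquely from the $g'=g$ piece; every other coefficient is a finite sum of products each lying in $\mm$, hence in $\mm$, while Lemma~\ref{lem:finite_interval} guarantees that the formal sum is well-defined in $\hLP(t)$.

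The main obstacle is the sign analysis of $\gamma(g')-\gamma(g)$: it hinges on the asymmetric sign pattern built into $\Inj_h^t$, where injective-reachability forces the $I$-variables to contribute strictly negative $I_\ufv$-components while the $X$-variables contribute non-negative ones, delivering exactly the non-positive $v$-powers needed to keep the corrected coefficients in $\mm$.
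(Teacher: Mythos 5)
Your argument is correct and follows essentially the same route as the cited proof of \cite[Lemma 6.2.4]{qin2017triangular}: expand $Z$ in $\Inj^{t}$, observe that after normalization each summand $[p*X(t)^{d_{X}}*\Inj_{g'}^{t}*I(t)^{d_{I}}]^{t}$ collapses to a term of the form covered by the hypothesis, and control the normalization $v$-powers via $\lambda(\,\cdot\,,p^{*}n)=-\sum_{k}n_{k}\diag_{k}(\,\cdot\,)_{k}$ together with the sign constraints $\pr_{I_{\ufv}}\deg^{t}X(t)^{d_{X}}=d_{X}\geq0$ and $\pr_{I_{\ufv}}\deg^{t}I(t)^{d_{I}}=-d_{I}\leq0$. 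The sign computation, the translation property of $H$, and the finiteness check via Lemma \ref{lem:finite_interval} are all in order.
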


\begin{Lem}[{\cite[Lemma 5.4.2]{qin2017triangular}}]\label{lem:inj_set_adjacent_compatible}

For any $k\in I_{\ufv}$, the set of distinguished functions $\Inj^{t}$
is compatibly pointed at $t$ and $\mu_{k}t$.

\end{Lem}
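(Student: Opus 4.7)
The plan is to exploit that $\Inj^t_g$ is, up to a frozen scalar, a twisted product of two cluster monomials --- one from $t$ and one from $t[1]$ --- both of which individually enjoy compatible pointedness along any mutation. The real work is the tropical check that $\phi_{t',t}$ is \emph{linear} on the sum of their $g$-vectors, since $\phi_{t',t}$ is only piecewise linear.

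First, I would decompose
\[
\Inj^t_g = [p_g * M_1 * M_2]^t,\qquad M_1 := X(t)^{[\pr_{I_{\ufv}} g]_+},\qquad M_2 := I(t)^{[-\pr_{I_{\ufv}} g]_+},
\]
where $p_g \in \cRing$. Here $M_1$ is a localized quantum cluster monomial of $t$, while $M_2$ is, up to normalization and a frozen factor, the Laurent expansion in $\LP(t)$ of a cluster monomial of $t[1]$. By the sign-coherence of $g$-vectors (Theorem \ref{thm:cluster_expansion} and Remark \ref{rem:diff_mutation_convention}), any cluster monomial is compatibly pointed at any two seeds; in particular the Laurent expansions of $M_1$ and $M_2$ at $t' := \mu_k t$ are pointed at $\phi_{t',t}(\deg^t M_1)$ and $\phi_{t',t}(\deg^t M_2)$ respectively. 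The frozen monomial $p_g$ is fixed by the mutation map on frozen variables, and its degree is preserved by $\phi_{t',t}$.

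The heart of the argument is a chamber check in the $k$-th coordinate. Writing $g_k$ for the $k$-th coordinate of $g$, one has
\[
[\deg^t M_1]_k = [g_k]_+ \geq 0,\qquad [\deg^t M_2]_k = -[-g_k]_+ \leq 0,\qquad [\deg^t p_g]_k = 0,
\]
and crucially at most one of $[g_k]_+$ and $[-g_k]_+$ is nonzero. Hence, whether $g_k \geq 0$ or $g_k \leq 0$, all three of these degrees lie in a common closed half-space ($\{x_k \geq 0\}$ or $\{x_k \leq 0\}$) on which the piecewise-linear map $\phi_{t',t}$ acts linearly. Consequently
\[
\phi_{t',t}(\deg^t p_g) + \phi_{t',t}(\deg^t M_1) + \phi_{t',t}(\deg^t M_2) = \phi_{t',t}(\deg^t p_g + \deg^t M_1 + \deg^t M_2) = \phi_{t',t}(g).
\]

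To conclude, the Laurent expansion map commutes with the twisted product and with normalization up to an overall scalar, so the Laurent expansion of $\Inj^t_g$ at $t'$ is (a scalar multiple of) the normalized product of the Laurent expansions of $p_g$, $M_1$, $M_2$ at $t'$. This product is pointed at the sum of the three $\phi_{t',t}$-images, namely at $\phi_{t',t}(g)$. Together with the original $g$-pointedness in $\LP(t)$, this yields compatible pointedness of $\Inj^t_g$ at $t$ and $\mu_k t$. The chamber check is the only conceptually substantive step; everything else is formal, and the check itself reflects the fact that the defining formula for $\Inj^t_g$, via $[\ \cdot\ ]_+$ and $[\ -\cdot\ ]_+$, is designed exactly so that the positive and negative exponents in the $k$-direction cannot simultaneously be nonzero.
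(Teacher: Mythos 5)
The paper itself does not prove this lemma --- it is quoted from \cite[Lemma 5.4.2]{qin2017triangular} --- so there is no in-paper argument to compare against. Your strategy is the right one: factor $\Inj^{t}_{g}$ into a frozen monomial, a cluster monomial of $t$, and (the Laurent expansion of) a cluster monomial of $t[1]$; invoke the compatible pointedness of cluster monomials (a consequence of sign-coherence, as the paper notes before Remark \ref{rem:suppdim_theta_func}); and observe that the $k$-th coordinates of the three leading degrees all lie in one closed half-space $\{x_{k}\geq 0\}$ or $\{x_{k}\leq 0\}$, on which $\phi_{t',t}$ is linear, hence additive. That chamber check is indeed the substantive point, and it is exactly what the operators $[\,\cdot\,]_{+}$ and $[-\,\cdot\,]_{+}$ in \eqref{eq:inj_function} are designed to guarantee. (Minor remark: $I(t)^{m}=[\prod I_{k}^{m_{k}}(t)]^{t}$ is \emph{exactly} $\seq^{*}X(t[1])^{\sum m_{k}f_{\sigma k}}$, with no extra frozen factor, since the normalization cancels the commutation $v$-power.)

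There is, however, a genuine gap in your last step. You conclude that the expansion of $\Inj^{t}_{g}$ in $\LP(t')$ is ``a scalar multiple of'' a $\phi_{t',t}(g)$-pointed element and then assert compatible pointedness; but Definition \ref{def:compatibly_pointed} requires the expansion itself to be pointed, i.e., to have leading coefficient exactly $1$. Concretely, writing $d_{p}=\deg^{t}p_{g}$, $g_{1}=\deg^{t}M_{1}$, $g_{2}=\deg^{t}M_{2}$, one has $\Inj^{t}_{g}=v^{\beta}\,p_{g}*M_{1}*M_{2}$ with $\beta=-\lambda_{t}(d_{p},g_{1})-\lambda_{t}(d_{p}+g_{1},g_{2})$, while the product of the three expansions in $\LP(t')$ has leading coefficient $v^{\gamma}$ with $\gamma$ given by the same expression in $\lambda_{t'}$ evaluated on the $\phi_{t',t}$-images; you must verify $\beta+\gamma=0$. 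This does follow from the very chamber observation you already made, but it has to be said: for $a,b$ in a common closed half-space one has $\lambda_{t'}(\phi_{t',t}a,\phi_{t',t}b)=\lambda_{t}(a,b)$. Indeed, one of the two linear pieces of $\phi_{t',t}$ is the change of basis $\Mc(t)\rightarrow\Mc(t')$, which preserves $\lambda$ by Lemma \ref{lem:mutate_compatible_pair}; the two pieces differ only on $f_{k}$, by the vector $\sum_{i}b_{ik}f_{i}'=-p^{*}e_{k}'$ expressed in $\Mc(t')$, and the resulting discrepancy in $\lambda$-values cancels by Lemma \ref{lem:compatible_pair}. With this supplement the scalar is $1$ and your proof is complete.
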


Finally, the following result gives a sufficient condition for strengthening
a weakly triangular basis into a triangular basis.

\begin{Prop}[\cite{qin2017triangular}]\label{prop:lift_to_triangular}

Let there be given $k\in I_{\ufv}$ and seeds $t=\mu_{k}t'$. If the
triangular basis $\can^{t}$ for seed $t$ and the weakly triangular
basis $\can^{t'}$ for $t'$ both exist, such that they are compatible,
i.e., $\mu_{k}^{*}\can_{g}^{t}=\can_{\phi_{t',t}g}^{t'}$ $\forall g\in\Mc(t)$,
then $\can^{t'}$ is the triangular basis for seed $t'$.

\end{Prop}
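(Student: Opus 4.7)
The plan is to verify that the weakly triangular basis $\can^{t'}$ satisfies the multiplication-triangularity condition~(4) of Definition~\ref{def:triangular_basis}; the remaining conditions are easy. Conditions~(1) and~(3) are built into the definition of a weakly triangular basis (bar-invariance and $\Mc(t')$-pointedness), while condition~(2) holds because each quantum cluster monomial of $t'$ or $t'[1]$ is a bar-invariant pointed Laurent polynomial equal to a distinguished function $\Inj^{t'}_{g'}$, hence equal to the triangular function $\can^{t'}_{g'}$ by uniqueness (Lemma~\ref{lem:tri_func_unique}). Since $\can^{t'}$ is $(\prec_{t'},\mm)$-unitriangular to $\Inj^{t'}$ by construction, Lemma~\ref{lem:inverse_triangular_transition} reduces the problem to showing that $[X_i(t')*\can^{t'}_{g'}]^{t'}$ is $(\prec_{t'},\mm)$-unitriangular to $\Inj^{t'}$ for every $i\in I$ and $g'\in\Mc(t')$.

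First I would handle the case $i\neq k$ (including every frozen direction). Here $X_i(t')$ and $X_i(t)$ agree under the mutation isomorphism, and the piecewise-linear map $\phi_{t',t}$ is linear along the direction $f_i$ because $f_i$ has zero $k$-th coordinate; in particular $\phi_{t',t}(g+f_i)=\phi_{t',t}g+f_i'$. Starting from the $(\prec_t,\mm)$-unitriangular expansion of $[X_i(t)*\can^t_g]^t$ in $\can^t$ provided by the triangularity of $\can^t$, I would transfer this identity to $t'$ using the hypothesis $\mu_k^*\can^t_g=\can^{t'}_{\phi_{t',t}g}$ together with the compatibility of $\Inj^t$ and $\Inj^{t'}$ at the adjacent seeds $t,\mu_kt$ (Lemma~\ref{lem:inj_set_adjacent_compatible}) and the mutation-invariance of the dominance-order decomposition (Proposition~\ref{prop:invariance_dominance_decomposition}). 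The outcome is a $(\prec_{t'},\mm)$-unitriangular expansion of $[X_i(t')*\can^{t'}_{\phi_{t',t}g}]^{t'}$ in $\can^{t'}$, and hence in $\Inj^{t'}$.

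For the remaining case $i=k$, the variable $X_k(t')$ is not a cluster variable in $t$, but it equals the distinguished function $\Inj^{t'}_{f_k'}$. I would apply the substitution Lemma~\ref{lem:substitution} to $\Inj^{t'}$ with $Z=\can^{t'}_{g'}$, $p=1$, $d_X=e_k$, $d_I=0$. Its standing hypothesis---that every normalised product $[p*X(t')^{d_X}*I(t')^{d_I}]^{t'}$ with $p\in\cRing$ and $d_X,d_I\in\N^{I_{\ufv}}$ is $(\prec_{t'},\mm)$-unitriangular to $\Inj^{t'}$---holds in the corresponding form for $\Inj^t$, because $\can^t$ is a genuine triangular basis and iterated use of Lemma~\ref{lem:factorization_triangular} and Definition~\ref{def:triangular_basis}(4) yields a $(\prec_t,\mm)$-unitriangular expansion of such products in $\can^t$, hence in $\Inj^t$. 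Transferring this hypothesis from $t$ to $t'$ is the main obstacle. I would combine the already-established $i\neq k$ case (which handles multiplication by every variable other than $X_k(t')$ and $I_k(t')$), Lemma~\ref{lem:factorization_triangular}(2) for the frozen factor $p$, and the bar-invariance of the resulting expansions together with the vanishing of bar-invariant elements of $\mm$. The delicate step is the coupling between $X_k(t')$ and $I_k(t')$ inside a single normalised product, which cannot be reduced to a cluster monomial of $t$ via $\mu_k^*$; one must exploit the explicit exchange relation at $k$ together with the compatibility $\mu_k^*\can^t_g=\can^{t'}_{\phi_{t',t}g}$ of the bases to control these coefficients.
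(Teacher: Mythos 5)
Your reduction of the problem to condition (4) of Definition \ref{def:triangular_basis} is correct, and your treatment of the directions $i\neq k$ follows the same lines as the source the paper cites (\cite[Claim (ii) in the proof of Proposition 6.6.3]{qin2017triangular}): push the $(\prec_t,\mm)$-unitriangular expansion of $[X_i(t)*\can^t_g]^t$ through $\mu_k^*$, use the compatibility hypothesis and the mutation-invariance of the dominance order decomposition, and then (a step you should make explicit) use bar-invariance to see that $\phi_{t',t}(g+f_i)=\phi_{t',t}g+f_i'$ is the \emph{unique} $\prec_{t'}$-maximal degree in the resulting support, exactly as in the proof of Proposition \ref{prop:admissible_nearby_seed}.

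The case $i=k$, however, is the entire content of the proposition, and your proposal does not prove it: you name the obstacle and the tool (``one must exploit the explicit exchange relation at $k$ together with the compatibility \dots to control these coefficients'') without carrying out the step. Worse, the route through the Substitution Lemma \ref{lem:substitution} is circular as stated: its standing hypothesis at the seed $t'$ --- that every $[p*X(t')^{d_X}*I(t')^{d_I}]^{t'}$ is $(\prec_{t'},\mm)$-unitriangular to $\Inj^{t'}$ --- is precisely what condition (4) at $t'$ (iterated via Lemma \ref{lem:factorization_triangular}) would deliver, so it cannot be assumed in order to establish condition (4) at $t'$. The missing argument is the one the paper points to: the exchange relation expresses $[X_k(t')*(\mu_k^*)^{-1}X_k(t)]^{t'}$ as $X(t')^{m_1}+v^{c}X(t')^{m_2}$ with $m_1\succ_{t'}m_2$ and $m_1,m_2$ supported away from $k$, hence as a $(\prec_{t'},\mm)$-unitriangular combination of cluster monomials already controlled by the $i\neq k$ case; one then multiplies $\can^{t'}_{g'}$ by this relation, uses that $(\mu_k^*)^{-1}X_k(t)$ acts unitriangularly (it is the image under $(\mu_k^*)^{-1}$ of the initial cluster variable $X_k(t)$, handled by the compatibility hypothesis and the triangularity of $\can^t$), and cancels the leading terms to extract the unitriangularity of $[X_k(t')*\can^{t'}_{g'}]^{t'}$. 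Without this bootstrapping step the proof is incomplete.
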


\begin{proof}

The claim is proved as Claim (ii) in the proof of \cite[Proposition 6.6.3]{qin2017triangular}.
Its proof is based on basic properties of triangular bases, as well
as the observation that the exchange relation of quantum cluster variables
gives a $(\prec_{t'},\mm)$-triangular decomposition of $X_{k}(t')*X_{k}(t)$.

\end{proof}

\subsection{Triangular bases and twist automorphisms}

Let there be given similar quantum seeds $t,t'=\seq t\in\Delta^{+}$.
Assume that $\tw=\tw^{t}=\seq^{*}\var^{t}$ is a twist automorphism
on $\cF(t)$ passing through $t'$ where $\var^{t}:\cF(t)\simeq\cF(t')$
is the variation map.

\begin{Prop}\label{prop:twist_map_similar_seed}

Assume that $\can^{t}$ and $\can^{t'}$ are the weakly triangular
basis with respect to the seeds $t$ and $t'$ respectively. If $\can^{t'}=(\seq^{*})^{-1}\can^{t}$,
then $\can^{t}$ is permuted by the twist automorphism $\tw$.

\end{Prop}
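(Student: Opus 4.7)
The plan is to recognize that $\tw = \seq^{*}\circ\var^{t}$ and the hypothesis $\can^{t'} = (\seq^{*})^{-1}\can^{t}$ (equivalently $\seq^{*}\can^{t'} = \can^{t}$) reduce the desired conclusion $\tw(\can^{t}) = \can^{t}$ to the single equality $\var^{t}(\can^{t}) = \can^{t'}$. To prove this, I would show that $\var^{t}$ transports $\can^{t}$ to the weakly triangular basis at $t'$, using Lemma \ref{lem:similar_triangular_basis} together with the uniqueness of triangular functions.

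First, I would verify that for each $g \in \Mc(t)$ the image $\var^{t}(\can_{g}^{t})$ is a bar-invariant pointed Laurent polynomial in $\LP(t')$ similar to $\can_{g}^{t}$ in the sense of Section \ref{subsec:similar-seeds}. Writing $\can_{g}^{t} = X(t)^{g}\cdot\bigl(1 + \sum_{n > 0} c_{n} Y(t)^{n}\bigr)$ with $c_{n} \in \kk$, the defining rules $\var^{t}(X_{k}(t)) = p_{k}\cdot X_{\sigma k}(t')$ and $\var^{t}(Y_{k}(t)) = Y_{\sigma k}(t')$ from Definition \ref{def:variation_map} yield
\begin{equation*}
\var^{t}(\can_{g}^{t}) = p\cdot X(t')^{\var^{t}g}\cdot\bigl(1 + \sum_{n > 0} c_{n} Y(t')^{\sigma n}\bigr)
\end{equation*}
for some frozen factor $p \in \cRing$, and bar-invariance is preserved because $\var^{t}$ is $\kk$-linear and preserves the twisted product, hence sends Laurent monomials with integer coefficient to Laurent monomials with integer coefficient.

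Next, I would invoke Lemma \ref{lem:similar_triangular_basis} for the similar quantum seeds $t, t'$ and the weakly triangular basis $\can^{t}$: it produces a weakly triangular basis at $t'$ consisting of pointed Laurent polynomials in $\LP(t')$ similar to those of $\can^{t}$. Since the weakly triangular basis at $t'$ is intrinsically the unique $\Mc(t')$-pointed collection of triangular functions with respect to $\Inj^{t'}$ (Lemma \ref{lem:tri_func_unique}), it must coincide with the given $\can^{t'}$. Combined with the previous step, each $\var^{t}(\can_{g}^{t})$ is a bar-invariant pointed function of degree $\var^{t}(g) \in \Mc(t')$ similar to $\can_{g}^{t}$, so uniqueness of triangular functions at each degree forces $\var^{t}(\can_{g}^{t}) = \can_{\var^{t}(g)}^{t'}$, hence $\var^{t}(\can^{t}) = \can^{t'}$. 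The conclusion then follows: $\tw(\can^{t}) = \seq^{*}(\var^{t}(\can^{t})) = \seq^{*}(\can^{t'}) = \can^{t}$.

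The main obstacle I anticipate is ensuring that $\var^{t}(\can_{g}^{t})$ is precisely the representative of the similar family at degree $\var^{t}(g)$, as opposed to a different similar function that differs by a frozen shift; this is resolved by the bijectivity of the degree-level map $\var^{t}\colon\Mc(t)\to\Mc(t')$ built into the variation map, together with the uniqueness of triangular functions at each degree, so the identification is automatic once bar-invariance and pointedness of $\var^{t}(\can_{g}^{t})$ have been verified.
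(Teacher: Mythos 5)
Your proposal is correct and follows essentially the same route as the paper: show that $\var^{t}$ carries $\can^{t}$ to a set of similar pointed bar-invariant functions, identify that set with the weakly triangular basis at $t'$ via Lemma \ref{lem:similar_triangular_basis} and the uniqueness of triangular functions, and then conclude $\tw\can^{t}=\seq^{*}\var^{t}\can^{t}=\seq^{*}\can^{t'}=\can^{t}$. The extra care you take with frozen shifts and the bijectivity of $\var^{t}$ on $\Mc(t)$ is exactly the point the paper compresses into the remark that $\var^{t}\can^{t}$ consists of \emph{all} elements similar to those of $\can^{t}$.
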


\begin{proof}

Notice that $\var^{t}\can^{t}$ consists of all elements similar to
those of $\can^{t}$. Therefore, $\var^{t}\can^{t}$ is the weakly
triangular basis with respect to $t'$ by Lemma \ref{lem:similar_triangular_basis}.
We deduce from the uniqueness of triangular functions that $\var^{t}\can^{t}=\can^{t'}=(\seq^{*})^{-1}\can^{t}$
and, equivalently, $\tw\can^{t}=\can^{t}.$

\end{proof}

As a consequence of Proposition \ref{prop:twist_map_similar_seed},
we obtain the following nice property of common triangular bases.

\begin{Prop}\label{prop:tri_basis_permute_by_twist}

If $\midAlg$ possesses the common triangular basis $\can$, then
$\can$ is permuted by any twist automorphisms.

\end{Prop}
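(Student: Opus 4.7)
The plan is to reduce the statement directly to Proposition \ref{prop:twist_map_similar_seed}, which already furnishes a sufficient condition for a twist automorphism to permute a weakly triangular basis. The key observation is that the compatibility built into the definition of a \emph{common} triangular basis matches exactly the hypothesis of that proposition, so essentially no new work is needed beyond unpacking definitions.

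First I would fix an arbitrary twist automorphism $\tw$ and a seed $t\in\Delta^{+}$ on which it is realized, so that by Definition \ref{def:twist_automorphism} we may write $\tw=\tw^{t}=\seq^{*}\var^{t}$, passing through a similar seed $t'=\seq t\in\Delta^{+}$, with variation part $\var^{t}\colon\cF(t)\simeq\cF(t')$. By Theorem \ref{thm:global_twist_automorphism}, it suffices to verify that $\can^{t}$ is stable under $\tw^{t}$: the corresponding statement at other seeds will follow automatically because $\tw$ is defined consistently under mutation identifications, as is the common triangular basis.

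Next I would unpack the hypothesis that $\midAlg$ admits the common triangular basis $\can$. By definition this yields triangular bases $\can^{t}$ and $\can^{t'}$ of $\midAlg(t)$ and $\midAlg(t')$ with respect to the respective seeds, and the pairwise compatibility clause states precisely that $\seq^{*}\can^{t'}=\can^{t}$, equivalently $\can^{t'}=(\seq^{*})^{-1}\can^{t}$. In particular, each is a weakly triangular basis at its seed. At this point the hypotheses of Proposition \ref{prop:twist_map_similar_seed} are met verbatim, so applying it gives $\tw^{t}\can^{t}=\can^{t}$, which is the desired conclusion.

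I do not foresee a substantive obstacle: the argument is a bookkeeping matching between the definition of a common triangular basis and the existing Proposition \ref{prop:twist_map_similar_seed}. The only point that warrants attention is that the notion of ``being permuted by $\tw$'' is well-defined independently of the auxiliary choice of seed $t$ used to realize $\tw$, which is ensured by Theorem \ref{thm:global_twist_automorphism} together with the compatibility of $\can^{t}$ across seeds $t\in\Delta^{+}$.
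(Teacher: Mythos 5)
Your proposal is correct and follows exactly the paper's route: the paper derives this proposition as a direct consequence of Proposition \ref{prop:twist_map_similar_seed}, and your argument supplies precisely the intended bookkeeping — the compatibility clause in the definition of the common triangular basis gives $\can^{t'}=(\seq^{*})^{-1}\can^{t}$ for the similar seed $t'$ through which the given twist automorphism passes, and triangular bases are in particular weakly triangular, so that proposition applies verbatim.
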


\subsection{Adjacent seeds\label{subsec:Adjacent-seeds}}

We prove some important statements concerning adjacent seeds, which
will be useful for studying the existence of common triangular bases. 

For any $t'=\mu_{k}t$, $j\neq k\in I_{\ufv}$, recall that we have
$\mu_{k}^{*}X_{j}(t')=X_{j}(t)$ and $\mu_{k}^{*}I_{j}(t')=I_{j}(t)$.

\begin{Def}[Admissibility]\label{def:admissibility}

Let there be given seeds $t'=\mu_{k}t$. A triangular basis $\can^{t}$
with respect to seed $t$ is said to be admissible in direction $k$,
if the quantum cluster monomials $\mu_{k}^{*}X_{k}(t')^{d}$, $\mu_{k}^{*}I_{k}(t')^{d}$,
$d\in\N$, are contained in $\can^{t}$.

\end{Def}

Let $t$ be a given injective-reachable seed. By the following Lemma,
a twist automorphism could reduce the burden to check the admissibility
condition by half.

\begin{Lem}\label{lem:reduce_admissible}

Assume that we have a DT-type twist automorphism $\tw$. If $\tw\can^{t}=\can^{t}$,
then $\can^{t}$ is admissible in direction $k$ if and only if it
contains $X_{k}(t')^{d}$, $d\in\N$.

\end{Lem}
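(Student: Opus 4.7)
The ``only if'' direction is immediate from Definition \ref{def:admissibility}, so my plan is to establish the converse: assuming $\tw \can^t = \can^t$ and that $\mu_k^* X_k(t')^d \in \can^t$ for every $d \in \mathbb{N}$, I aim to deduce $\mu_k^* I_k(t')^d \in \can^t$ for every such $d$.

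The first step is to transport the identity from Lemma \ref{lem:twist_injective_variable} to the adjacent seed. By Theorem \ref{thm:global_twist_automorphism}, $\tw$ corresponds under conjugation by $\mu_k^*$ to a twist automorphism $\tw^{t'} = (\mu_k^*)^{-1}\,\tw\,\mu_k^*$ on $\cF(t')$. Since $\tw$ is of DT-type, the discussion in Section \ref{subsec:Twist-automorphism-DT-type} (applied to $s=\mu_k t$) shows that $\tw^{t'}$ again passes through $t'[1]$. Applying Lemma \ref{lem:twist_injective_variable} now at the seed $t'$ supplies a frozen factor $p \in \cRing$ with $\tw^{t'} X_k(t') = p \cdot I_k(t')$.

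The second step is to raise this identity to the $d$-th power and pull it back to $\cF(t)$. Because $X_k(t')$ quasi-commutes with itself trivially, the power $X_k(t')^d$ agrees with the $d$-fold twisted product, so the algebra-automorphism property of $\tw^{t'}$ yields $\tw^{t'}(X_k(t')^d) = (p \cdot I_k(t'))^{*d}$, which equals a unit in $v^{\mathbb{Z}}$ times $p^d \cdot I_k(t')^d$. Since $k \in I_{\ufv}$, pulling back by $\mu_k^*$ sends $\cRing \subset \LP(t')$ into $\cRing \subset \LP(t)$, giving
\[
\tw\bigl(\mu_k^* X_k(t')^d\bigr) \;=\; v^\gamma \cdot q \cdot \mu_k^* I_k(t')^d
\]
for some $\gamma \in \mathbb{Z}$ and some frozen factor $q \in \cRing$.

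Finally, the hypothesis $\mu_k^* X_k(t')^d \in \can^t$ together with $\tw\can^t = \can^t$ places the left-hand side in $\can^t$. To finish, I invoke Lemma \ref{lem:factorization_triangular}(2), which says that (normalized) multiplication by any frozen variable permutes $\can^t$; iterating it in the inverse direction strips off $q$ and the unit $v^\gamma$, yielding $\mu_k^* I_k(t')^d \in \can^t$. I do not expect a serious obstacle here; the only delicate point is tracking $v$-scalars when passing between the commutative and the twisted products, which is routine.
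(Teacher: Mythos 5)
Your proposal is correct and follows essentially the same route as the paper: apply the twist automorphism to $X_k(t')^d$, identify the image with $I_k(t')^d$ up to a frozen factor via Lemma \ref{lem:twist_injective_variable}, and strip the frozen factor using the factorization property of Lemma \ref{lem:factorization_triangular}. The extra care you take in transporting Lemma \ref{lem:twist_injective_variable} to the seed $t'$ via $\tw^{t'}=(\mu_k^*)^{-1}\tw\,\mu_k^*$ and in tracking $v$-scalars is implicit in the paper's shorter argument.
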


\begin{proof}

If $\can^{t}$ contains $X_{k}(t')^{d}$ for some $d\in\N$, then
$\can^{t}=\tw\can^{t}$ contains $\tw X_{k}(t')^{d}$. By Lemma \ref{lem:twist_injective_variable},
there exists some frozen factor $p_{k}$ such that $\tw X_{k}(t')^{d}=p_{k}^{d}\cdot I_{k}(t')^{d}$.
By \cite[Lemma 6.2.1]{qin2017triangular}, $\can^{t}$ contains $I_{k}(t')^{d}$
as well.

\end{proof}

The following crucial result tells us that the admissibility condition
implies the existence of the compatible triangular basis for an adjacent
seed. Unlike previous works, no positivity assumption on the basis
is imposed. 

\begin{Prop}[Adjacent compatibility]\label{prop:admissible_nearby_seed}

Let there be given adjacent seeds $t'=\mu_{k}t$ related by one mutation,
$k\in I_{\ufv}$, and the triangular basis $\can^{t}$ with respect
to the seed $t$. If $\can^{t}$ is admissible in direction $k$,
then $\can^{t'}:=(\mu_{k}^{*})^{-1}\can^{t}$ is the triangular basis
with respect to $t'$. Moreover, $\can^{t}$ and $\can^{t'}$ are
compatible.

\end{Prop}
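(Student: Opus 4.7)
The plan is to reduce to Proposition \ref{prop:lift_to_triangular}, which requires establishing two statements: (i) $\can^t$ and $\can^{t'}$ are compatibly pointed at $t, t'$, and (ii) $\can^{t'}$ is the weakly triangular basis for $t'$. Both will follow from showing that, for every $g \in \Mc(t)$, the mutated element $\mu_k^* \can_g^t$ is $\phi_{t',t}(g)$-pointed in $\LP(t')$.

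The starting point is the $(\prec_t, \mm)$-unitriangular decomposition of $\can_g^t$ in the distinguished functions $\Inj^t$. By Lemma \ref{lem:inj_set_adjacent_compatible}, $\Inj^t$ is compatibly pointed at $t, t'$, so each $\mu_k^* \Inj_{g'}^t$ is $\phi_{t',t}(g')$-pointed. The mutation-invariance of the dominance order decomposition (Proposition \ref{prop:invariance_dominance_decomposition}) then transports this to a $\prec_{t'}$-decomposition of $\mu_k^* \can_g^t$ in $\mu_k^* \Inj^t$ with the same coefficients, reindexed through $\phi_{t',t}$.

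The hard part will be showing that $\phi_{t',t}(g)$ remains the unique $\prec_{t'}$-maximum of the transported support. Since $\phi_{t',t}$ is only piecewise linear, with chambers separated by the sign of the $k$-th coordinate, images of $\prec_t$-smaller degrees may fail to remain smaller in $\prec_{t'}$: a priori several $\phi_{t',t}(g')$ could become $\prec_{t'}$-maximal. The admissibility hypothesis is the key input here. It makes $\mu_k^* X_k(t')^d$ and $\mu_k^* I_k(t')^d$ genuine basis elements in $\can^t$, whose $g$-vectors parameterize exactly the two linear chambers of $\phi_{t',t}$. Using the factorization and substitution lemmas (Lemmas \ref{lem:factorization_triangular}, \ref{lem:substitution}) together with the correction technique of Section \ref{subsec:Correction-technique}, one factors out from $\can_g^t$ the appropriate power of one of these distinguished basis elements (according to the sign of $g_k$), reducing the ordering comparison to within a single chamber where $\phi_{t',t}$ is linear and intertwines $\tB(t)$ with $\tB(t')$ through the mutation matrices in a way compatible with the dominance order. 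This confirms that $\phi_{t',t}(g)$ is the unique $\prec_{t'}$-maximum with coefficient $1$.

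With compatibility in hand, the transferred decomposition is $(\prec_{t'}, \mm)$-unitriangular in $\mu_k^* \Inj^t$, and $\mu_k^*$ preserves bar-invariance. A final comparison between $\mu_k^* \Inj^t$ and $\Inj^{t'}$, again via the substitution lemma, promotes this to a $(\prec_{t'}, \mm)$-unitriangular decomposition in $\Inj^{t'}$, identifying $\can^{t'}$ as the weakly triangular basis at $t'$. Proposition \ref{prop:lift_to_triangular} applied to the compatible pair $(\can^t, \can^{t'})$ then upgrades it to the triangular basis at $t'$, completing the proof.
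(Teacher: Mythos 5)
Your overall architecture matches the paper's: establish that $(\mu_k^*)^{-1}\can^t$ is compatibly pointed and is the weakly triangular basis for $t'$, then invoke Proposition \ref{prop:lift_to_triangular}. You also correctly isolate the central difficulty, namely that $\phi_{t',t}$ is only piecewise linear, so after transport several degrees could a priori become $\prec_{t'}$-maximal. However, your proposed resolution of that difficulty is a genuine gap. ``Factoring out'' a power of $X_k(t')$ or $I_k(t')$ from $\can_g^t$ is not available in general: Lemma \ref{lem:factorization_triangular} only permits factoring out frozen variables or $q$-commuting cluster variables, and a generic basis element does not $q$-commute with these. More fundamentally, even if the leading degree $g$ lay in one linearity chamber, the lower-order degrees $\eta\prec_t g$ occurring in the decomposition are spread over both chambers, so no single-chamber reduction can control the whole support. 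The mechanism the paper actually uses is different and is missing from your proposal: all non-leading coefficients of the decomposition lie in $\mm=v^{-1}\Z[v^{-1}]$, the map $(\mu_k^*)^{-1}$ commutes with the bar involution, and $(\mu_k^*)^{-1}\can_g^t$ is bar-invariant; hence any $\prec_{t'}$-maximal degree of the transported support must carry a bar-invariant leading Laurent coefficient, which rules out every degree except $g'=\phi_{t',t}g$ (whose coefficient is $1$). Admissibility is not what forces unique maximality; it enters earlier, to make the Substitution Lemma applicable.

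A second, structural issue: you decompose $\can_g^t$ in terms of $\Inj^t$ and transport, which leaves you at the end needing to compare $(\mu_k^*)^{-1}\Inj^t$ with $\Inj^{t'}$ inside $\LP(t')$. That comparison requires knowing that the pulled-back cluster monomials $(\mu_k^*)^{-1}X_k(t)^d$ and $(\mu_k^*)^{-1}I_k(t)^d$ are $(\prec_{t'},\mm)$-unitriangular to $\Inj^{t'}$, which is essentially the admissibility of $\can^{t'}$ in direction $k$ --- part of what is being proved, so the step risks circularity. The paper avoids this by running the argument in the opposite direction: admissibility of $\can^t$ plus the Substitution Lemma show that $\mu_k^*\Inj_{g'}^{t'}$ is $(\prec_t,\mm)$-unitriangular to $\Inj^t$, hence to $\can^t$; the inverse-transition Lemma \ref{lem:inverse_triangular_transition} then expresses $\can^t$ unitriangularly in the pointed set $\mu_k^*\Inj^{t'}$, and mutation-invariance (Proposition \ref{prop:invariance_dominance_decomposition}) transports this directly into a decomposition in $\Inj^{t'}$ with no further comparison needed. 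You should restructure your argument along these lines and supply the bar-invariance step.
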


\begin{proof}

For any $g\in\Mc(t)$, denote $g'=\phi_{t',t}g\in\Mc(t')$ as before. 

For any $g'$, the $g'$-pointed function $\Inj_{g'}^{t'}$ in $\LP(t')$
takes the form

\begin{eqnarray*}
\Inj_{g'}^{t'}=\begin{cases}
[p_{g'}*X(t')^{d_{X}}*X_{k}(t')^{g'_{k}}*I(t')^{d_{I}}]^{t'} & g'_{k}\geq0\\{}
[p_{g'}*X(t')^{d_{X}}*I_{k}(t')^{-g'_{k}}*I(t')^{d_{I}}]^{t'} & g'_{k}\leq0
\end{cases}
\end{eqnarray*}
where $p_{g'}\in\cRing$ and $d_{X},d_{I}\in\N^{I_{\ufv}\backslash\{k\}}$.
Moreover, $\mu_{k}^{*}\Inj_{g'}^{t'}$ is $g$-pointed in $\LP(t)$
by Lemma \ref{lem:inj_set_adjacent_compatible}. Then it takes the
following form in $\LP(t)$:

\begin{eqnarray*}
\mu_{k}^{*}\Inj_{g'}^{t'}=\begin{cases}
[p_{g'}*X(t)^{d_{X}}*\mu_{k}^{*}X_{k}(t')^{g'_{k}}*I(t)^{d_{I}}]^{t} & g'_{k}\geq0\\{}
[p_{g'}*X(t)^{d_{X}}*\mu_{k}^{*}I_{k}(t')^{-g'_{k}}*I(t)^{d_{I}}]^{t} & g'_{k}\leq0
\end{cases}.
\end{eqnarray*}

Since $\can^{t}$ is admissible in direction $k$, it contains the
quantum cluster monomials $\mu_{k}^{*}X(t')^{d}$, $\mu_{k}^{*}I_{k}(t')^{d}$
viewed as elements in $\LP(t)$. Then these quantum cluster monomials
are $(\prec_{t},\mm)$-unitriangular to $\Inj^{t}$. By the Substitution
Lemma \ref{lem:substitution}, $\mu_{k}^{*}\Inj_{g'}^{t'}$ is $(\prec_{t},\mm)$-unitriangular
to $\Inj^{t}$, and thus $(\prec_{t},\mm)$-unitriangular to $\can^{t}$.
Then, by the inverse transition (Lemma \ref{lem:inverse_triangular_transition}),
$\can^{t}$ is $(\prec_{t},\mm)$-unitriangular to the $\Mc(t)$-pointed
set $\mu_{k}^{*}\Inj^{t'}=\{\mu_{k}^{*}\Inj_{g'}^{t'}|g'\in\Mc(t')\}$.
More precisely, for any $g\in\Mc(t)$, $\can_{g}^{t}$ has the following
$\prec_{t}$-decomposition in $\hLP(t)$:

\begin{eqnarray*}
Z:=\can_{g}^{t} & = & \mu_{k}^{*}\Inj_{g'}^{t'}+\sum_{\eta\prec_{t}g}b_{g,\eta}\mu_{k}^{*}\Inj_{\eta'}^{t'}
\end{eqnarray*}
where the coefficients $b_{g,\eta}\in\mm$, $\eta'=\phi_{t',t}\eta$.

Because the set of distinguished functions $\Inj^{t'}$ is compatibly
pointed at $t$ and $t'$ (Lemma \ref{lem:inj_set_adjacent_compatible}),
by the mutation-invariance property of dominance order decomposition
(Proposition \ref{prop:invariance_dominance_decomposition}), the
above $\prec_{t}$-decomposition of $\can_{g}^{t}$ in terms of $\Inj^{t'}$
also gives the $\prec_{t'}$-triangular decomposition in $\hLP(t')$:
\begin{eqnarray}
Z':=(\mu_{k}^{*})^{-1}\can_{g}^{t} & = & \Inj_{g'}^{t'}+\sum_{\eta\prec_{t}g}b_{g,\eta}\Inj_{\eta'}^{t'}.\label{eq:decomposition_adjacent}
\end{eqnarray}

We claim that \eqref{eq:decomposition_adjacent} is $(\prec_{t'},\mm)$-unitriangular
with the leading term $\Inj_{g'}^{t'}$, i.e., $(\mu_{k}^{*})^{-1}\can_{g}^{t}$
is $g'$-pointed. For the proof, first notice that $(\mu_{k}^{*})^{-1}$
commutes with the bar involution. It follows that $Z'$ is bar-invariant.
Consider the support $\supp_{\Inj^{t'}}(Z'):=\{g'\}\cup\{\eta'|b_{g,\eta}\neq0\}$.
Denote $b_{g,g}=1$. Any $\prec_{t'}$-maximal elements $m'\in\supp_{\Inj^{t'}}(Z')$
contributes a Laurent monomial $b_{g,m}X(t')^{m'}$ with a $\prec_{t'}-$maximal
degree in the Laurent expansion of $Z'\in\LP(t')$. Notice that $b_{g,g}=1$
and $b_{g,\eta}\in\mm$ for $\eta\prec_{t}g$. Then, the bar-invariance
of $Z'$ implies that $g'$ is the unique $\prec_{t'}$-maximal element
in the $\supp_{\Inj^{t'}}Z$. Our claim has been verified.

By the above claim, $\can^{t'}:=(\mu_{k}^{*})^{-1}\can^{t}$ is compatible
with $\can^{t}$, and it is $(\prec_{t'},\mm)$-unitriangular to $\Inj^{t'}$.
It follows from definition that it is the weakly triangular basis
for $t$. By Proposition \ref{prop:lift_to_triangular}, $\can^{t'}$
is further the triangular basis for the seed $t'$.

\end{proof}

We also need to verify the admissibility condition, provided some
compatibility condition (tropical properties). Let there be given
any $k\in I_{\ufv}$ and $d\in\N.$ Denote $t'=\mu_{k}t$ and $t'[1]=\mu_{\sigma k}(t[1])$.

\begin{Lem}\label{lem:support_to_admissible}

Let there be given $g=d\deg^{t}\mu_{k}^{*}X_{k}(t')$ and $g$-pointed
element $Z\in\LP(t)\cap\mu_{k}^{*}\LP(t')$. If $Z$ is bipointed
and its support dimension is the same as that of $\mu_{k}^{*}X_{k}(t')^{d}$,
then $Z=\mu_{k}^{*}X_{k}(t')^{d}$.

\end{Lem}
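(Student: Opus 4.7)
The plan is to show $Z = \mu_k^* X_k(t')^{*d}$ by analyzing $\LP(t) \cap \mu_k^* \LP(t')$ under a decomposition by the $k$-th Laurent coordinate, crucially exploiting that $b_{kk}=0$. By \eqref{eq:X_mutation}, $\mu_k^* X_k(t') = X(t)^\alpha + X(t)^\beta$ with $\alpha = -f_k + \sum_j [-b_{jk}]_+ f_j$ and $\beta = \alpha + p^*e_k$; thus $\mu_k^* X_k(t')$ is bipointed at $(\alpha,\beta)$ with support dimension $e_k$, and $\mu_k^* X_k(t')^{*d}$ is bipointed at $(d\alpha,\, d\alpha + dp^*e_k)$ with support dimension $de_k$. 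Since $Z$ has the same leading degree $g = d\alpha$, the same support dimension $de_k$, and is bipointed with coefficient $1$ at both extremes, the difference $Z' := Z - \mu_k^* X_k(t')^{*d}$ lies in $\LP(t) \cap \mu_k^* \LP(t')$ with Laurent support confined to $\{d\alpha + np^*e_k : 0 < n < d\}$. Every monomial of $Z'$ has $k$-th coordinate $-d$, because $b_{kk}=0$ gives $(p^*e_k)_k = 0$.

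Next I claim that any $W \in \LP(t) \cap \mu_k^* \LP(t')$ whose Laurent monomials all have $k$-th coordinate $-d$ is of the form $W = H * \mu_k^* X_k(t')^{*d}$ for a unique $H \in \kk[X_j(t)^\pm]_{j \ne k}$. Write $W = \mu_k^*(\tilde W)$ with $\tilde W = \sum_n G_n \cdot X_k(t')^n \in \LP(t')$ a finite expansion, $G_n \in \kk[X_j(t')^\pm]_{j \ne k}$. Then $W = \sum_n \mu_k^*(G_n) \cdot (\mu_k^* X_k(t'))^n$ in $\cF(t)$. Because $\alpha_k = \beta_k = -1$ and $(p^*e_k)_k = 0$, every term in the formal Laurent expansion of $(\mu_k^* X_k(t'))^n$ (a Laurent polynomial for $n \ge 0$, a formal Laurent series for $n < 0$) has $k$-th coordinate exactly $-n$, so distinct-$n$ summands land in disjoint $k$-coordinate subspaces. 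The hypothesis on $W$ then forces the $n$-th summand to vanish for every $n \ne d$, and injectivity of $\mu_k^*$ gives $G_n = 0$. Hence $\tilde W = G_d \cdot X_k(t')^d$ and the claim follows.

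Applying the claim to $Z'$ yields $Z' = H * X^{d\alpha} * F_d(Y_k(t))$ where $F_d$ is the $F$-polynomial of $\mu_k^* X_k(t')^{*d}$, a polynomial of degree $d$ in $Y_k$ with $F_d(0)=1$. Commuting $H$ past $X^{d\alpha}$ (absorbing $v$-factors into a new $H' \in \kk[X_j(t)^\pm]_{j \ne k}$) gives $Z' = X^{d\alpha} * H' \cdot F_d(Y_k)$. The requirement that $Z'$'s Laurent support (after peeling off $X^{d\alpha}$) lies in $\Z p^* e_k$, combined with $F_d$ being a non-zero-divisor in the domain $\kk[X_j(t)^\pm]_{j \ne k}$, forces via a coset decomposition of $H'$'s support modulo $\Z p^*e_k$ that $H' \in \kk[Y_k^\pm]$. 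The degree bounds $\deg_{Y_k}(H' F_d) \le d-1$ and minimum $Y_k$-degree $\ge 1$ (from $Z'$'s support at $0 < n < d$) then force $H' = 0$, so $Z' = 0$ and $Z = \mu_k^* X_k(t')^{*d}$. The main obstacle is this final coset-and-degree argument; everything hinges on $b_{kk}=0$, which places $Y_k$ entirely inside $\kk[X_j(t)^\pm]_{j \ne k}$ and enables the clean $k$-th coordinate decomposition throughout.
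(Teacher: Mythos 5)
Your argument is correct, and it reaches the conclusion by a route that differs from both proofs the paper gives. The paper's first proof is essentially a citation: it views $\LP(t)\cap\mu_{k}^{*}\LP(t')$ as a rank-one (type $A_{1}$) quantum cluster algebra and invokes Proposition \ref{prop:support_compatible_equivalent} together with a lemma from \cite{qin2019bases} on elements compatibly pointed at all seeds. The paper's second, ``fundamental'' proof applies $(\mu_{k}^{*})^{-1}$ to $Z$ itself: it uses that the $F$-function of $(\mu_{k}^{*})^{-1}X^{g}$ is a formal series in $Y_{k}'$ alone, observes that $(\mu_{k}^{*})^{-1}Z$ is then a Laurent polynomial pointed at $\deg^{t'}X_{k}(t')^{d}$ with $F$-polynomial of some $Y_{k}'$-degree $R$, and pushes forward again to force $R=0$ by a degree comparison. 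You instead never apply $(\mu_{k}^{*})^{-1}$ to $Z$; you subtract the known cluster monomial, grade $\LP(t)$ by the $k$-th Laurent coordinate (where $b_{kk}=0$ is exactly what makes $Y_{k}$ live in degree zero), and characterize the slice of $\mu_{k}^{*}\LP(t')$ in a fixed $k$-degree as a free module over $\kk[X_{j}^{\pm}]_{j\neq k}$ generated by $\mu_{k}^{*}X_{k}(t')^{d}$; the coset decomposition modulo $\Z p^{*}e_{k}$ plus the domain property of the quantum torus and the $Y_{k}$-degree bounds then kill the difference. What your version buys is a proof that is entirely internal to $\LP(t)$ and avoids both the formal-series expansion of $(\mu_{k}^{*})^{-1}X^{g}$ and the degree-swap machinery of Proposition \ref{prop:swap_order_pointedness}; the cost is the extra structural claim about $\LP(t)\cap\mu_{k}^{*}\LP(t')$, which you do justify correctly (the disjointness of the $k$-coordinate supports of the summands $\mu_{k}^{*}(G_{n})\cdot(\mu_{k}^{*}X_{k}(t'))^{n}$ is the right observation, and it is legitimate to compare them inside $\hLP(t)$ via the formal Laurent series embedding). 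The minor points you leave implicit --- that $(X^{\beta})^{*d}=X^{d\beta}$ so the copointed coefficient of $\mu_{k}^{*}X_{k}(t')^{d}$ is $1$, and that $\kk[X_{j}^{\pm}]_{j\neq k}$ is a domain --- are all standard and unproblematic.
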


\begin{proof}

We can view $Z$ and $\mu_{k}^{*}X_{k}(t')^{d}$ as elements of the
(type $A_{1}$) quantum cluster algebra $\LP(t)\cap\mu_{k}^{*}\LP(t')$,
where $k$ is viewed as the only unfrozen vertex. Then $Z$ and $\mu_{k}^{*}X(t')^{d}$
are compatibly pointed at the set of all seeds $\{t,t'\}$, and we
have $Z=\mu_{k}^{*}X_{k}(t')^{d}$ by Proposition \ref{prop:support_compatible_equivalent}
and \cite[Lemma 3.4.11]{qin2019bases}.

Alternatively, let us give a fundamental proof. Notice that $Z$ takes
the form $X^{g}*(1+\sum_{0<s<d}b_{s}Y_{k}^{s}+v_{k}^{d}Y_{k}^{d})$
by assumption, $b_{s}\in\kk$. Denote $Y_{k}(t')=Y_{k}'$ and $X_{i}(t')=X_{i}'$
for simplicity. We have $\mu_{k}^{*}Y_{k}'=Y_{k}^{-1}\in\LP(t)$ and
$(\mu_{k}^{*})^{-1}Y_{k}=(Y_{k}')^{-1}\in\LP(t')$. Notice that $(\mu_{k}^{*})^{-1}X^{g}$
is a pointed function in $\hLP(t')$ whose $F$-function is a formal
series in $Y_{k}'$. We compute that 
\begin{eqnarray*}
Z' & := & (\mu_{k}^{*})^{-1}Z\\
 & = & (\mu_{k}^{*})^{-1}X^{g}*(1+\sum b_{s}(Y_{k}')^{-s}+v_{k}^{d}(Y_{k}')^{-d})\\
 & = & X'^{g'}*(\sum_{r>0}^{R}c_{r}(Y_{k}')^{r}+1)
\end{eqnarray*}
where $c_{r}\in\kk$, $R\in\N$, $g'=\deg^{t'}((\mu_{k}^{*})^{-1}X^{g}*(Y_{k}'){}^{-d})$.
By the same computation, $(\mu_{k}^{*})^{-1}(\mu_{k}^{*}X_{k}(t')^{d})=X_{k}(t')^{d}$
also has this degree. Consequently, $X'^{g'}=(X_{k}')^{d}$.

Now consider $\mu_{k}^{*}Z'\in\LP(t)$. Similar to the above computation,
its degree is given by $\deg^{t}(\mu_{k}^{*}(X_{k}')^{d}*Y_{k}{}^{R})$.
Since $\deg^{t}\mu_{k}^{*}Z'=\deg^{t}Z=\deg^{t}\mu_{k}^{*}(X_{k}')^{d}$,
we must have $R=0$. Consequently, $Z'=(X_{k}')^{d}$.

\end{proof}

\begin{Prop}[Admissiblity by compatibility]\label{prop:compatible_to_admissible}

(1) Take $g=d\deg^{t}\mu_{k}^{*}X_{k}(t')$. Let there be given $g$-pointed
element $Z\in\LP(t)\cap\seq_{t[-1],t}^{*}\LP(t[-1])$. If $Z$ is
compatibly pointed at $t,t[-1]$, then $Z=\mu_{k}^{*}X_{k}(t')^{d}$.

(2) Take $g=d\deg^{t}\seq_{k}^{*}I_{k}(t')$. Let there be given $g$-pointed
element $Z\in\LP(t)\cap\seq_{t[1],t}^{*}\LP(t[1])$. If $Z$ is compatibly
pointed at $t[1],t$, then $Z=\seq_{k}^{*}I_{k}(t')^{d}$.

\end{Prop}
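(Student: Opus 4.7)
\smallskip

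\noindent\textbf{Plan.} The two parts are mirror images of one another: by Lemmas \ref{lem:twist_injective_variable} and \ref{lem:twist_permute_loc_cl_monom}, the DT-type twist automorphism $\tw$ permutes localized quantum cluster monomials, sends $X_k(t)$ to a frozen-factor multiple of $I_k(t)$, and interchanges the roles of $t[1]$ and $t[-1]$, so (2) follows from (1). I focus on (1).

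\smallskip

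\noindent\textbf{Setup.} Let $M := \mu_k^* X_k(t')^d$. Since cluster monomials are compatibly pointed at every pair of seeds (a consequence of the sign-coherence of $g$-vectors, cf.\ \cite[Remark 7.13]{FominZelevinsky07}), $M$ is compatibly pointed at $(t, t[-1])$ with the same $g$-vector $g$. Applying Proposition \ref{prop:support_compatible_equivalent} to both $Z$ and $M$ shows that both are bipointed with the common bidegree $(g, g + p^*\suppDim^t g)$. Because $M$ is the $d$-th twisted power of a cluster variable of support dimension $e_k$, we obtain $\suppDim^t g = d\,e_k$. Hence the Laurent supports of $Z$ and $M$ both lie in the length-$(d{+}1)$ chain $\{g + sp^*e_k : 0 \le s \le d\} \subset \Mc(t)$, with matching leading ($s=0$) and trailing ($s=d$) coefficients equal to $1$.

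\smallskip

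\noindent\textbf{Main step.} I aim to show that $Z \in \mu_k^* \LP(t')$. Once this is established, Lemma \ref{lem:support_to_admissible} applies: both $Z$ and $M$ lie in $\LP(t) \cap \mu_k^*\LP(t')$, are $g$-pointed and bipointed with the same support dimension, so the lemma forces $Z = M$. Equivalently, setting $W := Z - M$, it is enough to show $W = 0$. Since $M$ lies in every cluster chart and $Z \in \seq_{t[-1],t}^*\LP(t[-1])$, the difference $W$ lies in $\LP(t) \cap \seq_{t[-1],t}^*\LP(t[-1])$ and, by the matching bipointed extremes, is supported on the strict interior $\{g + sp^*e_k : 1 \le s \le d-1\}$. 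I would then track the dominance-order decomposition of $W$ with respect to the set of distinguished functions $\Inj^t$ (which is compatibly pointed at adjacent seeds by Lemma \ref{lem:inj_set_adjacent_compatible}), invoke the mutation-invariance of such decompositions (Proposition \ref{prop:invariance_dominance_decomposition}), and use the degree/codegree swap of Proposition \ref{prop:swap_order_pointedness} to translate the strictly interior $\prec_t$-support of $W$ into constraints on its $\prec_{t[-1]}$-support after Laurent expansion in $t[-1]$.

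\smallskip

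\noindent\textbf{Main obstacle.} The hard part is converting this tropical-incompatibility intuition into a clean vanishing argument, i.e., showing that no nontrivial $\kk$-linear combination of the strictly interior Laurent monomials $X(t)^{g + sp^*e_k}$ ($1 \le s \le d-1$) can expand under $\seq_{t,t[-1]}^*$ to a genuine Laurent polynomial in $\LP(t[-1])$, given that both the leading and trailing tropical directions are already accounted for by $M$. I expect this to reduce, via Lemma \ref{lem:pointed_Laurent_monomials} and the injectivity of $\psi_{t[-1],t}$ on the relevant chain, to a linear-independence statement for the formal Laurent tails of the interior monomials, from which $W = 0$ follows.
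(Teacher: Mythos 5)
Your setup for part (1) — both $Z$ and $M=\mu_k^*X_k(t')^d$ are bipointed with common bidegree and support dimension $d\,e_k$ by Proposition \ref{prop:support_compatible_equivalent}, so their supports lie on the chain $\{g+sp^*e_k\}$ — is exactly the paper's starting point. But you do not close the argument. The paper finishes in one step by Lemma \ref{lem:support_to_admissible}, whose proof is a direct rank-one computation: expand $(\mu_k^*)^{-1}Z=X'^{g'}*(\sum_{r>0}^{R}c_r(Y_k')^r+1)$ and compare $\deg^{t}\mu_k^*Z'$ with $\deg^{t}Z$ to force $R=0$. Your substitute route — set $W=Z-M$, observe its support is the strict interior of the chain, and hope to show that no nontrivial combination of the interior monomials $X^{g+sp^*e_k}$ has Laurent expansion in $\LP(t[-1])$ — is precisely the hard content of that lemma restated, and you explicitly leave it unproven; note that degree/codegree bookkeeping alone (Propositions \ref{prop:swap_order_pointedness}, \ref{prop:invariance_dominance_decomposition}) yields no contradiction, since $W$, if nonzero, would simply be bipointed with a smaller support dimension. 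So part (1) has a genuine gap at its main step. (Your observation that Lemma \ref{lem:support_to_admissible} nominally requires $Z\in\mu_k^*\LP(t')$ while the proposition only supplies $Z\in\seq_{t[-1],t}^*\LP(t[-1])$ is a fair reading of the hypotheses, but you neither establish that membership nor route around it.)

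The reduction of (2) to (1) via the twist automorphism is also not valid as stated. The proposition is an unconditional statement about injective-reachable quantum cluster algebras; no twist automorphism is hypothesized, and none need exist. Worse, the proposition feeds into Proposition \ref{prop:new_chain_bases} and hence Theorems \ref{thm:existence_compatible} and \ref{thm:existence_mutation_sequence}, which underpin the paper's use of twist automorphisms, so invoking one here risks circularity; and even granting its existence, Lemmas \ref{lem:twist_injective_variable} and \ref{lem:twist_permute_loc_cl_monom} only control cluster monomials, not the arbitrary compatibly pointed element $Z$ of part (2). The paper's reduction is purely mutational: transport $Z$ by $\seq_{t,t[1]}^*$ to the seed $t[1]$, compute via Lemma \ref{lem:composition_mutation_maps} that $\seq_{t,t[1]}^*\seq_k^*I_k(t')=\mu_{\sigma k}^*X_{\sigma k}(t'[1])$, i.e.\ the transported target is again a single adjacent-seed cluster variable, and apply part (1) at $t[1]$ (whose shift-down is $t$). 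This is the step your twist-automorphism heuristic is gesturing at, but the mutation part $\seq^*$ alone does the work, unconditionally.
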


\begin{proof}

Denote $t'=\mu_{k}t=\seq_{t',t}t$. Recall that $\seq_{t_{2},t_{3}}^{*}\seq_{t_{1},t_{2}}^{*}=\seq_{t_{1},t_{3}}^{*}$
for any seeds $t_{1},t_{2},t_{3}\in\Delta^{+}$ (Lemma \ref{lem:composition_mutation_maps}).

(1) Since $Z$ is compatibly pointed at $t,t[-1]$, by Proposition
\ref{prop:support_compatible_equivalent}, $Z$ has support dimension
$\suppDim g$. The same holds for the cluster monomial $\mu_{k}^{*}X_{k}(t')^{d}$.
We deduce that $\can_{g}^{t}$ and $X_{k}(t')^{d}$ have the same
support dimension. Lemma \ref{lem:support_to_admissible} implies
that $\can_{g}^{t}=\mu_{k}^{*}X_{k}(t')^{d}$.

(2) Notice that we have 
\begin{align*}
\seq_{t,t[1]}^{*}\seq_{k}^{*}I_{k}(t') & =\seq_{t,t[1]}^{*}\seq_{t',t}^{*}I_{k}(t')\\
 & =\seq_{t,t[1]}^{*}\seq_{t',t}^{*}\seq_{t'[1],t'}^{*}X_{\sigma k}(t'[1])\\
 & =\seq_{t'[1],t[1]}^{*}X_{\sigma k}(t'[1])\\
 & =\mu_{\sigma k}^{*}X_{\sigma k}(t'[1]).
\end{align*}
Since $Z$ is compatibly pointed at $t[1],t$, we obtain that $Z':=\seq_{t,t[1]}^{*}Z$
is compatibly pointed at $t[1],t$, such that $\deg^{t[1]}Z'=\phi_{t[1],t}\deg^{t}Z=\phi_{t[1],t}\deg^{t}\seq_{k}^{*}I_{k}(t')^{d}=\deg^{t[1]}\seq_{t,t[1]}^{*}\seq_{k}^{*}I_{k}(t')^{d}=\deg^{t[1]}\mu_{\sigma k}^{*}X_{\sigma k}(t'[1])^{d}$.
It follows from (1) that $Z'=\mu_{\sigma k}^{*}X_{\sigma k}(t'[1])^{d}$.
Therefore, $Z=(\seq_{t,t[1]}^{*})^{-1}Z'=(\seq_{t,t[1]}^{*})^{-1}\seq_{t,t[1]}^{*}\seq_{k}^{*}I_{k}(t')^{d}=\seq_{k}^{*}I_{k}(t')^{d}$.

\end{proof}

\subsection{Criteria for the existence of the common triangular bases\label{subsec:Criteria-existence}}

In this section, we give several criteria that guarantee the existence
of the common triangular bases. Let there be given seeds $t[1]=\seq t=\seq_{t[1],t}t$
as before. 

Define seeds $t[r]=(\sigma^{r-1}\seq)t[r-1]$ recursively for $r\in\Z$.
Then we have $t[r]=t[r-1][1]$, and the set $\{t[r],r\in\Z\}$ is
called an injective-reachable chain, see \cite{qin2017triangular}
for more details. Notice that we have $(\mu_{k}t)[r]=\mu_{\sigma^{r}k}(t[r])$,
$\seq_{t[r+1],t[r]}=\sigma^{r}\seq_{t[1],t}$. Denote $\seq_{t[r],t[r+1]}=\seq_{t[r+1],t[r]}^{-1}$.

\begin{Prop}\label{prop:compatible_to_chain_bases}

Assume that a given subalgebra $\midAlg(t)\subset\qUpClAlg(t)$ possesses
the triangular bases $\can^{t}$. The following statements are true.

(1) The triangular bases $\can^{t[r]}$ for the seeds $t[r]$ exist,
$r\in\Z$, and they consist of similar pointed functions.

(2) If $\can^{t[1]}$ is compatibly pointed at $t,t[1]$, then $\can^{t[r]}$
is compatibly pointed at $t[r]$, $t[r-1]$, for any $r\in\Z$. 

(3) If $\seq_{t[1],t}^{*}\can^{t[1]}$ is the triangular basis $\can^{t}$,
then $\seq_{t[r],t[r-1]}^{*}\can^{t[r]}$ is the triangular basis
$\can^{t[r-1]}$, $r\in\Z$.

\end{Prop}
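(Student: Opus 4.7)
The overall plan is to exploit the fact that the seeds $t[r]$ in the injective-reachable chain are pairwise similar quantum seeds, with $t[r]$ and $t[s]$ similar up to the permutation $\sigma^{r-s}$, and then propagate the triangular basis, the compatibility of pointedness, and the pullback-invariance through this chain by iteratively applying Lemma \ref{lem:similar_triangular_basis} and Proposition \ref{prop:mutation_similar_elements}.

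For (1), I would argue by induction on $|r|$, with $r=0$ given. Lemma \ref{lem:inj_symmetrizer} ensures that $t[r]$ and $t[r-1]$ are similar quantum seeds up to $\sigma$, so Lemma \ref{lem:similar_triangular_basis} produces a triangular basis $\can^{t[r]}$ consisting of pointed functions in $\LP(t[r])$ similar (up to $\sigma$) to those of $\can^{t[r-1]}$. Composing similarities along the chain, $\can^{t[r]}$ consists of pointed functions similar to $\can^{t}$ up to $\sigma^{r}$. The case $r \leq -1$ is symmetric, using the inverse similarities.

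For (2), I would iterate Proposition \ref{prop:mutation_similar_elements}. Since by definition $t[r] = (\sigma^{r-1}\seq_{t[1],t}) \cdot t[r-1]$, the mutation sequence realizing $t[r-1] \to t[r]$ is the $\sigma^{r-1}$-permuted copy of $\seq_{t[1],t}$. For any $\can^{t[r]}_h \in \can^{t[r]}$, part (1) gives some $\can^{t[1]}_g \in \can^{t[1]}$ similar to $\can^{t[r]}_h$ up to $\sigma^{r-1}$; the compatibility hypothesis for $\can^{t[1]}$ ensures that $\seq_{t[1],t}^{*}\can^{t[1]}_g$ is pointed in $\LP(t)$. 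Plugging $(t^{(1)},t'^{(1)}) = (t[1],t)$, $(t^{(2)},t'^{(2)}) = (t[r],t[r-1])$, similarity permutation $\sigma^{r-1}$, $Z^{(1)} = \can^{t[1]}_g$, $Z^{(2)} = \can^{t[r]}_h$ into Proposition \ref{prop:mutation_similar_elements} yields that $\seq_{t[r],t[r-1]}^{*}\can^{t[r]}_h$ lies in $\LP(t[r-1])$ and is pointed, which is exactly the compatibility of $\can^{t[r]}$ at $t[r], t[r-1]$.

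For (3), combining (1), (2), and the hypothesis via Proposition \ref{prop:mutation_similar_elements}, each $\seq_{t[r],t[r-1]}^{*}\can^{t[r]}_h$ is a pointed function in $\LP(t[r-1])$ similar (up to $\sigma^{r-1}$) to $\seq_{t[1],t}^{*}\can^{t[1]}_g = \can^{t}_{g'}$. Hence every element of $\seq_{t[r],t[r-1]}^{*}\can^{t[r]}$ belongs to the set of all pointed functions in $\LP(t[r-1])$ similar to elements of $\can^{t}$, which by Lemma \ref{lem:similar_triangular_basis} (applied to the similarity between $t$ and $t[r-1]$ up to $\sigma^{r-1}$) is precisely $\can^{t[r-1]}$. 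Both $\seq_{t[r],t[r-1]}^{*}\can^{t[r]}$ and $\can^{t[r-1]}$ are bases of $\midAlg(t[r-1])$ indexed bijectively by $\Mc(t[r-1])$ (using part (2) for the leading degrees of pulled-back basis elements), so the inclusion $\seq_{t[r],t[r-1]}^{*}\can^{t[r]} \subseteq \can^{t[r-1]}$ forces equality. The main obstacle will be the bookkeeping of frozen corrections, since the similarity relation does not pin down the frozen component of the degree; this is resolved by passing to bases indexed by $\Mc(t[r-1])$ and invoking the uniqueness of the (weakly) triangular basis at each seed (Lemma \ref{lem:tri_func_unique}).
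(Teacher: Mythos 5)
Your part (1) is essentially the paper's argument: define $\can^{t[r]}$ via Lemma \ref{lem:similar_triangular_basis} as the set of pointed functions similar to those of $\can^{t}$, composing similarities along the chain.

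The genuine gap is in part (2). From Proposition \ref{prop:mutation_similar_elements} you obtain that $\seq_{t[r],t[r-1]}^{*}\can_{h}^{t[r]}$ lies in $\LP(t[r-1])$ and is pointed, and you declare this to be ``exactly'' compatible pointedness. It is not: by Definition \ref{def:compatibly_pointed}, compatible pointedness requires the pullback to be pointed at the \emph{specific} degree $\phi_{t[r-1],t[r]}(h)$, whereas similarity only controls the $I_{\ufv}$-projection of degrees and the $F$-polynomial coefficients; in particular the frozen components of $\deg^{t[r-1]}\seq_{t[r],t[r-1]}^{*}\can_{h}^{t[r]}$ are not pinned down by your argument. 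The paper avoids this by never invoking Proposition \ref{prop:mutation_similar_elements} here: it characterizes compatible pointedness at $t[r],t[r-1]$ via Proposition \ref{prop:support_compatible_equivalent} as ``bipointed with support dimension $\suppDim g$'' --- a purely unfrozen datum that similar elements share --- and then uses the fact (Remark \ref{rem:suppdim_theta_func}) that $\suppDim g$ depends only on the principal $B$-matrix, hence is the same for $t[1]$ and $t[r]$ after relabelling by $\sigma^{r-1}$. Your argument can be repaired along these lines, but the decisive step is missing, not merely compressed.

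Part (3) has two further wrinkles. First, you invoke the conclusion of (2), whose hypothesis (that $\can^{t[1]}$ is compatibly pointed at $t,t[1]$) is not assumed in (3); the paper keeps (3) independent by applying Proposition \ref{prop:mutation_similar_elements} in both directions, obtaining $\seq_{t[r],t[r-1]}^{*}\can_{\sigma^{r-1}g}^{t[r]}\in\can^{t[r-1]}$ and $(\seq_{t[r],t[r-1]}^{*})^{-1}\can_{g'}^{t[r-1]}\in\can^{t[r]}$, and the two inclusions force equality with no counting. Second, your counting argument presupposes that $\seq_{t[r],t[r-1]}^{*}\can^{t[r]}$ is a basis of $\midAlg(t[r-1])$, i.e.\ that $\seq^{*}$ identifies the two subalgebras; but the single inclusion you establish only yields $\seq_{t[r],t[r-1]}^{*}\midAlg(t[r])\subseteq\midAlg(t[r-1])$, so this is part of what remains to be shown.
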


\begin{Prop}\label{prop:new_chain_bases}

Assume that a given subalgebra $\midAlg\subset\qUpClAlg$ possesses
compatible triangular bases at $\{t[r],r\in\Z\}$. Then, for any $k\in I_{\ufv}$,
it possesses compatible triangular bases at $\{(\mu_{k}t)[r],r\in\Z\}\cup\{t[r],r\in\Z\}$.

\end{Prop}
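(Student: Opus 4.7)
The plan is to promote the compatibility of the given chain $\{\can^{t[r]}\}_{r\in\Z}$ to the larger collection by first checking admissibility of each $\can^{t[r]}$ in direction $\sigma^{r}k$, then invoking the Adjacent Compatibility Proposition~\ref{prop:admissible_nearby_seed}, and finally extending to pairwise compatibility via transitivity through composition of mutation maps.

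The main step is to fix $r\in\Z$ and verify that $\can^{t[r]}$ is admissible in direction $\sigma^{r}k$ (recalling that $(\mu_{k}t)[r]=\mu_{\sigma^{r}k}t[r]$). The assumed compatibility of the $t$-chain implies that every basis element $\can_{g}^{t[r]}$ lies in $\LP(t[r])\cap\seq_{t[r-1],t[r]}^{*}\LP(t[r-1])\cap\seq_{t[r+1],t[r]}^{*}\LP(t[r+1])$ and is compatibly pointed with its neighbors $t[r\pm1]$. Taking $g=d\deg^{t[r]}\mu_{\sigma^{r}k}^{*}X_{\sigma^{r}k}((\mu_{k}t)[r])$ for $d\in\N$, the element $\can_{g}^{t[r]}$ is $g$-pointed by parametrization and compatibly pointed at $t[r],t[r-1]$, so Proposition~\ref{prop:compatible_to_admissible}(1) forces $\can_{g}^{t[r]}=\mu_{\sigma^{r}k}^{*}X_{\sigma^{r}k}((\mu_{k}t)[r])^{d}$. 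An analogous application of Proposition~\ref{prop:compatible_to_admissible}(2), now using compatibility at $t[r+1],t[r]$, places $\mu_{\sigma^{r}k}^{*}I_{\sigma^{r}k}((\mu_{k}t)[r])^{d}$ in $\can^{t[r]}$. Hence $\can^{t[r]}$ is admissible in direction $\sigma^{r}k$.

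Applying Proposition~\ref{prop:admissible_nearby_seed} at each $r$ then yields the triangular basis $\can^{(\mu_{k}t)[r]}:=(\mu_{\sigma^{r}k}^{*})^{-1}\can^{t[r]}$ for the seed $(\mu_{k}t)[r]$, automatically compatible with $\can^{t[r]}$. To conclude, I would check pairwise compatibility across the entire union: for any two seeds $s_{1},s_{2}\in\{t[r]\}_{r\in\Z}\cup\{(\mu_{k}t)[r]\}_{r\in\Z}$, the mutation map $\seq_{s_{1},s_{2}}^{*}$ factors (by Lemma~\ref{lem:composition_mutation_maps}) through a chain of adjacent seeds whose bases are already pairwise compatible (within the old chain by hypothesis, between the two chains by the previous paragraph, and within the new chain by combining these two), so $\seq_{s_{1},s_{2}}^{*}\can^{s_{2}}=\can^{s_{1}}$ as pointed sets.

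The main obstacle is the admissibility step; its success rests entirely on the tropical input of Proposition~\ref{prop:compatible_to_admissible}, which converts the assumed compatibility of the chain into the precise identification of the relevant basis elements with pulled-back quantum cluster monomials $\mu_{\sigma^{r}k}^{*}X_{\sigma^{r}k}((\mu_{k}t)[r])^{d}$ and $\mu_{\sigma^{r}k}^{*}I_{\sigma^{r}k}((\mu_{k}t)[r])^{d}$. Once this is in hand, both the adjacent-compatibility and transitivity steps are essentially formal.
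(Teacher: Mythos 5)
Your proposal is correct and follows essentially the same route as the paper: establish admissibility of $\can^{t[r]}$ in direction $\sigma^{r}k$ from the compatibility of the chain, then apply Proposition~\ref{prop:admissible_nearby_seed}. The only cosmetic difference is that for the injective part you cite Proposition~\ref{prop:compatible_to_admissible}(2) directly, whereas the paper re-derives it by transporting the cluster monomial $\mu_{\sigma^{r+1}k}^{*}X_{\sigma^{r+1}k}(t'[r+1])$ from $\can^{t[r+1]}$ down to $\can^{t[r]}$ via Lemma~\ref{lem:composition_mutation_maps}; these are the same argument.
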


We postpone the proofs of Propositions \ref{prop:compatible_to_chain_bases},
\ref{prop:new_chain_bases} to the end of section.

\begin{Thm}[Existence by compatibility]\label{thm:existence_compatible}

Assume that a given subalgebra $\midAlg(t)\subset\qUpClAlg(t)$ possesses
the triangular bases $\can^{t}$. If $\can^{t}$ is compatibly pointed
at $t,t[1]$ and, moreover, $(\seq_{t[1],t}^{*})^{-1}\can^{t}$ is
the triangular basis $\can^{t[1]}$ for the seed $t[1]$, then $\can^{t}$
is the common triangular basis.

\end{Thm}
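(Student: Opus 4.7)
The plan is to iteratively propagate the triangular basis structure from $t$ to every seed of $\Delta^{+}$ by combining Propositions \ref{prop:compatible_to_chain_bases} and \ref{prop:new_chain_bases}. The goal is to show that for each $s\in\Delta^{+}$ the image $\seq_{t,s}^{*}\can^{t}$ is the triangular basis at $s$; pairwise compatibility among all such bases then follows automatically from the composition law in Lemma \ref{lem:composition_mutation_maps} and the uniqueness of triangular bases.

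First, I would feed the hypothesis into Proposition \ref{prop:compatible_to_chain_bases}. The assumption $(\seq_{t[1],t}^{*})^{-1}\can^{t}=\can^{t[1]}$ is exactly the input of part (3), and the compatibility of $\can^{t}$ at $\{t,t[1]\}$ transfers to $\can^{t[1]}$ because the two bases are linked by $\seq_{t[1],t}^{*}$ and Definition \ref{def:compatibly_pointed} is symmetric in its two seeds; this yields the input of part (2). Together with part (1), we obtain triangular bases $\can^{t[r]}$ at every seed of the chain $\{t[r]\}_{r\in\Z}$ satisfying $\seq_{t[r],t[r-1]}^{*}\can^{t[r]}=\can^{t[r-1]}$. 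Composing mutation maps gives $\seq_{t,t[r]}^{*}\can^{t}=\can^{t[r]}$ for all $r\in\Z$, so the entire chain carries compatible triangular bases equal to the images of $\can^{t}$.

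Next, I run an induction on the length $l$ of a mutation sequence reaching an arbitrary $s\in\Delta^{+}$ from $t$. The case $l=0$ is handled above. For $l>0$, write $s=\mu_{k}s'$ with $s'$ reachable in $l-1$ mutations. By the inductive hypothesis, compatible triangular bases exist on the chain $\{s'[r]\}_{r\in\Z}$ with $\can^{s'[r]}=\seq_{t,s'[r]}^{*}\can^{t}$. Applying Proposition \ref{prop:new_chain_bases} to that chain in direction $k$ produces compatible triangular bases on $\{s[r]\}_{r\in\Z}\cup\{s'[r]\}_{r\in\Z}$; in particular $\can^{s}$ exists and satisfies $\seq_{s',s}^{*}\can^{s'}=\can^{s}$. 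Composing mutation maps gives $\can^{s}=\seq_{t,s}^{*}\can^{t}$, and analogously on the entire shifted chain $\{s[r]\}_{r}$. This closes the induction, and we conclude that for every $s\in\Delta^{+}$ the image $\seq_{t,s}^{*}\can^{t}$ is the triangular basis at $s$.

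The main obstacle is that each individual application of Proposition \ref{prop:new_chain_bases} only guarantees pairwise compatibility with the chain it was applied to, whereas the common triangular basis requires compatibility among all seeds in $\Delta^{+}$. This gap is closed using the uniqueness of the triangular basis at each seed (Lemma \ref{lem:tri_func_unique} together with the remark following Definition \ref{def:triangular_basis}): the basis obtained at $s$ by the above construction must coincide with $\seq_{t,s}^{*}\can^{t}$, independent of the chosen mutation path. The composition law $\seq_{t,s_{1}}^{*}\seq_{s_{1},s_{2}}^{*}=\seq_{t,s_{2}}^{*}$ from Lemma \ref{lem:composition_mutation_maps} then yields $\seq_{s_{1},s_{2}}^{*}\can^{s_{1}}=\can^{s_{2}}$ for every pair $s_{1},s_{2}\in\Delta^{+}$, so $\can^{t}$ is the common triangular basis.
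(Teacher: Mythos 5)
Your proposal is correct and follows essentially the same route as the paper: first feed the hypotheses into Proposition \ref{prop:compatible_to_chain_bases} to obtain compatible triangular bases along the chain $\{t[r]\}_{r\in\Z}$, then apply Proposition \ref{prop:new_chain_bases} recursively to adjacent seeds to propagate to all of $\Delta^{+}$. The extra care you take with the induction on mutation-sequence length and with upgrading set-equality to labeled compatibility via uniqueness and the composition law is exactly what the paper's terse two-sentence proof leaves implicit.
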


\begin{proof} 

Proposition \ref{prop:compatible_to_chain_bases} implies that the
triangular bases $\can^{t[r]}$, $r\in\Z$, exist and are compatible.
Using Proposition \ref{prop:new_chain_bases} recursively for adjacent
seeds starting from $\{t[r],r\in\Z\}$, we obtain that $\can^{t}$
is the common triangular basis.

\end{proof}

Recall that if the common triangular basis exists, then it is permuted
by all twist automorphisms (Proposition \ref{prop:tri_basis_permute_by_twist}).
As an inverse result, we have the following existence theorem which
implies the main theorem (Theorem \ref{thm:main_intro}). Denote $\seq_{t[1],t}=\seq=\mu_{k_{l}}\cdots\mu_{k_{2}}\mu_{k_{1}}$.
Denote $t_{s}=\seq_{\leq s}t=\mu_{k_{s}}\cdots\mu_{k_{2}}\mu_{k_{1}}t$
for $1\leq s\leq l$. 

\begin{Thm}[Existence by twist automorphisms]\label{thm:existence_mutation_sequence}

Assume that a given subalgebra $\midAlg(t)\subset\qUpClAlg(t)$ has
the triangular basis $\can^{t}$ with respect to the seed $t$. Further
assume that $\can^{t}$ satisfies the following properties:

\begin{enumerate}

\item There exists a twist automorphism $\tw$ of Donaldson-Thomas
type on $\cF(t)$ such that $\tw$ permutes $\can^{t}$.

\item The quantum cluster monomials $\seq_{\leq s}^{*}X_{k_{s}}(t_{s})^{d}$,
$d\in\N$, obtained along the mutation sequence $\seq_{t[1],t}$ starting
from $t$ are contained in $\can^{t}$. 

\end{enumerate}

Then $\can^{t}$ is the common triangular basis. In particular, it
contains all quantum cluster monomials.

\end{Thm}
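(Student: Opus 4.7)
The plan is to propagate the triangular basis property along the mutation sequence $\seq_{t[1],t} = \mu_{k_l}\cdots\mu_{k_1}$ from $t$ to $t[1]$ by induction, and then invoke Theorem~\ref{thm:existence_compatible}. Setting $t_s := \mu_{k_s}\cdots\mu_{k_1} t$ for $0 \le s \le l$ (so $t_0 = t$ and $t_l = t[1]$), the aim is to show inductively that $\can^{t_s} := (\seq_{\leq s}^*)^{-1}\can^t$ is the triangular basis for $t_s$ and is permuted by the DT-type twist automorphism $\tw^{t_s}$ furnished by Theorem~\ref{thm:global_twist_automorphism}. The fact that $\tw^{t_s}$ remains of DT-type is exactly the propagation discussed in Section~\ref{subsec:Twist-automorphism-DT-type}.

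The base case $s=0$ is given by hypotheses (1) and (2). For the inductive step, first observe that $\tw^{t_s}$, being conjugate to $\tw$ by $\seq_{\leq s}^*$, permutes $\can^{t_s}$ because $\tw$ permutes $\can^t$ and $\can^{t_s}$ is the corresponding mutation image of $\can^t$. Next, applying $(\seq_{\leq s}^*)^{-1}$ to the inclusion $\seq_{\leq s+1}^* X_{k_{s+1}}(t_{s+1})^d \in \can^t$ from hypothesis (2), and using the identity $(\seq_{\leq s}^*)^{-1}\seq_{\leq s+1}^* = \mu_{k_{s+1}}^*$, yields $\mu_{k_{s+1}}^* X_{k_{s+1}}(t_{s+1})^d \in \can^{t_s}$ for all $d \in \N$. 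By Lemma~\ref{lem:reduce_admissible}, this suffices for the admissibility of $\can^{t_s}$ in direction $k_{s+1}$, so Proposition~\ref{prop:admissible_nearby_seed} delivers the triangular basis $\can^{t_{s+1}} := (\mu_{k_{s+1}}^*)^{-1}\can^{t_s} = (\seq_{\leq s+1}^*)^{-1}\can^t$ for $t_{s+1}$, compatible with $\can^{t_s}$.

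At $s=l$ we obtain the triangular basis $\can^{t[1]} = (\seq^*)^{-1}\can^t$ for $t[1]$, where $\seq = \seq_{t[1],t}$. For each $Z \in \can^t$, the image $(\seq^*)^{-1}Z = \seq_{t,t[1]}^* Z$ lies in $\can^{t[1]}\subset \LP(t[1])$ and is pointed, so $\can^t$ is compatibly pointed at $t,t[1]$ in the sense of Definition~\ref{def:compatibly_pointed}. Theorem~\ref{thm:existence_compatible} then yields that $\can^t$ is the common triangular basis, whence it contains all quantum cluster monomials.

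The key conceptual step, and the main obstacle the plan must navigate, is that hypothesis (2) only supplies the ``$X$-side'' of admissibility, namely the cluster variables arising along the mutation sequence, but not the ``$I$-side''. The DT-type twist automorphism converts one into the other via \eqref{eq:twist_cl_var}, which is precisely why Lemma~\ref{lem:reduce_admissible} applies and allows the induction to proceed without the positivity assumption discussed in Remark~\ref{rem:positivity_assumption}. A minor subtlety is the bookkeeping of mutation birational maps and their inverses so that the identity $(\seq_{\leq s}^*)^{-1}\seq_{\leq s+1}^* = \mu_{k_{s+1}}^*$ is correctly applied; this is essentially Lemma~\ref{lem:composition_mutation_maps}.
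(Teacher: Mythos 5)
Your proposal is correct and follows essentially the same route as the paper's (first) proof: propagate admissibility along $\seq_{t[1],t}$ using Lemma~\ref{lem:reduce_admissible} (via the DT-type twist automorphism) and Proposition~\ref{prop:admissible_nearby_seed} to get compatible triangular bases at each $t_s$, then conclude with Theorem~\ref{thm:existence_compatible}. You merely make explicit the induction and the conjugation bookkeeping ($\tw^{t_s}=(\seq_{\leq s}^{*})^{-1}\tw\,\seq_{\leq s}^{*}$, $(\seq_{\leq s}^{*})^{-1}\seq_{\leq s+1}^{*}=\mu_{k_{s+1}}^{*}$) that the paper leaves implicit.
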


\begin{proof}

Because $\can^{t}$ is permuted by $\tw$ and contains all cluster
monomials appearing along the mutation sequence $\seq$, we deduce
from Lemma \ref{lem:reduce_admissible} and Proposition \ref{prop:admissible_nearby_seed}
that it gives rise to the triangular basis $\can^{t_{s}}$ for the
seeds $t_{s}$, which are compatible with $\can^{t}$. In particular,
$\can^{t}$ and $\can^{t[1]}$ are compatible. We deduce the claim
from Theorem \ref{thm:existence_compatible} .

Let us give an alternative proof which does not depend on Theorem
\ref{thm:existence_compatible} nor Proposition \ref{prop:compatible_to_chain_bases}.
For any $r\in\Z$, by Lemma \ref{lem:similar_triangular_basis} for
similar seeds $t,t[r]$, we can construct the triangular bases $\can^{t[r]}$
for seed $t[r]$ as the set of elements similar to the elements in
$\can^{t}$. Then it contains the quantum cluster monomials appearing
along the sequence $\sigma^{r}\seq$ from $t[r]$ to $t[r+1]$ by
Lemma \ref{lem:similar_cluster_mutation}. It follows that $\can^{t[r]}$
and $\can^{t[r+1]}$ are compatible by Lemma \ref{lem:reduce_admissible}
and Proposition \ref{prop:admissible_nearby_seed}. Therefore, we
obtain compatible triangular bases at $\{t[r],r\in\Z\}$. The claim
follows by using Proposition \ref{prop:new_chain_bases} repeatedly
for adjacent seeds.

\end{proof}

\begin{proof}[Proof of Proposition \ref{prop:new_chain_bases}]

Denote $t'=\mu_{k}t$. Recall that $\mu_{\sigma^{r}k}t[r]=t'[r]$,
$r\in\Z$. Since $\can^{t[r]}$ is compatibly pointed at $t[r],t[r-1]$,
it contains $\mu_{\sigma^{r}k}^{*}X_{\sigma^{r}k}(t'[r])$ by Proposition
\ref{prop:compatible_to_admissible}(1). Similarly, $\can^{t[r+1]}$
contains $Z:=\mu_{\sigma^{r+1}k}^{*}X_{\sigma^{r+1}k}(t'[r+1])$.
In addition, since $\can^{t[r+1]}$ and $\can^{t[r]}$ are compatible,
$\can^{t[r]}=\seq_{t[r+1],t[r]}^{*}\can^{t[r+1]}$ also contains the
$Z':=\mu_{t[r+1],t[r]}^{*}Z$. Lemma \ref{lem:composition_mutation_maps}
implies that we have
\begin{align*}
Z' & =\mu_{t[r+1],t[r]}^{*}\mu_{t'[r+1],t[r+1]}^{*}X_{\sigma^{r+1}k}(t'[r+1])\\
 & =\mu_{t'[r+1],t[r]}^{*}X_{\sigma^{r+1}k}(t'[r+1])\\
 & =\mu_{t'[r],t[r]}^{*}\mu_{t'[r+1],t'[r]}^{*}X_{\sigma^{r+1}k}(t'[r+1])\\
 & =\mu_{t'[r],t[r]}^{*}I_{\sigma^{r}k}(t'[r])\\
 & =\mu_{\sigma^{r}k}^{*}I_{\sigma^{r}k}(t'[r]).
\end{align*}
Therefore, $\can^{t[r]}$ is admissible in direction $\sigma^{r}k$.
Then Proposition \ref{prop:admissible_nearby_seed} implies that $\can^{t[r]}$
gives rise to a compatible triangular basis $\can^{t'[r]}$ in the
adjacent seed $t'[r]$. The claim follows.

\end{proof}

\begin{proof}[Proof of Proposition \ref{prop:compatible_to_chain_bases}]

(1) We define $\can^{t[r]}$ to be the set of pointed functions in
$\LP(t[r])$ similar to the elements of $\can^{t}$. The claim follows
from Lemma \ref{lem:similar_triangular_basis}.

(2) By Proposition \ref{prop:support_compatible_equivalent}, $\can_{g}^{t[1]}$
has the support dimension $\suppDim g\in\N^{I_{\ufv}}\simeq N_{\ufv}^{\geq0}(t[1])$
for any $g\in\Z^{I}\simeq\Mc(t[1])$. By (1), $\can^{t[r]}$ and $\can^{t[1]}$
have similar elements. Let us relabel the set of vertices $I_{\ufv}$
by $(\sigma^{r-1})^{-1}$ when working with the seed $t[r]$, such
that $b_{ij}(t[1])=b_{ij}(t[r])$ (or, equivalently, we create a new
seed $\sigma^{1-r}t[r]$ such that $e_{k}(\sigma^{1-r}t[r]):=e_{\sigma^{r-1}k}(t[r])$).
Then we have $\suppDim\can_{g}^{t[r]}=\suppDim\can_{g}^{t[1]}=\suppDim g$
as elements in $\Z^{I_{\ufv}}$.

Since the support dimension $\suppDim g$ associated to $g\in\Z^{I}$
only depends on the principal $B$-matrix (see Remark \ref{rem:suppdim_theta_func}),
$\suppDim g$ for $t[1]$ and $t[r]$ are the same. Using Proposition
\ref{prop:support_compatible_equivalent} again, we deduce that $\can_{g}^{t[r]}$
is compatibly pointed at $t[r],t[r-1]$ for any $r\in\Z$.

(3) For any $g\in\Z^{I}$, $\can_{\sigma^{r-1}g}^{t[r]}$ and $\can_{g}^{t[1]}$
are similar by (1). If follows that $\seq_{t[r],t[r-1]}^{*}\can_{\sigma^{r-1}g}^{t[r]}$
and $\seq_{t[1],t}^{*}\can_{g}^{t[1]}$ are similar by Proposition
\ref{prop:mutation_similar_elements}. Since $\seq_{t[1],t}^{*}\can_{g}^{t[1]}\in\can^{t}$,
its similar function $\seq_{t[r],t[r-1]}^{*}\can_{\sigma^{r-1}g}^{t[r]}$
must belong to $\can^{t[r-1]}$ by (1). By the same argument, we have
$(\seq_{t[r],t[r-1]}^{*})^{-1}\can_{g'}^{t[r-1]}\in\can^{t[r]}$ for
any $g'\in\Z^{I}$. The claim follows.

\end{proof}

\section{Prerequisite for quantum unipotent subgroup\label{sec:Prerequisite-quantum-groups}}

We collect useful notions and results for quantum groups, mostly following
\cite{kimura2017twist}\cite{Kimura10}. Notice that different conventions
have been used in literature. We use an easy model to explain ours,
see Examples \ref{eg:sl3_type}, \ref{eg:sl3_canonical_basis}. We
choose the field $\kk=\Q(q)$ with a formal parameter $q$. Notice
that $\kk$ has a bar involution such that $\overline{q}=q^{-1}$.

\subsection{Quantized enveloping algebras}

\subsubsection*{Root datum}

A root datum consists of the following.

\begin{enumerate}

\item A finite set $[1,r]:=\{1,2,\ldots,r\}$.

\item A finite dimensional $\Q$-vector space $\frh$, its dual space
$\frh^{*}$, and the canonical pairing $\langle\ ,\ \rangle$.

\item A lattice $P\subset\frh^{*}$ called the weight lattice.

\item The dual lattice $P^{\vee}=\Hom_{\Z}(P,\Z)\subset\frh$ called
the coweight lattice.

\item A linearly independent subset $\{\alpha_{i}|i\in[1,r]\}\subset P$,
called the set of simple roots.

\item A linearly independent subset $\{\alpha_{i}^{\vee}|i\in[1,r]\}\subset P^{\vee}$,
called the set of simple coroots.

\item A $\Q$-valued symmetric $\Z$-bilinear form $(\ ,\ )$ on
$P$, such that

\begin{itemize}

\item $\sym_{i}:=\frac{1}{2}(\alpha_{i},\alpha_{i})\in\Z_{>0}$,

\item $\langle\alpha_{i}^{\vee},\mu\rangle=\frac{2(\alpha_{i},\mu)}{(\alpha_{i},\alpha_{i})}$
for any $\mu\in P$,

\item Denote $C_{ij}=\langle\alpha_{i}^{\vee},\alpha_{j}\rangle$.
Then $C=(C_{ij})_{1\leq i,j\leq r}$ is a generalized symmetrizable
Cartan matrix, i.e., $C_{ii}=2$ and, for any $i\neq j$, we have
$C_{ij}\in\Z_{\leq0}$, $C_{ij}=0$ if and only if $C_{ji}=0$.

\end{itemize}

\end{enumerate}

Notice that we have $\sym_{i}C_{ij}=\sym_{j}C_{ji}$. We define the
root lattice $Q=\oplus_{i}\Z\alpha_{i}\subset P$, the coroot lattice
$Q^{\vee}=\oplus_{i}\Z\alpha_{i}^{\vee}$. Define $Q_{\pm}=\pm\oplus_{i}\N\alpha_{i}$,
the semigroup of dominant weights $P_{+}=\{\lambda\in P|\langle\alpha_{i}^{\vee},\lambda\rangle\geq0,\forall i\}$.

Assume that we have fundamental weights $\varpi_{i}\in P_{+}$ such
that $\langle\alpha_{i}^{\vee},\varpi_{j}\rangle=\delta_{ij}$, $\forall i,j$.
Denote $\rho=\sum_{i\in[1,r]}\varpi_{i}$.

In addition, for any $i\in[1,r]$, we define the simple reflection
$s_{i}$ on $\frh^{*}$ such that, for any $\mu\in\frh^{*}$, 
\begin{eqnarray*}
s_{i}(\mu) & = & \mu-\langle\alpha_{i}^{\vee},\mu\rangle\alpha_{i}.
\end{eqnarray*}
The group $W$ generated by the simple reflections is called the Weyl
group. Let there be given any $w\in W$. Given a word $\ui=i_{1}\cdots i_{l}$
with symbols $i_{j}\in[1,r]$, $1\leq j\leq l$, $l\in\N$, we denote
$s_{\ui}=s_{i_{1}}\cdots s_{i_{l}}$, and the length $|\ui|=l$. Recall
that, if $l=\min\{|\ui'||s_{\ui'}=w\}$, we call the word $\ui$ a
reduced word for $w$, the product $s_{\ui}$ a reduced expression
of $w$, and $l(w)=l$ the length of $w$. Choosing a reduced word
$\ui$ for $w$, we define the support of $w$ in $[1,r]$ to be

\[
\supp w:=\supp s_{\ui}:=\supp\ui:=\{i|i\in\ui\}.
\]
One can check that $\supp w$ does not depend on the choice of the
reduced word.

Similarly, as in \cite[2.2.6]{Lus:intro}, we define reflections $s_{i}$
on $\frh$ such that, for any $h\in\frh$, we have

\begin{eqnarray*}
s_{i}(h) & = & h-\langle\alpha_{i},h\rangle\alpha_{i}^{\vee}.
\end{eqnarray*}
Then $\langle s_{i}(h),\mu\rangle=\langle h,s_{i}(\mu)\rangle$.

For any $k\in[1,l]$, define its successor and predecessor by 
\begin{align*}
s(k) & :=k[1]:=\min(\{j>k|i_{j}=i_{k}\}\cup\{+\infty\}),\\
p(k) & :=k[-1]:=\max(\{j<k|i_{j}=i_{k}\}\cup\{-\infty\}).
\end{align*}
Define $k[0]=k$. Recursively, define $k[d\pm1]=k[d][\pm1]$ for $d\in\Z$
if $k[d]\in[1,l]$. In addition, denote

\begin{align*}
k^{\max} & :=\max\{j\in[1,l]|i_{j}=i_{k}\},\\
k^{\min} & :=\min\{j\in[1,l]|i_{j}=i_{k}\}.
\end{align*}
For any $a\in\supp w$, we denote

\begin{align*}
^{\min}a & =\min\{k\in[1,l]|i_{k}=a\},\\
^{\max}a & =\max\{k\in[1,l]|i_{k}=a\}.
\end{align*}
Similarly, for $b\in\supp(i_{1}i_{2}\cdots i_{j})$, $j\in[1,l]$,
define

\begin{align*}
^{\min,\geq j}b & =\min(\{k\in[j,l]|i_{k}=b\}\cup\{+\infty\}),
\end{align*}

\begin{align*}
^{\max,\leq j}b & =\max(\{k\in[1,j]|i_{k}=b\}\cup\{-\infty\}).
\end{align*}
Finally, for $a\in[1,r]$, $j,k\in[1,l]$, denote the multiplicity:

\begin{align*}
m(a,[j,k]) & =|\{s\in[j,k]|i_{s}=a\}|,\\
m(a) & =m(a,[1,l]),\\
m_{k}^{+} & =m(i_{k},[k+1,l]),\\
m_{k}^{-} & =m(i_{k},[1,k-1]).
\end{align*}

\subsubsection*{Quantized enveloping algebras $\envAlg$}

Denote $q_{i}=q^{\sym_{i}}$, $[a]_{i}=[a]_{q_{i}}=\frac{q_{i}^{a}-q_{i}^{-a}}{q_{i}-q_{i}^{-1}}$
for $i\in[1,r]$, $a\in\N$. The quantized enveloping algebra $\envAlg$
is the non-commutative $\kk$-algebra generated by the generators
$E_{i}$, $F_{i}$, $K^{h}$, $i\in[1,r]$, $h\in P^{\vee}$ subject
to certain relations (see Section \ref{sec:Serre_relations}). Denote
$K_{i}=K^{\sym_{i}\alpha_{i}^{\vee}}$ and the divided power $F_{i}^{(k)}=\frac{F_{i}^{k}}{[k]_{i}!}$
and $E_{i}^{(k)}=\frac{E_{i}^{k}}{[k]_{i}}$.

Recall that we have a $\kk$-algebra triangular decomposition $\envAlg=\envAlg^{+}\otimes\envAlg^{0}\otimes\envAlg^{-}=\langle E_{i}\rangle_{i\in[1,r]}\otimes\langle K^{h}\rangle_{h\in P^{\vee}}\otimes\langle F_{i}\rangle_{i\in[1,r]}$.
We have a natural $Q$-grading $\wt(\ )$ on $\envAlg$ such that
$\wt E_{i}=\alpha_{i}$, $\wt F_{i}=-\alpha_{i}$, $\wt K^{h}=0$
respectively. For any weight $\gamma\in Q$, the homogeneous weight
$\gamma$ elements form a $\kk$-vector space

\begin{eqnarray*}
(\envAlg)_{\gamma} & = & \{u\in\envAlg|K^{h}uK^{-h}=q^{\langle h,\gamma\rangle}u,\forall h\in P^{\vee}\}.
\end{eqnarray*}

\subsubsection*{Automorphisms and bilinear forms}

\begin{Def}

Extend the bar involution $\overline{(\ )}$ on $\kk$ to an involution
on $\envAlg$ such that

\begin{eqnarray*}
\overline{q}=q^{-1},\qquad\overline{E_{i}} & = & E_{i},\qquad\overline{F_{i}}=F_{i},\qquad\overline{K^{h}}=K^{-h}.
\end{eqnarray*}

Define the $\kk$-algebra anti-involution $*$ on $\envAlg$ such
that

\begin{eqnarray*}
*(E_{i}) & = & E_{i},\qquad*(F_{i})=F_{i},\qquad*(K^{h})=K^{-h}.
\end{eqnarray*}

\end{Def}

Following the convention in \cite{kimura2017twist}, we consider Lusztig's
pairing $(\ ,\ )_{L}$ on $\envAlg^{-}$. It is a symmetric and non-degenerate
$\kk$-bilinear form. Notice that one could also work with Kashiwara's
bilinear form $(\ ,\ )_{K}$, and the results only differ by scalars
depending on the $Q$-grading, see \cite[Lemma 2.12]{Kimura10}.

\begin{Def}

Define the dual bar involution $\sigma$ on $\envAlg^{-}$ such that
$(\sigma(x),y)_{L}=\overline{(x,\overline{y})_{L}}$.

\end{Def}

\begin{Def}

Define the $\kk$-linear isomorphism $c_{\tw}:\envAlg^{-}\rightarrow\envAlg^{-}$
such that, for homogeneous $x$, we have

\begin{eqnarray*}
c_{\tw}(x) & = & q^{\frac{1}{2}(\wt x,\wt x)-(\wt x,\rho)}x.
\end{eqnarray*}

\end{Def}

For $\mu=\sum\mu_{i}\alpha_{i}\in Q$, define $\Tr(\mu)=\sum\mu_{i}\in\Z$.

\begin{Prop}[{\cite[Proposition 3.10]{kimura2017twist}}]

For a homogeneous element $x\in\envAlg^{-}$, we have

\begin{eqnarray*}
\sigma(x) & = & (-1)^{\Tr(\wt x)}c_{\tw}(\overline{(\ )}\circ*)(x).
\end{eqnarray*}
In particular, for homogeneous $x,y$, we have 
\begin{eqnarray*}
\sigma(xy) & = & q^{(\wt x,\wt y)}\sigma(y)\sigma(x).
\end{eqnarray*}

\end{Prop}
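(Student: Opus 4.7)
The plan is to set $\phi(x):=(-1)^{\Tr(\wt x)}\,c_{\tw}\bigl(\overline{x^{*}}\bigr)$ for homogeneous $x$, extended $\Q$-antilinearly (i.e.\ sending $q\mapsto q^{-1}$) to $\envAlg^{-}$, and to prove $\sigma=\phi$. Once this is done, the ``in particular'' clause is immediate: a direct computation using that $*$ reverses order, the bar involution is a ring homomorphism, and $\tfrac{1}{2}(\wt x+\wt y,\wt x+\wt y)=\tfrac{1}{2}(\wt x,\wt x)+\tfrac{1}{2}(\wt y,\wt y)+(\wt x,\wt y)$ yields $\phi(xy)=q^{(\wt x,\wt y)}\phi(y)\phi(x)$ for homogeneous $x,y$, which then transfers to $\sigma$. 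Both maps are $\Q$-antilinear and preserve the $Q$-grading, and $\envAlg^{-}$ is generated as a $\kk$-algebra by $F_{1},\dots,F_{r}$, so it suffices to verify that (i)~$\sigma(F_i)=\phi(F_i)$ for every $i$, and (ii)~$\sigma$ obeys the same twisted antimultiplicativity as $\phi$.

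For step~(i): $\phi(F_i)=-q_i^{2}F_i$, using $\Tr(-\alpha_i)=-1$, $\overline{F_i^{*}}=F_i$, and $\tfrac{1}{2}(-\alpha_i,-\alpha_i)-(-\alpha_i,\rho)=\sym_i+(\alpha_i,\rho)=2\sym_i$ (since $(\alpha_i,\rho)=\sym_i\langle\alpha_i^{\vee},\rho\rangle=\sym_i$). For $\sigma(F_i)$: the weight-$(-\alpha_i)$ subspace of $\envAlg^{-}$ is one-dimensional, so $\sigma(F_i)=cF_i$ for some $c\in\kk$; plugging into the defining identity $(\sigma(F_i),F_i)_{L}=\overline{(F_i,\overline{F_i})_{L}}$ together with $(F_i,F_i)_{L}=1/(1-q_i^{2})$ gives $c/(1-q_i^{2})=\overline{1/(1-q_i^{2})}=-q_i^{2}/(1-q_i^{2})$, hence $c=-q_i^{2}$.

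Step~(ii) is the main obstacle, since $\phi$'s antimultiplicativity is purely formal but $\sigma$ is defined only implicitly through the pairing. The required rule for $\sigma$ is extracted from the bialgebra-like structure underlying Lusztig's form: the adjointness relations $(F_{i}x,y)_{L}=\tfrac{1}{1-q_i^{2}}(x,e_i'(y))_{L}$ and $(xF_i,y)_{L}=\tfrac{1}{1-q_i^{2}}(x,{}_{i}e'(y))_{L}$ encode a twisted coproduct $r:\envAlg^{-}\to\envAlg^{-}\otimes\envAlg^{-}$ with $r(F_i)=F_i\otimes 1+1\otimes F_i$, such that $(xy,z)_{L}$ equals the pairing of $x\otimes y$ with $r(z)$ once the appropriate $q$-twist on the tensor product is incorporated. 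Dualizing the definition $(\sigma(x),y)_{L}=\overline{(x,\overline{y})_{L}}$ through $r$ and tracking the $q$-twist produces the reversal of order and the factor $q^{(\wt x,\wt y)}$. Combined with step~(i), induction on the height $|\wt x|$ then gives $\sigma=\phi$ on all of $\envAlg^{-}$.
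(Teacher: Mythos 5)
The paper itself gives no proof of this proposition---it is imported directly from \cite[Proposition 3.10]{kimura2017twist}---so your attempt can only be judged on its own terms. Within it, step~(i) is correct and complete: $\phi(F_i)=-q_i^2F_i$ follows from $(\alpha_i,\rho)=\sym_i$, and $\sigma(F_i)=-q_i^2F_i$ follows from the one-dimensionality of the weight space together with $\overline{(1-q_i^2)^{-1}}=-q_i^2(1-q_i^2)^{-1}$. The purely formal verification that $\phi(xy)=q^{(\wt x,\wt y)}\phi(y)\phi(x)$ is also fine, and the concluding induction on monomials in the $F_i$ would be valid \emph{if} both maps were known to satisfy this rule.

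The problem is step~(ii), and it is a genuine gap rather than a compressed argument. The identity $\sigma(xy)=q^{(\wt x,\wt y)}\sigma(y)\sigma(x)$ is exactly the ``in particular'' clause of the proposition; it carries essentially all of the content (the order reversal, the precise exponent $(\wt x,\wt y)$, and, through the base case, the sign and the $\rho$-shift hidden in $c_{\tw}$). Saying that it is ``extracted from the bialgebra-like structure'' by ``dualizing through $r$ and tracking the $q$-twist'' does not establish it. Concretely: computing $(\sigma(F_ix'),y)_L=\overline{(F_ix',\overline{y})_L}$ via the adjunction reduces the left side to an expression in $(x',e_i'(\overline{y}))_L$, while the desired right-hand side $q^{(\wt F_i,\wt x')}\sigma(x')\sigma(F_i)$ reduces, via the adjunction for $xF_i$, to an expression in $(\sigma(x'),{}_{i}e'(y))_L$. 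Matching the two forces you to prove a precise rule for how $e_i'$ and ${}_{i}e'$ interchange under the bar involution and $*$ (equivalently, to compare Lusztig's coproduct $r$ with its bar-conjugate), including the exact $q$-power depending on $\wt x'$. That lemma is the heart of the proof and is nowhere supplied; without it the argument does not close. A cleaner organization, which avoids having to prove the antimultiplicativity of $\sigma$ a priori (and thus avoids the slight awkwardness of deriving the main formula from its own corollary), is to verify directly by induction on the weight that your candidate $\phi$ satisfies the defining adjunction $(\phi(x),y)_L=\overline{(x,\overline{y})_L}$; non-degeneracy of $(\ ,\ )_L$ then gives $\sigma=\phi$, and the twisted antimultiplicativity of $\sigma$ follows for free from the formal one for $\phi$. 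But that induction runs into the very same $e_i'$-versus-${}_{i}e'$ lemma, so in either formulation you must actually prove it.
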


For example, we have $(F_{i},F_{i})_{L}=\frac{1}{1-q_{i}^{2}}$, $\sigma(F_{i})=-q_{i}^{2}F_{i}$,
for $i\in[1,r]$.

Define the twisted dual bar involution $\sigma'=c_{\tw}^{-1}\circ\sigma$
on $\envAlg^{-}$. Then we have $\sigma'(x)=(-1)^{\Tr(\wt x)}(\overline{(\ )}\circ*)(x)$
for homogeneous $x$. In particular, $\sigma(x)=x$ if and only if
$\sigma'(x)=c_{\tw}^{-1}(x)=q^{-\frac{1}{2}(\wt x,\wt x)+(\wt x,\rho)}x$.
It follows that $\sigma'(xy)=\sigma'(y)\sigma'(x)$.

In order to compare quantum groups with the quantum cluster algebras,
a rescaling will be needed. We often extend the base field from $\kk=\Q(q)$
to $\kk(q^{\Hf})=\Q(q^{\Hf})$. Denote $\envAlg_{\Q(q^{\Hf})}^{-}=\envAlg^{-}\otimes_{\Q(q)}\Q(q^{\Hf})$.
Define the $\Q(q^{\Hf})$-module endomorphism $\cor$ of $\envAlg_{\Q(q^{\Hf})}^{-}$
such that 
\begin{eqnarray*}
\cor x & = & q^{-\frac{1}{4}(\wt x,\wt x)+\Hf(\wt x,\rho)}x
\end{eqnarray*}
 for homogeneous $x$. Then $\sigma(x)=x$ if and only if $\sigma'(\cor x)=\cor x$.
We understand $\cor$ as a scalar correction.

\subsection{Quantum unipotent subgroups}

Take any $w\in W$. Choose a reduced expression $\ow=s_{\ui}=s_{i_{1}}\cdots s_{i_{l}}$
of $w$. We have the natural function $\ui(\ ):[1,l]\rightarrow[1,r]$
such that $\ui(j)=i_{j}$. For any $j<k\in[1,l]$, denote 

\begin{eqnarray*}
\ui[j,k] & = & i_{j}i_{j+1}\cdots i_{k}\\
\ow_{[j,k]} & = & s_{\ui[j,k]}\\
\ow_{\leq j} & = & s_{\ui[1,j]}\\
\ow_{\geq k} & = & s_{\ui[k,l]}
\end{eqnarray*}

Define the roots $\beta_{k}=\ow_{\leq k-1}\alpha_{i_{k}}$, $k\in[1,l]$.

Following the convention in \cite[Definition 3.23]{kimura2017twist}(see
$T_{i,1}^{''}$, $T'_{i,-1}$ in \cite[Section 37.1.3]{Lus:intro}\cite[Section 4.2.1]{Kimura10}\cite[Proposition 1.3.1]{saito1994pbw}),
for any $i\in[1,r]$, we define the $\kk$-algebra automorphism $T_{i}$
on $\envAlg$ (braid group action) such that

\begin{align*}
T_{i}(K^{h}) & =K^{s_{i}(h)}\\
T_{i}(E_{j}) & =\begin{cases}
-F_{i}K_{i} & i=j\\
\sum_{r+s=-C_{ij}}(-1)^{r}q_{i}^{-r}E_{i}^{(s)}E_{j}E_{i}^{(r)} & i\neq j
\end{cases}\\
T_{i}(F_{j}) & =\begin{cases}
-K_{i}^{-1}E_{i} & i=j\\
\sum_{r+s=-C_{ij}}(-1)^{r}q_{i}^{r}F_{i}^{(r)}F_{j}F_{i}^{(s)} & i\neq j
\end{cases}.
\end{align*}
Its inverse map is given by

\begin{align*}
T_{i}^{-1}(K^{h}) & =K^{s_{i}(h)}\\
T_{i}^{-1}(E_{j}) & =\begin{cases}
-K_{i}^{-1}F_{i} & i=j\\
\sum_{r+s=-C_{ij}}(-1)^{r}q_{i}^{-r}E_{i}^{(r)}E_{j}E_{i}^{(s)} & i\neq j
\end{cases}\\
T_{i}^{-1}(F_{j}) & =\begin{cases}
-E_{i}K_{i} & i=j\\
\sum_{r+s=-C_{ij}}(-1)^{r}q_{i}^{r}F_{i}^{(s)}F_{j}F_{i}^{(r)} & i\neq j
\end{cases}.
\end{align*}

Notice that $T_{i}((\envAlg)_{\gamma})=(\envAlg)_{s_{i}(\gamma)}$
for $\gamma\in Q$. Define $T_{\ow}=T_{i_{1}}T_{i_{2}}\cdots T_{i_{l}}$.
Then it is known that $T_{\ow}$ only depends on $w\in W$, which
we denote by $T_{w}$.

\begin{Prop}[{\cite[Section 37.2.4]{Lus:intro}}]\label{prop:change_braid_sign}

We have $*\circ T_{i}\circ*=T_{i}^{-1}$.

\end{Prop}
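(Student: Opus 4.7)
The plan is to verify the identity $*\circ T_i\circ * = T_i^{-1}$ directly on each of the generators $K^h$, $E_j$, $F_j$ of $\envAlg$, using only the definitions of $*$, $T_i$, $T_i^{-1}$ given in the excerpt together with the fact that $*$ is a $\kk$-algebra anti-involution. Since both sides are $\kk$-algebra homomorphisms (the composition of an anti-involution, an automorphism, and an anti-involution is an automorphism), equality on generators will suffice.

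First I would handle the toral part. For any $h\in P^\vee$,
\[
(*\circ T_i\circ *)(K^h) = *(T_i(K^{-h})) = *(K^{-s_i(h)}) = K^{s_i(h)} = T_i^{-1}(K^h),
\]
which matches. Next I would treat the diagonal cases $j=i$. Using that $*$ reverses products and fixes $E_i,F_i$ while sending $K_i\mapsto K_i^{-1}$, I compute
\[
(*\circ T_i\circ *)(E_i) = *(-F_iK_i) = -*(K_i)*(F_i) = -K_i^{-1}F_i = T_i^{-1}(E_i),
\]
and symmetrically $(*\circ T_i\circ *)(F_i) = *(-K_i^{-1}E_i) = -E_iK_i = T_i^{-1}(F_i)$.

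For the off-diagonal cases $j\neq i$ I would apply $*$ term-by-term to Lusztig's sum formula. Since $*$ fixes $E_i,E_j,F_i,F_j$ and hence each divided power $E_i^{(k)}$, $F_i^{(k)}$, the only effect of the outer $*$ is to reverse the order of the triple product $E_i^{(s)}E_jE_i^{(r)}$ to $E_i^{(r)}E_jE_i^{(s)}$ (and similarly for $F$):
\[
(*\circ T_i\circ *)(E_j) = \sum_{r+s=-C_{ij}}(-1)^r q_i^{-r}\, E_i^{(r)}E_jE_i^{(s)} = T_i^{-1}(E_j),
\]
and the analogous computation with the sign $q_i^{r}$ gives the $F_j$ case. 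Comparing with the explicit formulas for $T_i^{-1}$ in the excerpt yields equality in each case.

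No step here is a real obstacle; this is a mechanical check on generators, and the only thing to be careful about is tracking that $*$ reverses multiplication so that the ``inner'' and ``outer'' copies of $E_i^{(s)},E_i^{(r)}$ exchange roles, which is precisely the difference between the formulas for $T_i$ and $T_i^{-1}$. Since both $*\circ T_i\circ *$ and $T_i^{-1}$ are algebra automorphisms agreeing on a generating set, the proposition follows.
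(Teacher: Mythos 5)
Your proof is correct and is exactly the standard verification: the paper itself gives no argument and simply cites \cite[Section 37.2.4]{Lus:intro}, where the identity is established by the same direct check on the generators $K^h$, $E_j$, $F_j$, using that $*$ is an anti-automorphism fixing $E_i,F_i$ and inverting $K^h$. All of your individual computations (including the reversal of the triple products $E_i^{(s)}E_jE_i^{(r)}$ and the matching of the scalars $q_i^{\mp r}$ with the index that migrates from the right to the left factor) check out against the formulas for $T_i$ and $T_i^{-1}$ given in the paper.
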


Choose any sign $e\in\{+1,-1\}$. We might omit $e$ when $e=+1$. 

For any $m\in\N$, let us define

\begin{eqnarray*}
F_{e}(m\beta_{k}) & = & T_{i_{1}}^{e}\cdots T_{i_{k-1}}^{e}(F_{i_{k}}^{(m)}).
\end{eqnarray*}
Notice that $\wt(F_{e}(m\beta_{k}))=-m\beta_{k}$.

For any multiplicity vector $\uc=(c_{1},\ldots,c_{l})\in\N^{[1,l]}$,
define the corresponding ordered product (\cite[Section 4.3]{Kimura10}):

\begin{eqnarray*}
F_{e}(\uc,\ow) & = & \begin{cases}
F_{e}(c_{1}\beta_{1})\cdots F_{e}(c_{l}\beta_{l}) & e=+1\\
F_{e}(c_{l}\beta_{l})\cdots F_{e}(c_{1}\beta_{1}) & e=-1
\end{cases}
\end{eqnarray*}

By Proposition \ref{prop:change_braid_sign}, we have $*F_{e}(\uc,\ow)=F_{-e}(\uc,\ow)$.

\begin{Prop}[\cite{Lus:intro}]

$\{F_{e}(\uc,\ow)|\uc\in\N^{[1,l]}\}$ forms a basis of a $\kk$-subspace
of $\envAlg^{-}$ which does not depend on the choice of $\ow$. Denote
the subspace by $\envAlg^{-}(w,e)$.

\end{Prop}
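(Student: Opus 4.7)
The plan is to follow the standard PBW theory of quantum groups. First, using the anti-automorphism $*$ on $\envAlg$ and the identity $*F_e(\uc,\ow) = F_{-e}(\uc,\ow)$ (which follows from $*\circ T_i\circ * = T_i^{-1}$ by Proposition \ref{prop:change_braid_sign} and reverses ordered products), the case $e=-1$ reduces immediately to the case $e=+1$: since $*$ is a $\kk$-linear involution of $\envAlg^-$ sending one family to the other, linear independence and reduced-word-independence for one sign transfer to the other. Hence I would focus on $e=+1$ throughout.

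Second, I would prove linear independence of the ordered monomials $\{F(\uc,\ow) : \uc \in \N^{[1,l]}\}$ in $\envAlg^-$ for a fixed reduced expression $\ow$. The classical approach uses the fact that the roots $\beta_1,\ldots,\beta_l$ are pairwise distinct real positive roots (exactly those made negative by $w^{-1}$), and that the quantum root vectors $F(\beta_k) := T_{i_1}\cdots T_{i_{k-1}}(F_{i_k})$ satisfy the Levendorskii-Soibelman straightening rule. This rule forces the monomials $F(c_1\beta_1)\cdots F(c_l\beta_l)$ to have distinct leading terms in a weight-refined lexicographic ordering when expanded against any fixed total PBW basis of $\envAlg^-$ (obtained from a reduced expression of the longest element in finite type, or by the affine/Kac-Moody variant in \cite{lusztig1992affine,Kimura10}). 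This yields linear independence and exhibits a well-defined subspace $\envAlg^-(w,+) \subset \envAlg^-$ attached to each $\ow$.

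Third, and this is the main obstacle, I would show that $\envAlg^-(w,+)$ is independent of the chosen reduced expression. By Matsumoto's theorem, any two reduced words for $w$ are connected by a sequence of commutation moves (when $C_{ij}=0$) and rank-two braid moves of length equal to the order of $s_is_j$. Invariance under commutation moves follows from $T_iT_j=T_jT_i$ when $C_{ij}=0$, together with the observation that the corresponding $F(\beta_k),F(\beta_{k+1})$ then $q$-commute, so the ordered products agree up to a reordering. Invariance under a rank-two braid move reduces to checking the identity of two PBW subspaces inside a rank-two quantum subgroup: namely, the statement that the subspace of $\envAlg^-$ generated by the PBW monomials attached to the two reduced expressions of the longest element in the rank-two Weyl group coincide. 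For the finite rank-two types $A_1\times A_1$, $A_2$, $B_2$, $G_2$, this is a direct computation using the explicit action of $T_i$ on $F_i,F_j$ and the quantum Serre relations, and is carried out in \cite[Chapters 40-41]{lusztig1992affine}.

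Finally, having verified invariance under every elementary move, I would conclude that $\envAlg^-(w,e)$ is well defined and that $\{F_e(\uc,\ow):\uc\in\N^{[1,l]}\}$ is a $\kk$-basis of it for any reduced $\ow$ and any sign $e$. The bulk of the technical work lies in the rank-two braid move verification; for the symmetrizable Kac-Moody setting one invokes the extension of Lusztig's arguments given in \cite[Theorem 4.9]{Kimura10}, whose proof we would only need to quote.
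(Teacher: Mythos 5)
This proposition is quoted in the paper as a prerequisite (from \cite[Theorem 4.9]{Kimura10} and \cite[40.2.1, 41.1.3]{lusztig1992affine}); the paper supplies no proof of its own, so the only meaningful comparison is with the standard argument in those references, which your outline essentially reproduces: reduction of $e=-1$ to $e=+1$ via the anti-involution $*$ and $*\circ T_i\circ *=T_i^{-1}$, and independence of the reduced word via Matsumoto's theorem plus explicit rank-two computations (correctly restricted to the finite rank-two types, since braid moves only exist when the rank-two parabolic is finite). That part of your sketch is sound.

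The one step I would push back on is your linear-independence argument. You propose expanding the monomials $F(c_1\beta_1)\cdots F(c_l\beta_l)$ against ``any fixed total PBW basis of $\envAlg^{-}$'' coming from a reduced expression of the longest element, ``or by the affine/Kac-Moody variant.'' For a general symmetrizable Kac-Moody algebra there is no longest element and no global PBW basis of $\envAlg^{-}$ of this shape, and the affine variant does not cover the generality needed here, so as written this step does not go through. The standard (and reference's) route is instead the orthogonality of the PBW vectors under Lusztig's bilinear form: $(F_{e}(\uc,\ow),F_{e}(\uc',\ow))_{L}=\prod_{k}\delta_{c_{k},c'_{k}}\prod_{s=1}^{c_{k}}\frac{1}{1-q_{i_{k}}^{2s}}$, with nonzero diagonal entries, which immediately gives linear independence with no reference to any ambient basis; this formula is exactly the proposition the paper records right after the statement you are proving. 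Replacing your leading-term argument by this orthogonality would make the outline complete (modulo the rank-two computations you defer to \cite{lusztig1992affine}, as the paper itself does).
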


\begin{Def}

$\{F_{e}(\uc,\ow)|\uc\in\N^{[1,l]}\}$ is called the \emph{PBW basis
}(Poincar\'e-Birkhoff-Witt basis) of $\envAlg^{-}(w,e)$.

\end{Def}

When the context is clear, we could simply write $F_{e}(\uc)$ instead
of $F_{e}(\uc,\ow)$. 

For any $j\leq k$ such that $i_{j}=i_{k}$, set 
\begin{eqnarray*}
\beta_{[j,k]} & = & \sum_{s\in[j,k],i_{s}=i_{j}}\beta_{s}\in Q.
\end{eqnarray*}
It is straightforward to check that 
\begin{eqnarray*}
\beta_{[j,k]} & = & (\ow_{<j}-\ow_{\leq k})\varpi_{i_{j}}.
\end{eqnarray*}

We also define the multiplicity vector $\uc_{[j,k]}\in\N^{[1,l]}$
such that
\begin{eqnarray*}
(\uc_{[j,k]})_{s} & = & \begin{cases}
\delta_{i_{j},i_{s}}\  & s\in[j,k]\\
0 & \mathrm{elsewhere}
\end{cases}.
\end{eqnarray*}
 We denote 
\begin{eqnarray*}
F_{e}[j,k] & = & F_{e}(\uc_{[j,k]},\ow).
\end{eqnarray*}
Then $\wt F_{e}[j,k]=-\beta_{[j,k]}$.

We have the following Levendorskii-Soibelman straightening law (LS-law
for short, see \cite{LevSoi}\cite[Proposition 4.10]{Kimura10}).

\begin{Thm}

For any $j<k$, we have
\begin{eqnarray*}
F_{+1}(\beta_{k})F_{+1}(\beta_{j})-q^{-(\beta_{j},\beta_{k})}F_{+1}(\beta_{j})F_{+1}(\beta_{k}) & = & \sum_{\uc\in\N^{[j+1,k-1]}}f_{\uc}F_{+1}(\uc,\ow)\\
F_{-1}(\beta_{j})F_{-1}(\beta_{k})-q^{-(\beta_{j},\beta_{k})}F_{-1}(\beta_{k})F_{-1}(\beta_{j}) & = & \sum_{\uc\in\N^{[j+1,k-1]}}f{}_{\uc}F_{-1}(\uc,\ow)
\end{eqnarray*}
such that $f_{\uc}\in\kk$, and the equations are $Q_{-}$-grading
homogeneous.

\end{Thm}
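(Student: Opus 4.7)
The plan is to prove the two identities in tandem: the $e=+1$ case is the main work, and the $e=-1$ case follows formally from it by applying the $\kk$-algebra anti-involution $*$. Using Proposition \ref{prop:change_braid_sign}, i.e.\ $*\circ T_i\circ *=T_i^{-1}$, together with $*F_i=F_i$, one checks $*F_{+1}(m\beta_s)=F_{-1}(m\beta_s)$; since $*$ reverses the order of products, the definitions give $*F_{+1}(\uc,\ow)=F_{-1}(\uc,\ow)$. Applying $*$ to the first identity, and using that $*$ fixes the scalar $q^{-(\beta_j,\beta_k)}$, converts it term-by-term into the second identity. Hence it suffices to establish the identity for $e=+1$.

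For the $e=+1$ identity I would first reduce to the case $j=1$ by applying the inverse braid operator $T_{i_{j-1}}^{-1}\cdots T_{i_1}^{-1}$ to both sides. Under this map the vector $F_{+1}(\beta_s,\ow)=T_{i_1}\cdots T_{i_{s-1}}(F_{i_s})$ (for $s\geq j$) becomes $T_{i_j}\cdots T_{i_{s-1}}(F_{i_s})=F_{+1}(\beta'_{s-j+1},\ow')$ for the reduced word $\ow'=\ow_{[j,l]}$, and $F_{+1}(\beta_j,\ow)$ in particular maps to $F_{i_j}$. Thus we reduce to proving, for the word $\ow'$, the commutation identity with the leftmost index in position $1$; after relabelling, we may as well assume $j=1$ from the outset.

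Now, since $\envAlg^{-}(w,+1)$ is a subalgebra with $\kk$-basis $\{F_{+1}(\uc,\ow)\}$, the element $F_{+1}(\beta_k)F_{i_1}-q^{-(\alpha_{i_1},\beta_k)}F_{i_1}F_{+1}(\beta_k)$ expands uniquely as $\sum f_\uc F_{+1}(\uc,\ow)$. The $Q$-grading forces $\sum_s c_s\beta_s=\alpha_{i_1}+\beta_k$. The essential combinatorial input is that $\beta_1<\beta_2<\cdots<\beta_l$ is a convex order on the inversion set of $w$: any non-negative integral expression of $\alpha_{i_1}+\beta_k$ in the $\beta_s$ distinct from $\uc=e_1+e_k$ must have support strictly inside $(1,k)$. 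Granting this, the support condition $\supp\uc\subset[2,k-1]$ reduces to showing that the coefficient of $F_{+1}(e_1+e_k,\ow)=F_{i_1}F_{+1}(\beta_k)$ (equivalently of $F_{+1}(\beta_1)F_{+1}(\beta_k)$) in the expansion is exactly the $q^{-(\alpha_{i_1},\beta_k)}$ we have subtracted.

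The main obstacle is precisely this coefficient identification, which I would establish by induction on $k$. The base case $k=2$ is a rank-two computation: one verifies directly that
\[
T_{i_1}(F_{i_2})\,F_{i_1}=q^{-(\alpha_{i_1},s_{i_1}\alpha_{i_2})}\,F_{i_1}\,T_{i_1}(F_{i_2})
\]
inside the subalgebra generated by $F_{i_1},F_{i_2}$, using Lusztig's explicit formula for $T_{i_1}(F_{i_2})$ together with the quantum Serre relations (the four possibilities $A_1\times A_1,A_2,B_2,G_2$ being checked case by case). For the inductive step, write $F_{+1}(\beta_k)=T_{i_1}\bigl(F_{+1}(\gamma_{k-1},\ow'')\bigr)$ with $\ow''=\ow_{[2,l]}$ and $\gamma_r$ its PBW roots, so that the desired commutation between $F_{+1}(\beta_k)$ and $F_{i_1}=T_{i_1}(-K_{i_1}^{-1}E_{i_1})$ is transported across $T_{i_1}$ and reduces to a commutation within $T_{i_1}\bigl(\envAlg^{-}(s_{i_2}\cdots s_{i_l},+1)\bigr)$; one then invokes the inductive hypothesis for the shorter reduced word $\ow''$ to pin down the coefficient.
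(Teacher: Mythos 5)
The paper does not prove this theorem: it is quoted from Levendorskii--Soibelman and \cite[Proposition 4.10]{Kimura10}, so there is no internal proof to compare against. Judged on its own, your outline has the right skeleton --- the $*$-reduction of the $e=-1$ identity to the $e=+1$ identity is correct, as is the braid-operator reduction to $j=1$ --- but it breaks down at the step you call ``the essential combinatorial input.''

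The claim that convexity of $\beta_1,\dots,\beta_l$ forces every non-negative integral expression of $\alpha_{i_1}+\beta_k$ other than $\uc=e_1+e_k$ to have support strictly inside $(1,k)$ is false. Already in type $A_2$ with $\ow=s_1s_2s_1$ (so $\beta_1=\alpha_1$, $\beta_2=\alpha_1+\alpha_2$, $\beta_3=\alpha_2$) and $j=1$, $k=2$, one has $\beta_1+\beta_2=2\alpha_1+\alpha_2=2\beta_1+\beta_3$, whose support $\{1,3\}$ is not contained in $[2,1]=\emptyset$. The PBW monomial $F_{+1}((2,0,1),\ow)$ therefore has the correct weight, and weight plus convexity cannot exclude it; what excludes it is the quantum Serre relation (the $j=1$, $k=2$ instance of the LS-law is precisely the statement that $T_{i_1}(F_{i_2})$ $q$-commutes with $F_{i_1}$). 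In general, killing the coefficients $c_s$ for $s>k$ uses the algebraic fact that both root vectors lie in the span of the PBW monomials of the subword $\ow_{\leq k}$ and that this span is a subalgebra compatibly embedded in the full PBW basis, while killing $c_1$ in the non-leading terms and pinning the leading coefficient require Lusztig's decomposition $\envAlg^-=\ker(e_i')\oplus F_i\envAlg^-$ (and its $*$-twist) or the filtration argument of Levendorskii--Soibelman and De Concini--Kac. None of this is delivered by the convex-order lemma, so the heart of the proof is missing.

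Two further problems. You assume $\envAlg^{-}(w,+1)$ is a subalgebra with basis the PBW monomials, but in the paper's logical order closure under multiplication is deduced \emph{from} the LS-law; you must either invoke Lusztig's independent characterization of this span or arrange the induction so as not to presuppose it. And the inductive step does not close as described: transporting the $j=1$ commutator across $T_{i_1}$ replaces $F_{i_1}$ by $T_{i_1}^{-1}(F_{i_1})=-E_{i_1}K_{i_1}$, so the transported problem is a mixed commutator involving $E_{i_1}$ and is not an instance of the inductive hypothesis for the shorter word; it has to be handled with the $e_i'$-derivation calculus. (Also, the rank-two base case must cover arbitrary symmetrizable rank-two Cartan matrices, not just the four finite types.)
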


It follows that $\envAlg^{-}(w,e)$ is the $\kk$-subalgebra of $\envAlg^{-}$
generated by $\{F_{e}(\beta_{j})|j\in[1,l]\}$.

\begin{Prop}[{\cite[Proposition 4.22]{Kimura10}\cite[38.2.3]{Lus:intro}}]

For any $\uc,\uc'\in\N^{[1,l]}$, we have 
\begin{eqnarray*}
(F_{e}(\uc,\ow),F_{e}(\uc',\ow))_{L} & = & \prod_{k=1}^{l}\delta_{c_{k},c'_{k}}\prod_{s=1}^{c_{k}}\frac{1}{1-q_{i_{k}}^{2s}}.
\end{eqnarray*}

\end{Prop}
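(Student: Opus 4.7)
I proceed by induction on $l = l(w)$, treating only the case $e=+1$ (the case $e=-1$ follows by applying $*$ and using $*F_{e}(\uc,\ow) = F_{-e}(\uc,\ow)$ together with the fact that $(*x,*y)_{L}$ relates to $(x,y)_{L}$ by a known scalar). For the base case $l=1$, one has $F_{e}(\uc,\ow) = F_{i_{1}}^{(c_{1})}$, and a direct unwinding of the recursion $(F_{i}x,y)_{L} = \tfrac{1}{1-q_{i}^{2}}(x,e_{i}'(y))_{L}$ together with $e_{i}'(F_{i}^{(b)}) = F_{i}^{(b-1)}$ yields the claimed formula. For the inductive step, factor
\[
F_{+1}(\uc,\ow) \;=\; F_{i_{1}}^{(c_{1})} \cdot T_{i_{1}}\bigl(F_{+1}(\uc'',\ow_{[2,l]})\bigr),
\]
where $\uc'' = (c_{2},\ldots,c_{l})$ and $\ow_{[2,l]} = s_{i_{2}}\cdots s_{i_{l}}$.

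\textbf{Key lemma to establish.} The backbone of the induction is the identity, valid for $x,y \in \ker e_{i}'$ and $a,b \in \N$:
\[
\bigl(F_{i}^{(a)}x,\; F_{i}^{(b)}y\bigr)_{L} \;=\; \delta_{a,b}\,\Bigl(\prod_{s=1}^{a}\frac{1}{1-q_{i}^{2s}}\Bigr)\,(x,y)_{L}.
\]
This reduces to the base case by iterating $(F_{i}u,v)_{L} = \tfrac{1}{1-q_{i}^{2}}(u,e_{i}'(v))_{L}$ combined with the twisted Leibniz rule for $e_{i}'$; the vanishing $e_{i}'(x) = 0$ and the weight shift produced each time the derivation crosses an $F_{i}$ are what force the $\delta_{a,b}$ and the product of $(1-q_{i}^{2s})^{-1}$. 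The orthogonality part (when $a \neq b$) also uses that $(\ ,\ )_{L}$ is $Q$-graded, so pairing elements of different weights is automatically zero.

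\textbf{Applying the lemma to the PBW expansion.} The inductive step now hinges on verifying two further points. First, that $T_{i_{1}}$ sends $\envAlg^{-}(\ow_{[2,l]},+1)$ into $\ker e_{i_{1}}'$; this is a standard property of Lusztig's braid operators and can be checked on the PBW generators $F_{+1}(\beta_{k})$ for $k \geq 2$, since for a reduced word starting with $i_{1}$, the roots $\beta_{k}$ for $k \geq 2$ satisfy $s_{i_{1}}\beta_{k} \in Q_{+}$, which translates into the annihilation statement via the explicit formulas for $T_{i_{1}}$. Second, that the induced map $T_{i_{1}} : \envAlg^{-}(\ow_{[2,l]},+1) \to \ker e_{i_{1}}' \cap \envAlg^{-}$ is an isometry for $(\ ,\ )_{L}$. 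Granting these, the lemma peels off the leftmost factor and reduces $\bigl(F_{+1}(\uc,\ow), F_{+1}(\uc',\ow)\bigr)_{L}$ to $\delta_{c_{1},c_{1}'}\prod_{s=1}^{c_{1}}(1-q_{i_{1}}^{2s})^{-1}$ times $\bigl(F_{+1}(\uc'',\ow_{[2,l]}), F_{+1}(\uc''',\ow_{[2,l]})\bigr)_{L}$, at which point the induction hypothesis finishes the proof.

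\textbf{Main obstacle.} The genuinely non-trivial step is the isometry property of $T_{i_{1}}$ on the relevant subspace. One route is to exploit the coproduct description of $(\ ,\ )_{L}$ (it is the pairing induced by viewing $\envAlg^{-}$ as the dual of itself via its twisted bialgebra structure) and then use the compatibility of $T_{i}$ with the comultiplication restricted to $\ker e_{i}'$. An alternative, more computational route is to verify the isometry directly on the PBW generators $T_{i_{1}}(F_{i_{k}}) = F_{+1}(\beta_{k})$ using the explicit action of $T_{i_{1}}$ on the quantum Serre relations, then extend multiplicatively via the Levendorskii--Soibelman law, since the LS-law is triangular and the leading terms already determine the pairing values.
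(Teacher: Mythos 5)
Your proof is correct and is essentially a reconstruction of the argument in the sources the paper cites for this Proposition (the paper itself gives no proof, only the references to Kimura, Prop.\ 4.22, and Lusztig, 38.2.3): the orthogonal decomposition $\envAlg^{-}=\bigoplus_{a}F_{i}^{(a)}\ker e_{i}'$ with the stated scaling of the pairing, the factorization $F_{+1}(\uc,\ow)=F_{i_{1}}^{(c_{1})}T_{i_{1}}(F_{+1}(\uc'',\ow_{[2,l]}))$, and induction on $l$. The step you single out as the main obstacle --- that $T_{i_{1}}$ carries $\envAlg^{-}(\ow_{[2,l]},+1)$ isometrically into $\ker e_{i_{1}}'$ --- is exactly the content of the cited Lusztig 38.2.3 (together with 38.1.6), so your outline closes precisely where the paper's citation begins.
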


\begin{Def}[{\cite[Definition 3.26]{kimura2017twist}}]

The algebra $\envAlg^{-}(w):=\envAlg^{-}(w,+1)$ is called the quantum
nilpotent subalgebra associated to $w\in W$. The algebra $\qO[N_{-}(w)]:=*(\envAlg^{-}(w))=\envAlg^{-}(w,-1)$
is called the quantum unipotent subgroup associated to $w$. 

\end{Def}

The quantum unipotent subgroup $\qO[N_{-}(w)]$ has a $Q_{-}$-grading
induced from that of $\envAlg^{-}$. It is a quantum analogue of the
coordinate ring of the unipotent subgroup $N_{-}(w)$, see \cite{kimura2017twist}
for more details.

\begin{Eg}[$\mathfrak{sl}_3$ type]\label{eg:sl3_type}

Consider the Cartan matrix $C=\left(\begin{array}{cc}
2 & -1\\
-1 & 2
\end{array}\right)$ and the reduced word $\ow=s_{1}s_{2}s_{1}$. We get $\beta_{1}=\alpha_{1}$,
$\beta_{2}=s_{1}\alpha_{2}=\alpha_{1}+\alpha_{2}$, $\beta_{3}=\alpha_{2}$.
Under our convention, we compute that $F(\beta_{1})=F_{1}$, $F(\beta_{2})=T_{1}F_{2}=F_{2}F_{1}-qF_{1}F_{2}$,
$F(\beta_{3})=T_{1}T_{2}F_{1}=F_{2}$. The LS-law now reads as

\begin{eqnarray*}
F(\beta_{3})F(\beta_{1})-qF(\beta_{1})F(\beta_{3}) & = & F(\beta_{2}).
\end{eqnarray*}
Notice that $F(\beta_{1})F(\beta_{3})=F((1,0,1),\ow)=F[1,3]$. By
applying the $*$ anti-involution, we get $F_{-1}(\beta_{2})=F_{1}F_{2}-qF_{2}F_{1}=T_{1}^{-1}(F_{2})$,
$F_{-1}[1,3]=F_{-1}(\beta_{3})F_{-1}(\beta_{1})=F_{2}F_{1}$, and
the LS-law reads as

\begin{eqnarray*}
F_{-1}(\beta_{1})F_{-1}(\beta_{3})-qF_{-1}(\beta_{3})F_{-1}(\beta_{1}) & = & F_{-1}(\beta_{2}).
\end{eqnarray*}

\end{Eg}

\subsection{Dual canonical bases\label{subsec:Dual-canonical-bases}}

Take a sign $e\in\{+1,-1\}$. We define the dual PBW basis of $\envAlg^{-}(w,e)$
to be
\begin{eqnarray*}
F_{e}^{\up}:=\{F_{e}^{\up}(\uc,\ow) & := & \frac{F_{e}(\uc,\ow)}{(F_{e}(\uc,\ow),F_{e}(\uc,\ow))_{L}}|\uc\in\N^{[1,l]}\}.
\end{eqnarray*}

The corresponding dual LS-law takes the following form.

\begin{Prop}[{\cite[Theorem 4.27]{Kimura10}}]\label{prop:dual_LS_law}

For any $j<k$, we have

\begin{eqnarray*}
q^{(\beta_{j},\beta_{k})}F_{-1}^{\up}(\beta_{j})F_{-1}^{\up}(\beta_{k})-F_{-1}^{\up}(\beta_{k})F_{-1}^{\up}(\beta_{j}) & = & \sum_{\uc\in\N^{[j+1,k-1]}}f_{\uc}^{*}F_{-1}^{\up}(\uc,\ow)
\end{eqnarray*}
such that $f_{\uc}^{*}\in\Q[q^{\pm}]$, and the equation is $Q_{-}$-grading
homogeneous.

\end{Prop}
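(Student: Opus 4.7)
The plan is to deduce the dual LS-law directly from the LS-law stated above by a careful rescaling using the Lusztig norms, then to argue integrality of the resulting coefficients. First, I recall from the norm formula that $(F_{-1}(\beta_s),F_{-1}(\beta_s))_L = \frac{1}{1-q_{i_s}^2}$, so $F_{-1}^{\up}(\beta_s) = (1-q_{i_s}^2)\,F_{-1}(\beta_s)$. More generally, for a multiplicity vector $\uc\in\N^{[1,l]}$, set
\[
n_\uc := (F_{-1}(\uc,\ow),F_{-1}(\uc,\ow))_L = \prod_{k=1}^l\prod_{s=1}^{c_k}\frac{1}{1-q_{i_k}^{2s}},
\]
so that $F_{-1}^{\up}(\uc,\ow) = n_\uc^{-1}F_{-1}(\uc,\ow)$. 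Note that the norm is multiplicative only up to the scalars above and does not see the ordering, which is crucial for the argument below.

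Next, I substitute $F_{-1}(\beta_j) = n_{\beta_j}F_{-1}^{\up}(\beta_j)$ and $F_{-1}(\beta_k) = n_{\beta_k}F_{-1}^{\up}(\beta_k)$ into the LS-law and multiply both sides by $q^{(\beta_j,\beta_k)}/(n_{\beta_j}n_{\beta_k})$. Since $n_{\beta_j},n_{\beta_k}$ are scalars in $\kk$ they commute past everything, and the LHS becomes
\[
q^{(\beta_j,\beta_k)}F_{-1}^{\up}(\beta_j)F_{-1}^{\up}(\beta_k) - F_{-1}^{\up}(\beta_k)F_{-1}^{\up}(\beta_j),
\]
exactly matching the stated dual LS-law. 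On the RHS the coefficient of $F_{-1}^{\up}(\uc,\ow)$ is
\[
f_\uc^{*} \;=\; \frac{q^{(\beta_j,\beta_k)}\,n_\uc}{n_{\beta_j}n_{\beta_k}}\,f_\uc \;=\; q^{(\beta_j,\beta_k)}(1-q_{i_j}^2)(1-q_{i_k}^2)\,f_\uc\prod_{k}\prod_{s=1}^{c_k}\frac{1}{1-q_{i_k}^{2s}}.
\]
The $Q_-$-grading homogeneity is inherited from the LS-law, since $F_{-1}(\uc,\ow)$ and $F_{-1}^{\up}(\uc,\ow)$ have the same weight $-\sum c_k\beta_k$, and the contributing $\uc\in\N^{[j+1,k-1]}$ are precisely those with $\sum c_s\beta_s = \beta_j + \beta_k$.

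The main obstacle is to show $f_\uc^{*}\in\Q[q^{\pm}]$ rather than merely $\Q(q)$, since the product $\prod_s(1-q_{i_k}^{2s})^{-1}$ carries ostensible poles at roots of unity. Two routes present themselves. The more computational route is to iterate the LS-law on $F_{-1}(\beta_j)F_{-1}(\beta_k)$, tracking how the resulting structure constants $f_\uc$ accumulate factors of $(1-q_{i}^{2s})$ in patterns that exactly absorb the denominators coming from $n_\uc$; this mirrors the mechanism by which quantum binomial coefficients are Laurent polynomials despite being defined by ratios of factorials. The more conceptual route is to use the fundamental adjointness identity $(xy,z)_L = (x\otimes y,\,r(z))_L$ for Lusztig's twisted coproduct $r$ on $\envAlg^{-}$: dualising under $(\,,\,)_L$ sends the product $F_{-1}^{\up}(\beta_j)F_{-1}^{\up}(\beta_k)$ into the coefficients of $r(F_{-1}(\uc,\ow))$ on $F_{-1}(\beta_j)\otimes F_{-1}(\beta_k)$, and these coefficients are known to lie in $\Z[q^{\pm}]$ by Lusztig's integrality results for the PBW basis (see \cite[Chapter 1, \S1.4]{Lus:intro}). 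Either route yields $f_\uc^{*}\in\Q[q^{\pm}]$ and completes the proof.
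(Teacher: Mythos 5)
The paper itself offers no proof of this proposition: it is imported verbatim from \cite[Proposition 4.28]{Kimura10}, so there is no in-paper argument to compare against. Your rescaling reduction is correct as far as it goes: substituting $F_{-1}(\beta_{s})=n_{\beta_{s}}F_{-1}^{\up}(\beta_{s})$ into the LS-law, multiplying by $q^{(\beta_{j},\beta_{k})}/(n_{\beta_{j}}n_{\beta_{k}})$, and reading off
$f_{\uc}^{*}=q^{(\beta_{j},\beta_{k})}\,n_{\uc}\,f_{\uc}/(n_{\beta_{j}}n_{\beta_{k}})$
is exactly right, and the $Q_{-}$-homogeneity is indeed inherited. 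But this bookkeeping only yields $f_{\uc}^{*}\in\Q(q)$, and the integrality $f_{\uc}^{*}\in\Q[q^{\pm}]$ is the entire content of the proposition. That step is not proved in your proposal; it is replaced by two sketches.

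The first sketch does not work as stated: the LS-law as quoted in this paper only asserts $f_{\uc}\in\kk=\Q(q)$, so there are no integral factors to ``track,'' and even granting $f_{\uc}\in\Z[q^{\pm}]$, the claim that iterated straightening produces exactly the factors $\prod_{s=1}^{c_{m}}(1-q_{i_{m}}^{2s})$ needed to cancel $n_{\uc}$ is precisely the assertion to be proved, not an argument for it. The second sketch is the correct mechanism (and is essentially how Kimura proves the result): by adjointness of multiplication and the twisted coproduct $r$ under $(\ ,\ )_{L}$, the structure constant $f_{\uc}^{*}$ equals the coefficient of $F_{-1}(\beta_{j})\otimes F_{-1}(\beta_{k})$ in the PBW-expansion of $r(F_{-1}(\uc,\ow))$ (up to the explicit $q$-power), and integrality then follows from the stability of the $\Z[q^{\pm}]$-lattice spanned by the PBW basis under $r$. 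However, that stability is a nontrivial theorem (\cite[41.1.3]{Lus:intro} combined with the behaviour of $r$ on divided powers, not Chapter~1, \S1.4 as you cite), and you neither state it precisely nor check that the relevant PBW components of $r(F_{-1}(\uc,\ow))$ inside $\envAlg^{-}\otimes\envAlg^{-}$ can be isolated by pairing against $F_{-1}^{\up}(\beta_{j})\otimes F_{-1}^{\up}(\beta_{k})$ using orthogonality. Until one of the two routes is actually carried out, the proof has a genuine gap at its only substantive point.
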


For any two multiplicity vectors $\uc$, $\uc'$, we have the lexicographical
order $<$ such that $\uc'<\uc$ if, for some $s$, $c'_{s}<c_{s}$
and $c'_{i}=c_{i}$, $\forall i<s$. We also have the reverse lexicographical
order $<'$ such that $\uc'<'\uc$ if, for some $s$, $\uc'_{s}<\uc_{s}$
and $\uc'_{j}=\uc_{j}$ for all $j>s$. Introduce the partial order
$\prec$ such that $\uc'\prec\uc$ if $\uc'<\uc$ and $\uc'<'\uc$.

By Proposition \ref{prop:dual_LS_law}, we have 
\begin{eqnarray*}
\sigma F_{e}^{\up}(\uc,\ow)-F_{e}^{\up}(\uc,\ow) & \in & \sum_{\uc'\prec\uc}\Q[q^{\pm}]F_{e}^{\up}(\uc',\ow).
\end{eqnarray*}

The dual canonical basis $B_{e}^{\up}=\{B_{e}^{\up}(\uc,\ow)|\uc\in\N^{[1,l]}\}$
is defined to be the unique $\sigma$-invariant basis of $\envAlg^{-}(w,e)$
such that 
\begin{eqnarray*}
B_{e}^{\up}(\uc,\ow)-F_{e}^{\up}(\uc,\ow) & \in & \sum_{\uc'\prec\uc}q\Z[q]F_{e}^{\up}(\uc',\ow).
\end{eqnarray*}

As before, for $\uc=\uc_{[j,k]}$ such that $i_{j}=i_{k}$, we denote
$B_{e}^{\up}[j,k]=B_{e}^{\up}(\uc_{[j,k]},\ow)$ for simplicity.

\begin{Eg}\label{eg:sl3_canonical_basis}

Continue Example \ref{eg:sl3_type}. Recall that $\sigma(F_{i})=-q^{2}F_{i}$
for $i=1,2$, $\sigma(F_{1}F_{2})=q^{-1}\sigma(F_{2})\sigma(F_{1})=q^{3}F_{2}F_{1}$.
In addition, we can check $(F(\beta_{i}),F(\beta_{i}))_{L}=\frac{1}{1-q^{2}}$.
The dual PBW basis elements read as $F^{\up}(\beta_{1})=(1-q^{2})F_{1}$,
$F^{\up}(\beta_{3})=(1-q^{2})F_{2}$, $F^{\up}(\beta_{2})=(1-q^{2})(F_{2}F_{1}-qF_{1}F_{2})$.
It is straightforward to check that $\sigma(F^{\up}(\beta_{i}))=F^{\up}(\beta_{i})$,
$\forall i\in[1,3]$. Notice that $F^{\up}[1,3]=(1-q^{2})^{2}F[1,3]=(1-q^{2})^{2}F_{1}F_{2}$,
$\sigma F^{\up}[1,3]=(1-q^{-2})^{2}q^{3}F_{2}F_{1}$. The LS-law now
reads as

\begin{eqnarray*}
\sigma(F^{\up}[1,3])-F^{\up}[1,3] & = & (q^{-1}-q)F^{\up}(\beta_{2}).
\end{eqnarray*}

We can calculate the corresponding dual canonical basis element
\begin{eqnarray*}
B^{\up}[1,3] & = & F^{\up}[1,3]-qF^{\up}(\beta_{2}).
\end{eqnarray*}
We then obtain the following exchange relation in $\envAlg^{-}(w)$:

\begin{eqnarray*}
F^{\up}(\beta_{1})F^{\up}(\beta_{3}) & = & B^{\up}[1,3]+qF^{\up}(\beta_{2}).
\end{eqnarray*}

Apply the anti-involution $*$, we obtain the following exchange relation
in $\qO[N_{-}(w)]$:

\begin{eqnarray*}
F_{-1}^{\up}(\beta_{3})F_{-1}^{\up}(\beta_{1}) & = & B_{-1}^{\up}[1,3]+qF_{-1}^{\up}(\beta_{2}).
\end{eqnarray*}

Apply the dual bar involution $\sigma$, we get

\begin{eqnarray*}
q^{-1}F_{-1}^{\up}(\beta_{1})F_{-1}^{\up}(\beta_{3}) & = & B_{-1}^{\up}[1,3]+q^{-1}F_{-1}^{\up}(\beta_{2}).
\end{eqnarray*}
Notice that all factors appearing are dual canonical basis elements
of the form $B_{-1}^{\up}(\uc,\ow)$ for some $\uc$. Explicitly,
we have

\begin{align*}
F_{-1}^{\up}(\beta_{1}) & =(1-q^{2})F_{1}\\
F_{-1}^{\up}(\beta_{3}) & =(1-q^{2})F_{2}\\
F_{-1}^{\up}(\beta_{2}) & =(1-q^{2})(F_{1}F_{2}-qF_{2}F_{1})\\
B_{-1}^{\up}[1,3] & =(1-q^{2})(F_{2}F_{1}-qF_{1}F_{2}).
\end{align*}

\end{Eg}

\subsection{Subalgebras and unipotent quantum minors}

For any $j\leq k\in[1,l]$, we define the subalgebras $\envAlg^{-}(w,e)_{[j,k]}=\kk[F_{e}[\beta_{i}]]_{i\in[j,k]}$
of $\envAlg^{-}(w,e)$, $e=\pm1$, respectively. Notice that they
depend on the reduced expression $\ow$ of $w$. By the LS-law, $\envAlg^{-}(w,e)_{[j,k]}$
has the $\kk$-basis 
\begin{eqnarray*}
F_{e}^{\up} & : & =\{F_{e}^{\up}(\uc,\ow)|\uc\in\N^{[j,k]}\},
\end{eqnarray*}
called the dual PBW basis. 

As before, denote $\envAlg^{-}(w)_{[j,k]}=\envAlg^{-}(w,+1)_{[j,k]}$
and $\qO[N_{-}(w)]_{[j,k]}=\envAlg^{-}(w,-1)_{[j,k]}$. Then we have
we have $*\envAlg^{-}(w)_{[j,k]}=\qO[N_{-}(w)]_{[j,k]}$. 

The braid group action $T_{\leq j-1}=T_{\ow[1,j-1]}$ gives a $\kk$-algebra
isomorphism

\begin{eqnarray*}
T_{\leq j-1} & : & \envAlg^{-}(\ow_{[j,k]})\simeq\envAlg^{-}(w)_{[j,k]},
\end{eqnarray*}
because $T_{\leq j-1}$ sends the quantum root vector $T_{i_{j}}\cdots T_{i_{s-1}}F_{i_{s}}$
in $\envAlg^{-}(\ow_{[j,k]})$ to $(T_{i_{1}}\cdots T_{i_{j-1}})T_{i_{j}}\cdots T_{i_{s-1}}F_{i_{s}}=F(\beta_{s})\in\envAlg^{-}(w)_{[j,k]}$,
$\forall s\in[j,k]$. It follows that we have a $\kk$-algebra isomorphism
\begin{eqnarray*}
*T_{\leq j-1}* & : & \qO[N_{-}(\ow_{[j,k]})]\simeq\qO[N_{-}(w)]_{[j,k]}
\end{eqnarray*}
which identifies the dual PBW bases.

If we have $i_{j}=i_{k}=a\in[1,r]$, then the dual canonical basis
element $B_{-1}^{\up}[j,k]=B_{-1}^{\up}(\uc_{[j,k]},\ow)$ is called
a unipotent quantum minor, which we denote by $D[j,k]$. See $D[j,k]=D_{\ow_{\leq k}\varpi_{a},\ow_{<j}\varpi_{a}}$
in \cite[Definition 3.40]{kimura2017twist} or \cite[Section 6]{Kimura10}
\cite[Definition 5.3.2]{KimuraQin14}.

By definition of the dual canonical basis, $D[j,k]$ takes the following
form 
\begin{eqnarray}
D[j,k] & = & F_{-1}^{\up}(\beta_{k})\cdots F_{-1}^{\up}(\beta_{j[1]})F_{-1}^{\up}(\beta_{j})+Z\label{eq:quantum_minor_ld_term}
\end{eqnarray}
for some $Z\in\sum_{\uc\prec\uc_{[j,k]}}q\Z[q]B_{-1}^{\up}(\uc,\ow)$.
Notice that it is homogeneous with the weight

\begin{eqnarray*}
\wt D[j,k] & = & -\beta_{[j,k]}=-(\ow_{<j}-\ow_{\leq k})\varpi_{i_{j}}.
\end{eqnarray*}

Applying \cite{Kimura10} to the reduced word $\ow_{[j,k]}$ and using
the isomorphism $\qO[N_{-}(w)]_{[j,k]}\simeq\qO[N_{-}(\ow_{[j,k]})]$,
we obtain the following result.

\begin{Thm}[{\cite[Corollary 6.4 Section 6.2]{Kimura10}}]\label{thm:unipotent_minor_property}

Let there be given any $j\leq k\in[1,l]$ and $a\in\supp\ow_{[j,k]}$.
Denote $j_{1}=^{\min,\geq j}a$ and $j_{2}=^{\max,\leq k}a$.

(1) $D[j_{1},j_{2}]$ lies in the $q$-center of $\qO[N_{-}(w)]_{[j,k]}$,
i.e., for any $u\in\qO[N_{-}(w)]_{[j,k]}$, we have $D[j_{1},j_{2}]u=q^{s}uD[j_{1},j_{2}]$
for some $s\in\Z$.

(2) For any $d\in\N$, there exists some $s\in\Z$ such that $D[j_{1},j_{2}]^{d}=q^{s}B_{-1}^{\up}(d\uc_{[j_{1},j_{2}]},\ow)$.

\end{Thm}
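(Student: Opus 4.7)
The plan is to reduce to the case where the subword spans the full length, and then invoke Kimura's original $q$-centrality and divided-power results, whose shape is suggested by the dual LS-law and the characterization of dual canonical basis elements.

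First I would use the $\kk$-algebra isomorphism $*T_{\leq j-1}*:\qO[N_{-}(\ow_{[j,k]})]\simeq\qO[N_{-}(w)]_{[j,k]}$, which identifies $F_{-1}^{\up}(T_{j_{1}}\cdots T_{j_{1}-1}F_{i_{j_{1}}})$ (the analogous PBW root vectors inside $\qO[N_{-}(\ow_{[j,k]})]$) with the $F_{-1}^{\up}(\beta_{s})$ for $s\in[j,k]$, and therefore identifies the corresponding dual canonical basis elements. Under this identification, $D[j_{1},j_{2}]$ corresponds to the unipotent quantum minor $D^{\mathrm{red}}[1,l']$ inside $\qO[N_{-}(\ow_{[j,k]})]$ associated with the first and last occurrence of $a$ in the reduced word $\ow_{[j,k]}$. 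This reduces both parts (1) and (2) to the case where the ambient algebra is $\qO[N_{-}(v)]$ for a fresh reduced word $v$, and where the index $a$ satisfies ${}^{\min}a=1$, ${}^{\max}a=l'$.

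For part (1), in this reduced case I would verify $q$-centrality by checking the commutation $D[1,l']F_{-1}^{\up}(\beta_{s})=q^{e_{s}}F_{-1}^{\up}(\beta_{s})D[1,l']$ on each generator $F_{-1}^{\up}(\beta_{s})$ of $\qO[N_{-}(v)]$. The key input is \eqref{eq:quantum_minor_ld_term} together with Proposition \ref{prop:dual_LS_law}: the weight of $D[1,l']$ is $-\beta_{[1,l']}=(v-\mathrm{id})\varpi_{a}$, which forces the commutator with $F_{-1}^{\up}(\beta_{s})$ to live in the homogeneous component of weight $-\beta_{[1,l']}-\beta_{s}$, and the LS-law shows that the only dual PBW basis element available at that weight is $F_{-1}^{\up}(\beta_{s})D[1,l']$ (up to $q$-power). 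I expect this to be the main obstacle, since controlling the lower-order PBW terms in $D[1,l']F_{-1}^{\up}(\beta_{s})$ requires an inductive argument together with the specific form of the Weyl translate $v\varpi_{a}$ (which is Kimura's argument via unipotent minor $T$-systems for $D_{v\varpi_{a},\varpi_{a}}$).

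For part (2), I would then combine $q$-centrality with the standard dual-canonical characterization. Since $D[j_{1},j_{2}]$ is bar-invariant (under $\sigma$) and $q$-central in the subalgebra generated by the $F_{-1}^{\up}(\beta_{s})$'s for $s\in[j_{1},j_{2}]$, the formula $\sigma(xy)=q^{(\wt x,\wt y)}\sigma(y)\sigma(x)$ shows that $D[j_{1},j_{2}]^{d}$ is $\sigma$-invariant up to a computable power of $q$. Using the dual LS-law iteratively, the leading term of $D[j_{1},j_{2}]^{d}$ in the dual PBW basis is a $q$-power multiple of $F_{-1}^{\up}(d\uc_{[j_{1},j_{2}]},\ow)$, with lower-order corrections in $\sum_{\uc'<d\uc_{[j_{1},j_{2}]}}q\Z[q]F_{-1}^{\up}(\uc',\ow)$; this is where $q$-centrality is essential, as it allows one to collect the divided-power factors without introducing genuinely new PBW terms at the leading weight. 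The uniqueness of the $\sigma$-invariant element with this leading PBW profile then forces $q^{-s}D[j_{1},j_{2}]^{d}=B_{-1}^{\up}(d\uc_{[j_{1},j_{2}]},\ow)$ for a suitable $s\in\Z$, completing the proof.
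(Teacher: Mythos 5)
Your first paragraph reproduces exactly what the paper does for this statement: the theorem is quoted from \cite[Corollary 6.4, Section 6.2]{Kimura10}, and the paper's only own contribution is the reduction along the isomorphism $*T_{\leq j-1}*:\qO[N_{-}(\ow_{[j,k]})]\simeq\qO[N_{-}(w)]_{[j,k]}$ so that Kimura's results can be applied to the reduced word $\ow_{[j,k]}$. So the structure of your proposal matches the paper's.

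Your attempted reconstructions of the cited results themselves, however, do not close. For (1), the weight argument is insufficient: the homogeneous component of weight $-\beta_{[1,l']}-\beta_{s}$ generally contains many dual PBW monomials $F_{-1}^{\up}(\uc')$ with $\sum_{t}c'_{t}\beta_{t}=\beta_{[1,l']}+\beta_{s}$, so homogeneity does not force the commutator to be proportional to $F_{-1}^{\up}(\beta_{s})D[1,l']$; you flag this yourself, and the missing content is precisely Kimura's argument. For (2) there is a genuine gap even granting $q$-centrality: to verify the characterization of $B_{-1}^{\up}(d\uc_{[j_{1},j_{2}]},\ow)$ you must show that $q^{-s}D[j_{1},j_{2}]^{d}$ is $(<,q\Z[q])$-unitriangular to the dual PBW basis, but expanding the product forces you to re-expand terms $F_{-1}^{\up}(\uc')F_{-1}^{\up}(\uc'')$ coming from the lower-order parts of each factor, and those re-expansions have coefficients only in $\Z[q^{\pm}]$, not in $q\Z[q]$. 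Iterating the dual LS-law therefore does not establish the required profile, and $q$-centrality does not repair this. Kimura's actual proof instead identifies $D[j_{1},j_{2}]$ with the quantum flag minor $D_{\ow_{\leq j_{2}}\varpi_{a},\ow_{<j_{1}}\varpi_{a}}$ and invokes the multiplicativity of such minors, which comes from extremal weight vectors of integrable highest weight modules; that representation-theoretic input is what your sketch is missing.
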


\begin{Prop}[{\cite[Theorem 6.25]{Kimura10}}]\label{prop:factorization_dual_canonical_basis}

The dual canonical basis $B_{-1}^{\up}$ of $\qO[N_{-}(w)]$ factors
through the unipotent quantum minor $D[^{\min}a,^{\max}a]$, $a\in\supp w$,
up to a $q$-power, i.e., for any $\uc\in\N^{[1,l]}$, there exists
some $s\in\Z$ such that 
\begin{eqnarray*}
D[^{\min}a,^{\max}a]B_{-1}^{\up}(\uc,\ow) & = & q^{s}B_{-1}^{\up}(\uc+\uc_{[^{\min}a,^{\max}a]},\ow).
\end{eqnarray*}

\end{Prop}

\subsection{Localization}

For any $\lambda=\sum_{a\in\supp w}\lambda_{a}\varpi_{a}\in\oplus_{a}\N\varpi_{a}\simeq\N^{\supp w}$,
denote $\uc_{\lambda}=\sum\lambda_{a}\uc_{[^{\min}a,^{\max}a]}$.
Following \cite[Corollary 6.4]{Kimura10}\cite[Proposition 3.47]{kimura2017twist},
denote $D_{w\lambda,\lambda}=B_{-1}^{\up}(\uc_{\lambda},\ow)$ and
$\cD_{w}=\{q^{s}D_{w\lambda,\lambda}|s\in\Z,\forall\lambda\}$. By
Proposition \ref{prop:factorization_dual_canonical_basis}, the localization
$\qO[N_{-}(w)][\cD_{w}^{-1}]$ has the following basis

\begin{eqnarray*}
\circB_{-1}^{\up}(w) & := & \{q^{(\lambda,\wt S+\lambda-w\lambda)}D_{w\lambda,\lambda}^{-1}S|S=B_{-1}^{\up}(\uc,\ow),\forall\uc\in\N^{[1,l]},\forall\lambda\in\oplus_{a}\N\varpi_{a}\}.
\end{eqnarray*}

We refer the reader to \cite[5.1.3]{Kimura10}\cite[Definition 3.37]{kimura2017twist}
for the definition of the quotient algebra $\qO[N_{-}\cap X_{w}]$
of $\envAlg^{-}$, where $X_{w}$ denotes the Schubert variety. By
\cite[Theorem 5.13]{Kimura10}, the natural composition $\iota_{w}:\qO[N_{-}(w)]\hookrightarrow\envAlg^{-}\twoheadrightarrow\qO[N_{-}\cap X_{w}]$
is an embedding. For each basis element $b\in B_{-1}^{\up}$, denote
its image $\iota_{w}(b)=[b]$. Following \cite[Section 2.6, Section 4]{kimura2017twist},
the quantum unipotent cell associated to $w$ is defined as
\begin{align*}
\qO[N_{-}^{w}]: & =\qO[N_{-}\cap X_{w}][[\cD_{w}]^{-1}].
\end{align*}
By \cite[Theorem 4.13]{kimura2017twist}, $\iota_{w}$ induces a $Q$-graded
algebra isomorphism $\iota_{w}:\qO[N_{-}(w)][\cD_{w}^{-1}]\simeq\qO[N_{-}^{w}]$
called the De Concini-Procesi isomorphism. Then $\qO[N_{-}^{w}]$
with the following basis

\begin{eqnarray*}
 & \circB_{-1}^{\up,w} & :=\iota_{w}\circB_{-1}^{\up}(w)\\
 &  & =\{q^{(\lambda,\wt S+\lambda-w\lambda)}[D_{w\lambda,\lambda}]^{-1}[S]|[S]=[B_{-1}^{\up}(\uc,\ow)],\forall\uc\in\N^{[1,l]},\forall\lambda\in\oplus_{a}\N\varpi_{a}\}.
\end{eqnarray*}

We call $\circB_{-1}^{\up}(w)$ and $\circB_{-1}^{\up,w}$ the (localized)
dual canonical bases. 

The twisted dual bar involution $\sigma'$ naturally extends to a
$\Q$-anti-involution on the localization $\qO[N_{-}(w)][\cD_{w}^{-1}]\simeq\qO[N_{-}^{w}]$.
As before, define the $\Q(q)$-module endomorphism $c_{\tw}$ such
that $c_{\tw}(x)=q^{\Hf(\wt x,\wt x)-(\wt x,\rho)}x$ for homogeneous
$x$. Define the dual bar involution $\sigma=c_{\tw}\sigma'$ on the
localization. Then we still have $\sigma(xy)=q^{(\wt x,\wt y)}\sigma(y)\sigma(x)$
for homogeneous $x,y$. Moreover, the dual canonical bases $\circB_{-1}^{\up}(w)$
and $\circB_{-1}^{\up,w}$ are $\sigma$-invariant, see \cite[Proposition 4.9]{kimura2017twist}.

As before, define the $\Q(q)$-module endomorphism $\cor$ on $\qO[N_{-}(w)][\cD_{w}^{-1}]\simeq\qO[N_{-}^{w}]_{\Q(q^{\Hf})}$
such that $\cor(x)=q^{-\frac{1}{4}(\wt x,\wt x)+\Hf(\wt x,\rho)}x$
for homogeneous $x$. Then $x$ is $\sigma$-invariant if and only
if $\cor x$ is $\sigma'$-invariant. 

\section{Cluster structures on quantum unipotent subgroup\label{sec:Quantum-cluster-structures}}

A quantum unipotent subgroup $\qO[N_{-}(w)]$ is a \emph{symmetric
CGL extension}, see \cite[Defintion 2.6, Section 9]{goodearl2016berenstein}
and Section \ref{subsec:CGL-extension}. By a general theory in \cite{GY13},
it possesses a quantum cluster structure, which we will briefly introduce.
An explicit and detailed treatment could be found in the recent work
\cite{goodearl2020integral}.

\subsection{Quantum cluster structure}

Define the set of vertices $I=[1,l]$, $I_{\ufv}=\{k\in I|s(k)\neq+\infty\}$,
$I_{\fv}=I\backslash I_{\ufv}$. 

Following \cite[Section 4.8]{Kimura10}, define bilinear forms $c_{\ow}$,
$N_{\ow}$ on $\N^{[1,l]}$ such that

\begin{eqnarray*}
c_{\ow}(\uc,\uc') & = & \sum_{j<k}(c_{k}\beta_{k},c_{j}'\beta_{j})-\frac{1}{2}\sum_{k}c_{k}c_{k}'(\beta_{k},\beta_{k}),\\
N_{\ow}(\uc,\uc') & = & c_{\ow}(\uc,\uc')-c_{\ow}(\uc',\uc)\\
 & = & \sum_{k>j}(\beta_{k},\beta_{j})c_{k}c'_{j}-\sum_{k<i}(\beta_{k},\beta_{i})c_{k}c'_{i}.
\end{eqnarray*}

We say two elements $x,y$ $q$-commute if $xy=q^{s}yx$ for some
$s\in\Z$.

\begin{Prop}[{\cite[Proposition 4.33]{Kimura10}}]

Assume that $B_{-1}^{\up}(\uc)$ and $B_{-1}^{\up}(\uc')$ $q$-commute,
then we have

\begin{eqnarray}
B_{-1}^{\up}(\uc)B_{-1}^{\up}(\uc') & = & q^{N_{\ow}(\uc,\uc')}B_{-1}^{\up}(\uc')B_{-1}^{\up}(\uc).\label{eq:q_comm_canonical}
\end{eqnarray}

\end{Prop}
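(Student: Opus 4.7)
The approach is to extract the $q$-power by comparing the leading dual PBW coefficients on both sides of the $q$-commutation relation.

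\emph{Step 1: leading term of products of dual PBW elements.} Iteratively applying the dual LS-law (Proposition~\ref{prop:dual_LS_law}) to reorder the interleaved ordered product $\prod_{k=l}^{1} F_{-1}(\beta_k)^{c_k} \cdot \prod_{k=l}^{1} F_{-1}(\beta_k)^{c_k'}$ into canonical PBW form $\prod_{k=l}^{1} F_{-1}(\beta_k)^{c_k+c_k'}$, I would establish the expansion
\[
F_{-1}^{\up}(\uc)\, F_{-1}^{\up}(\uc') \;=\; \psi^{\up}(\uc,\uc')\, F_{-1}^{\up}(\uc+\uc') \;+\; \sum_{\uc'' < \uc+\uc'} a_{\uc''}\, F_{-1}^{\up}(\uc'')
\]
in the dual PBW basis, where $<$ denotes the lex order on $\N^{[1,l]}$. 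The leading coefficient $\psi^{\up}(\uc,\uc')$ is a product of (i) the pure $q$-commutation factor $q^{-\sum_{j<k} c_j c_k'(\beta_j,\beta_k)}$ obtained from the principal terms of the LS-law each time $F_{-1}(\beta_j)^{c_j}$ is swapped past $F_{-1}(\beta_k)^{c_k'}$ for $j<k$, (ii) quantum binomials $\prod_k \left(\begin{array}{c} c_k+c_k' \\ c_k \end{array}\right)_{q_{i_k}}$ from merging equal-index divided powers, and (iii) the $L$-pairing normalization $(F_{-1}(\uc+\uc'),F_{-1}(\uc+\uc'))_L^{-1}\cdot (F_{-1}(\uc),F_{-1}(\uc))_L (F_{-1}(\uc'),F_{-1}(\uc'))_L$ relating $F_{-1}$ and $F_{-1}^{\up}$. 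Factors (ii) and (iii) are manifestly symmetric in $(\uc,\uc')$.

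\emph{Step 2: passage to the dual canonical basis.} Using the unitriangular transition $B_{-1}^{\up}(\uc) - F_{-1}^{\up}(\uc) \in \sum_{\uc_1<\uc} q\Z[q]\, F_{-1}^{\up}(\uc_1)$ and expanding $B_{-1}^{\up}(\uc) B_{-1}^{\up}(\uc')$ in the dual PBW basis, every cross term $F_{-1}^{\up}(\uc_1) F_{-1}^{\up}(\uc_1')$ with $(\uc_1,\uc_1')\ne(\uc,\uc')$ contributes only to indices strictly below $\uc+\uc'$, because lex order is compatible with addition on $\N^{[1,l]}$. Hence the coefficient of $F_{-1}^{\up}(\uc+\uc')$ in $B_{-1}^{\up}(\uc) B_{-1}^{\up}(\uc')$ equals $\psi^{\up}(\uc,\uc')$, and symmetrically the coefficient in $B_{-1}^{\up}(\uc') B_{-1}^{\up}(\uc)$ equals $\psi^{\up}(\uc',\uc)$.

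\emph{Step 3: matching.} The $q$-commutation $B_{-1}^{\up}(\uc) B_{-1}^{\up}(\uc') = q^s B_{-1}^{\up}(\uc') B_{-1}^{\up}(\uc)$ forces $\psi^{\up}(\uc,\uc') = q^s \psi^{\up}(\uc',\uc)$. Since the symmetric factors (ii) and (iii) cancel in the ratio, one obtains
\[
q^s \;=\; \psi^{\up}(\uc,\uc')/\psi^{\up}(\uc',\uc) \;=\; q^{-\sum_{j<k}(c_j c_k' - c_j' c_k)(\beta_j,\beta_k)} \;=\; q^{N_{\ow}(\uc,\uc')},
\]
where the last equality is a direct reindexing of the defining formula for $N_{\ow}$. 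This yields $s = N_{\ow}(\uc,\uc')$.

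The principal obstacle is the careful bookkeeping in Step 1: each application of the LS-law introduces lower-order corrections which must themselves be further reordered, and one must verify that every resulting cascade contributes only to strictly lex-lower dual PBW indices. This can be handled inductively using that the LS lower terms $F_{-1}(\uc''')$ with $\uc'''\in\N^{[j+1,k-1]}$ satisfy $\uc'''<e_j+e_k$ in lex (both vanish at positions $1,\ldots,j-1$ and differ first at position $j$, where $c'''_j=0<1$), together with lex-compatibility under addition.
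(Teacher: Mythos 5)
The paper gives no proof of this proposition—it is imported verbatim from \cite{Kimura10}—and your argument is correct and essentially the one used there: expand both sides in the dual PBW basis, note that only the product of the leading terms $F_{-1}^{\up}(\uc)F_{-1}^{\up}(\uc')$ can contribute to the coefficient of $F_{-1}^{\up}(\uc+\uc')$ (by lex-compatibility with addition), and read off $q^{s}=\psi^{\up}(\uc,\uc')/\psi^{\up}(\uc',\uc)=q^{N_{\ow}(\uc,\uc')}$ since the binomial and $L$-norm factors are symmetric in $(\uc,\uc')$. The lex-triangularity bookkeeping you flag at the end is exactly the content of the LS-law computations in \cite[Section 4.8]{Kimura10}, so there is no gap.
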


Now work with the extension $\qO[N_{-}(w)]_{\Q(q^{\Hf})}:=\qO[N_{-}(w)]\otimes_{\Q(q)}\Q(q^{\Hf})$.
Recall that we have $\cor x=q^{-\frac{1}{4}(\wt x,\wt x)+\Hf(\wt x,\rho)}x$
for homogeneous $x$, such that $\sigma(x)=x$ if and only if $\sigma'(\cor x)=\cor x$.

For $k\in[1,l]$, denote $X_{k}=\cor D[k^{\min},k]$. Notice that
$\wt X_{k}=-\beta_{[k^{\min},k]}=-(\beta_{k}+\beta_{k[-1]}+\cdots+\beta_{k^{\min}})$. 

Define the matrix $L=(L_{jk})_{j,k\in[1,l]}$ such that

\begin{eqnarray*}
L_{jk} & = & N_{\ow}(\uc_{[j^{\min},j]},\uc_{[k^{\min},k]}).
\end{eqnarray*}
Then we have $X_{j}X_{k}=q^{L_{jk}}X_{k}X_{j}.$ In particular, for
any $j\leq k$, we have the following (see \cite[Proposition 4.2]{kimura2017twist}\cite[Lemma 11.2]{GeissLeclercSchroeer11}):

\begin{eqnarray*}
L_{kj} & = & (\varpi_{i_{k}}+\ow_{\leq k}\varpi_{i_{k}},-\beta_{[j^{\min},j]}).
\end{eqnarray*}

\begin{Thm}[{\cite[Theorem 8.2]{GY13} \cite[Theorem B]{goodearl2020integral}}]\label{thm:quantum_cluster_structure}

(1) There exists a unique matrix $\tB(\ow)=(b_{ik})_{i\in I,k\in I_{\ufv}}$
such that $\tB(\ow)$ is compatible with $L$ such that $\sum_{j\in I}b_{ij}L_{jk}=-2\delta_{ik}\sym_{k}$
and $\sum_{i\in I}b_{ik}\wt X_{i}=0$ for $k\in I_{\ufv}$.

(2) Let $t_{0}=t_{0}(\ow)$ denote an initial seed associated to $\ow$,
such that $\tB(t_{0})=\tB(\ow)$, $\Lambda(t_{0})=L$, for $j\in[1,l]$.
Then we have an algebra isomorphism $\kappa$ from the partially compactified
quantum cluster algebra $\bClAlg(t_{0})_{\Q(q^{\Hf})}:=\bClAlg(t_{0})\otimes\Q(q^{\Hf})$
to the quantum unipotent subgroup $\qO[N_{-}(w)]_{\Q(q^{\Hf})}$,
such that the initial quantum cluster variables $X_{j}(t_{0})$, $j\in I$,
are identified with the rescaled unipotent quantum minors $X_{j}=\cor D[j^{\min},j]$.

\end{Thm}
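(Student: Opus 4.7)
The plan is to invoke directly the quantum cluster algebra structure on symmetric Cauchon-Goodearl-Letzter extensions established in \cite{GY13}, with its integral form refined in \cite{goodearl2020integral}. The preceding discussion in Section \ref{sec:Quantum-cluster-structures} has already assembled all the input data: we have exhibited $\qO[N_{-}(w)]$ as a symmetric CGL extension with $\cH$-eigenvectors $F_{-1}^{\up}(\beta_k)$; its $\cH$-homogeneous prime elements along each prefix subalgebra are identified with the unipotent quantum minors $D[j^{\min},j]$ (Proposition \ref{prop:homogeneous_prime}); and their pairwise $q$-commutation powers are encoded in the matrix $L$ (Proposition \ref{prop:commuting_power}). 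Thus our task reduces to checking that these data fit the hypotheses of \cite[Theorem 8.2]{GY13}.

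For part (1), I would first observe that the weights $\wt X_i = -\beta_{[i^{\min},i]}$ span a sublattice of $Q$, and the columns $\col_k \tB$ must simultaneously satisfy (a) the compatibility equation $\tB^T L = (D',0)$ with $D'$ a diagonal matrix with positive entries on $I_{\ufv}$ (equivalent to the existence of quantum exchange relations in the standard Berenstein-Zelevinsky form), and (b) the weight-vanishing constraint $\sum_i b_{ik}\wt X_i = 0$, which encodes the fact that each exchange relation has $Q$-weight zero. In the symmetric CGL framework, for each unfrozen index $k$ (i.e., $k[1] \neq +\infty$) the two constraints together pin down $\col_k\tB$ uniquely; the key point is that $L$ restricted to the relevant sublattice is non-degenerate enough to give uniqueness, while the GY analysis of prime elements along $k$ and $k[1]$ provides existence. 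The formulas of \cite{goodearl2020integral} write $b_{ik}$ explicitly in terms of the word $\ow$, confirming both existence and uniqueness in the symmetrizable setting.

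For part (2), I would apply \cite[Theorem 8.2]{GY13} to the symmetric CGL extension $\qO[N_{-}(w)]_{\Q(q^{\Hf})}$. That theorem produces an isomorphism from the (partially compactified) quantum cluster algebra built from the initial data $(\tB(\ow), L)$ to the CGL extension, sending each initial cluster variable to an $\cH$-homogeneous prime element. The rescaling by $\cor$ is essential: the Berenstein-Zelevinsky convention requires $X_j(t_0) * X_k(t_0) = v^{\Lambda_{jk}} X_j(t_0) \cdot X_k(t_0)$ with $v = q^{\Hf}$, so we must balance the $q$-powers in the leading-term expansion of $D[j^{\min},j]$ displayed before Theorem \ref{thm:quantum_cluster_structure}. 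A direct computation shows that the rescaling constant $\xi_j$ produced by $\cor$ is precisely the normalization constant in \cite[(4.18)]{GY13}, so the map $X_j(t_0) \mapsto \cor D[j^{\min},j]$ extends to the desired isomorphism $\kappa$. The base extension from $\Q(q)$ to $\Q(q^{\Hf})$ is forced solely by this half-integer rescaling.

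The only real obstacle, were one to reprove this from scratch rather than cite, is the non-trivial claim that every cluster variable produced by mutation from $t_0$ stays inside $\qO[N_{-}(w)]$ and equals the $\cor$-rescaling of some other $\cH$-homogeneous prime element (namely, a unipotent quantum minor of a sub-word subalgebra). This is the technical heart of \cite{GY13}, handled inductively via Cauchon's deleting-derivations procedure on the iterated Ore extension, and extended uniformly to the symmetrizable setting in \cite{goodearl2020integral}. Since we do not need a new proof of these facts for our Lie-theoretic input to Conjecture \ref{conj:quantization_conjecture}, the present proof is simply a citation plus the verification of the $\cor$-compatibility described above.
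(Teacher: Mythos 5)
Your proposal matches the paper's treatment: the theorem is cited from \cite{GY13} and \cite{goodearl2020integral}, and the surrounding text of Section \ref{sec:Quantum-cluster-structures} does exactly the preparatory work you describe — exhibiting $\qO[N_{-}(w)]$ as a symmetric CGL extension, identifying the $\cH$-homogeneous prime elements with the unipotent quantum minors $D[j^{\min},j]$ via Proposition \ref{prop:homogeneous_prime}, computing the commutation matrix $L$ via Proposition \ref{prop:commuting_power}, and checking that the rescaling constant $\xi_{j}$ coming from $\cor$ agrees with the normalization in \cite[(4.18)]{GY13}. This is the same route as the paper, so no further comment is needed.
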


\reviseStart

\begin{proof}

By \cite[Theorem 8.2]{GY13}\cite[Theorem B]{goodearl2020integral},
we have an isomorphism $\bClAlg(t_{0}')_{\Q(q^{\Hf})}\simeq\qO[N_{-}(w)]_{\Q(q^{\Hf})}$
for some quantum seed $t_{0}'$, such that $\tB(t_{0}')$ is uniquely
determined by $\Lambda(t_{0}')$ as in (1), and $X_{j}(t_{0}')$ are
identified with $\alpha_{j}D[j^{\min},j]$ for some scalars $\alpha_{j}$.

Notice that $L$ is determined by $X_{j}=\cor D[j^{\min},j]$ via
the $q$-commutative relations \eqref{eq:q_comm_canonical}, and $\Lambda(t_{0}')$
is determined by $\alpha_{j}D[j^{\min},j]$ similarly. So $L=\Lambda(t_{0}')$.
It follows that $\tB(t_{0}')=\tB(t_{0})$. A direct computation shows
that $\cor D[j^{\min},j]=\alpha_{j}D[j^{\min},j]$, see Remark \ref{rem:CGL_normalization_factor}.

\end{proof}

\reviseEnd

Notice that $\cor D[j^{\min},j]$ is $\sigma'$-invariant and $X_{j}(t_{0})=\kappa^{-1}\cor D[j^{\min},j]$
is invariant under the bar involution $\overline{(\ )}$ on $\bQClAlg(t_{0})$.
We obtain the following result.

\begin{Lem}\label{lem:identify_bar_involution}

The isomorphism $\kappa$ identifies the twisted dual bar involution
$\sigma'$ on $\qO[N_{-}(w)]_{\Q(q^{\Hf})}$ and $\overline{(\ )}$
on $\bQClAlg(t_{0})_{\Q(q^{\Hf})}$, i.e. for any $Z\in\bClAlg(t_{0})_{\Q(q^{\Hf})}$,
we have $\sigma'(\kappa Z)=\kappa\overline{Z}$.

\end{Lem}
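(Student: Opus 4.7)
The plan is to show that the composition $\Phi := \kappa^{-1} \circ \sigma' \circ \kappa \circ \overline{(\ )}$ equals the identity on $\bClAlg(t_{0})_{\Q(q^{\Hf})}$; the desired identity $\sigma'\circ\kappa=\kappa\circ\overline{(\ )}$ then follows immediately by rearrangement, using that $\overline{(\ )}$ is an involution.

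The first step is to check that $\Phi$ is a $\Q(q^{\Hf})$-algebra automorphism. The map $\kappa$ is a $\Q(q^{\Hf})$-algebra isomorphism, while $\sigma'$ and $\overline{(\ )}$ are $\Z$-linear anti-automorphisms each sending $q^{\Hf}$ to $q^{-\Hf}$; the composition of these four maps is therefore $\Q(q^{\Hf})$-linear and multiplicative, since the two order-reversing contributions cancel. Moreover, $\Phi$ fixes every initial quantum cluster variable $X_j(t_0)$, $j\in I$: this follows by combining $\overline{X_j(t_0)}=X_j(t_0)$, $\kappa(X_j(t_0))=\cor D[j^{\min},j]$, and $\sigma'(\cor D[j^{\min},j])=\cor D[j^{\min},j]$, all recorded in the paragraph preceding the lemma.

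The final step is to extend $\Phi$ to the skew field of fractions $\cF(t_0)_{\Q(q^{\Hf})}$ and use generation to conclude. Since $\qO[N_-(w)]_{\Q(q^{\Hf})}$ is an iterated Ore (CGL) extension, as described before Theorem \ref{thm:quantum_cluster_structure}, the isomorphic algebra $\bClAlg(t_0)_{\Q(q^{\Hf})}$ is an Ore domain. Its skew field of fractions coincides with $\cF(t_0)_{\Q(q^{\Hf})}$, because $\bClAlg(t_0)$ contains every $X_j(t_0)$, so its fraction field contains $\LP(t_0)$, forcing equality. The automorphism $\Phi$ extends uniquely to a $\Q(q^{\Hf})$-algebra automorphism of $\cF(t_0)_{\Q(q^{\Hf})}$, which, since the $X_j(t_0)$ generate this skew field over $\Q(q^{\Hf})$, must be the identity. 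Restricting back to $\bClAlg(t_0)_{\Q(q^{\Hf})}$ yields that $\Phi$ is the identity, completing the proof. No serious obstacle is anticipated; the two points of bookkeeping are verifying that $\sigma'$ is truly a $\Z$-linear anti-automorphism with $\sigma'(q^{\Hf})=q^{-\Hf}$ (which follows from the defining relation $\sigma' = c_{\tw}^{-1}\circ\sigma$ together with the analogous property of $\sigma$) and that $\bClAlg(t_0)$ is Ore.
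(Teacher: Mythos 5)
Your proof is correct and follows the same idea as the paper, which merely observes that both $\overline{(\ )}$ and $\sigma'$ are semilinear anti-automorphisms fixing the initial cluster variables $X_j(t_0)=\kappa^{-1}(\cor D[j^{\min},j])$ and leaves the remaining bookkeeping implicit. Your passage to the skew field of fractions is the right way to make ``agreement on the initial cluster variables implies agreement on all of $\bClAlg(t_0)_{\Q(q^{\Hf})}$'' rigorous, and it usefully avoids the circularity one would incur by instead arguing on the algebra generators $\cor D[j,j]$ of $\qO[N_-(w)]_{\Q(q^{\Hf})}$, since the fact that these are bar-invariant cluster variables is established only later in the paper using this very lemma.
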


\begin{Rem}\label{rem:initial_seed}

By \cite{BerensteinFominZelevinsky05}, the $I\times I_{\ufv}$-matrix
$\tB(\ow)=(b_{ik})_{i\in I,k\in I_{\ufv}}$is given by the following:

\begin{eqnarray*}
b_{jk} & = & \begin{cases}
1 & j=p(k)\\
-1 & j=s(k)\\
C_{i_{j}i_{k}} & j<k<s(j)<s(k)\\
-C_{i_{j}i_{k}} & k<j<s(k)<s(j)\\
0 & \mathrm{else}
\end{cases}.
\end{eqnarray*}
For verifying that such a matrix satisfies the condition in Theorem
\ref{thm:quantum_cluster_structure}, see \cite[Theorem 8.3]{BerensteinZelevinsky05}
(or arguments in \cite[Section 10.1]{GY13}).

\end{Rem}

\begin{Eg}

Continue Example \ref{eg:sl3_type} \ref{eg:sl3_canonical_basis}.
Notice that we have $\cor F_{i}=q^{-1}F_{i}$ and $\cor F_{1}F_{2}=q^{-\frac{3}{2}}F_{1}F_{2}$,
$\cor F_{2}F_{1}=q^{-\frac{3}{2}}F_{1}F_{2}$. We get

\begin{eqnarray*}
\cor(F_{-1}^{\up}(\beta_{1}))\cor(F_{-1}^{\up}(\beta_{3})) & = & q^{\Hf}\cor(B_{-1}^{\up}[1,3])+q^{-\Hf}\cor(F_{-1}^{\up}(\beta_{2})).
\end{eqnarray*}

Denote $X_{1}=\cor F_{-1}^{\up}(\beta_{1})$, $X_{1}'=\cor F_{-1}^{\up}(\beta_{3})$,
$X_{2}=\cor F_{-1}^{\up}(\beta_{2})$, $X_{3}=\cor B_{-1}^{\up}[1,3]$.
By using the defining relations (Section \ref{sec:Serre_relations}),
it is straightforward to check that (knowing that $F_{1}F_{2}^{2}F_{1}=F_{2}F_{1}^{2}F_{2}$)
we have $X_{1}X_{2}=q^{-1}X_{2}X_{1}$, $X_{1}X_{3}=qX_{3}X_{1}$,
$X_{2}X_{3}=X_{3}X_{2}$.

Denote $I=[1,3]$ and $I_{\ufv}=\{1\}$. We compute explicitly the
matrix $L=\left(\begin{array}{ccc}
0 & -1 & 1\\
1 & 0 & 0\\
-1 & 0 & 0
\end{array}\right)$. Take $(b_{ij})=\left(\begin{array}{ccc}
0 & -1 & 1\\
1 & 0 & -1\\
-1 & 1 & 0
\end{array}\right)$, then its $I\times I_{\ufv}$-submatrix $\tB$ is the same as in
Remark \ref{rem:initial_seed}. We have $\tB^{T}L=\left(\begin{array}{ccc}
2 & 0 & 0\end{array}\right)$.

By the above computation, $\qO[N_{-}(w)]_{\Q(q^{\Hf})}$ is a partially
compactified quantum cluster algebra with initial seed $t_{0}$ such
that $\tB(t_{0})=\tB(\ow)$ and $\Lambda(t_{0})=L$. Its mutation
rule reads as

\begin{align*}
X_{1}*X_{1}' & =q^{\Hf\Lambda(f_{1},f_{3})}X_{3}+q^{\Hf\Lambda(f_{1},f_{2})}X_{2}\\
 & =q^{\Hf}X_{3}+q^{-\Hf}X_{2}.
\end{align*}

\end{Eg}

\subsection{Quantum cluster variables}

\begin{Lem} 

Let there be given any $j\leq k\in[1,l]$ such that $i_{j}=i_{k}$.
The rescaled unipotent quantum minor $\cor D[j,k]$ is identified
with a quantum cluster variable in $\bClAlg(t_{0})$. 

\end{Lem}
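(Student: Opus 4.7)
The strategy is to induct on the shift $m \geq 0$ defined by $j = k^{\min}[m]$, and to exhibit $\cor D[k^{\min}[m], k]$ as a quantum cluster variable via explicit mutations starting from the initial seed $t_0$. The base case $m = 0$ is immediate: when $j = k^{\min}$, the rescaled minor $\cor D[j,k]$ equals the initial cluster variable $X_k(t_0)$ by the explicit description of $t_0$ in Theorem~\ref{thm:quantum_cluster_structure}.

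For the inductive step with $m \geq 1$, assume that $\cor D[k^{\min}[m-1], k']$ is a quantum cluster variable for all relevant $k'$. The plan is to find an intermediate seed $t'$, reachable from $t_0$ by mutations involving previously established cluster variables, and an unfrozen vertex $s \in I_{\ufv}$ of $t'$, such that mutating $t'$ at $s$ converts the cluster variable $\cor D[k^{\min}[m-1], k]$ into a frozen-factor times $\cor D[k^{\min}[m], k]$. The driving algebraic identity behind this mutation is a quantum Pl\"ucker-type relation, obtained by iterating the dual Levendorskii--Soibelman law (Proposition~\ref{prop:dual_LS_law}), of the form
\[
q^{\alpha}\, D[k^{\min}[m-1], k]\, D[k^{\min}[m], k[-1]] = q^{\beta}\, D[k^{\min}[m-1], k[-1]]\, D[k^{\min}[m], k] + q^{\gamma}\, \Phi,
\]
where $\Phi$ is a product of unipotent quantum minors attached to the neighbors of $i_k$ in the Dynkin diagram; by induction, all such $\Phi$-factors are already known to be cluster variables. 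Matching this identity with the quantum cluster exchange relation at $s$, with the prescribed frozen factor, requires tracking the $q$-commutation relations among minors via Proposition~\ref{prop:commuting_power} and the compatibility pair $(\tB(t_0), L)$.

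The main obstacle is precisely this last bookkeeping step: one must verify that the $q$-powers and the frozen factor produced by the Pl\"ucker identity exactly match those of the cluster exchange relation determined by the compatibility pair, and simultaneously that the sequence of intermediate mutations respects the sign-coherence conventions underlying $\Delta^+$. An alternative, more conceptual route that avoids this computation proceeds as follows: use the algebra isomorphism $*T_{\leq j-1}*: \qO[N_{-}(\ow_{[j,k]})] \simeq \qO[N_{-}(w)]_{[j,k]}$ and apply Theorem~\ref{thm:quantum_cluster_structure} to the subword $\ow_{[j,k]}$. Under the resulting cluster structure on $\qO[N_{-}(\ow_{[j,k]})]$, the minor $\cor D[j,k]$ is the initial cluster variable associated to the terminal position. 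One then verifies that this sub-cluster structure embeds compatibly into $\bClAlg(t_0)$, for instance by freezing the vertices outside $[j,k]$ and matching initial clusters, so that initial cluster variables of the sub-structure become genuine cluster variables in the ambient quantum cluster algebra.
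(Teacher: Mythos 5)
Your proposal does not follow the paper's route, and both of the routes you sketch have genuine gaps at exactly the point where the real work lies. The paper's proof is short precisely because it outsources the hard content to Goodearl--Yakimov: by Theorem \ref{thm:unipotent_minor_property} and the leading-term formula \eqref{eq:quantum_minor_ld_term}, $D[j,k]$ is identified with the interval homogeneous prime element $y_{[j,k]}$ of the symmetric CGL extension in the sense of \cite[Theorem 5.1]{GY13}, and then \cite[Theorems 5.3, 8.2]{GY13} assert that every such $y_{[j,k]}$ is, up to a scalar, a quantum cluster variable in an explicit seed $t_{\tau}\in\Delta^{+}$ attached to a permutation $\tau$ of $[1,l]$; the scalar is pinned down by bar-invariance via Lemma \ref{lem:identify_bar_involution}. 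Your first route attempts to reprove this by explicit mutations, but the quantum Pl\"ucker-type identity you posit --- with $\Phi$ a product of minors attached to Dynkin neighbours, and with $q$-powers and frozen factors matching the exchange relation of a specific seed --- is not a formal consequence of iterating the dual LS-law (Proposition \ref{prop:dual_LS_law}); it is itself a hard determinantal identity, and identifying the intermediate seed $t'$ and vertex $s$ is the substance of \cite[Theorem 5.3]{GY13}. You flag this as ``the main obstacle'' but do not resolve it, so the induction does not close.

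Your second route has a more structural problem. It is true that under $*T_{\leq j-1}*:\qO[N_{-}(\ow_{[j,k]})]\simeq\qO[N_{-}(w)]_{[j,k]}$ the minor $D[j,k]$ becomes an \emph{initial} cluster variable of the seed $t_{0}(\ow_{[j,k]})$ attached to the subword. But that sub-structure is not obtained from $t_{0}(\ow)$ by freezing the vertices outside $[j,k]$: the initial cluster of $t_{0}(\ow_{[j,k]})$ consists of minors $D[{}^{\min,\geq j}a,\,s]$, where the minimum is taken inside $[j,k]$, and for positions $s$ with $p(s)<j$ these differ from the ambient initial variables $D[s^{\min},s]$. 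So ``matching initial clusters'' fails at the boundary, and showing that the transported initial cluster of the sub-structure is a cluster of $\Delta_{t_{0}}^{+}$ is essentially a special case of the very lemma you are proving --- the argument is circular unless you again import the Goodearl--Yakimov interval machinery. In short, either cite \cite[Theorems 5.1, 5.3, 8.2]{GY13} as the paper does, or supply a genuine proof of the determinantal identities; as written, neither route is complete.
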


\reviseStart

\begin{proof}

By \cite[Theorem 7.3]{goodearl2020integral}, there exists a scalar
$\xi\in q^{\Hf\Z}$ such that $\kappa^{-1}\xi D[j,k]$ is a quantum
cluster variable in some seed. Identify the bar-involution and $\sigma'$
by Lemma \ref{lem:identify_bar_involution}. Then the bar-invariant
quantum cluster variable is identified with the $\sigma'$-invariant
element $\cor D[j,k]$.

\end{proof}

\reviseEnd

\begin{Thm}[{\cite[Theorem 5.3, Theorem 8.2(c)]{GY13}}]\label{thm:inj_seed_admissible_mutations}

There is a seed $t_{0}[1]\in\Delta^{+}$ whose quantum cluster variables
are $\cor D[j,j^{\max}]$, $j\in[1,l]$. Moreover, there exists a
mutation sequence $\seq_{}$ from $t_{0}$ to $t_{0}[1]$ such that
the quantum cluster variables obtained along the mutation sequence
take the form $\cor D[j,k]$ for $j\leq k\in[1,l]$, $i_{j}=i_{k}$.

\end{Thm}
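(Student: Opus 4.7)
The plan is to apply the explicit cluster-structure construction of Goodearl and Yakimov, invoked here for the symmetric CGL extension $\qO[N_-(w)]$, and to track the effect of the rescaling $\cor$. Their framework associates to each permutation of $[1,l]$ compatible with the $\cH$-prime stratification a seed $t_\tau\in\Delta^+$ whose cluster variables are (rescaled) $\cH$-homogeneous prime elements, which by Proposition~\ref{prop:homogeneous_prime}(2) are unipotent quantum minors up to scalar. The identity permutation reproduces the initial seed $t_0$ with cluster variables $X_j(t_0)=\cor D[j^{\min},j]$.

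First, I would identify the candidate target seed $t_0[1]$ as the seed $t_{\tau^{\mathrm{op}}}$ attached to the ``reversing'' permutation that swaps each prefix-interval $[j^{\min},j]$ for the suffix-interval $[j,j^{\max}]$; by construction its quantum cluster variables are exactly the $\cor D[j,j^{\max}]$. Second, I would verify that this seed coincides with the injective-reachable shift in the sense of Definition~\ref{def:inj_reachable}: using the leading-term decomposition~\eqref{eq:quantum_minor_ld_term} together with Theorem~\ref{thm:unipotent_minor_property}, one computes that the $g$-vector of $\cor D[\sigma(k),(\sigma(k))^{\max}]$ in the initial seed $t_0$ has unfrozen part equal to $-f_k$, where $\sigma$ is the permutation of $I_{\ufv}$ that sends $k$ to the position next occupied by $i_k$ when read from the end; this is precisely the condition~\eqref{eq:injective-reachable-condition}.

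Third, I would exhibit the mutation sequence $\seq$ from $t_0$ to $t_0[1]$ by factoring $\tau^{\mathrm{op}}$ into a sequence of elementary permutations, each corresponding to one mutation. The key input (established in [GY13, Section~5] via the dual Levendorskii--Soibelman law) is that at every intermediate step the exchange relation between two interval-type $\cH$-prime generators $D[j,k]$ produces another interval-type generator $D[j',k']$ with $i_{j'}=i_{k'}$. Reading off the intermediate cluster variables along the factorisation of $\tau^{\mathrm{op}}$ yields a mutation sequence of the required form.

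The main technical obstacle is not the cluster combinatorics itself, which is already handled in [GY13] and the recent treatment in \cite{goodearl2020integral}, but rather ensuring that the rescalings match: GY13 distinguish cluster variables only up to $q^{\Hf\Z}$-scalars, whereas here we need the specific representatives $\cor D[j,k]$. This is routine using Lemma~\ref{lem:identify_bar_involution}: the bar-invariance of quantum cluster variables in $\bQClAlg(t_0)$ corresponds under $\kappa$ to $\sigma'$-invariance in $\qO[N_-(w)]_{\Q(q^{\Hf})}$, and the rescaling $\cor$ is the unique normalisation producing $\sigma'$-invariant elements from the unipotent quantum minors, so the scalar bookkeeping is uniquely determined and consistent along $\seq$.
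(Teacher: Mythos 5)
Your proposal is correct and follows essentially the same route as the paper, which proves this statement by citing Goodearl--Yakimov's chain of seeds indexed by permutations of $[1,l]$ (their Theorems 5.3 and 8.2(c)) and then pinning down the $q^{\Hf\Z}$-scalars exactly as you do, by matching bar-invariance of quantum cluster variables with $\sigma'$-invariance of $\cor D[j,k]$ via Lemma \ref{lem:identify_bar_involution}. Your second step (the $g$-vector computation showing $t_0[1]$ is the shift in the sense of Definition \ref{def:inj_reachable}) is not part of the theorem as stated --- the paper establishes it separately afterwards via Lemma \ref{lem:parametrize_injective}, using the parametrization map $\theta^{-1}$ --- but including it does no harm.
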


See Remark \ref{rem:injective_mutation_sequence} for a choice of
$\seq$. By Lemma \ref{lem:parametrize_injective}, the seed $t_{0}[1]$
in Theorem \ref{thm:inj_seed_admissible_mutations} is shifted from
$t_{0}$ in the sense of Definition \ref{def:inj_reachable}.

\section{Dual canonical bases are common triangular bases \label{sec:Dual-canonical-bases-results}}

In this section, we apply previous discussion of triangular bases
and cluster twist automorphisms to quantum unipotent subgroups and
quantum unipotent cells.

Let there be given any partially compactified quantum cluster algebra
$\bQClAlg(t_{0})_{\Q(q^{\Hf})}\simeq\qO[N_{-}(w)]_{\Q(q^{\Hf})}$
as in Theorem \ref{thm:quantum_cluster_structure}. Recall that $I=[1,l]$,
$I_{\fv}=\{j\in I|j=j^{\max}\}$.

\subsection{Parametrization}

Recall that the initial quantum cluster variables $X_{j}(t_{0})$
are identified with the rescaled unipotent quantum minors $\cor D[j^{\min},j]$,
$j\in I$. Moreover, the dual canonical basis elements $D[j^{\min},j]=B_{-1}^{\up}(\uc_{[j^{\min},j]},\ow)$
are parametrized by $\uc_{[j^{\min},j]}\in\N^{I}$ and $X_{j}(t_{0})$
by their leading degrees $f_{j}\in\Mc(t_{0})$ ($j$-th unit vector).
Following \cite{qin2017triangular}, we translate the multiplicity
vector $\uc\in\N^{[1,l]}$ to a vector in $\Mc(t_{0})$ by defining
a linear map $\theta^{-1}$ from $\N^{[1,l]}$ to $\Mc(t_{0})=\Z^{I}$
such that 
\begin{eqnarray*}
\theta^{-1}(\uc_{[j,j]})=\begin{cases}
f_{j}-f_{p(j)} & p(j)\neq-\infty\\
f_{j} & p(j)=-\infty
\end{cases}
\end{eqnarray*}
It follows that $\theta^{-1}$ is injective. We further define the
bijective linear map $\theta:\Mc(t_{0})\simeq\Z^{[1,l]}$, called
the parametrization map, such that

\begin{eqnarray*}
\theta(f_{j})= & \uc_{[j^{\min},j]}.
\end{eqnarray*}

Let us make the identification $\bQClAlg(t_{0})\simeq\qO[N_{-}(w)]$.
Recall that, by Theorem \ref{thm:quantum_cluster_structure}, the
rescaled unipotent quantum minors $\cor D[j,k]$, $i_{j}=i_{k}$,
are quantum cluster monomials. In particular, the rescaled dual PBW
generators $\cor F_{-1}^{\up}(\beta_{s})$, $s\in[1,l]$, are quantum
cluster variables.

\begin{Prop}\label{prop:pointed_basis_parametrization}

The following statements are true.

(1) The rescaled dual PBW basis $\cor F_{-1}^{\up}$ is a $\theta^{-1}(\N^{[1,l]})$-pointed
basis of $\bQClAlg(t_{0})$ such that $\cor F_{-1}^{\up}(\uc)$ is
$\theta^{-1}(\uc)$-pointed.

(2) The rescaled dual canonical basis $\cor B_{-1}^{\up}$ is a $\theta^{-1}(\N^{[1,l]})$-pointed
basis of $\bQClAlg(t_{0})$ such that $\cor B_{-1}^{\up}(\uc)$ is
$\theta^{-1}(\uc)$-pointed.

(3) The rescaled dual canonical basis $\cor B_{-1}^{\up}$ is $(\prec_{t_{0}},q\Z[q])$-unitriangular
to the rescaled dual PBW basis $\cor F_{-1}^{\up}$.

\end{Prop}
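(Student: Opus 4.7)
The approach is to establish (1) by induction on the positions $j \in [1,l]$, deduce (2) from the unitriangular relation between dual canonical and dual PBW bases combined with a homogeneity argument, and read off (3) from the same analysis.

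For (1), the base case is $p(j) = -\infty$ (i.e.\ $j = j^{\min}$), where $\cor F_{-1}^{\up}(\beta_{j^{\min}})$ coincides with $\cor D[j^{\min},j^{\min}]$ up to a $q$-power, hence equals (up to scalar) the initial quantum cluster variable $X_{j^{\min}}(t_{0})$ by Theorem~\ref{thm:quantum_cluster_structure}, and thus is pointed at $f_{j^{\min}} = \theta^{-1}(\uc_{[j^{\min},j^{\min}]})$. For general $j$ with $p(j)\neq-\infty$, I would apply the rescaling $\cor$ to the leading-term expression \eqref{eq:quantum_minor_ld_term} for $D[j^{\min},j]$, which expresses $\cor D[j^{\min},j]$ as a normalized ordered product $[\cor F_{-1}^{\up}(\beta_j) * \cor F_{-1}^{\up}(\beta_{p(j)}) * \cdots * \cor F_{-1}^{\up}(\beta_{j^{\min}})]$ plus terms in $\qO[N_{-}(w)]_{[j^{\min}+1,j-1]}$. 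Since $\cor D[j^{\min},j] = X_j(t_0)$ is pointed at $f_j$ and $\cor D[j^{\min},p(j)] = X_{p(j)}(t_0)$ is pointed at $f_{p(j)}$ by induction, rearranging (using the pointedness of inverses for normalized products, cf.\ Lemma~\ref{lem:similar_inverse}) yields pointedness of $\cor F_{-1}^{\up}(\beta_j)$ at $f_j - f_{p(j)} = \theta^{-1}(\uc_{[j,j]})$. For a general $\uc \in \N^{[1,l]}$, the element $\cor F_{-1}^{\up}(\uc)$ equals, up to a $q$-power, the ordered twisted product of the $\cor F_{-1}^{\up}(\beta_s)^{c_s}$, so by additivity of degrees under the $*$-product it is pointed at $\sum_s c_s \theta^{-1}(\uc_{[s,s]}) = \theta^{-1}(\uc)$. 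Linear independence and completeness of the family then give the pointed basis property.

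For (2) and (3), I start from the defining relation of the dual canonical basis,
\[
B_{-1}^{\up}(\uc) - F_{-1}^{\up}(\uc) \in \sum_{\uc' < \uc} q\Z[q]\, F_{-1}^{\up}(\uc'),
\]
where $<$ is the lexicographic order. Because $B_{-1}^{\up}(\uc)$ is $Q_-$-homogeneous and $\cor$ acts by a $q$-power depending only on the weight, the rescaling factor is common to all terms and the relation descends to
\[
\cor B_{-1}^{\up}(\uc) - \cor F_{-1}^{\up}(\uc) \in \sum_{\uc' < \uc} q\Z[q]\, \cor F_{-1}^{\up}(\uc').
\]
In the sum, all $\uc'$ satisfy $\sum_s c'_s \beta_s = \sum_s c_s \beta_s$. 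Part (1) identifies each $\cor F_{-1}^{\up}(\uc')$ as $\theta^{-1}(\uc')$-pointed, so (2) and (3) both reduce to verifying the tropical compatibility: for any $\uc' < \uc$ lex with $\wt F_{-1}^{\up}(\uc') = \wt F_{-1}^{\up}(\uc)$, we have $\theta^{-1}(\uc') \prec_{t_0} \theta^{-1}(\uc)$, i.e., $\theta^{-1}(\uc - \uc') \in \tB(t_{0})\,\N^{I_{\ufv}}$. This will be deduced by combining the explicit description of $\tB(t_0)$ in Remark~\ref{rem:initial_seed} with the definition of $\theta^{-1}$ and the weight constraint $\sum(c_s - c'_s)\beta_s = 0$; informally, moving a unit of multiplicity from position $s$ to $p(s)$ (the basic step making $\uc$ lex-smaller at the same weight) corresponds exactly to subtracting the $s$-th column of $\tB(t_0)$ from $\theta^{-1}(\uc)$, and iterating shows $\uc'<\uc$ of the same weight only produces nonnegative $\N^{I_{\ufv}}$-combinations.

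The main obstacle is the combinatorial verification of this last tropical comparison, linking the lexicographic order on PBW indices to the dominance order on $g$-vectors; once it is in place, (2) follows by the uniqueness of leading terms (Lemma~\ref{lem:finite_decomposition_triangular}) and (3) is simply the reinterpretation of the displayed relation above as a $(\prec_{t_0}, q\Z[q])$-unitriangular transition. The statement that $\cor B_{-1}^{\up}$ is a basis is inherited from the corresponding fact for $B_{-1}^{\up}$ via the invertible rescaling $\cor$.
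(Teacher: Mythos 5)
Your overall shape (pointedness of the rescaled dual PBW elements, then descending the defining $(<,q\Z[q])$-relation through $\cor$ by homogeneity) is close to the paper's, and the homogeneity observation for $\cor$ is correct. But there are two genuine gaps.

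First, the normalization. A twisted product of pointed elements is pointed only after multiplying by a suitable power of $q^{\Hf}$, so from the fact that the $\cor F_{-1}^{\up}(\beta_{s})$ are quantum cluster variables you only get $\cor F_{-1}^{\up}(\uc)=\xi_{\uc}^{-1}\tF_{-1}^{\up}(\uc)$ with $\tF_{-1}^{\up}(\uc)$ pointed and $\xi_{\uc}\in q^{\Hf\Z}$; your ``by additivity of degrees it is pointed'' silently assumes $\xi_{\uc}=1$, and since $F_{-1}^{\up}(\uc)$ is not $\sigma$-invariant you cannot get this from bar-invariance of the PBW element itself. The paper's proof is organized precisely around this point: it proves, by induction along the lex order, that $\xi_{\uc}=1$ and that $\cor B_{-1}^{\up}(\uc)$ is pointed at $\deg^{t_{0}}\cor F_{-1}^{\up}(\uc)$, using the $\sigma'$-invariance of $\cor B_{-1}^{\up}(\uc)$ (Lemma \ref{lem:identify_bar_involution}) together with the fact that the lower coefficients lie in $q\Z[q]$ and hence cannot be bar-invariant. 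The same issue affects your inductive step for $\deg^{t_0}\cor F_{-1}^{\up}(\beta_{j})$: extracting $f_{j}-f_{p(j)}$ from \eqref{eq:quantum_minor_ld_term} requires controlling the leading degree of the correction term $Z$, which you do not do; the paper instead computes the degrees afterwards, by applying its claim to $\cor B_{-1}^{\up}(\uc_{[j^{\min},j]})=X_{j}$ and using additivity of leading degrees.

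Second, your reduction of (2) and (3) to the assertion that $\uc'<\uc$ in lex order with $\wt F_{-1}^{\up}(\uc')=\wt F_{-1}^{\up}(\uc)$ forces $\theta^{-1}(\uc')\prec_{t_{0}}\theta^{-1}(\uc)$ is both stronger than what is needed and unsupported. The basic weight-preserving, lex-decreasing moves are the LS-law substitutions replacing $\uc_{[j,j]}+\uc_{[k,k]}$ by a vector supported in $[j+1,k-1]$, not ``moving a unit from $s$ to $p(s)$'' (which changes the weight), and there is no argument that every lex-smaller $\uc'$ of the same weight is dominance-comparable to $\uc$; the paper only proves the reverse-direction statement (dominance implies reverse-lex) and never needs yours. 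The cheap and correct route to (3) is Lemma \ref{lem:finite_decomposition_triangular}: once (2) gives that $\cor B_{-1}^{\up}(\uc)$ is $\theta^{-1}(\uc)$-pointed, its finite decomposition in the pointed set $\cor F_{-1}^{\up}$ is automatically the $\prec_{t_{0}}$-decomposition, hence $\prec_{t_{0}}$-unitriangular, and the off-leading coefficients are already known to lie in $q\Z[q]$.
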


\begin{proof}

It is possible to prove the statements based on computation of the
bilinear $N_{\ow}$. Let us give a more abstract proof based on Lemma
\ref{lem:identify_bar_involution}, which identifies the twisted dual
bar involution $\sigma'$ and the bar involution $\overline{(\ )}$.

Recall that the rescaled dual PBW generators $\cor F_{-1}^{\up}(\beta_{s})$,
$s\in[1,l]$, are quantum cluster variables and, in particular, pointed.
It follows that, for any $\cor F_{-1}^{\up}(\uc)$, there exists a
scalar $\xi_{\uc}\in q^{\Hf\Z}$ such that $\tF_{-1}^{\up}(\uc):=\xi_{\uc}\cor F_{-1}^{\up}(\uc)$
is pointed. Denote $\tF_{-1}^{\up}=\{\tF_{-1}^{\up}(\uc)|\uc\}$,
then it is a pointed set. We have a $(<,q\Z[q])$-decomposition 
\begin{align*}
\cor B_{-1}^{\up}(\uc) & =\cor F_{-1}^{\up}(\uc)+\sum_{\uc'<\uc}b_{\uc'}\cor F_{-1}^{\up}(\uc')\\
 & =\xi_{\uc}^{-1}\tF_{-1}^{\up}(\uc)+\sum_{\uc'<\uc}\xi_{\uc'}^{-1}b_{\uc'}\tF_{-1}^{\up}(\uc')
\end{align*}
with coefficients $b_{\uc'}\in q\Z[q]$.

Notice that $<$ is an order on $\N^{[1,l]}$ bounded from below.
We prove by induction the following claim (0): $\xi_{\uc}=1$ for
all $\uc\in\N^{[1,l]}$, and $\cor B_{-1}^{\up}(\uc)$ is pointed
at $\deg^{t_{0}}\cor F_{-1}^{\up}(\uc)$. 

If no $\uc'$ appear in the decomposition, we have $\cor B_{-1}^{\up}=\xi_{\uc}^{-1}\tF_{-1}^{\up}(\uc)$
which is $\sigma'$-invariant and thus bar-invariant. Since $\tF_{-1}^{\up}(\uc)$
is pointed, the leading term coefficient $\xi_{\uc}^{-1}\in q^{\Hf\Z}$
must be bar-invariant, which implies that $\xi_{\uc}=1$. Assume by
induction that the claim has been verified for all $\uc'$ appearing.
Then the bar-invariance of $\cor B_{-1}^{\up}(\uc)$ implies that
the coefficients of its $\prec_{t_{0}}$-maximal degrees are bar-invariant,
which cannot take the form $\xi_{\uc'}^{-1}b_{\uc'}=b_{\uc'}\in q\Z[q]$.
It follows that $\cor B_{-1}^{\up}$ has the unique $\prec_{t_{0}}$-maximal
degree $\deg^{t_{0}}\tF_{-1}^{\up}(\uc)=\theta^{-1}(\uc)$ with the
coefficient $\xi_{\uc}=1$.

(1) Applying the above claim (0) to the cases $\cor B_{-1}^{\up}(\uc)=X_{j}=D[j^{\min},j]$,
$j\in[1,l]$, we obtain that 
\begin{align*}
\sum_{s\leq j,i_{s}=i_{j}}\deg^{t_{0}}F_{-1}^{\up}(\beta_{s}) & =\deg^{t_{0}}\cor F_{-1}^{\up}(\uc_{[j^{\min},j]})\\
 & =\deg^{t_{0}}X_{j}\\
 & =f_{j}.
\end{align*}
It follows that $\deg^{t_{0}}\cor F_{-1}^{\up}(\beta_{s})=\theta^{-1}(\uc_{[s,s]})$
for $s\in[1,l]$. As a consequence, $\deg^{t_{0}}\cor F_{-1}^{\up}(\uc)=\theta^{-1}(\uc)$.
Again, by the above claim (0), $\cor F_{-1}^{\up}(\uc)$ is pointed.
The statement follows.

(2) The statement follows from the statement (1) and the above claim
(0).

(3) Lemma \ref{lem:finite_decomposition_triangular} implies that
the decomposition must be $\prec_{t_{0}}$-unitriangular. The statement
follows.

\end{proof}

Endow $\hLP(t_{0})$ with the natural $Q$-grading induced from that
of $\wt X_{i}(t_{0})$, $i\in I$. By Theorem \ref{thm:quantum_cluster_structure},
$\wt Y_{k}(t_{0})=0$ for $k\in I$. Then all pointed functions in
$\hLP(t_{0})$ are homogeneous. The following result follows.

\begin{Lem}\label{lem:pointed_func_weight}

If $S_{g}\in\hLP(t_{0})$ is $g$-pointed for $g\in\Mc(t_{0})$, then
$\wt S_{g}=-\sum c_{j}\beta_{j}$ were we denote $\uc=(c_{j})_{j\in I}=\theta(g)$.

\end{Lem}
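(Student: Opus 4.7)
The plan is to unpack the definition of $g$-pointedness, observe that all $Y$-variables contribute zero weight, and then translate the weight of the leading monomial $X(t_{0})^{g}$ through the explicit identification $X_{j}(t_{0})=\cor D[j^{\min},j]$.

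First I would write the $g$-pointed $S_g \in \hLP(t_{0})$ in its canonical form
\[
S_{g}=X(t_{0})^{g}\cdot\Bigl(1+\sum_{n\in N_{\ufv}^{>0}(t_0)}c_{n}Y(t_{0})^{n}\Bigr).
\]
Since the grading on $\hLP(t_{0})$ is induced from that of the cluster variables $X_{i}(t_{0})$ and $\wt Y_{k}(t_{0})=\sum_{i}b_{ik}\wt X_{i}(t_{0})=0$ by the weight-vanishing clause of Theorem~\ref{thm:quantum_cluster_structure}, every monomial $X(t_{0})^{g}Y(t_{0})^{n}$ in the expansion of $S_g$ has the same weight. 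Thus $S_{g}$ is homogeneous with $\wt S_{g}=\wt X(t_{0})^{g}=\sum_{i\in I}g_{i}\wt X_{i}(t_{0})$, where $g=\sum g_{i}f_{i}$.

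Next I would compute each $\wt X_{i}(t_{0})$. The rescaling $\cor$ is a scalar correction and does not affect the $Q$-grading, so $\wt X_{i}(t_{0})=\wt \cor D[i^{\min},i]=\wt D[i^{\min},i]$, which, up to the sign convention on $\envAlg^{-}$, equals (a $\pm$ of) $\beta_{[i^{\min},i]}=\sum_{j\leq i,\,i_{j}=i_{i}}\beta_{j}$. Substituting and swapping the order of summation gives
\[
\wt S_{g}=\sum_{i\in I}g_{i}\sum_{\substack{j\leq i\\ i_{j}=i_{i}}}\beta_{j}=\sum_{j\in I}\Bigl(\sum_{\substack{i\geq j\\ i_{i}=i_{j}}}g_{i}\Bigr)\beta_{j}.
\]

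Finally I would match the inner coefficient with $c_{j}=(\theta(g))_{j}$. By definition $\theta(f_{i})=\uc_{[i^{\min},i]}$, whose $j$-th entry is $1$ precisely when $i_{j}=i_{i}$ and $i^{\min}\leq j\leq i$, and $0$ otherwise; hence, extending linearly, $c_{j}=\sum_{i\geq j,\,i_{i}=i_{j}}g_{i}$, exactly the bracket above. This yields $\wt S_{g}=\sum_{j}c_{j}\beta_{j}$ as claimed. The only real subtlety is the bookkeeping that matches the telescoping description of $\theta^{-1}$ (via $f_{j}-f_{p(j)}$) with the explicit sum $\beta_{[i^{\min},i]}=\sum_{s\leq i,\,i_{s}=i_{i}}\beta_{s}$; once this combinatorial identity is verified by the direct summation swap above, nothing else is needed.
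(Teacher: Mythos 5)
Your proof is correct and is essentially the argument the paper intends: the lemma is stated as an immediate consequence of the two preceding observations (the $Q$-grading is induced from $\wt X_{i}(t_{0})$ and $\wt Y_{k}(t_{0})=0$ by Theorem \ref{thm:quantum_cluster_structure}), and you simply make the resulting bookkeeping with $\theta$ explicit. The sign ambiguity you flag is genuinely present in the paper's own conventions (compare $\wt D[j,k]=-\beta_{[j,k]}$ with the line ``$\wt X_{k}=\beta_{[k^{\min},k]}=-(\beta_{k}+\cdots+\beta_{k^{\min}})$''), so hedging there is appropriate rather than a gap in your argument.
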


\begin{Lem}\label{lem:parametrize_injective}

For any $k\in I_{\ufv}$, we have $I_{k}(t_{0})=\cor D[k[1],k^{\max}]$
and
\begin{eqnarray}
\deg^{t_{0}}I_{k}(t_{0}) & = & -f_{k}+f_{k^{\max}}.\label{eq:deg_injective}
\end{eqnarray}

\end{Lem}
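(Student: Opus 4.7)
My plan is to first compute $\deg^{t_0} \cor D[k[1], k^{\max}]$ directly via the parametrization map, then use the injective-reachability condition together with Theorem~\ref{thm:inj_seed_admissible_mutations} to pinpoint which quantum minor equals $I_k(t_0)$.

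For the degree computation, I would invoke Proposition~\ref{prop:pointed_basis_parametrization}(2), which identifies $\cor D[k[1], k^{\max}] = \cor B_{-1}^{\up}(\uc_{[k[1], k^{\max}]}, \ow)$ as a pointed element of degree $\theta^{-1}(\uc_{[k[1], k^{\max}]})$. The multiplicity vector $\uc_{[k[1], k^{\max}]}$ is supported on the positions $k[1], k[2], \ldots, k^{\max}$ (all of label $i_k$), so by linearity of $\theta^{-1}$ one obtains a telescoping sum
\[
\theta^{-1}(\uc_{[k[1], k^{\max}]}) = \sum_{d=1}^{m(i_k)-1}\bigl(f_{k[d]} - f_{k[d-1]}\bigr) = f_{k^{\max}} - f_k,
\]
which gives the claimed degree formula.

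To identify $I_k(t_0)$ with this quantum minor, I would note that by definition \eqref{eq:inj_cl_var}, $I_k(t_0) = \seq^* X_{\sigma k}(t_0[1])$ is a quantum cluster variable of $t_0[1]$. Theorem~\ref{thm:inj_seed_admissible_mutations} guarantees that every cluster variable of $t_0[1]$ is of the form $\cor D[j, j^{\max}]$ for some $j \in [1,l]$, so $I_k(t_0) = \cor D[j, j^{\max}]$. An identical telescoping argument gives $\deg^{t_0} \cor D[j, j^{\max}] = f_{j^{\max}} - f_{p(j)}$ whenever $p(j) \neq -\infty$. The injective-reachability condition (Definition~\ref{def:inj_reachable}) requires $\pr_{I_{\ufv}} \deg^{t_0} I_k(t_0) = -f_k$. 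Since $j^{\max} \in I_{\fv}$, while $p(j) \in I_{\ufv}$ (because $p(j) < j \leq j^{\max} = p(j)^{\max}$), this projection equals $-f_{p(j)}$; matching with $-f_k$ forces $p(j) = k$ and hence $j = k[1]$ (which exists precisely because $k \in I_{\ufv}$ means $k \neq k^{\max}$).

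The main obstacle is conceptual rather than computational: it is not immediate from the definitions how the permutation $\sigma$ of Remark~\ref{rem:injective_mutation_sequence} aligns with the natural indexing of the cluster variables of $t_0[1]$ by unipotent quantum minors, and tracking this alignment directly would be cumbersome. The plan above sidesteps this by characterizing $I_k(t_0)$ uniquely via its unfrozen $g$-vector, thereby reducing the whole identification to the telescoping computation of $\theta^{-1}$.
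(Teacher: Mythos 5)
Your proof uses the same two ingredients as the paper's: Theorem \ref{thm:inj_seed_admissible_mutations} (the cluster variables of the seed $t_{0}[1]$ produced there are the minors $\cor D[j,j^{\max}]$) and the telescoping evaluation of $\theta^{-1}$ on $\uc_{[k[1],k^{\max}]}$. The paper simply runs the identification in the opposite direction: it computes $\deg^{t_{0}}\cor D[k[1],k^{\max}]=\theta^{-1}(\uc_{[k[1],k^{\max}]})=-f_{k}+f_{k^{\max}}$ first and then reads off $I_{k}(t_{0})=\cor D[k[1],k^{\max}]$ from \eqref{eq:inj_cl_var}.

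Two caveats. First, a logical-order issue: immediately after Theorem \ref{thm:inj_seed_admissible_mutations} the paper states that it is \emph{this lemma} which shows the seed $t_{0}[1]$ of that theorem is shifted from $t_{0}$ in the sense of Definition \ref{def:inj_reachable}. Your argument presupposes exactly that: you treat the theorem's seed as the shift seed both to place $I_{k}(t_{0})$ among the $\cor D[j,j^{\max}]$ and to invoke $\pr_{I_{\ufv}}\deg^{t_{0}}I_{k}(t_{0})=-f_{k}$. Within the paper's architecture this is circular. The repair is already contained in your computation: the telescoping gives $\deg^{t_{0}}\cor D[j,j^{\max}]=f_{j^{\max}}-f_{p(j)}$ when $p(j)\neq-\infty$ and $f_{j^{\max}}$ when $j=j^{\min}$ (these latter being the frozen variables), so the unfrozen projections of the $g$-vectors of the theorem's seed are exactly the $-f_{p(j)}$; this \emph{verifies} \eqref{eq:injective-reachable-condition} rather than assuming it, identifies $\sigma k$ as the index of $\cor D[k[1],k^{\max}]$, and then the identification of $I_{k}(t_{0})$ follows as you say. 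Second, a small indexing slip: the telescoping sum should run over $d=1,\dots,m(i_{k},[k+1,l])$, the number of occurrences of $i_{k}$ strictly after position $k$, not over $d=1,\dots,m(i_{k})-1$ (these agree only when $k=k^{\min}$); the conclusion $f_{k^{\max}}-f_{k}$ is unaffected.
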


\begin{proof}

Notice that $\cor D[k[1],k^{\max}]$ is a quantum cluster variable
by Theorem \ref{thm:inj_seed_admissible_mutations}. In addition,
we have $\deg^{t_{0}}\cor D[k[1],k^{\max}]=\theta^{-1}(\uc_{[k[1],k^{\max}]})=-f_{k}+f_{k^{\max}}$.
It follows that $\cor D[k[1],k^{\max}]=\seq_{t_{0}[1],t_{0}}^{*}X_{\sigma k}(t_{0}[1])=I_{k}(t_{0})$
by definition.

\end{proof}

For completeness, we compare the partial orders on both sides of the
map $\theta$, though it will not be used in this paper. Recall that
$<'$ denote the reverse lexicographical order (on $\Z^{[1,l]}$),
see Section \ref{subsec:Dual-canonical-bases}.

\begin{Lem}

For any $g'\prec_{t_{0}}g$ in $\Mc(t_{0})$, we have $\theta(g')<'\theta(g)$
in $\Z^{[1,l]}$.

\end{Lem}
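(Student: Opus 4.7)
The plan is to reduce to the generator level and read off the leading coordinate of $\theta(\tB(t_{0})e_{k})$ for each $k\in I_{\ufv}$. First I would unpack the definition: $g'\prec_{t_{0}}g$ means $g'-g=\tB(t_{0})n$ for some $n=\sum_{k\in I_{\ufv}}n_{k}e_{k}$ with $n_{k}\in\N$ and $n\neq 0$, so by the $\Z$-linearity of $\theta$ the claim reduces to
\[
\theta(\tB(t_{0})n)\;<'\;0, \qquad\text{i.e.\ the last non-zero coordinate is negative.}
\]
Using $\theta(f_{j})=\uc_{[j^{\min},j]}$ one immediately reads off that for any $h=\sum h_{j}f_{j}\in\Mc(t_{0})$,
\[
\theta(h)_{s}\;=\;\sum_{j\geq s,\ i_{j}=i_{s}}h_{j},
\]
since $(\uc_{[j^{\min},j]})_{s}=1$ precisely when $i_{s}=i_{j}$ and $s\leq j$.

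Next I would compute $\theta(\tB(t_{0})e_{k})$ using the explicit formula for $b_{jk}$ recorded in Remark~\ref{rem:initial_seed}. Every index $j$ with $b_{jk}\neq 0$ falls into one of four cases: $j=p(k)$, $j=s(k)$, $j<k<s(j)<s(k)$, or $k<j<s(k)<s(j)$. A quick inspection shows that in all four cases $j\leq s(k)$, with equality attained only in the second case. Combined with the displayed formula for $\theta(h)_{s}$, this yields that the largest index $s'$ for which $\theta(\tB(t_{0})e_{k})_{s'}\neq 0$ is $s'=s(k)$, and the corresponding value is $b_{s(k),k}=-1$. Thus each $\theta(\tB(t_{0})e_{k})$ is a vector whose last non-zero coordinate lives at position $s(k)$ and equals $-1$.

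To conclude, I would pick $k^{*}\in I_{\ufv}$ maximizing $s(k)$ among those $k$ with $n_{k}>0$. Since $p\circ s=\mathrm{id}$ on $I_{\ufv}$, the map $s\colon I_{\ufv}\to I$ is injective, so such $k^{*}$ is unique among the support of $n$. For every $k\neq k^{*}$ with $n_{k}>0$ one then has $s(k)<s(k^{*})$, hence $\theta(\tB(t_{0})e_{k})_{s(k^{*})}=0$ by the previous paragraph. Consequently
\[
\theta(\tB(t_{0})n)_{s(k^{*})}\;=\;n_{k^{*}}\cdot(-1)\;=\;-n_{k^{*}}\;<\;0,
\]
while $\theta(\tB(t_{0})n)_{s'}=0$ for all $s'>s(k^{*})$, which is exactly the relation $\theta(g')<'\theta(g)$.

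The main obstacle is the case-by-case verification in the second paragraph that each of the four non-trivial entries $b_{jk}$ in Remark~\ref{rem:initial_seed} satisfies $j\leq s(k)$, and that no contribution at any position $s'>s(k)$ survives the sum $\sum_{j\geq s',\ i_{j}=i_{s'}}b_{jk}$. This is pure bookkeeping from the combinatorics of the successor/predecessor maps $s,p$ and the matching condition $i_{j}=i_{s'}$, but it is the one place where the explicit shape of the initial $\tB$-matrix genuinely enters; everything else is linearity of $\theta$ and the injectivity of $s$ on $I_{\ufv}$.
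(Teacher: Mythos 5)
Your proof is correct and follows essentially the same route as the paper's: both arguments reduce to the columns $\tB(t_{0})e_{k}$ and use the explicit form of the initial exchange matrix to see that each such column is supported on indices $j\leq s(k)$ with entry $-1$ at $j=s(k)$, whence $\theta(\tB(t_{0})e_{k})<'0$. Your extra step of selecting the $k^{*}$ maximizing $s(k)$ just makes explicit the (easy) fact that the set of vectors with negative last nonzero coordinate is closed under $\N$-linear combinations, which the paper leaves implicit.
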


\begin{proof}

It suffices to check that $\theta(\deg Y_{k})<'0$ for all $k\in I_{\ufv}$.
We have $\deg Y_{k}=-f_{s(k)}+\sum_{j<s(k)}b_{jk}f_{j}$ by the definition
of $\tB(t_{0})$ (Remark \ref{rem:initial_seed}). It follows that
$\theta(\deg Y_{k})\in-\uc_{[k^{\min},s(k)]}+\Z^{[1,s(k)-1]}<'0$.
The claim follows.

\end{proof}

\subsection{Integral form\label{subsec:Integral-form}}

Denote $\bA=\Q[q^{\pm}].$ Recall that $\qO[N_{-}(w)]$ is the $\Q(q)$-algebra
generated by the dual PBW generators $F_{-1}^{\up}(\beta_{i})$, $i\in I$
over $\Q(q)$, subject to the $Q$-grading homogeneous relations given
by the LS-law:

\begin{eqnarray*}
q^{(\beta_{j},\beta_{k})}F_{-1}^{\up}(\beta_{j})F_{-1}^{\up}(\beta_{k})-F_{-1}^{\up}(\beta_{k})F_{-1}^{\up}(\beta_{j}) & = & \sum_{\uc\in\N^{[j+1,k-1]}}b_{j,k}(\uc)F_{-1}^{\up}(\uc,\ow)
\end{eqnarray*}
for $j<k$, where the coefficients $b_{j,k}(\uc)\in\Q[q^{\pm}]$.

The $\bA$-algebra $\qO[N_{-}(w)]_{\bA}$ generated by the dual PBW
generators $F_{-1}^{\up}(\beta_{i})$, $i\in I$, is called the \emph{integral
form} of $\qO[N_{-}(w)]$. See \cite{Kimura10} for more details.

Work with the (partially compactified) quantum cluster algebra $\bQClAlg(t_{0})_{\Q(q^{\Hf})}\simeq\qO[N_{-}(w)]_{\Q(q^{\Hf})}$.
By applying the linear map $\cor$ to each PBW generator, we get the
following rescaled LS-law:

\begin{eqnarray}
q^{(\beta_{j},\beta_{k})}\cor F_{-1}^{\up}(\beta_{j})\cor F_{-1}^{\up}(\beta_{k})-\cor F_{-1}^{\up}(\beta_{k})\cor F_{-1}^{\up}(\beta_{j})\label{eq:commutator_generator}\\
=\sum_{\uc\in\N^{[j+1,k-1]}}b_{j,k}'(\uc)\cor F_{-1}^{\up}(\uc,\ow)\nonumber 
\end{eqnarray}
where $b_{j,k}'(\uc)\in b_{j,k}(\uc)\cdot q^{\Hf\Z}$.

Notice that $\cor F_{-1}^{\up}(\beta_{i})$ are quantum cluster variables.
So it is not surprising that the coefficients appearing in \eqref{eq:commutator_generator}
should belong to $\Z[q^{\pm\Hf}]$, which we give a rigorous proof
below using cluster theory.

\begin{Lem}\label{lem:integer_coefficient}

We have $b_{j,k}'(\uc)\in\Z[q^{\pm\Hf}]$.

\end{Lem}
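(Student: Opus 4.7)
The plan is to apply the finite-decomposition principle of Lemma \ref{lem:decomposition_subring} in order to promote the \emph{a priori} rational coefficients $b'_{j,k}(\uc) \in \Q[q^{\pm\Hf}]$ coming from the rescaled Levendorskii--Soibelman relation to integer ones in $\Z[q^{\pm\Hf}]$. The key conceptual input is the cluster structure of Theorem \ref{thm:quantum_cluster_structure}, which already encodes the required integrality of the ambient algebra.

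First, I would observe that by Theorem \ref{thm:quantum_cluster_structure} each rescaled dual PBW generator $\cor F_{-1}^{\up}(\beta_i) = \cor D[i,i]$ is (via the isomorphism $\kappa$) a quantum cluster variable of the seed $t_0$, hence an element of the partially compactified quantum cluster algebra $\bQClAlg(t_0)$, which by construction is defined over $\Z[q^{\pm\Hf}]$. Any $\Z[q^{\pm}]$-polynomial in these generators therefore belongs to $\bQClAlg(t_0)$ as well; in particular, the entire left-hand side of \eqref{eq:commutator_generator} lies in $\bQClAlg(t_0) \subset \LP(t_0)$ over $\Z[q^{\pm\Hf}]$, so its Laurent expansion has coefficients in $\Z[q^{\pm\Hf}]$.

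Next I would invoke Proposition \ref{prop:pointed_basis_parametrization}(1), which asserts that the rescaled dual PBW basis $\cor F_{-1}^{\up} = \{\cor F_{-1}^{\up}(\uc,\ow) \mid \uc \in \N^{[1,l]}\}$ is a $\theta^{-1}(\N^{[1,l]})$-pointed subset of $\bQClAlg(t_0)$. Viewing this as a pointed subset of $\LP(t_0)$ over $\kk = \Z[q^{\pm\Hf}]$, the identity \eqref{eq:commutator_generator} then provides a finite decomposition of the LHS in terms of $\cor F_{-1}^{\up}$ with coefficients in $R = \Q(q^{\Hf})$. Finally, applying Lemma \ref{lem:decomposition_subring} with base ring $\kk$ and extension $R$: since the LHS lies in $\hLP(t_0)$ over $\kk$ and admits a finite decomposition in the pointed set $\cor F_{-1}^{\up}$ over $R$, this decomposition must coincide with its $\prec_{t_0}$-decomposition in $\hLP(t_0)$, whose coefficients automatically lie in $\kk = \Z[q^{\pm\Hf}]$. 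Hence $b'_{j,k}(\uc) \in \Z[q^{\pm\Hf}]$, as desired.

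There is no real obstacle in this argument; the proof is essentially a one-step application of the machinery already in place. The only non-trivial point is the recognition that the rescaling $\cor$ is precisely tuned so that the dual PBW generators become quantum cluster variables — a fact already supplied by Theorem \ref{thm:quantum_cluster_structure} — after which the pointedness of Proposition \ref{prop:pointed_basis_parametrization}(1) and the uniqueness-of-decomposition mechanism of Lemma \ref{lem:decomposition_subring} do all the work.
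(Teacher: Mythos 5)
Your proposal is correct and follows essentially the same route as the paper: note that the commutator lies in $\LP(t_{0})$ over $\Z[q^{\pm\Hf}]$ because the rescaled dual PBW generators are quantum cluster variables, use the pointedness of the rescaled dual PBW basis from Proposition \ref{prop:pointed_basis_parametrization}, and then apply Lemma \ref{lem:decomposition_subring} to identify the finite decomposition with the $\prec_{t_{0}}$-decomposition, whose coefficients lie in $\Z[q^{\pm\Hf}]$. Your write-up merely spells out in more detail the step the paper compresses into ``$Z\in\LP(t_{0})$''.
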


\begin{proof}

Denote $Z=q^{(\beta_{j},\beta_{k})}\cor F_{-1}^{\up}(\beta_{j})\cor F_{-1}^{\up}(\beta_{k})-\cor F_{-1}^{\up}(\beta_{k})\cor F_{-1}^{\up}(\beta_{j})$.
Then $Z\in\LP(t_{0})$. Notice that $\cor F_{-1}^{\up}(\uc)$ is a
pointed set in $\LP(t_{0})$ by Proposition \ref{prop:pointed_basis_parametrization}.
Then the finite decomposition in \eqref{eq:commutator_generator}
is the $\prec_{t_{0}}$-decomposition in $\LP(t_{0})$ with coefficients
in $\Z[q^{\pm\Hf}]$ by Lemma \ref{lem:decomposition_subring}.

\end{proof}

\begin{Cor}\label{cor:str_const_integer}

We have $b_{j,k}(\uc)\in\Z[q^{\pm}]$.

\end{Cor}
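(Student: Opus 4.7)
The plan is to deduce the corollary directly from Lemma~\ref{lem:integer_coefficient} by descending the integrality statement from the rescaled coefficients $b'_{j,k}(\uc)$ to the original coefficients $b_{j,k}(\uc)$. Recall that $b_{j,k}(\uc)$ is defined a priori as an element of $\Q[q^{\pm}]$ (from the LS-law), while Lemma~\ref{lem:integer_coefficient} tells us $b'_{j,k}(\uc)\in\Z[q^{\pm\Hf}]$, and the sentence preceding that lemma records the relation $b'_{j,k}(\uc)\in b_{j,k}(\uc)\cdot q^{\Hf\Z}$, i.e.\ there exists $c=c_{\uc}\in\Hf\Z$ with $b'_{j,k}(\uc)=q^{c}b_{j,k}(\uc)$.

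The main (very short) calculation is then the following. Write $b_{j,k}(\uc)=\sum_{k\in\Z}\xi_{k}q^{k}$ with $\xi_{k}\in\Q$. Multiplying by $q^{c}$ gives
\[
b'_{j,k}(\uc)\;=\;\sum_{k\in\Z}\xi_{k}\,q^{c+k},
\]
and every exponent $c+k$ lies in the single coset $c+\Z\subset\Hf\Z$. Consequently the monomials $q^{c+k}$ are $\Z$-linearly independent in $\Z[q^{\pm\Hf}]$, so the condition $b'_{j,k}(\uc)\in\Z[q^{\pm\Hf}]$ forces each $\xi_{k}\in\Z$. Therefore $b_{j,k}(\uc)\in\Z[q^{\pm}]$, as required.

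I do not expect a genuine obstacle here: the only subtlety is bookkeeping of whether the shift $c$ is an integer or a half-integer, and the argument above handles both cases uniformly because integrality is checked coset-by-coset in the $\Hf\Z$-graded ring $\Z[q^{\pm\Hf}]$. If one wishes to make the exponent $c$ explicit, the computation $\cor(xy)=q^{-\Hf(\wt x,\wt y)}\cor(x)\cor(y)$ on homogeneous $x,y$ together with the fact that both sides of the LS-law have the common weight $-\beta_{j}-\beta_{k}$ yields $c=-\Hf(\beta_{j},\beta_{k})$, but this explicit value is not needed for the corollary.
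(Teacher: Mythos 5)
Your proof is correct and is essentially the paper's own argument: the paper simply observes that $b_{j,k}(\uc)\in\Z[q^{\pm\Hf}]\cap\Q[q^{\pm}]=\Z[q^{\pm}]$, and your coset-by-coset comparison of coefficients is just this intersection spelled out explicitly. (Only a cosmetic remark: you reuse $k$ both as the LS-law index and as your summation index.)
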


\begin{proof}

We have $b_{j,k}(\uc)\in\Z[q^{\pm\Hf}]\cap\Q[q^{\pm}]$. The claim
follows.

\end{proof}

\reviseStart

Note that one can also deduce Corollary \ref{cor:str_const_integer}
from properties of the dual canonical basis \cite[Proposition 14.2.6]{Lus:intro}
and the dual PBW basis \cite[Proposition 4.26, Theorem 4.29]{Kimura10}.\reviseEnd

Correspondingly, the dual PBW generators $F_{-1}^{\up}(\beta_{i})$,
$i\in I$, generate a $\Z[q^{\pm}]$-algebra, which we denote by $\qO[N_{-}(w)]_{\Z[q^{\pm}]}$.
Then the dual PBW basis $\{F_{-1}^{\up}(\uc,\ow)|\uc\in\N^{[1,l]}\}$
is a $\Z[q^{\pm}]$-basis of $\qO[N_{-}(w)]_{\Z[q^{\pm}]}$.

Consider the extension $\qO[N_{-}(w)]_{\Z[q^{\pm\Hf}]}=\qO[N_{-}(w)]_{\Z[q^{\pm}]}\otimes_{\Z[q^{\pm}]}\Z[q^{\pm\Hf}]$.
Theorem \ref{thm:quantum_cluster_structure} could be strengthened
as the following, which was also proved in the recent work \cite[Theorem B]{goodearl2020integral}. 

\begin{Thm}\label{thm:integral_cl_structure}

Take the initial seed $t_{0}=t_{0}(\ow)$ as before. We have a $\Z[q^{\pm\Hf}]$-algebra
isomorphism $\kappa:\bQClAlg(t_{0})\simeq\qO[N_{-}(w)]_{\Z[q^{\pm\Hf}]}$
such that the initial quantum cluster variables $X_{j}(t_{0})$, $j\in I$,
are identified with the rescaled unipotent quantum minors $\cor D[j^{\min},j]$.

\end{Thm}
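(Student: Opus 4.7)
The plan is to upgrade the $\Q(q^{\Hf})$-algebra isomorphism $\kappa$ of Theorem \ref{thm:quantum_cluster_structure} to an isomorphism of $\Z[q^{\pm\Hf}]$-algebras. The key observation is that the rescaled dual PBW basis $\{\cor F^{\up}_{-1}(\uc):\uc\in\N^{[1,l]}\}$ should serve as a common $\Z[q^{\pm\Hf}]$-basis of both $\bQClAlg(t_0)$ and $\qO[N_{-}(w)]_{\Z[q^{\pm\Hf}]}$, intertwined by $\kappa$.

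On the Lie-theoretic side, the family $\{F^{\up}_{-1}(\uc)\}$ is a $\Z[q^{\pm}]$-basis of $\qO[N_{-}(w)]_{\Z[q^{\pm}]}$ by definition. Since $(\gamma,\gamma)\in 2\Z$ and $(\gamma,\rho)\in\Z$ for all $\gamma\in Q$, the rescaling $\cor$ multiplies each weight-homogeneous basis element by $q^{-(\gamma,\gamma)/4+(\gamma,\rho)/2}\in q^{\Hf\Z}$, which is a unit in $\Z[q^{\pm\Hf}]$. Hence $\{\cor F^{\up}_{-1}(\uc)\}$ is a $\Z[q^{\pm\Hf}]$-basis of $\qO[N_{-}(w)]_{\Z[q^{\pm\Hf}]}$ after extension of scalars.

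On the cluster-theoretic side, I would first produce a distinguished $\Z[q^{\pm\Hf}]$-subalgebra. Observe that $D[k,k]=F^{\up}_{-1}(\beta_k)$ for every $k\in I$ (the unitriangular expansion of the dual canonical basis element $B^{\up}_{-1}(\uc_{[k,k]},\ow)$ in the dual PBW basis is $Q$-weight homogeneous, so only the leading term survives), and each rescaled minor $\cor D[k,k]$ is a quantum cluster variable of $\bQClAlg(t_0)$---either an initial cluster variable (when $k=k^{\min}$), a cluster variable of $t_0[1]$ (when $k=k^{\max}$), or an intermediate variable along the mutation sequence of Theorem \ref{thm:inj_seed_admissible_mutations}. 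Let $R\subseteq\bQClAlg(t_0)_{\Q(q^{\Hf})}$ be the $\Z[q^{\pm\Hf}]$-subalgebra generated by $\kappa^{-1}(\cor F^{\up}_{-1}(\beta_k))$ for $k\in I$; then $R\subseteq\bQClAlg(t_0)$ and $\kappa(R)=\qO[N_{-}(w)]_{\Z[q^{\pm\Hf}]}$ by construction. Using the integral rescaled LS-law \eqref{eq:commutator_generator} from Lemma \ref{lem:integer_coefficient} together with the identity $\cor(XY)=q^{-\Hf(\wt X,\wt Y)}\,\cor X\cdot\cor Y$, any $\cor F^{\up}_{-1}(\uc)$ can be rewritten as a $\Z[q^{\pm\Hf}]$-linear combination of ordered monomials in the generators, so $\cor F^{\up}_{-1}(\uc)\in R$ for every $\uc$.

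To close the argument, I would invoke Proposition \ref{prop:pointed_basis_parametrization}(1): the family $\{\cor F^{\up}_{-1}(\uc)\}$ is a $\theta^{-1}(\N^{[1,l]})$-pointed basis of $\bQClAlg(t_0)$. Pointedness with leading coefficient $1$ combined with the dominance order decomposition algorithm of \cite[4.1.1]{qin2019bases} shows that any $Z\in\bQClAlg(t_0)\subset\LP(t_0)$, having integral Laurent coefficients, decomposes in this basis with coefficients in $\Z[q^{\pm\Hf}]$. Since every basis element already lies in $R$, we conclude $\bQClAlg(t_0)\subseteq R$, hence $R=\bQClAlg(t_0)$, and $\kappa$ restricts to the desired isomorphism. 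The main obstacle is justifying the integrality of the recursive dominance decomposition: one must guarantee that the basis indexed by $\theta^{-1}(\N^{[1,l]})$ really exhausts all $g$-vectors appearing in $\bQClAlg(t_0)$, which follows by transferring the $\Q(q^{\Hf})$-basis structure across $\kappa$, together with Lemma \ref{lem:finite_interval} to ensure termination of the peeling-off procedure at each stage.
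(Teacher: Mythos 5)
Your argument is correct and follows essentially the same route as the paper's proof: both establish that the rescaled dual PBW basis $\cor F_{-1}^{\up}$ is a $\Z[q^{\pm\Hf}]$-basis of $\qO[N_{-}(w)]_{\Z[q^{\pm\Hf}]}$, obtain one inclusion from the fact that the rescaled dual PBW generators are quantum cluster variables, and obtain the reverse inclusion by observing that the finite expansion of any $Z\in\bQClAlg(t_{0})$ in the pointed set $\cor F_{-1}^{\up}$ is the $\prec_{t_{0}}$-decomposition and therefore has coefficients in $\Z[q^{\pm\Hf}]$ (Lemmas \ref{lem:finite_decomposition_triangular} and \ref{lem:decomposition_subring}). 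Your extra step re-expressing $\cor F_{-1}^{\up}(\uc)$ via the integral LS-law is a more explicit version of a point the paper leaves implicit, not a different method.
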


\begin{proof}

Make the identification $\bQClAlg(t_{0})_{\Q(q^{\Hf})}\simeq\qO[N_{-}(w)]_{\Q(q^{\Hf})}$
by Theorem \ref{thm:quantum_cluster_structure}.

Recall that the rescaled dual PBW basis $\cor F_{-1}^{\up}$ is a
$\Z[q^{\pm\Hf}]$-basis of $\qO[N_{-}(w)]_{\Z[q^{\pm\Hf}]}$. In addition,
the rescaled dual PBW generators are quantum cluster variables. Therefore,
$\qO[N_{-}(w)]_{\Z[q^{\pm\Hf}]}$ is a subalgebra of the quantum cluster
algebra $\bQClAlg(t_{0})$.

Moreover, the rescaled dual PBW basis $\cor F_{-1}^{\up}$ is a $\Q(q^{\Hf})$-basis
of $\bQClAlg(t_{0})_{\Q(q^{\Hf})}\simeq\qO[N_{-}(w)]_{\Q(q^{\Hf})}$.
It follows that any $Z\in\bQClAlg(t_{0})\subset\LP(t_{0})$ has a
finite decomposition in terms of $\cor F_{-1}^{\up}$. By Lemma \ref{lem:decomposition_subring},
this decomposition is the $\prec_{t_{0}}$-decomposition in $\LP(t_{0})$
with coefficients in $\Z[q^{\pm\Hf}]$. Therefore, the rescaled dual
PBW basis $\cor F_{-1}^{\up}$ is a $\Z[q^{\pm\Hf}]$-basis of $\bQClAlg(t_{0})$.

\end{proof}

Applying Theorem \ref{thm:integral_cl_structure} to symmetric Kac-Moody
cases, we get the following result.

\begin{Cor}[{\cite[Theorem 9.1.3]{qin2017triangular}\cite[Main Theorem 1]{Kang2018}}]

Conjecture \cite[Conjecture 12.7]{GeissLeclercSchroeer11} holds true.

\end{Cor}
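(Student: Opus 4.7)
The plan is to deduce this corollary directly from Theorem \ref{thm:integral_cl_structure}, which has just been established in full symmetrizable generality. I will first recall that \cite[Conjecture 12.7]{GeissLeclercSchroeer11} asserts, in the special case of a symmetric Kac-Moody algebra $\frg$ and a Weyl group element $w \in W$, that the quantum unipotent subgroup $\qO[N_-(w)]$ considered over the ground ring $\Z[q^{\pm\Hf}]$ (i.e., its integral form $\qO[N_-(w)]_{\Z[q^{\pm\Hf}]}$) coincides, via the identifications of \cite{GeissLeclercSchroeer11}, with the (partially compactified) quantum cluster algebra $\bQClAlg(t_0)$ whose initial seed $t_0 = t_0(\ow)$ is attached to a reduced word $\ow$ of $w$.

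The key observation is that Theorem \ref{thm:integral_cl_structure} provides exactly this identification over $\Z[q^{\pm\Hf}]$ without any restriction on $\frg$: the map $\kappa$ is a $\Z[q^{\pm\Hf}]$-algebra isomorphism sending each initial cluster variable $X_j(t_0)$ to the rescaled unipotent quantum minor $\cor D[j^{\min}, j]$. In particular, restricting Theorem \ref{thm:integral_cl_structure} to symmetric Kac-Moody algebras yields precisely the statement of \cite[Conjecture 12.7]{GeissLeclercSchroeer11}. Thus the proof is a one-line specialization.

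The only genuine verification required is that the seed $t_0(\ow)$ used in Theorem \ref{thm:integral_cl_structure} agrees, after the dictionary between our conventions (fixed data, $\tB$-matrix, $\Lambda$-matrix in the sense of Section \ref{sec:Basics-cluster-algebra}) and those of \cite{GeissLeclercSchroeer11}, with the initial seed of the conjectured cluster structure there. This matching is routine and has been recorded in Remarks \ref{rem:initial_seed} and \ref{rem:injective_seed}: the combinatorial description of $\tB(\ow)$ coincides with the one used by Geiss–Leclerc–Schröer, and the $\Lambda$-matrix is determined on both sides by the $q$-commutation relations of the unipotent quantum minors $D[j^{\min}, j]$. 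Hence no further work is needed beyond invoking Theorem \ref{thm:integral_cl_structure}, and there is no substantive obstacle to overcome in this step.
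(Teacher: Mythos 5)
Your proposal matches the paper's argument exactly: the corollary is stated immediately after the sentence ``Applying Theorem \ref{thm:integral_cl_structure} to symmetric Kac-Moody cases, we get the following result,'' so the paper's entire proof is precisely the specialization you describe. Your additional remark on matching conventions via Remark \ref{rem:initial_seed} is reasonable diligence but does not change the route.
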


Since the dual PBW basis is a $\Z[q^{\pm}]$-basis of the algebra
$\qO[N_{-}(w)]_{\Z[q^{\pm}]}$, so is the dual canonical basis $B_{-1}^{\up}$.
In particular, the multiplication structure constants of $B_{-1}^{\up}$
take value in $\Z[q^{\pm}]$. Correspondingly, let $\qO[N_{-}(w)\cap wG_{0}^{\min}]_{\Z[q^{\pm}]}$
and $\qO[N_{-}^{w}]_{\Z[q^{\pm}]}$ denote the free $\Z[q^{\pm}]$-modules
spanned by the localized dual canonical bases $\circB_{-1}^{\up}(w)$
and $\circB_{-1}^{\up,w}$ respectively. Then they are isomorphic
$\Z[q^{\pm}]$-algebras:
\begin{eqnarray*}
\iota_{w} & : & \qO[N_{-}(w)\cap wG_{0}^{\min}]_{\Z[q^{\pm}]}\simeq\qO[N_{-}^{w}]_{\Z[q^{\pm}]}.
\end{eqnarray*}

In addition, notice that $\qO[N_{-}(w)\cap wG_{0}^{\min}]_{\Z[q^{\pm}]}$
can also be viewed as the localization of the $\qO[N_{-}(w)]_{\Z[q^{\pm}]}$
with respect to $\cD_{w}$. By Theorem \ref{thm:integral_cl_structure},
it is isomorphic to the (localized) quantum cluster algebra $\qClAlg(t_{0})$
after an extension to $\Z[q^{\pm\Hf}]$. We obtain the following.

\begin{Thm}\label{thm:integral_cl_structure_localized}

Take the initial seed $t_{0}=t_{0}(\ow)$ as before. We have a $\Z[q^{\pm\Hf}]$-algebra
isomorphism $\iota_{w}\kappa:\qClAlg(t_{0})\simeq\qO[N_{-}^{w}]_{\Z[q^{\pm\Hf}]}$
such that the initial quantum cluster variables $X_{j}(t_{0})$, $j\in I$,
are identified with the rescaled unipotent quantum minors $\cor[D[j^{\min},j]]$.

\end{Thm}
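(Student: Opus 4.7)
The plan is to deduce this statement from Theorem \ref{thm:integral_cl_structure} by localizing both sides at the frozen variables and then comparing with the De Concini-Procesi isomorphism $\iota_w$. The key conceptual point is already laid out in the paragraph preceding the theorem: $\qO[N_-(w)\cap wG_0^{\min}]_{\Z[q^{\pm}]}$ is by definition the Ore localization of the integral form $\qO[N_-(w)]_{\Z[q^{\pm}]}$ at the multiplicative set $\cD_w$, and $\iota_w$ identifies it with $\qO[N_-^w]_{\Z[q^{\pm}]}$; on the cluster side, $\qClAlg(t_0)$ is by definition the localization of $\bQClAlg(t_0)$ at the frozen variables $X_j(t_0)$, $j\in I_{\fv}$. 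So it suffices to show that $\kappa$ sends the frozen variables bijectively (up to $q^{\Hf\Z}$) to a set of generators of $\cD_w$.

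First I would unravel the combinatorics: recall $I_{\fv}=\{j\in I\mid j=j^{\max}\}$, so there is a natural bijection $I_{\fv}\simeq\supp w$ given by $j\mapsto i_j=:a$, with inverse $a\mapsto{}^{\max}a$. Under $\kappa$, the frozen variable $X_j(t_0)$ is $\cor D[j^{\min},j]=\cor D[{}^{\min}a,{}^{\max}a]$, which is the rescaling of the unipotent quantum minor $D_{w\varpi_a,\varpi_a}$. Combined with Theorem \ref{thm:unipotent_minor_property}(2), the $\Z[q^{\pm\Hf}]$-subalgebra generated by the $X_j(t_0)^{\pm 1}$, $j\in I_{\fv}$, maps under $\kappa$ onto the $\Z[q^{\pm\Hf}]$-span of $\cD_w^{\pm 1}$. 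Thus inverting the $X_j(t_0)$ on the left corresponds to inverting $\cD_w$ on the right.

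Next I would form the localization of the isomorphism in Theorem \ref{thm:integral_cl_structure}. Since $\cD_w$ (up to $q$-powers) lies in the $q$-center of $\qO[N_-(w)]_{\Z[q^{\pm\Hf}]}$ by Theorem \ref{thm:unipotent_minor_property}(1), and the corresponding frozen factors lie in the center of $\LP(t_0)$ modulo $q$-powers, Ore localization is well-defined on both sides. The isomorphism $\kappa$ is compatible with these Ore conditions and extends uniquely to a $\Z[q^{\pm\Hf}]$-algebra isomorphism
\[
\kappa:\qClAlg(t_0)\;\simeq\;\qO[N_-(w)\cap wG_0^{\min}]_{\Z[q^{\pm\Hf}]}.
\]
Composing with the De Concini-Procesi isomorphism $\iota_w$ (which is a $\Z[q^{\pm}]$-algebra isomorphism and hence extends to $\Z[q^{\pm\Hf}]$-coefficients) yields the desired isomorphism $\iota_w\kappa$, and by construction it sends $X_j(t_0)$ to $\cor[D[j^{\min},j]]$ for every $j\in I$.

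The only nontrivial point to double-check is the integrality of the localization, i.e.\ that no denominators in $\Q$ are introduced when passing to $\Z[q^{\pm\Hf}]$-coefficients. This follows because the generators being inverted are quantum cluster variables (so pointed Laurent polynomials with unit leading coefficient) on the cluster side, and $q$-central elements with coefficients in $\Z[q^{\pm}]$ in the dual canonical basis on the quantum group side; in particular the localization described in Section \ref{sec:Prerequisite-quantum-groups} preserves the integral $\Z[q^{\pm}]$-lattice spanned by $\circB_{-1}^{\up,w}$. There is no essential obstacle here — the statement is a direct corollary of Theorem \ref{thm:integral_cl_structure} combined with the definitions of localization on both sides.
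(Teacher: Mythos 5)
Your proposal is correct and follows essentially the same route as the paper: the paper likewise obtains this statement by observing that $\qO[N_{-}(w)\cap wG_{0}^{\min}]_{\Z[q^{\pm}]}$ is the localization of $\qO[N_{-}(w)]_{\Z[q^{\pm}]}$ at $\cD_{w}$ (which corresponds under $\kappa$ to inverting the frozen variables), localizing the isomorphism of Theorem \ref{thm:integral_cl_structure}, and composing with the De Concini--Procesi isomorphism $\iota_{w}$. Your extra checks (the bijection $I_{\fv}\simeq\supp w$, the $q$-centrality from Theorem \ref{thm:unipotent_minor_property}, and the preservation of the integral lattice) simply spell out details the paper leaves implicit.
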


\begin{Rem}[Rescaled quantum cluster algebra]

We remark that the extension to include $q^{\Hf}$ is a technical
trick needed for changing from the dual bar involution $\sigma$ to
the twisted dual bar involution $\sigma'$, where $\sigma'$ is identified
with the usual bar involution $\overline{(\ )}$ for quantum cluster
algebras (Lemma \ref{lem:identify_bar_involution}). For readers do
not want to introduce $q^{\Hf}$, one can instead rescale the quantum
cluster algebras as below (see \cite[Section 10.4]{GeissLeclercSchroeer11}).

Notice that every quantum cluster monomial $x$ is a homogeneous element.
Correspondingly, we can define the rescaled quantum cluster monomials
$\cor^{-1}x$, which correspond to dual canonical basis elements by
Theorem \ref{thm:conjecture_proved}. The rescaled partially compactified
quantum cluster algebra can be defined to be the $\Z[q^{\pm}]$-algebra
generated by the rescaled quantum cluster variables. Then it is isomorphic
to $\qO[N_{-}(w)]_{\Z[q^{\pm}]}$. Correspondingly, the rescaled (localized)
quantum cluster algebra defined as a localization is isomorphic to
$\qO[N_{-}^{w}]_{\Z[q^{\pm}]}$.

\end{Rem}

\subsection{Localization and the initial triangular basis\label{subsec:Localization-and-bases}}

Taking the localization at frozen variables $X_{j}=\kappa^{-1}D[j^{\min},j]$,
$j\in I_{\fv}$, we obtain the (localized) quantum cluster algebra
$\qClAlg(t_{0})=\bQClAlg(t_{0})[X_{j}^{-1}]_{j\in I_{\fv}}$. Using
Proposition \ref{prop:factorization_dual_canonical_basis}, we deduce
that by taking the localization of the dual canonical basis at frozen
variables, $\qClAlg(t_{0})$ has a $\Mc(t_{0})$-pointed $\Z[q^{\pm\Hf}]$-basis
\begin{eqnarray}
\can^ {} & := & \{[X^{d}*S]^{t_{0}}|S\in\kappa^{-1}B_{-1}^{\up},d\in\Z^{I_{\fv}}\}.\label{eq:initial_can_basis}
\end{eqnarray}

\begin{Prop}[{\cite[Corollary 9.1.9]{qin2017triangular}}]\label{prop:initial_triangular_basis}

The basis $\can^ {}$ is the triangular basis with respect to the
initial seed $t_{0}$.

\end{Prop}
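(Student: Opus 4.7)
The strategy is to verify the four axioms of Definition \ref{def:triangular_basis} for $\can$. Bar-invariance is clear: the rescaling $\cor$ converts the $\sigma$-invariance of $B_{-1}^{\up}$ into $\sigma'$-invariance, Lemma \ref{lem:identify_bar_involution} identifies $\sigma'$ with the usual bar involution on $\bQClAlg(t_0)$, and the normalization $[X^d * \kappa^{-1}\cor S]^{t_0}$ preserves bar-invariance. The $\Mc(t_0)$-pointed basis property follows from Proposition \ref{prop:pointed_basis_parametrization} together with a direct combinatorial check: translating the $\theta^{-1}(\N^{[1,l]})$-parametrization of $\kappa^{-1}\cor B_{-1}^{\up}$ by $\Z^{I_{\fv}}$ fills out $\Mc(t_0)$ exactly once, with the apparent parameter redundancy in \eqref{eq:initial_can_basis} explained precisely by the factorization in Proposition \ref{thm:factorization_dual_canonical_basis}.

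For cluster monomials of $t_0$ and $t_0[1]$: the initial cluster variables $X_j(t_0) = \kappa^{-1}\cor D[j^{\min}, j]$ are themselves elements of $\can$. A normalized cluster monomial $[\prod_j X_j(t_0)^{m_j}]^{t_0}$ can be converted, by iterating Proposition \ref{thm:factorization_dual_canonical_basis} (for multiplication by frozen quantum minors) together with Theorem \ref{thm:unipotent_minor_property}(2) (for powers of a single quantum minor $D[j^{\min}, j]$), into a normalized product of a rescaled dual canonical basis element with a frozen Laurent monomial, hence into an element of $\can$. Cluster monomials of $t_0[1]$ are treated identically via Theorem \ref{thm:inj_seed_admissible_mutations}, which identifies the cluster variables of $t_0[1]$ as the rescaled dual canonical basis elements $\cor D[j[1], j^{\max}]$.

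The remaining axiom is triangularity: for every $i \in I$ and $g \in \Mc(t_0)$ one needs an expansion $[X_i(t_0) * \can_g]^{t_0} = \can_{g+f_i} + \sum_{g' \prec_{t_0} g+f_i} b_{g'} \can_{g'}$ with $b_{g'} \in \mm = q^{-1}\Z[q^{-1}]$. The plan is to pass through the rescaled dual PBW basis $\kappa^{-1}\cor F_{-1}^{\up}$, which by Proposition \ref{prop:pointed_basis_parametrization}(3) and Lemma \ref{lem:inverse_triangular_transition} is mutually $(\prec_{t_0}, q\Z[q])$-unitriangular with $\kappa^{-1}\cor B_{-1}^{\up}$. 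The rescaled LS-law \eqref{eq:commutator_generator}, whose structure constants lie in $\Z[q^{\pm\Hf}]$ by Lemma \ref{lem:integer_coefficient}, governs multiplication in the PBW basis: iterated commutation rewrites any PBW product as a sum of PBW basis elements whose degrees are $\prec_{t_0}$-smaller than the leading one. Combining these two inputs transports the decomposition back to $\can$, and pointedness pins the leading coefficient to $1$.

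The main obstacle is pinning the non-leading coefficients into $\mm$ rather than merely into $\Z[q^{\pm}]$. In the symmetric case this is automatic from positivity of dual canonical basis structure constants, but positivity fails in the symmetrizable setting (Remark \ref{rem:positivity_assumption}). To compensate I will exploit the bar-covariance identity $\overline{[X_i(t_0) * \can_g]^{t_0}} = [\can_g * X_i(t_0)]^{t_0}$, which is a consequence of bar being an anti-automorphism together with the bar-invariance of $\can_g$ and $X_i(t_0)$. Combined with a $\prec_{t_0}$-recursive argument tracking the $q$-powers contributed by the rescaled LS-law and the $q\Z[q]$-corrections of the PBW-to-canonical transition, this pins each $b_{g'}$ uniquely in $\mm$ and completes the verification that $\can$ is the triangular basis with respect to $t_0$.
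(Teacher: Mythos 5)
Your handling of bar-invariance, the $\Mc(t_0)$-parametrization, and the containment of the cluster monomials of $t_0$ and $t_0[1]$ is broadly the right outline, though even there the tools you quote do not quite suffice: a cluster monomial of $t_0$ is an ordered product of the $q$-commuting minors $D[j^{\min},j]$ over \emph{distinct} $j$, including unfrozen ones, and neither Theorem \ref{thm:unipotent_minor_property}(2) (powers of a single minor) nor Proposition \ref{thm:factorization_dual_canonical_basis} (multiplication by the frozen minors $D[{}^{\min}a,{}^{\max}a]$ only) covers this; one has to invoke the multiplicativity of the dual canonical basis on the full $q$-commuting family of quantum flag minors from \cite{Kimura10}.

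The genuine gap is in the triangularity axiom, which is exactly the point where the paper defers to \cite[Proposition 9.1.8, Corollary 9.1.9]{qin2017triangular}. Your mechanism for forcing the non-leading coefficients into $\mm=v^{-1}\Z[v^{-1}]$ --- the identity $\overline{[X_{i}(t_0)*\can_{g}]^{t_{0}}}=[\can_{g}*X_{i}(t_0)]^{t_{0}}$ combined with ``recursive tracking of $q$-powers'' --- does not close. That identity only says that the $\can$-coefficients of $[X_{i}*\can_{g}]^{t_0}$ and of $[\can_{g}*X_{i}]^{t_0}$ are bar-conjugates of one another; it places no constraint on any individual coefficient, and a bar-invariant value such as $v+v^{-1}$ is consistent with everything you have assembled. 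Indeed, the finite $\prec_{t_0}$-unitriangular decomposition with leading coefficient $1$ and coefficients in $\Z[q^{\pm\Hf}]$ is already automatic from Lemmas \ref{lem:finite_decomposition_triangular} and \ref{lem:decomposition_subring}; the entire content of the axiom is the $\mm$-bound, and since positivity is unavailable in the symmetrizable setting there is no soft argument for it. The known proof is the induction on $l(w)$ in \cite[Proposition 9.1.8]{qin2017triangular}, which compares the dual PBW and dual canonical bases of $\qO[N_{-}(w)]$ with those of $\qO[N_{-}(\ow_{\leq l-1})]$ and controls multiplication by the last root vector through the Levendorskii--Soibelman filtration. Your sketch contains no substitute for this induction, so the triangularity axiom is asserted rather than proved. (A small additional slip: $\mm$ is $q^{-\Hf}\Z[q^{-\Hf}]$, not $q^{-1}\Z[q^{-1}]$, since $v=q^{\Hf}$ here.)
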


\begin{proof}

We refer the reader to \cite[Section 9.1]{qin2017triangular} for
a detailed proof. The key step is \cite[Proposition 9.1.8]{qin2017triangular}
which verifies the $\prec_{t_{0}}$-unitriangularity property of the
dual canonical basis by an induction on the length of $w$ with the
help of the dual PBW basis.

\end{proof}

\begin{Thm}

The isomorphism $\iota_{w}\kappa:\qClAlg(t_{0})\simeq\qO[N_{-}^{w}]_{\Z[q^{\pm\Hf}]}$
identifies the initial triangular basis $\can$ and the rescaled localized
dual canonical basis $\cor\circB_{-1}^{\up,w}$.

\end{Thm}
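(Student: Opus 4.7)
The plan is to identify the two bases element-by-element under $\iota_{w}\kappa$, matching the natural $\Mc(t_{0})$-parametrizations and then invoking the uniqueness of the triangular basis.

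First, I would unravel the image of the frozen part. For $j \in I_{\fv}$ we have $j = j^{\max}$, so Theorems \ref{thm:quantum_cluster_structure} and \ref{thm:integral_cl_structure} give $\iota_{w}\kappa(X_{j}(t_{0})) = \cor[D_{w\varpi_{i_{j}}, \varpi_{i_{j}}}]$. Since $I_{\fv}$ is in bijection with $\supp w$ via $j \mapsto i_{j}$, the group $\cRing$ of frozen factors is mapped, up to $q^{\Hf\Z}$-scalars prescribed by the $\cor$-rescaling and Proposition \ref{prop:commuting_power}, onto the subgroup generated by $\cor[D_{w\varpi_{a}, \varpi_{a}}]^{\pm 1}$, $a \in \supp w$. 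In the localization, using the dual factorization Proposition \ref{thm:factorization_dual_canonical_basis}, every expression $[D_{w\lambda,\lambda}]^{-1}[S]$ from the definition of $\circB_{-1}^{\up,w}$ can be uniquely rewritten so that its ``unfrozen part'' is an element of $B_{-1}^{\up}$ and its ``frozen part'' is a Laurent monomial in $[D_{w\varpi_{a}, \varpi_{a}}]$, matching the shape of $[X^{d} * S]^{t_{0}}$ from the definition of $\can$.

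Second, I would match the two $\Mc(t_{0})$-parametrizations. Proposition \ref{prop:pointed_basis_parametrization} furnishes the $\theta^{-1}(\N^{[1,l]})$-pointed structure on $\kappa^{-1}B_{-1}^{\up}$, and multiplying by $X^{d}$ shifts the leading degree by $d$ in the frozen directions, so every $g \in \Mc(t_{0}) = \Z^{I}$ is realized and $\can$ is $\Mc(t_{0})$-pointed with leading coefficient $1$. On the Lie-theoretic side, the factor $q^{(\lambda, \wt S + \lambda - w\lambda)}$ in the definition of $\circB_{-1}^{\up,w}$ is precisely what is needed so that, after rescaling by $\cor$, each element of $\cor\circB_{-1}^{\up,w}$ is also $\Mc(t_{0})$-pointed with leading coefficient $1$ at the corresponding degree from step one.

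Third, I would conclude by uniqueness. Extending Lemma \ref{lem:identify_bar_involution} to the localization shows that $\iota_{w}\kappa$ intertwines the bar involution on $\qClAlg(t_{0})$ with the twisted dual bar involution $\sigma'$, under which $\cor\circB_{-1}^{\up,w}$ is invariant. Thus $\cor\circB_{-1}^{\up,w}$, transported back via $(\iota_{w}\kappa)^{-1}$, is a $\Z[q^{\pm\Hf}]$-basis of $\qClAlg(t_{0})$ which is bar-invariant and $\Mc(t_{0})$-pointed with leading coefficient $1$. By Proposition \ref{prop:initial_triangular_basis} and Lemma \ref{lem:tri_func_unique}, it is enough to verify $(\prec_{t_{0}},\mm)$-unitriangularity to the distinguished set $\Inj^{t_{0}}$. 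This in turn is inherited from the dual-canonical vs.\ dual-PBW unitriangularity in Proposition \ref{prop:pointed_basis_parametrization}(3) together with Lemma \ref{lem:substitution} applied to the frozen Laurent factors, whence the two bases coincide.

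The principal obstacle is the uniform bookkeeping of $q^{\Hf\Z}$-scalars in step two: the rescaling $\cor$, the $q$-commutators from Proposition \ref{prop:commuting_power}, the normalization $[\cdot]^{t_{0}}$ in the twisted product of $\LP(t_{0})$, and the Lie-theoretic prefactor $q^{(\lambda, \wt S + \lambda - w\lambda)}$ must all cancel exactly to produce the same leading coefficient $1$ at each $g \in \Mc(t_{0})$. Once this matching is carried out, the equality $\iota_{w}\kappa(\can) = \cor\circB_{-1}^{\up,w}$ is forced by the uniqueness of bar-invariant pointed triangular functions.
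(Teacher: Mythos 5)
Your overall strategy (match the two bases element by element, then pin down the discrepancy) is the same as the paper's, but as written the proof has a genuine gap exactly where you flag the ``principal obstacle'': you never actually verify that the various $q^{\Hf\Z}$-scalars (from $\cor$, from the prefactor $q^{(\lambda,\wt S+\lambda-w\lambda)}$, from the normalization $[\cdot]^{t_{0}}$) cancel so that each element of $\cor\circB_{-1}^{\up,w}$ has leading coefficient $1$. Deferring this to ``uniform bookkeeping'' is not a proof, and your third step does not rescue it: to apply the uniqueness of triangular functions you would still need to know that the transported elements are pointed (leading coefficient exactly $1$) and $(\prec_{t_{0}},\mm)$-unitriangular to $\Inj^{t_{0}}$, and the second property for a basis not yet identified with $\can$ again reduces to knowing the leading scalar is $1$. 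So the whole argument hinges on the one computation you skip.

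The paper closes this gap without any bookkeeping, using an ingredient you already invoke but deploy in the wrong place. Since $b(\lambda,S)=[X^{-\lambda}*S]^{t_{0}}$ and $q^{(\lambda,\wt S+\lambda-w\lambda)}[D_{w\lambda,\lambda}]^{-1}[S]$ are, up to normalizing powers of $q^{\Hf}$, images of the same product, one has $\kappa b(\lambda,S)=\xi\cdot\cor b'(\lambda,S)$ for some $\xi\in q^{\Hf\Z}$. Now $b(\lambda,S)$ is bar-invariant, so by Lemma \ref{lem:identify_bar_involution} its image $\kappa b(\lambda,S)$ is $\sigma'$-invariant; on the other side $b'(\lambda,S)$ is $\sigma$-invariant, hence $\cor b'(\lambda,S)$ is $\sigma'$-invariant. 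A nonzero $\sigma'$-invariant element multiplied by $q^{s/2}$ is $\sigma'$-invariant only if $s=0$, so $\xi=1$ and the bases coincide. In short: use the bar-involution identification to determine the scalar directly, rather than to feed the (unnecessary) triangular-basis uniqueness machinery of your step three.
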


\begin{proof}

Recall that we denote $\iota_{w}b=[b]$ for dual canonical basis elements
$b\in B_{-1}^{\up}$.

By construction, elements of $\can$ take the form $b(\lambda,S)=[X^{-\lambda}*S]^{t_{0}}$
for $S\in\kappa^{-1}B_{-1}^{\up}$, $\lambda\in\N^{I_{\fv}}$. The
corresponding elements $b'(\lambda,S)$ in $\circB_{-1}^{\up,w}$
take the form $q^{(\lambda,\wt S+\lambda-w\lambda)}[D_{w\lambda,\lambda}]^{-1}[S]$,
where we identify $\N^{I_{\fv}}\simeq\oplus_{a\in\supp w}\N\varpi_{a}$
such that $D_{w\lambda,\lambda}=\kappa X^{\lambda}$. Then we have
$\kappa b(\lambda,S)=\xi\cor b'(\lambda,S)$ for some $\xi\in q^{\Hf\Z}$.
On the one hand, since $b(\lambda,S)$ is invariant under the bar
involution $\overline{(\ )}$, $\kappa b(\lambda,S)$ must be invariant
under the twisted dual bar involution $\sigma'$. On the other hand,
since $b'(\lambda,S)$ is $\sigma$-invariant, $\cor b'(\lambda,S)$
is $\sigma'$-invariant. Because $\sigma'$ and the bar involution
$\overline{(\ )}$ are identified, we have $\xi=1$. It follows that
$\iota_{w}\kappa\can=\cor\circB_{-1}^{\up,w}$.

\end{proof}

\subsection{Twist automorphism\label{subsec:Twist-automorphism-compare}}

\cite{kimura2017twist} introduced a quantum analogue of the twist
automorphism for quantum unipotent cells $\qO[N_{-}^{w}]$, which
was denoted by $\eta_{w,q}$.

\begin{Thm}[{\cite[Theorem 6.1]{kimura2017twist}}]\label{thm:twist_unipotent_cell}

There exists a $\Q(q)$-algebra automorphism $\eta_{w,q}$ on $\qO[N_{-}^{w}]$
such that, for $k\in[1,l]$, we have
\begin{align*}
\eta_{w,q}[D[k^{\min},k]] & =q^{-(\varpi_{i_{k}},-\beta_{[k^{\min},k]})}(D[k^{\min},k^{\max}])^{-1}D[k[1],k^{\max}],\ k\in I_{\ufv},\\
\eta_{w,q}[D[k^{\min},k^{\max}]^{-1}] & =q^{(\varpi_{i_{k}},-\beta_{[k^{\min},k^{\max}]})}D[k^{\min},k^{\max}].
\end{align*}
In particular, $\wt\eta_{w,q}x=-\wt x$ for homogeneous $x$. Moreover,
$\eta_{w,q}$ restricts to a permutation on the (localized) dual canonical
basis $\circB_{-1}^{\up,w}$ and commutes with the dual bar involution
$\sigma$.

\end{Thm}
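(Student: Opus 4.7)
The plan is to construct $\eta_{w,q}$ explicitly as an algebra map, verify the stated formulas on generators, and then extract the statements about the canonical basis and the bar involution from the construction. The starting observation is that the classical twist automorphism of Berenstein-Fomin-Zelevinsky on $N_{-}^{w}$ arises from a suitable composition of the $*$-anti-involution and the braid group element $T_{w}$ (essentially the Chamber Ansatz), so the natural candidate at the quantum level is a composition of the same building blocks, possibly corrected by a torus character to account for the non-trivial $q$-powers appearing in the statement. I would first work inside $\qO[N_{-}(w)\cap wG_{0}^{\min}]$ via the De Concini--Procesi isomorphism $\iota_{w}$, construct a candidate automorphism there, and then transport it to $\qO[N_{-}^{w}]$.

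First, I would define a $\Q(q)$-linear endomorphism $\widetilde{\eta}$ of the quantum nilpotent subalgebra $\envAlg^{-}(w)$ using the composition $* \circ T_{w}^{-1} \circ *$ together with a weight-dependent renormalization of the form $x \mapsto q^{c(\wt x)} x$, chosen so that the image of each PBW generator $F^{\up}(\beta_{k})$ lies in a predictable part of $\envAlg^{+}(w^{-1}w_{0})$ up to the right scalar. Using the fact that $*$ interchanges the two PBW orders and that $T_{w}$ maps the quantum root vectors attached to $w$ to their ``dual'' versions, one checks that this $\widetilde{\eta}$ sends the subalgebra $\envAlg^{-}(w)$ into itself after inversion of the $q$-central elements $D_{w\lambda,\lambda}$. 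This gives an automorphism of $\qO[N_{-}(w)\cap wG_{0}^{\min}]$ which I would then push forward via $\iota_{w}$ to obtain $\eta_{w,q}$.

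Second, I would verify the two displayed formulas. For the quantum minor $D[k^{\min},k]$ this amounts to an explicit computation of $T_{w}$ on the corresponding (extremal) weight vector: since $D[k^{\min},k]$ pairs with the vector $v_{\ow_{\leq k}\varpi_{i_{k}}}\otimes v_{-\varpi_{i_{k}}}$ under the Drinfeld pairing, applying the Chamber Ansatz-type formula produces the ratio $D[k[1],k^{\max}]/D[k^{\min},k^{\max}]$ with a prefactor determined by the weight $-\beta_{[k^{\min},k]}$ and the fundamental weight $\varpi_{i_{k}}$; the relation $\sym_{i_{k}}^{-1}(\varpi_{i_{k}},\cdot) = \langle \alpha_{i_{k}}^{\vee},\cdot\rangle$ together with the normalization in Lusztig's pairing pins down the $q$-power. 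The formula for the inverses follows by applying $\eta_{w,q}$ twice and using $\eta_{w,q}^{2} = \id$, which I would check on generators.

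Third, the invariance of the (localized) dual canonical basis and the commutation with $\sigma$ are the part where I expect most of the work. I would proceed in two steps: (a) check that the renormalized composition $* \circ T_{w}^{-1} \circ *$, conjugated by the scalar correction, is compatible with the bar involution on $\envAlg^{-}$ in the sense that it intertwines $\sigma$ (this is a weight-homogeneous statement, so it reduces to verifying the $q$-power in the correction $c(\wt x)$ is symmetric under $q\mapsto q^{-1}$); (b) show that $\eta_{w,q}$ sends the upper global basis to itself. For (b) I would use that $\eta_{w,q}$ is already known to be bar-compatible by (a) and that it preserves the integral $\bA$-form generated by the quantum minors $[D[k^{\min},k]]^{\pm 1}$ (the latter follows from the explicit formulas just established, combined with Theorem~\ref{thm:factorization_dual_canonical_basis}), so by the characterization of the dual canonical basis as the unique $\sigma$-invariant $\bA$-basis of the integral form with prescribed leading PBW terms, $\eta_{w,q}(\circB_{-1}^{\up,w}) \subseteq \circB_{-1}^{\up,w}$. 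The hard part is controlling the leading PBW terms well enough to invoke this uniqueness; here I would rely on the categorification of $\circB_{-1}^{\up,w}$ by simple modules over quiver Hecke algebras (or, failing that, on Lusztig's/Saito's PBW-parametrization results), which provide an intrinsic characterization stable under algebra automorphisms that preserve $\sigma$ and a distinguished integral form.
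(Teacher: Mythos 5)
First, a structural point: the paper does not prove this theorem. It is imported verbatim from Kimura--Oya \cite[Theorem 6.1]{kimura2017twist} and used as a black box --- the paper's own contribution begins only afterwards, with the rescaling $\tw_{w}=\eta_{w,q}\cor_{1}^{-2}$ and the verification that $\tw_{w}$ is a cluster twist automorphism of DT type. So your proposal is really a reconstruction of the Kimura--Oya proof, and as such it has concrete gaps. The construction you propose, a scalar-corrected $*\circ T_{w}^{-1}\circ*$, is not how $\eta_{w,q}$ is built (Kimura--Oya work inside the quantized coordinate ring, realizing the twist through the De Concini--Procesi isomorphism and multiplication operators on matrix coefficients, precisely because each of these maps is separately known to be compatible with the dual canonical basis). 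More importantly, your candidate does not obviously stabilize anything useful: $T_{w}^{-1}$ moves the quantum root vectors of $\envAlg^{-}(w)$ out of $\envAlg^{-}$ into $\envAlg^{\geq0}$, so the assertion that the composition ``sends $\envAlg^{-}(w)$ into itself after inversion of the $D_{w\lambda,\lambda}$'' is the entire content of the construction and is left unargued. Also, $\eta_{w,q}$ is \emph{not} an involution --- its mutation part is a Donaldson--Thomas transformation, which iterates along the chain $t_{0}[r]$ without returning --- so the formula for $\eta_{w,q}[D[k^{\min},k^{\max}]^{-1}]$ cannot be derived from ``$\eta_{w,q}^{2}=\id$''; it is part of the data defining the automorphism on the localization.

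The most serious gap is in your step (b). You propose to control the leading PBW terms, and hence the basis preservation, via the quiver Hecke algebra categorification of $\circB_{-1}^{\up,w}$. That identification of the dual canonical basis with simple modules is available only in the \emph{symmetric} case; in the symmetrizable generality needed here the dual canonical basis need not even have positive structure constants (Remark \ref{rem:positivity_assumption}), and no such categorical characterization exists. This is exactly the obstruction the present paper is designed to circumvent, so a proof of the input theorem cannot lean on it. Kimura--Oya avoid the issue by never invoking positivity or categorification: they track the dual canonical basis through the quotient to $\qO[N_{-}\cap X_{w}]$, the localization at the $q$-central minors, and the De Concini--Procesi isomorphism, each of which permutes the basis up to $q$-powers by Kimura's earlier compatibility results. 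Without that chain of compatibilities, the uniqueness characterization you want to invoke (the $\sigma$-invariant basis with prescribed unitriangular PBW expansion) cannot be applied to the image of the basis under $\eta_{w,q}$.
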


The rescaling map $\cor$ decomposes as $\cor=\cor_{2}\cor_{1}=\cor_{1}\cor_{2}$
for the $\Z[q^{\pm\Hf}]$-module endomorphisms $\cor_{1}$, $\cor_{2}$
on $\qO[N_{-}^{w}]_{\Z[q^{\pm\Hf}]}$ such that, for any homogeneous
$x$, we have 
\begin{align*}
\cor_{1}(x) & =c_{1}(x)x:=q^{\Hf(\wt x,\rho)}x\\
\cor_{2}(x) & =c_{2}(x)x:=q^{-\frac{1}{4}(\wt x,\wt x)}x.
\end{align*}
Then $\cor_{1}$ is an algebra automorphism of the $Q$-graded algebra
$\qO[N_{-}^{w}]_{\Z[q^{\pm\Hf}]}$. Notice that $\eta_{w,q}$ is well-defined
on $\qO[N_{-}^{w}]_{\Z[q^{\pm}]}$. We rescale $\eta_{w,q}$ to the
following automorphism on $\qO[N_{-}^{w}]_{\Z[q^{\pm}]}$:

\begin{eqnarray}
\tw_{w} & := & \eta_{w,q}\cor_{1}^{-2}.\label{eq:rescale_twist_automorphism}
\end{eqnarray}

\begin{Lem}\label{lem:rescale_twsit_auto_commute_bar}

The rescaled twist automorphism $\tw_{w}$ commutes with $\sigma'$.

\end{Lem}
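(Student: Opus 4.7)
The plan is to verify the identity $\tw_w\sigma'(x) = \sigma'\tw_w(x)$ on a homogeneous element $x$ of weight $\mu$ by tracking the weight-grading behaviour of each factor. The key inputs I would use are: (i) $\sigma$ and hence $\sigma'=c_{\tw}^{-1}\sigma$ are $q$-antilinear, i.e.\ $\sigma'(q^a y)=q^{-a}\sigma'(y)$; (ii) $\eta_{w,q}$ is $Q$-graded with $\wt\eta_{w,q}(y)=-\wt y$ and commutes with $\sigma$ by Theorem \ref{thm:twist_unipotent_cell}; (iii) both $c_{\tw}$ and $\cor_1^{-2}$ act by weight-dependent scalars.

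First, I would pull the scalar correction $\cor_1^{-2}$ out to reduce the statement to a commutation relation for $\eta_{w,q}$ and $\sigma'$. Since $\cor_1^{-2}(x)=q^{-(\mu,\rho)}x$, one side gives
\[
\tw_w\sigma'(x)=\eta_{w,q}\cor_1^{-2}\sigma'(x)=q^{-(\mu,\rho)}\,\eta_{w,q}\sigma'(x),
\]
using that $\sigma'(x)$ has weight $\mu$. For the other side, applying the $q$-antilinearity of $\sigma'$ to the factor $q^{-(\mu,\rho)}$ introduced by $\cor_1^{-2}$ yields
\[
\sigma'\tw_w(x)=\sigma'\bigl(q^{-(\mu,\rho)}\eta_{w,q}(x)\bigr)=q^{(\mu,\rho)}\,\sigma'\eta_{w,q}(x).
\]
Hence the claim reduces to the identity
\[
\eta_{w,q}\sigma'(x)=q^{2(\mu,\rho)}\,\sigma'\eta_{w,q}(x)
\]
on the weight-$\mu$ subspace.

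Next, I would use $\sigma'=c_{\tw}^{-1}\sigma$ together with $\eta_{w,q}\sigma=\sigma\eta_{w,q}$ to reduce this further to a purely scalar comparison. Writing $C(\nu):=q^{\frac{1}{2}(\nu,\nu)-(\nu,\rho)}$ so that $c_{\tw}^{-1}$ acts by $C(\nu)^{-1}$ on the weight-$\nu$ part, the left side becomes $C(\mu)^{-1}\sigma\eta_{w,q}(x)$ while the right side becomes $q^{2(\mu,\rho)}C(-\mu)^{-1}\sigma\eta_{w,q}(x)$ (since $\eta_{w,q}(x)$ has weight $-\mu$). The sought identity therefore reduces to
\[
\frac{C(-\mu)}{C(\mu)}=q^{2(\mu,\rho)},
\]
which is immediate from the explicit formula for $C$ since the quadratic part $q^{\frac{1}{2}(\mu,\mu)}$ is even in $\mu$ while $q^{-(\mu,\rho)}$ contributes the required factor.

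The only subtle point—and what I expect to be the main obstacle to write out carefully—is the bookkeeping of the $q$-antilinearity of $\sigma$ (and hence $\sigma'$): scalars in $\Q(q)$ pass through $\sigma'$ only after inverting $q$, and it is precisely this inversion combined with the weight reversal performed by $\eta_{w,q}$ that lets the $\cor_1^{-2}$-correction cancel the anomaly $C(-\mu)/C(\mu)=q^{2(\mu,\rho)}$ obstructing the naive commutation of $\eta_{w,q}$ with $\sigma'$. Once this accounting is written out in the order above, the verification is mechanical.
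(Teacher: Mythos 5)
Your proposal is correct and follows essentially the same route as the paper: both proofs rest on the identity $\sigma\eta_{w,q}=\eta_{w,q}\sigma$, the weight reversal $\wt\eta_{w,q}x=-\wt x$, and the $q$-antilinearity of $\sigma'$, and both amount to the same scalar bookkeeping showing that the correction $\cor_{1}^{-2}$ exactly absorbs the $q^{2(\wt x,\rho)}$ anomaly. The paper merely organizes the computation in the opposite direction (rewriting $\sigma\eta_{w,q}x$ and $\eta_{w,q}\sigma x$ in terms of $\sigma'$ and $\tw_{w}$ rather than reducing $\sigma'\tw_{w}$ versus $\tw_{w}\sigma'$ to a scalar identity), so no substantive difference.
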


\begin{proof}

For any homogeneous $x$, we have $\sigma\eta_{w,q}x=\eta_{w,q}\sigma x$
by Theorem \ref{thm:twist_unipotent_cell}. Since $\wt\eta_{w,q}x=-\wt x$,
we have $c_{1}(\eta_{w,q}x)=c_{1}(x)^{-1}$ and $c_{2}(\eta_{w,q}x)=c_{2}(x)$.
Recall that $\sigma=\cor^{-2}\sigma'$. We compute

\begin{align*}
\sigma\eta_{w,q}x & =\cor_{2}^{-2}\cor_{1}^{-2}\sigma'\eta_{w,q}(x)\\
 & =c_{2}(\eta_{w,q}x)^{-2}c_{1}(\eta_{w,q}x)^{-2}\sigma'\eta_{w,q}x\\
 & =c_{2}(x)^{-2}c_{1}(x)^{2}\sigma'\eta_{w,q}x\\
 & =c_{2}(x)^{-2}\sigma'(c_{1}(x)^{-2}\eta_{w,q}x)\\
 & =c_{2}(x)^{-2}\sigma'(\eta_{w,q}c_{1}(x)^{-2}x)\\
 & =c_{2}(x)^{-2}\sigma'\tw_{w}x
\end{align*}
Similarly, we have 
\begin{eqnarray*}
\eta_{w,q}\sigma x & = & \eta_{w,q}c_{2}(x)^{-2}c_{1}(x)^{-2}\sigma'x\\
 & = & c_{2}(x)^{-2}\eta_{w,q}c_{1}(\sigma'x)^{-2}\sigma'x\\
 & = & c_{2}(x)^{-2}\tw_{w}(\sigma'x).
\end{eqnarray*}

It follows that $\sigma'\tw_{w}x=\tw_{w}\sigma'x$.

\end{proof}

The automorphism $\tw_{w}$ on $\qO[N_{-}^{w}]_{\Z[q^{\pm}]}$ induces
an automorphism $\tw_{w}$ on $\qClAlg(t_{0})$ via the identification
$\qClAlg(t_{0})\simeq\qO[N_{-}^{w}]_{\Z[q^{\pm\Hf}]}$. We translate
Theorem \ref{thm:twist_unipotent_cell} for cluster algebras as the
following. 

\begin{Thm} \label{thm:twist_auto_invariance}

Let there be given $t_{0}=t_{0}(\ow)$ as before. The algebra automorphism
$\tw_{w}$ on $\qClAlg(t_{0})$ satisfies
\begin{eqnarray}
\tw_{w}(X_{k}(t_{0})) & = & [X_{k^{\max}}^{-1}*I_{k}(t_{0})]^{t_{0}},\qquad k\in I_{\ufv}\label{eq:twist_unfrozen}\\
\tw_{w}(X_{j}(t_{0})) & = & X_{j}(t_{0})^{-1},\qquad j\in I_{\fv}.\label{eq:twist_frozen}
\end{eqnarray}
Moreover, $\tw_{w}$ commutes with the bar involution $\overline{(\ )}$
on $\qClAlg(t_{0})$, and it restricts to a permutation on the initial
triangular basis $\can$.

\end{Thm}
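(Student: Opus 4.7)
The plan is to transport the statements of Theorem \ref{thm:twist_unipotent_cell} across the isomorphism $\iota_{w}\kappa\colon \qClAlg(t_{0}) \simeq \qO[N_{-}^{w}]_{\Z[q^{\pm\Hf}]}$, using the defining relation $\tw_{w} = \eta_{w,q}\cor_{1}^{-2}$ from \eqref{eq:rescale_twist_automorphism} to translate between $\eta_{w,q}$ and $\tw_{w}$. The bar-invariance part is essentially free: Lemma \ref{lem:rescale_twsit_auto_commute_bar} gives $\sigma'\tw_{w} = \tw_{w}\sigma'$ on $\qO[N_{-}^{w}]_{\Z[q^{\pm\Hf}]}$, while Lemma \ref{lem:identify_bar_involution} identifies $\sigma'$ with the usual bar involution $\overline{(\ )}$ on the cluster side via $\kappa$, and both extend to the localization $\qClAlg(t_{0})$.

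The key step for \eqref{eq:twist_unfrozen} and \eqref{eq:twist_frozen} is the identity $\tw_{w}\circ\cor = \cor\circ\eta_{w,q}$ on weight-homogeneous elements. This is a direct computation from $\cor = \cor_{1}\cor_{2}$ combined with $\wt\eta_{w,q}(x) = -\wt x$: for homogeneous $x$ of weight $\gamma$ one has $\cor_{1}^{-2}\cor \, x = \cor_{1}^{-1}\cor_{2}\, x = q^{-\frac{1}{2}(\gamma,\rho)-\frac{1}{4}(\gamma,\gamma)}\, x$, and the image $\eta_{w,q}(x)$ has weight $-\gamma$, which produces the same scalar when hit by $\cor$ on the other side. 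Applying this identity to $x = [D[k^{\min},k]]$, so that $\cor\, x = \kappa X_{k}(t_{0})$, and invoking Theorem \ref{thm:twist_unipotent_cell} realizes $\tw_{w}X_{k}(t_{0})$ as an element of $q^{\Hf\Z}\cdot[D[k^{\min},k^{\max}]]^{-1}[D[k[1],k^{\max}]]$ for $k\in I_{\ufv}$, and of $q^{\Hf\Z}\cdot[D[k^{\min},k^{\max}]]^{-1}$ for $k\in I_{\fv}$.

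To match these with the claimed right-hand sides, I use Lemma \ref{lem:parametrize_injective} to identify $I_{k}(t_{0}) = \cor D[k[1],k^{\max}]$, together with $X_{k^{\max}} = \cor[D[k^{\min},k^{\max}]]$, and compute the leading degrees via \eqref{eq:deg_injective} to see that both $\tw_{w}X_{k}(t_{0})$ and $[X_{k^{\max}}^{-1}*I_{k}(t_{0})]^{t_{0}}$ are $q^{\Hf\Z}$-scalar multiples of the same product of rescaled unipotent minors. Since both elements are bar-invariant (the former by the previous step, the latter by construction of the normalization bracket), the connecting $q^{\Hf\Z}$-scalar must be bar-invariant and therefore equal to $1$. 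The frozen case \eqref{eq:twist_frozen} is analogous but simpler, involving only a single quantum minor and its inverse.

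Finally, the permutation of $\can$ follows formally. Under $\iota_{w}\kappa$, the initial triangular basis $\can$ corresponds to the rescaled localized dual canonical basis $\cor\circB_{-1}^{\up,w}$, as recorded in Section \ref{subsec:Localization-and-bases}. The identity $\tw_{w}\circ\cor = \cor\circ\eta_{w,q}$ converts the permutation of $\circB_{-1}^{\up,w}$ by $\eta_{w,q}$ (Theorem \ref{thm:twist_unipotent_cell}) into a permutation of $\cor\circB_{-1}^{\up,w}$ by $\tw_{w}$, and hence of $\can$. The main obstacle is the $q$-power bookkeeping needed to identify $\tw_{w}X_{k}(t_{0})$ with $[X_{k^{\max}}^{-1}*I_{k}(t_{0})]^{t_{0}}$ precisely; rather than explicitly matching the scalar from Theorem \ref{thm:twist_unipotent_cell} with the scalar coming out of the normalization $[\cdot]^{t_{0}}$, bar-invariance offers a clean way to bypass the weight computation, since the only bar-invariant element of $q^{\Hf\Z}$ is $1$.
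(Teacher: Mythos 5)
Your proposal is correct and follows essentially the same route as the paper: the commutation with the bar involution via Lemmas \ref{lem:identify_bar_involution} and \ref{lem:rescale_twsit_auto_commute_bar}, the intertwining identity $\tw_{w}\circ\cor=\cor\circ\eta_{w,q}$ on homogeneous elements (which the paper carries out for dual canonical basis elements to get the permutation of $\can$), and the use of bar-invariance to pin down the residual $q^{\Hf\Z}$-scalar in \eqref{eq:twist_unfrozen} and \eqref{eq:twist_frozen} rather than computing it. The only cosmetic difference is that you state the intertwining identity for arbitrary weight-homogeneous elements and then specialize, whereas the paper applies $\cor$ directly to the formulas of Theorem \ref{thm:twist_unipotent_cell}.
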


\begin{proof}

Since $\sigma'$ is identified with $\overline{(\ )}$ by Lemma \ref{lem:identify_bar_involution},
Lemma \ref{lem:rescale_twsit_auto_commute_bar} implies that $\tw_{w}$
commutes with $\overline{(\ )}$.

Next, for any $S\in\circB_{-1}^{\up,w}$, we have $\eta_{w,q}S\in\circB_{-1}^{\up,w}$
by Theorem \ref{thm:twist_unipotent_cell}. We compute 
\begin{eqnarray*}
\tw_{w}\cor S & = & \eta_{w,q}\cor_{1}^{-2}\cor_{1}\cor_{2}S\\
 & = & \eta_{w,q}c_{1}(S)^{-1}c_{2}(S)S\\
 & = & c_{1}(S)^{-1}c_{2}(S)\eta_{w,q}S\\
 & = & c_{1}(\eta_{w,q}S)c_{2}(\eta_{w,q}S)\eta_{w,q}S\\
 & = & \cor(\eta_{w,q}S)
\end{eqnarray*}
 It follows that $\tw_{w}$ preserves $\cor\circB_{-1}^{\up,w}\simeq\can$.

Finally, applying the rescaling $\cor$ to the unipotent quantum minors
appearing in Theorem \ref{thm:twist_unipotent_cell} we get 
\begin{eqnarray*}
\tw_{w}(X_{k}(t_{0})) & = & \xi_{k}X_{k^{\max}}^{-1}*I_{k}(t_{0}),\qquad k\in I_{\ufv},\\
\tw_{w}(X_{j}(t_{0})) & = & \xi_{j}X_{j}(t_{0})^{-1},\qquad j\in I_{\fv},
\end{eqnarray*}
for some scalars $\xi_{k},\xi_{j}\in q^{\Hf\Z}$. Since $\tw_{w}$
commutes with the bar involution $\overline{(\ )}$ and quantum cluster
variables are bar-invariant, the scalars are chosen such that the
right hand side are bar-invariant. It follows that the right hand
side are normalization of the ordered product.

\end{proof}

Next, we want to show that the automorphism $\tw_{w}$ is a DT-type
cluster twist automorphism defined in Section \ref{sec:Twist-automorphisms}.

The automorphism $\tw_{w}$ naturally extends to an automorphism on
$\cF(t_{0})$. Recall that the mutation sequence induces a $\kk$-algebra
isomorphism $\seq^{*}:\cF(t_{0}[1])\simeq\cF(t_{0})$. Let us define
$\var_{w}:\cF(t_{0})\simeq\cF(t_{0}[1])$ by $\var_{w}=(\seq^{*})^{-1}\tw_{w}$.\reviseStart

From Theorem \ref{thm:twist_auto_invariance}, we deduce that\reviseEnd

\begin{align*}
\var_{w}(X_{k}(t_{0})) & =X_{k^{\max}}^{-1}\cdot X_{\sigma k}(t_{0}[1]),\qquad k\in I_{\ufv},\\
\var_{w}(X_{j}) & =X_{j}^{-1},\qquad j\in I_{\fv}.
\end{align*}
In particular, we see that $\var_{w}$ can be induced from the linear
homomorphism $\var_{w}:\Mc(t_{0})\simeq\Mc(t_{0}[1])$ such that

\begin{eqnarray*}
\var_{w}(f_{k}) & = & f_{\sigma(k)}'-f_{k^{\max}}',\qquad,k\in I_{\ufv}\\
\var_{w}(f_{j}) & = & -f_{j}',\qquad j\in I_{\fv},
\end{eqnarray*}
where $f_{i}$, $f_{i}'$ denote the $i$-th unit vector in $\Mc(t_{0})$
and $\Mc(t_{0}[1])$ respectively.

\begin{Prop}

The map $\var_{w}$ is a variation map.

\end{Prop}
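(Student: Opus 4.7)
The plan is to verify the three lattice-level conditions following Definition 4.1 for the linear bijection $\var_w : \Mc(t_0) \simeq \Mc(t_0[1])$ displayed just above the proposition, since these imply that the induced algebra isomorphism is a variation map in the general sense.

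For the first condition $\var_w(f_k) = f_{\sigma k}' + u_k$ with $u_k \in \Z^{I_\fv}$ ($k \in I_\ufv$), the formula $\var_w(f_k) = f_{\sigma k}' - f_{k^{\max}}'$ shows that $u_k = -f_{k^{\max}}'$, which lies in $\Z^{I_\fv}$ because $k^{\max} \in I_\fv = \{j : j^{\max} = j\}$. The third condition $\var_w(\oplus_{j \in I_\fv} \Z f_j) = \oplus_{j \in I_\fv} \Z f_j'$ follows from $\var_w(f_j) = -f_j'$ together with the fact that $\sigma$ restricts to the identity on $I_\fv$ (see Remark \ref{rem:injective_mutation_sequence}).

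The substantive condition is the middle one, equivalent to the identity $\var_w(p^* e_k) = p^*_{t_0[1]}(e_{\sigma k})$ for each $k \in I_\ufv$. Substituting the formula for $\var_w$ and comparing coefficients on both sides, this splits into two identities: the coefficient of $f_{\sigma i}'$ for $i \in I_\ufv$ gives $b_{ik}(t_0) = b_{\sigma i, \sigma k}(t_0[1])$, which holds by the similarity of $t_0$ and $t_0[1]$ up to $\sigma$ asserted in Remark \ref{rem:injective_mutation_sequence}; the coefficient of $f_j'$ for $j \in I_\fv$ gives
\begin{equation*}
-b_{jk}(t_0) - \sum_{i \in I_\ufv,\, i^{\max} = j} b_{ik}(t_0) = b_{j, \sigma k}(t_0[1]).
\end{equation*}

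This frozen identity is the main obstacle. I will verify it by direct substitution of the explicit entries of $\tB(t_0)$ and $\tB(t_0[1])$ from Remarks \ref{rem:initial_seed} and \ref{rem:injective_seed}, combined with the description $\sigma(k^{\min}[d]) = k^{\max}[-d-1]$ from Remark \ref{rem:injective_mutation_sequence}; the computation splits into a handful of cases depending on the relative positions of $j, k, p(k), s(k)$ in the reduced word $\ui$, each reducing either to a Cartan-matrix tautology or to a cancellation of $\pm 1$ contributions. As an alternative that avoids this case analysis, one may compute $\tw_w(Y_k(t_0))$ and $\seq^*(Y_{\sigma k}(t_0[1]))$ directly as Laurent monomials in the cluster variables $\{I_i(t_0) : i \in I_\ufv\} \cup \{X_j : j \in I_\fv\}$, using the explicit action of $\tw_w$ on cluster variables from Theorem \ref{thm:twist_auto_invariance} and the definition $I_i(t_0) = \seq^* X_{\sigma i}(t_0[1])$; the $I_i$-exponents then match automatically by the similarity identity already established, and equating the frozen exponents yields exactly the displayed identity.
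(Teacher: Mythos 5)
Your reduction is correct and matches the first half of the paper's argument: conditions (1) and (3) of Definition \ref{def:variation_map} are immediate from the displayed formulas for $\var_{w}$, and for condition (2) the similarity $b_{ik}(t_{0})=b_{\sigma i,\sigma k}(t_{0}[1])$ handles the unfrozen coordinates, so everything comes down to the frozen-coefficient identity you display (equivalently, to showing that the a priori discrepancy $\var_{w}(p^{*}e_{k})-p^{*}e'_{\sigma k}=u_{k}\in\Z^{I_{\fv}}$ vanishes). The gap is that neither of your two proposed verifications of this identity is actually a proof. The first is an announced case analysis with the explicit matrices of Remarks \ref{rem:initial_seed} and \ref{rem:injective_seed}; this is precisely the computation the paper declines to carry out (``it might be possible to verify the claim by working with the matrices directly''), and it is not a triviality: the entries of $\tB(t_{0}[1])$ are expressed through the permutation $w_{0}$ and the successors and predecessors of $j'=w_{0}(j)$, so matching them against $-b_{jk}(t_{0})-\sum_{i:\,i^{\max}=j}b_{ik}(t_{0})$ requires genuine combinatorial bookkeeping that you have not performed. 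The second, ``alternative'' route is circular: writing $\tw_{w}(Y_{k}(t_{0}))$ and $\seq^{*}(Y_{\sigma k}(t_{0}[1]))$ as monomials in the cluster $\{I_{i}(t_{0})\}\cup\{X_{j}\}$ and ``equating the frozen exponents'' does not prove that they are equal --- the equality of those exponents \emph{is} the identity in question, and you offer no independent reason for it.

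The paper closes this gap with a degree argument that avoids all matrix entries. From $\deg^{t_{0}}I_{i}(t_{0})=-f_{i}+f_{i^{\max}}$ (Lemma \ref{lem:parametrize_injective}) and Theorem \ref{thm:twist_auto_invariance} one gets $\deg^{t_{0}}\tw_{w}X_{j}(t_{0})=-f_{j}$ for every $j\in I$, hence $\deg^{t_{0}}\tw_{w}Y_{k}(t_{0})=-\deg^{t_{0}}Y_{k}(t_{0})$. On the other hand $\tw_{w}Y_{k}(t_{0})=\seq^{*}Y_{\sigma k}(t_{0}[1])\cdot X^{u_{k}}$ with $X^{u_{k}}\in\cRing$, and $\deg^{t_{0}}\seq^{*}Y_{\sigma k}(t_{0}[1])=-\deg^{t_{0}}Y_{k}(t_{0})$ by the $c$-vector duality of \cite[Proposition 3.3.8]{qin2019bases}. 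Comparing the two computations forces $u_{k}=0$. You would need either to supply this (or an equivalent) argument, or to actually execute the case analysis you only sketch.
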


\begin{proof}

It remains to check that $\var_{w}Y_{k}(t_{0})=Y_{\sigma k}(t_{0}[1])$
for any $k\in I_{\ufv}$, or, equivalently, $\var_{w}\deg^{t_{0}}Y_{k}(t_{0})=\deg^{t_{0}[1]}Y_{\sigma k}(t_{0}[1])$,
i.e., $\var_{w}(\sum_{i\in I}b_{ik}f_{i})=\sum_{i\in I}b_{ik}'f_{i}'$,
where $b_{ik}=b_{ik}(t_{0})$, $b_{ik}'=b_{ik}(t_{0}[1])$, $f_{i}$,
$f_{i}'$ denote the $i$-th unit vectors in $\Mc(t_{0})$ and $\Mc(t_{0}[1])$
respectively.

It might be possible to verify the claim by working with the matrices
directly (Remarks \ref{rem:initial_seed}, \ref{rem:injective_seed}).
Let us give a more conceptual proof.

On the one hand, we deduce from the definition of $\tw_{w}$ and Lemma
\ref{lem:parametrize_injective} that $\deg^{t_{0}}\tw_{w}X_{j}(t_{0})=-f_{j}$,
$j\in I$. It follows that $\deg^{t_{0}}\tw_{w}Y_{k}(t_{0})=-\deg^{t_{0}}Y_{k}(t_{0})$.

On the other hand, since $t_{0}$ and $t_{0}[1]$ are similar up to
$\sigma$, we have $b_{ik}=b_{\sigma i,\sigma k}'$ for $i,k\in I_{\ufv}$.
It follows from the definition of $\var_{w}$ that $\var_{w}(\sum b_{ik}f_{i})-\sum b_{ik}'f_{i}'=u_{k}$
for some $u_{k}\in\Z^{I_{\fv}}$. Therefore, $\var_{w}Y_{k}(t_{0})=Y_{\sigma k}(t_{0}[1])\cdot p_{k}$
where $p_{k}=X^{u_{k}}\in\cRing$. We deduce that $\tw_{w}Y_{k}(t_{0})=\seq^{*}\var_{w}Y_{k}(t_{0})=\seq^{*}Y_{\sigma k}(t_{0}[1])\cdot p_{k}$.
In addition, we know that $\deg^{t_{0}}\seq^{*}Y_{\sigma k}(t_{0}[1])=-\deg^{t_{0}}Y_{k}(t_{0})$
by properties of $c$-vectors, see \cite[Proposition 3.3.8]{qin2019bases}.
Therefore, $\deg^{t_{0}}\tw_{w}Y_{k}(t_{0})=-\deg^{t_{0}}Y_{k}(t_{0})+u_{k}$.

Combining the above discussions, we obtain $u_{k}=0$. It follows
that $\var_{w}\deg^{t_{0}}Y_{k}(t_{0})=\deg^{t_{0}[1]}Y_{\sigma k}(t_{0}[1])$.

\end{proof}

Consequently, we obtain the following result.

\begin{Thm}\label{thm:compare_twist_automorphism}

The automorphism $\tw_{w}$ decomposes as $\tw_{w}=\seq^{*}\var_{w}$
such that $\var_{w}:\cF(t_{0})\simeq\cF(t_{0}[1])$ is a variation
map. In particular, $\tw_{w}$ is a twist automorphism of Donaldson-Thomas
type in the sense of Section \ref{subsec:Twist-automorphism-DT-type}.

\end{Thm}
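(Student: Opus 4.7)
The plan is essentially to assemble the pieces already established in the preceding discussion and verify that they fit the formal definition of a Donaldson--Thomas type twist automorphism from Section~\ref{subsec:Twist-automorphism-DT-type}. First I would observe that the decomposition $\tw_w = \seq^{*}\var_w$ holds tautologically: by construction $\var_w$ was defined as $(\seq^{*})^{-1}\tw_w$, where $\seq = \seq_{t_0[1],t_0}$ is the mutation sequence sending $t_0$ to $t_0[1]$. So the nontrivial content is entirely in showing that $\var_w$ qualifies as a variation map in the sense of Definition~\ref{def:variation_map}.

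Next I would invoke the proposition immediately preceding the theorem, which establishes that $\var_w$ is indeed a variation map. Specifically, the explicit formulas computed from Theorem~\ref{thm:twist_auto_invariance} and Lemma~\ref{lem:parametrize_injective} give the required behaviour on quantum cluster variables, namely $\var_w(X_k(t_0)) = X_{k^{\max}}^{-1}\cdot X_{\sigma k}(t_0[1])$ for $k \in I_{\ufv}$ and $\var_w(X_j) = X_j^{-1}$ for $j \in I_{\fv}$. This matches the shape $\var^{t}(X_k) = p_k \cdot X_{\sigma k}$ with $p_k \in \cRing$ required by Definition~\ref{def:variation_map}, and it shows that $\var_w$ preserves the group $\cRing$ of frozen factors. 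The compatibility with $Y$-variables, $\var_w Y_k(t_0) = Y_{\sigma k}(t_0[1])$, is the content of the preceding proposition, whose proof I would simply cite.

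Finally, to conclude that $\tw_w$ is of Donaldson--Thomas type, I would recall that by definition this only requires the associated pair of similar seeds to be $(t_0, t_0[1])$, i.e.\ the target seed of the variation map must be the shifted seed $t_0[1]$. This is how $\seq$ and $t_0[1]$ were set up, using Theorem~\ref{thm:inj_seed_admissible_mutations} which provides the injective-reachable structure, together with Lemma~\ref{lem:inj_symmetrizer} which guarantees that the symmetrizers $\diag_k$ on $t_0$ and $t_0[1]$ match up through $\sigma$, making $t_0$ and $t_0[1]$ similar as quantum seeds. Since the theorem is essentially a packaging statement, no step is really an obstacle here; the real work was carried out earlier in verifying the explicit formulas for $\tw_w$ on cluster variables (Theorem~\ref{thm:twist_auto_invariance}) and in the immediately preceding proposition which matched $\var_w$ on $Y$-variables via a degree argument using sign-coherence of $c$-vectors (cf.\ \cite[Proposition 3.3.8]{qin2019bases}).
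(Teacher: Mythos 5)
Your proposal is correct and matches the paper exactly: the paper presents this theorem as an immediate consequence ("Consequently, we obtain the following result") of the preceding proposition that $\var_w$ is a variation map, with the decomposition $\tw_w=\seq^{*}\var_w$ holding by the definition $\var_w=(\seq^{*})^{-1}\tw_w$ and the Donaldson--Thomas property coming from the target seed being $t_0[1]$. You have correctly identified where the real work lies (Theorem \ref{thm:twist_auto_invariance} and the $Y$-variable computation in the preceding proposition) and assembled the pieces in the same way.
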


\begin{Eg}

Continue Example \ref{eg:sl3_type} \ref{eg:sl3_canonical_basis}.
Notice that $\seq^{*}(X_{1})=I_{1}(t_{0})=X_{1}'=X^{-f_{1}+f_{3}}+X^{-f_{1}+f_{2}}$.
The automorphism $\tw_{w}$ on $\qClAlg(t_{0})$ takes the following
form

\begin{align*}
\tw_{w}(X_{1}) & =[X_{3}^{-1}*I_{1}(t_{0})]^{t_{0}}=[X_{3}^{-1}*X_{1}']^{t_{0}}=X^{-f_{1}}+X^{-f_{1}+f_{2}-f_{3}}\\
\tw_{w}(X_{2}) & =X_{2}^{-1}\\
\tw_{w}(X_{2}) & =X_{3}^{-1}
\end{align*}

Define $\var_{w}:\cF(t_{0})\simeq\cF(t_{0}[1])$ such that 
\begin{eqnarray*}
\var_{w}(X_{1}) & = & q^{-\Hf\Lambda'_{13}}X_{1}'*X_{3}^{-1}=q^{\Hf\Lambda_{13}}X_{1}'*X_{3}^{-1}=q^{\Hf}X_{1}'*X_{3}^{-1}\\
\var_{w}(X_{2}) & = & X_{2}^{-1}\\
\var_{w}(X_{3}) & = & X_{3}^{-1}
\end{eqnarray*}
Then it is straightforward to check that $\var_{w}$ is a $\kk$-algebra
automorphism, $\tw_{w}=\seq^{*}\var_{w}$ and $\var_{w}(Y_{1})=X^{f_{2}-f_{3}}=Y_{1}'$.
Consequently, $\tw_{w}$ is an twist automorphism in our previous
sense. Notice that we have 
\begin{eqnarray*}
\tw_{w}(X_{1}') & = & \tw_{w}(X_{1}^{-1}*(q^{\Hf}X_{3}+q^{-\Hf}X_{2}))\\
 & = & ((1+qY_{1})*X_{1}^{-1})^{-1}*q^{-\Hf}(qX_{3}^{-1}+X_{2}^{-1})\\
 & = & X_{1}*(1+qY_{1})^{-1}*q^{-\Hf}X_{2}^{-1}*(qY_{1}+1)\\
 & = & X^{f_{1}-f_{2}}.
\end{eqnarray*}

\end{Eg}

\subsection{Consequences\label{subsec:Consequences}}

\subsubsection*{Quantization conjecture}

\begin{Thm}\label{thm:canonical_triangular}

The rescaled localized dual canonical basis $\can$ in \eqref{eq:initial_can_basis}
is the common triangular basis of $\qClAlg\simeq\qO[N_{-}^{w}]_{\Z[q^{\pm\Hf}]}$.
In particular, it contains all quantum cluster monomials.

\end{Thm}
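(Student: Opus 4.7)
The plan is to apply the existence criterion Theorem~\ref{thm:existence_mutation_sequence} to the initial seed $t_0 = t_0(\ow)$, the basis $\can$ from \eqref{eq:initial_can_basis}, the automorphism $\tw_w$, and the mutation sequence $\seq_{\ow}$ from $t_0$ to $t_0[1]$ supplied by Theorem~\ref{thm:inj_seed_admissible_mutations}. Once this is set up, the theorem is immediate and the conclusion about quantum cluster monomials follows from its statement. So the entire proof reduces to checking the two hypotheses of Theorem~\ref{thm:existence_mutation_sequence} for this data.

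First, I would invoke Proposition~\ref{prop:initial_triangular_basis} to know that $\can$ is already the triangular basis with respect to $t_0$. For hypothesis~(1), Theorem~\ref{thm:compare_twist_automorphism} shows that the rescaled twist automorphism $\tw_w = \seq_{\ow}^{*}\var_w$ is a cluster twist automorphism of Donaldson--Thomas type in the sense of Section~\ref{subsec:Twist-automorphism-DT-type}, while Theorem~\ref{thm:twist_auto_invariance} records that it permutes $\can$. The latter is essentially a translation of the permutation property of Kimura--Oya's $\eta_{w,q}$ on the localized dual canonical basis $\circB_{-1}^{\up,w}$ (Theorem~\ref{thm:twist_unipotent_cell}) through the rescaling \eqref{eq:rescale_twist_automorphism}, using Lemma~\ref{lem:identify_bar_involution} to identify $\sigma'$ with the cluster bar involution.

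For hypothesis~(2), Theorem~\ref{thm:inj_seed_admissible_mutations} identifies each quantum cluster variable encountered along $\seq_{\ow}$ with a rescaled unipotent quantum minor $\cor D[j,k]$, where $i_j = i_k$. By definition $D[j,k] = B_{-1}^{\up}(\uc_{[j,k]}, \ow)$ lies in the dual canonical basis, and Theorem~\ref{thm:unipotent_minor_property}(2) shows that its positive integer powers $D[j,k]^d$ agree, up to a power of $q$, with the dual canonical basis element $B_{-1}^{\up}(d\,\uc_{[j,k]}, \ow)$. Rescaling by $\cor$ and translating via the isomorphism $\iota_w \kappa$ of Theorem~\ref{thm:integral_cl_structure_localized}, these powers become elements of $\can$. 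Hence all quantum cluster monomials $\seq_{\leq s}^{*} X_{k_s}(t_s)^{d}$, $d \in \N$, appearing along $\seq_{\ow}$ are contained in $\can$, verifying hypothesis~(2).

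With both hypotheses confirmed, Theorem~\ref{thm:existence_mutation_sequence} yields that $\can$ is the common triangular basis and contains every quantum cluster monomial. I expect no serious obstacle at this stage; the real work has been accomplished upstream, namely the construction of a triangular basis at $t_0$ without any positivity assumption (Proposition~\ref{prop:initial_triangular_basis}), the construction and properties of $\tw_w$ (Theorems~\ref{thm:twist_auto_invariance}, \ref{thm:compare_twist_automorphism}), and the propagation-by-admissibility machinery underlying Theorem~\ref{thm:existence_mutation_sequence} (Proposition~\ref{prop:admissible_nearby_seed} together with Proposition~\ref{prop:compatible_to_admissible}). The only delicate bookkeeping left is to keep the $q^{\Hf}$-scaling and the switch between $\sigma$ and $\sigma'$ (Lemma~\ref{lem:rescale_twsit_auto_commute_bar}) consistent, so that the elements of $\can$ are genuinely bar-invariant on the cluster side.
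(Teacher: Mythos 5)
Your proposal is correct and follows essentially the same route as the paper: Proposition~\ref{prop:initial_triangular_basis} for the initial triangular basis, Theorems~\ref{thm:twist_auto_invariance} and \ref{thm:compare_twist_automorphism} for the DT-type twist automorphism permuting $\can$, Theorem~\ref{thm:inj_seed_admissible_mutations} for the cluster monomials along $\seq_{\ow}$, and then Theorem~\ref{thm:existence_mutation_sequence}. Your explicit use of Theorem~\ref{thm:unipotent_minor_property}(2) to pass from the cluster variables $\cor D[j,k]$ to their powers is a small but welcome elaboration of a step the paper leaves implicit.
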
 

\begin{proof}

The rescaled localized dual canonical basis $\can$ is the initial
triangular basis for $t_{0}$ by Proposition \ref{prop:initial_triangular_basis}.
Thanks to its invariance under the twist automorphism (Theorems \ref{thm:twist_auto_invariance},
\ref{thm:compare_twist_automorphism}) and the admissibility along
the mutation sequence in Theorem \ref{thm:inj_seed_admissible_mutations},
we can apply the existence Theorem \ref{thm:existence_mutation_sequence}
to the initial triangular basis and the claim follows.

\end{proof}

The quantization conjecture follows as a consequence.

\begin{Thm}[{\cite[Conjecture 1.1]{Kimura10}}]\label{thm:conjecture_proved}

All quantum cluster monomials of $\clAlg(t_{0})$ are contained in
the dual canonical basis $B_{-1}^{\up}$ of $\envAlg^{-}(w)$ up to
$q^{\Hf}$-powers.

\end{Thm}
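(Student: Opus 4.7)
The plan is to deduce this immediately from Theorem \ref{thm:canonical_triangular} combined with the identifications set up in Sections \ref{subsec:Integral-form} and \ref{subsec:Localization-and-bases}, with the only real work being the careful bookkeeping of the rescaling map $\cor$ and the passage between the localized and partially compactified settings.

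First I would invoke Theorem \ref{thm:canonical_triangular}, which states that the rescaled localized dual canonical basis $\can$ defined in \eqref{eq:initial_can_basis} is the common triangular basis of $\qClAlg(t_{0}) \simeq \qO[N_{-}^{w}]_{\Z[q^{\pm\Hf}]}$. By definition of a common triangular basis (Definition \ref{def:triangular_basis}), $\can$ contains every quantum cluster monomial of $\qClAlg(t_{0})$. Transporting along the isomorphism $\iota_{w}\kappa$ from Theorem \ref{thm:integral_cl_structure_localized}, every quantum cluster monomial of $\qClAlg(t_{0})$ corresponds to an element of $\cor\circB_{-1}^{\up,w}$, i.e., to an element of $\circB_{-1}^{\up,w}$ up to a $q^{\Hf}$-power coming from the rescaling $\cor$.

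Next I would restrict to unlocalized cluster monomials, i.e., to elements of $\bQClAlg(t_{0})$. Via the isomorphism $\kappa:\bQClAlg(t_{0})\simeq\qO[N_{-}(w)]_{\Z[q^{\pm\Hf}]}$ of Theorem \ref{thm:integral_cl_structure}, these correspond to elements of the integral form of $\qO[N_{-}(w)]$. A quantum cluster monomial of $\bQClAlg(t_{0})$, viewed inside $\qClAlg(t_{0})$, belongs to $\can$ by the above paragraph, so under $\iota_{w}\kappa$ it becomes $\cor$ applied to a (localized) dual canonical basis element of the form $q^{(\lambda,\wt S+\lambda-w\lambda)}[D_{w\lambda,\lambda}]^{-1}[S]$ with $\lambda=0$, since no frozen variable is being inverted. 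Thus it corresponds to $\cor[S]$ for some $S\in B_{-1}^{\up}$ under the De Concini-Procesi isomorphism $\iota_{w}$; since $\iota_{w}$ identifies $B_{-1}^{\up}$ with its image inside $\qO[N_{-}^{w}]$, we conclude that each quantum cluster monomial of $\bQClAlg(t_{0})$ is an element of $B_{-1}^{\up}$ up to a $q^{\Hf}$-power coming from $\cor$.

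Finally, I would observe that a cluster monomial in the localized algebra $\qClAlg(t_{0})$ which involves negative powers of frozen variables $X_{j}$, $j\in I_{\fv}$, corresponds under $\iota_{w}\kappa$ to an element $\cor[X^{-\lambda}*S]^{t_{0}}$, i.e., again to a localized dual canonical basis element up to $q^{\Hf}$-power. Since Theorem \ref{thm:conjecture_proved} is about $\clAlg(t_{0})$ and $\envAlg^{-}(w)$, the only remaining point is to observe that under the embedding $\iota_{w}:\qO[N_{-}(w)\cap wG_{0}^{\min}]\hookrightarrow\qO[N_{-}^{w}]$ (which is a $Q$-graded isomorphism by \cite{kimura2017twist}), the preimage of each $\cor[S]$ is $\cor S$, so the statement lifts verbatim to the unlocalized integral form $\qO[N_{-}(w)]_{\Z[q^{\pm}]}$ whose $\Z[q^{\pm}]$-basis is $B_{-1}^{\up}$. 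The only potential subtlety is whether the rescaling correction $\cor$ genuinely lives in $q^{\Hf\Z}$ on each homogeneous piece, but this is immediate from the definition $\cor x = q^{-\frac{1}{4}(\wt x,\wt x)+\Hf(\wt x,\rho)}x$ together with the fact that $\sym_{i}\in\Z_{>0}$ and quantum cluster monomials are $Q$-homogeneous, so the overall exponent lies in $\Hf\Z$ as required.
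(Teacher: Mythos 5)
Your proposal is correct and matches the paper's (essentially one-line) deduction: Theorem \ref{thm:conjecture_proved} is presented there as an immediate consequence of Theorem \ref{thm:canonical_triangular} together with the identifications of Theorems \ref{thm:integral_cl_structure} and \ref{thm:integral_cl_structure_localized}, and your bookkeeping of $\cor$ and of the localization is the intended argument. The only loose point is the claim that $\lambda=0$ ``since no frozen variable is being inverted'': a priori a cluster monomial of $\bQClAlg(t_{0})$ could correspond to an element $q^{\bullet}[D_{w\lambda,\lambda}]^{-1}[S]$ with $\lambda\neq0$ that happens to clear denominators, so one should add that any localized dual canonical basis element lying in $\qO[N_{-}(w)]$ is itself an element of $B_{-1}^{\up}$ up to a $q$-power, which follows from the factorization property (Theorem \ref{thm:factorization_dual_canonical_basis}).
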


\subsubsection*{Comparing triangular bases}

Berenstein-Zelevinsky defined triangular basis for acyclic seeds in
\cite{BerensteinZelevinsky2012}, and their definition is different
from ours. Take $w=c^{2}$ where $c$ is a Coxeter word. The corresponding
quantum unipotent cell is an acyclic quantum cluster algebra. We have
the following natural result.

\begin{Cor}[{\cite[Conjecture 1.2.1]{qin2019compare}}]\label{cor:compare_triangular_basis}

The Berenstein-Zelevinsky triangular basis for acyclic seeds is the
same as the common triangular basis in \cite{qin2017triangular}.
Consequently, it contains all quantum cluster monomials.

\end{Cor}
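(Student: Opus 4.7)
The plan is to deduce this corollary by combining the main theorem of this paper (Theorem \ref{thm:canonical_triangular}) with the comparison result established in \cite{qin2019compare}. First, I would recall that when $w = c^2$ for a Coxeter word $c$, the associated quantum unipotent cell $\qO[N_-^w]$ (after rescaling and localization at frozen variables) is known to be an acyclic quantum cluster algebra; moreover, the initial seed $t_0(\ow)$ constructed in Section \ref{sec:Quantum-cluster-structures} is acyclic, so both the Berenstein-Zelevinsky triangular basis and our common triangular basis are defined for this seed.

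Next, I would invoke Theorem \ref{thm:canonical_triangular}, which guarantees that the rescaled localized dual canonical basis $\can$ is the common triangular basis of $\qClAlg(t_0) \simeq \qO[N_-^w]_{\Z[q^{\pm \Hf}]}$, and in particular contains all quantum cluster monomials. This is the crucial input that was previously unavailable in the symmetrizable setting and which unlocks the comparison.

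Then I would appeal to the main comparison result of \cite{qin2019compare}: it shows (conditionally, under the hypothesis that the common triangular basis of \cite{qin2017triangular} exists, is bar-invariant, and contains the quantum cluster monomials) that the Berenstein-Zelevinsky triangular basis for an acyclic seed coincides with the common triangular basis. Since Theorem \ref{thm:canonical_triangular} verifies this hypothesis in the present setting, the conclusion follows immediately. The statement that the Berenstein-Zelevinsky basis contains all cluster monomials is then just a restatement of the corresponding property of $\can$.

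I do not anticipate a serious obstacle: the work has been done upstream, in Theorem \ref{thm:canonical_triangular} (via the twist-automorphism / tropical-properties machinery of Sections \ref{sec:Twist-automorphisms}--\ref{sec:Triangular-bases}) and in \cite{qin2019compare} (via a direct unitriangularity comparison on the acyclic seed). The only genuine care needed is in matching normalizations: one must ensure that the rescalings $\cor$ used here agree with those employed in \cite{BerensteinZelevinsky2012} and \cite{qin2019compare} so that the two $\Mc(t_0)$-pointed, bar-invariant bases in question are indeed compared element by element rather than merely up to $q^{\pm \Hf}$-powers. Once this bookkeeping is verified, the corollary is a direct consequence.
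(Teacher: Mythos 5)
Your proposal matches the paper's own argument: take $w=c^{2}$ for a Coxeter word $c$ so that the quantum unipotent cell is an acyclic quantum cluster algebra, use Theorem \ref{thm:canonical_triangular} to supply the existence (and cluster-monomial containment) of the common triangular basis, and then run the comparison argument of \cite{qin2019compare}, which was previously only available in the symmetric case because it was conditional on that existence. This is essentially the same route as the paper's proof, and your remark about matching normalizations is a sensible precaution rather than a gap.
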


\begin{proof}

\cite{qin2019compare} verifies the statements for symmetric Kac-Moody
cases. Now we have seen the existence of common triangular bases for
general Kac-Moody algebras. With this existence, the previous proof
\cite{qin2019compare} is also valid for symmetrizable Kac-Moody cases.

\end{proof}

\subsubsection*{Tropical properties of dual canonical bases}

We refer the readers to \cite{qin2019bases} for necessary definitions
and results relating bases and tropical points (Definition \ref{def:tropical_points}),
see also \cite{gross2018canonical} for a sophisticated geometric
view. Since common triangular bases are parametrized by the tropical
points, and we have seen that they agree with the localized dual canonical
bases (after rescaling), we get the following result.

\begin{Cor}\label{cor:tropical_dual_can_basis}

The dual canonical basis $B_{-1}^{\up}$ of the quantum unipotent
subgroup $\qO[N_{-}(w)]$ is parametrized by the set of tropical points
$\theta^{-1}(\N^{[1,l]})\subsetneq\Mc(t_{0})\simeq\tropMc$ of the
corresponding cluster variety in the sense of \cite{qin2019bases}. 

Similarly, the (localized) dual canonical basis $\circB_{-1}^{\up,w}$
of the quantum unipotent cell $\qO[N_{-}^{w}]$ is parametrized by
the set of the tropical points $\tropMc$ of the corresponding cluster
variety.

\end{Cor}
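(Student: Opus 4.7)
The plan is to derive Corollary \ref{cor:tropical_dual_can_basis} as an essentially formal consequence of Theorem \ref{thm:canonical_triangular}, combined with the compatibility properties of common triangular bases spelled out in Section \ref{sec:Triangular-bases}. First, I would treat the (localized) case, since it is the cleaner one, and then restrict back to the unipotent subgroup.

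For the localized case, recall from Definition \ref{def:triangular_basis} that the common triangular basis $\can$ of $\qClAlg$ is $\Mc(t)$-pointed for every seed $t \in \Delta^+$, and any two such parametrizations are compatible in the sense of Definition \ref{def:compatibly_pointed}. By definition of compatibility, if $b \in \can$ has degree $g \in \Mc(t)$ with respect to the seed $t$, then after mutation to any seed $t' = \seq_{t',t} t$ its degree becomes $\phi_{t',t} g \in \Mc(t')$. This is exactly the equivalence relation that defines the set of tropical points $\tropMc$ (Definition \ref{def:tropical_points}). Therefore the map $b \mapsto [\deg^t b] \in \tropMc$ is well defined and bijects $\can$ with $\tropMc$. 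Applying Theorem \ref{thm:canonical_triangular}, which identifies $\can$ with $\cor \circB_{-1}^{\up,w}$, and noting that the rescaling $\cor$ is a weight-preserving bijection on basis elements, yields the parametrization of $\circB_{-1}^{\up,w}$ by $\tropMc$.

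For the non-localized case, I would use Proposition \ref{prop:pointed_basis_parametrization}(2), which tells us that the rescaled dual canonical basis $\cor B_{-1}^{\up}$ of $\qO[N_{-}(w)]$ is $\theta^{-1}(\N^{[1,l]})$-pointed as a subset of the $\Mc(t_0)$-pointed basis of $\qClAlg$. Concretely, $B_{-1}^{\up}$ is recovered from the localized dual canonical basis by removing exactly those elements whose initial $g$-vector has some strictly negative frozen component (this reflects that localization is only at the frozen variables $X_j$, $j \in I_{\fv}$). Since $\theta^{-1}(\N^{[1,l]})$ sits inside $\Mc(t_0)$ as a representative of a proper sub-cone of $\tropMc$, this gives the claimed parametrization of $B_{-1}^{\up}$ by $\theta^{-1}(\N^{[1,l]}) \subsetneq \tropMc$.

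The only subtle point, and the one I would spell out carefully, is the identification of $\theta^{-1}(\N^{[1,l]})$ with a well-defined subset of $\tropMc$ rather than merely of $\Mc(t_0)$: one must check that being a $\cor$-rescaled element of $B_{-1}^{\up}$ (as opposed to a general element of $\cor \circB_{-1}^{\up,w}$) is intrinsic, i.e.\ does not depend on the choice of initial seed. This follows from the compatibility of the parametrization under tropical transformations together with the fact that the frozen coordinates are preserved by mutation (the tropical maps $\phi_{t',t}$ act trivially on the frozen part). I do not anticipate a serious obstacle here; the only work is bookkeeping with the sub-cone description, and everything else is a direct reading of Theorem \ref{thm:canonical_triangular} through the language of tropical points.
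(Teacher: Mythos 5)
Your main line of argument is exactly the paper's: the common triangular basis is compatibly pointed at all seeds, hence its $\Mc(t)$-degrees glue into a bijection with (a subset of) $\tropMc$; Theorem \ref{thm:canonical_triangular} identifies it with $\cor\circB_{-1}^{\up,w}$; and Proposition \ref{prop:pointed_basis_parametrization}(2) supplies the $\theta^{-1}(\N^{[1,l]})$-parametrization for the non-localized basis. Two side remarks in your last paragraph are, however, false as stated, even though they are not load-bearing for the corollary (which identifies $\tropMc$ with $\Mc(t_0)$ simply by choosing the representative seed $t_0$). First, the tropical maps $\phi_{t',t}$ do \emph{not} act trivially on the frozen coordinates: for $j\in I_{\fv}$ one has $g'_j=g_j+[\pm b_{jk}]_+g_k$, and $b_{jk}$ is generally nonzero for frozen $j$. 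Second, $\theta^{-1}(\N^{[1,l]})$ is not the locus of nonnegative frozen components; unwinding $\theta(g)_k=\sum_{j\geq k,\,i_j=i_k}g_j$, it is the cone cut out by nonnegativity of all tail sums along each row of the word, which is a genuinely different (full-dimensional) cone in $\Mc(t_0)$. Neither error affects the validity of the proof of the statement as written, but you should not rely on either claim elsewhere.
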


\appendix

\section{Cluster structures on quantum groups\label{sec:Cluster-structures-on}}

\subsection{Quantized enveloping algebras\label{sec:Serre_relations}}

The defining relations for a quantum enveloping algebra are the following.
For any $i,j\in[1,r]$, $h\in P^{\vee}$, we have

\begin{align*}
K^{h}K^{h'} & =K^{h+h'},\ K^{0}=1,\\
K^{h}E_{i}K^{-h} & =q^{\langle h,\alpha_{i}\rangle}E_{i},\\
K^{h}F_{i}K^{-h} & =q^{\langle h,-\alpha_{i}\rangle}F_{i},\\
E_{i}F_{j}-F_{j}E_{i} & =\delta_{ij}\frac{K_{i}-K_{i}^{-1}}{q_{i}-q_{i}^{-1}},\\
\sum_{k=0}^{1-C_{ij}}(-1)^{k}E_{i}^{(k)}E_{j}E_{i}^{(1-C_{ij}-k)} & =0,\quad i\neq j,\\
\sum_{k=0}^{1-C_{ij}}(-1)^{k}F_{i}^{(k)}F_{j}F_{i}^{(1-C_{ij}-k)} & =0,\quad i\neq j.
\end{align*}
In particular, $K_{i}E_{j}=q_{i}^{C_{ij}}E_{j}K_{i}$, $K_{i}F_{j}=q_{i}^{-C_{ij}}F_{j}K_{i}$.

\subsection{CGL extensions\label{subsec:CGL-extension}}

In the following, we explain that $\qO[N_{-}(w)]$ can be viewed as
iterated Ore extensions equipped with the action by a torus $\cH$.
See \cite{GY13} for details.

\begin{Def}[Twist derivation]\label{def:twist_derivation}

Let there be given a $\kk$-algebra $S$ and $\sigma\in\End S$. A
$\kk$-module endomorphism $\delta$ is called a $\sigma$-derivation
of $S$ if it satisfies $\delta(xy)=\delta(x)y+\sigma(x)\delta(y)$.

\end{Def}

When a $\kk$-algebra $S$ is equipped with a $\sigma$-derivation
$\delta$, we can define the Ore extension $S[x;\sigma,\delta]=\oplus_{d\in\N}x^{d}S$
as a free $S$-module, where $x$ is an indeterminate. We endow $S[x;\sigma,\delta]$
with the algebra structure by requiring $xs=\sigma(s)x+\delta(s)$.

Take the base field $\kk=\Q(q)$ as before. We can identify $Q$ with
the character lattice of the algebraic split torus $\cH=\Hom_{\Z}(Q,\Z)\otimes_{\Z}\kk^{*}\simeq(\kk^{*})^{r}$
such that $\gamma=\sum\gamma_{i}\alpha_{i}\in Q$ is identified with
$\chi^{\gamma}$, whose action on $\uh=(h_{1},\ldots,h_{r})\in\cH$
is given by $\chi^{\gamma}(\uh)=\prod h_{i}^{\gamma_{i}}$. Let $\cH$
act on $(\envAlg)_{\gamma}$ such that $\uh\cdot u=\chi^{\gamma}(\uh)u$.

\reviseStart

For any $i,j,k\in[1,r]$, we choose $\uh^{(k)},\uh^{(k)*}\in\cH$
such that $\chi^{\beta_{j}}(\uh^{(k)})=q^{(\beta_{j},\beta_{k})}$,
$\chi^{\beta_{i}}(\uh^{(k)*})=q^{-(\beta_{i},\beta_{k})}$ as the
following. Denote $\beta_{k}=\sum_{s=1}^{r}(\beta_{k})_{s}\alpha_{s}$.
We can choose $\uh^{(k)}=(h_{1},\ldots,h_{r})=q^{a}=(q^{a_{1}},\ldots,q^{a_{r}})$,
$a=\sum_{i=1}^{r}a_{i}\alpha_{i}^{\vee}$, $a_{i}=\sum_{s\in[1,r]}C_{is}(\beta_{k})_{s}\sym_{i}$,
and $\uh^{(k)*}=(\uh^{(k)})^{-1}=q^{-a}$.

\reviseEnd

Denote the action of $\uh^{(k)}$ on $\qO[N_{-}(w)]_{\leq k}$ by
$\sigma_{k}$ and the action of $\uh^{(k)*}$ on $\qO[N_{-}(w)]_{\geq k}$
by $\sigma_{k}^{*}$. By the dual LS-law (Proposition \ref{prop:dual_LS_law}),
we have the following $\kk$-module endomorphisms $\delta_{k}$ and
$\delta_{k}^{*}$:
\begin{eqnarray*}
\delta_{k}: & \qO[N_{-}(w)]_{[1,k-1]} & \rightarrow\qO[N_{-}(w)]_{[1,k-1]}\\
 & u & \mapsto F_{-1}^{\up}(\beta_{k})u-\sigma_{k}(u)F_{-1}^{\up}(\beta_{k})
\end{eqnarray*}

\begin{eqnarray*}
\delta_{k}^{*}: & \qO[N_{-}(w)]_{[k+1,l]} & \rightarrow\qO[N_{-}(w)]_{[k+1,l]}\\
 & u & \mapsto F_{-1}^{\up}(\beta_{k})u-\sigma_{k}^{*}(u)F_{-1}^{\up}(\beta_{k})
\end{eqnarray*}
Then $\qO[N_{-}(w)]$ can be written as the following iterated Ore
extensions equipped with the action by $\cH$:

\begin{align*}
\qO[N_{-}(w)] & =\kk[F_{-1}^{\up}(\beta_{1})][F_{-1}^{\up}(\beta_{2});\sigma_{2},\delta_{2}]\cdots[F_{-1}^{\up}(\beta_{l});\sigma_{l},\delta_{l}]\\
 & =\kk[F_{-1}^{\up}(\beta_{l})][F_{-1}^{\up}(\beta_{l-1});\sigma_{l-1}^{*},\delta_{l-1}^{*}]\cdots[F_{-1}^{\up}(\beta_{1});\sigma_{1}^{*},\delta_{1}^{*}].
\end{align*}
One can check that it is a symmetric Cauchon-Goodearl-Letzter (CGL)
extension in the sense of \cite[Defintion 2.2]{goodearl2016berenstein}. 

\begin{Rem}[Normalization factor]\label{rem:CGL_normalization_factor}

For $j<k$, denote $\uh^{(k)}F_{-1}^{\up}(\beta_{j})=\lambda_{kj}F_{-1}^{\up}(\beta_{j})$.
Then $\lambda_{kj}=q^{(\beta_{k},\beta_{j})}$. Define $\lambda_{kk}=1$,
$\lambda_{jk}=\lambda_{kj}^{-1}$. For any $j$, recall that $j^{\min}[m_{j}^{-}]=j$.
The normalization factor $\cS(\uc_{[j^{\min}j]})$ in \cite[(4.18)(2.7)]{GY13}
is given by

\begin{align*}
\cS(\uc_{[j^{\min},j]}) & :=\prod_{0\leq b<a\leq m_{j}^{-}}(\sqrt{\lambda_{j^{\min}[b],j^{\min}[a]}})^{-1}\\
 & =\prod_{b<a}q^{\Hf(\beta_{j^{\min}[b]},\beta_{j^{\min}[a]})}.
\end{align*}

Recall that $\cor B_{-1}^{\up}$ is $\sigma'$-invariant. Applying
$\cor$ and $\sigma'$ to $D[j^{\min},j]$ and using \eqref{eq:quantum_minor_ld_term},
we get

\begin{eqnarray*}
\cor D[j^{\min},j] & = & \sigma'\left(\xi_{j}\cor F_{-1}^{\up}(\beta_{j})\cdots\cor F_{-1}^{\up}(\beta_{j^{\min}[1]})\cor F_{-1}^{\up}(\beta_{j^{\min}})+\cor Z\right)\\
 & = & \xi_{j}^{-1}\cor F_{-1}^{\up}(\beta_{j^{\min}})\cor F_{-1}^{\up}(\beta_{j^{\min}[1]})\cdots\cor F_{-1}^{\up}(\beta_{j})+\sigma'\cor Z
\end{eqnarray*}
with the scalar $\xi_{j}=q^{-\Hf\sum_{m_{j}^{-}\geq a>b\geq0}(\beta_{j^{\min}[a]},\beta_{j^{\min}[b]})}.$
Then $\xi_{j}^{-1}$ agrees with the normalization factor $\cS(\uc_{[j^{\min}j]})$.
By \cite[Proposition 4.6, Theorem 8.2(b)]{GY13}, $\cor D[j^{\min},j]$
is a quantum cluster variable.

\end{Rem}

\subsection{Cluster structures}

\begin{Rem}\label{rem:injective_seed}

The $\tB$-matrix $\tB(t_{0}[1])$ is given by \cite[Section 10.1]{GY13}
as the following (\cite{GY13} treated finite dimensional Lie algebras,
but their statements concerning combinatorics such \cite[Proposition 10.4]{GY13}
are valid for a general root datum). 

Define the permutation $w_{0}$ on $[1,l]$ such that $w_{0}(j^{\min}[d])=j^{\max}[-d]$,
$j\in[1,l]$, $0\leq d<m(i_{j})$. Denote $j':=w_{0}(j)$. Then $I_{\fv}=\{j|p(j')=-\infty\}$.
The $(j,k)$-entry of $\tB(t_{0}[1])$ is given by

\begin{eqnarray*}
b_{jk} & = & \begin{cases}
1 & j'=p(k')\\
-1 & j'=s(k')\\
C_{i_{j}i_{k}} & p(j')<p(k')<j'<k'\\
-C_{i_{j}i_{k}} & p(k')<p(j')<k'<j'\\
0 & \mathrm{else}
\end{cases}.
\end{eqnarray*}

The $\Lambda$-matrix of $t_{0}[1]$ is determined by the $q$-commutativity
relation among the quantum cluster variables $\cor D[j,j^{\max}]$,
$j\in I$. By \cite[Theorem 9.5(b)]{GY13}, for any $j<k$, we have
\begin{align*}
N_{\ow}(\uc_{[j,j^{\max}]},\uc_{[k,k^{\max}]}) & =((\ow_{<j}+w)\varpi_{i_{j}},-\beta_{[k,k^{\max}]}).
\end{align*}

\end{Rem}

\begin{Eg}[{\cite[Example 13.2]{GeissLeclercSchroeer10}}]

Let us consider a more complicated example. Take the Cartan matrix
$C=\left(\begin{array}{ccc}
2 & -3 & -2\\
-3 & 2 & -2\\
-2 & -2 & 2
\end{array}\right)$, the reduced word $\ui=(1,2,1,3,1,2,1,2,3,2)$. Correspondingly,
$I=[1,10]$, $I_{\fv}=\{7,9,10\}$. We choose an initial seed $t_{0}=t_{0}(\ow)$
such that its $\tB$-matrix is given by Remark \ref{rem:initial_seed}.
We associate to its (skew-symmetric) matrix $(b_{ij})_{i,j\in I}$
a quiver $\tQ(t_{0})$, which is a directed graph with the set of
vertices $I$ and $[b_{ij}]_{+}$-many arrows from $i$ to $j$ for
all $i,j\in I$. We can choose $t_{0}$ such that the $\tQ(t_{0})$
is drawn as in Figure \ref{fig:non-adapt-initial} (frozen vertices
are diamond nodes and unfrozen ones are circle nodes; the number on
the arrow means the multiplicity).

The mutation sequence $\seq_{\ow}$ applies successively on the sequence
of vertices $(1,3,5,2,6,8,1,3,4,1,2,6,2)$ (read from left to right).
We have the seed $t_{0}[1]=\seq_{\ow}t_{0}$. Its quiver $\tQ(t_{0}[1])$
is drawn as Figure \ref{fig:non-adapt-shift}. Relabeling the vertices
by the permutation $w_{0}$ of $I$ given in Remark \ref{rem:injective_seed},
we obtain Figure \ref{fig:non-adapt-shift-relabeled}.

\end{Eg}

\begin{figure}[htb!]  \centering \beginpgfgraphicnamed{fig:non-adapt-initial}   \begin{tikzpicture}[scale=0.7]  \node [shape=circle, draw] (v1) at (10,5) {1};      \node  [shape=circle, draw] (v2) at (9,2) {2};      \node [shape=circle,  draw] (v3) at (8,5) {3}; \node [shape=circle, draw] (v4) at (7,-1) {4}; 
\node  [shape=circle, draw] (v5) at (6,5) {5};      \node [shape=circle,  draw] (v6) at (4,2) {6};      \node [shape=diamond, draw] (v7) at (3,5) {7};      \node  [shape=circle, draw] (v8) at (2,2) {8};      \node [shape=diamond,  draw] (v9) at (0,-1) {9};
\node [shape=diamond, draw] (v10) at (-1,2) {10};    \draw[-triangle 60] (v2) edge  node[near start] {3} (v1); \draw[-triangle 60] (v4) edge  node[near start] {2} (v2);        \draw[-triangle 60] (v4) edge  node[near start] {2} (v3);    \draw[-triangle 60] (v5) edge  node[near start] {3} (v2);    \draw[-triangle 60] (v6) edge  node[near end] {3} (v5);    \draw[-triangle 60] (v7) edge  node[near start] {3} (v6);  \draw[-triangle 60] (v7) edge  node[near end] {2} (v4);    \draw[-triangle 60] (v8) edge  node[near start] {2} (v4);   
\draw[-triangle 60] (v9) edge  node[near end] {2} (v8);    \draw[-triangle 60] (v9) edge  node[near end] {2} (v7);   
\draw[-triangle 60] (v10) edge  node[near start] {3} (v7);    \draw[-triangle 60] (v10) edge  node[near start] {2} (v9);   
\draw[-triangle 60] (v1) edge (v3); \draw[-triangle 60] (v3) edge (v5);    \draw[-triangle 60] (v5) edge (v7);    \draw[-triangle 60] (v2) edge (v6); \draw[-triangle 60] (v6) edge (v8);    \draw[-triangle 60] (v8) edge (v10);    \draw[-triangle 60] (v4) edge (v9);      \end{tikzpicture} \endpgfgraphicnamed \caption{The quiver $\tQ(t_0)$ associated to the reduced word $\ui$.} \label{fig:non-adapt-initial} \end{figure}

\begin{figure}[htb!]  \centering \beginpgfgraphicnamed{fig:non-adapt-shift}   \begin{tikzpicture}[scale=0.7]   \node [shape=diamond, draw] (v1) at (10,5) {7};      \node  [shape=diamond, draw] (v2) at (9,2) {10};      \node [shape=circle,  draw] (v3) at (8,5) {5}; \node [shape=diamond, draw] (v4) at (7,-1) {9}; 
\node  [shape=circle, draw] (v5) at (6,5) {3};      \node [shape=circle,  draw] (v6) at (4,2) {8};      \node [shape=circle, draw] (v7) at (3,5) {1};      \node  [shape=circle, draw] (v8) at (2,2) {6};      \node [shape=circle,  draw] (v9) at (0,-1) {4};
\node [shape=circle, draw] (v10) at (-1,2) {2};    \draw[-triangle 60] (v2) edge  node[near start] {3} (v1);      \draw[-triangle 60] (v4) edge  node[near start] {2} (v1);    \draw[-triangle 60] (v3) edge  node[near start] {3} (v2);    \draw[-triangle 60] (v6) edge  node[near end] {3} (v3);    \draw[-triangle 60] (v7) edge  node[near start] {3} (v6);  \draw[-triangle 60] (v5) edge  node[near end] {2} (v4);    \draw[-triangle 60] (v6) edge  node[near start] {2} (v4);   
\draw[-triangle 60] (v9) edge  node[near end] {2} (v6);    \draw[-triangle 60] (v9) edge  node[near end] {2} (v5);   
\draw[-triangle 60] (v8) edge  node[near start] {3} (v7);    \draw[-triangle 60] (v10) edge  node[near start] {2} (v9);   
\draw[-triangle 60] (v1) edge (v3); \draw[-triangle 60] (v3) edge (v5);    \draw[-triangle 60] (v5) edge (v7);    \draw[-triangle 60] (v2) edge (v6); \draw[-triangle 60] (v6) edge (v8);    \draw[-triangle 60] (v8) edge (v10);    \draw[-triangle 60] (v4) edge (v9);      \end{tikzpicture} \endpgfgraphicnamed \caption{The quiver $\tQ(t_0[1])$ associated with the reduced word $\ui$.} \label{fig:non-adapt-shift} \end{figure}

\begin{figure}[htb!]  \centering \beginpgfgraphicnamed{fig:non-adapt-shift-relabeled}   \begin{tikzpicture}[scale=0.7]   \node [shape=diamond, draw] (v1) at (10,5) {1'};      \node  [shape=diamond, draw] (v2) at (9,2) {2'};      \node [shape=circle,  draw] (v3) at (8,5) {3'}; \node [shape=diamond, draw] (v4) at (7,-1) {4'}; 
\node  [shape=circle, draw] (v5) at (6,5) {5'};      \node [shape=circle,  draw] (v6) at (4,2) {6'};      \node [shape=circle, draw] (v7) at (3,5) {7'};      \node  [shape=circle, draw] (v8) at (2,2) {8'};      \node [shape=circle,  draw] (v9) at (0,-1) {9'};
\node [shape=circle, draw] (v10) at (-1,2) {10'};    \draw[-triangle 60] (v2) edge  node[near start] {3} (v1);      \draw[-triangle 60] (v4) edge  node[near start] {2} (v1);    \draw[-triangle 60] (v3) edge  node[near start] {3} (v2);    \draw[-triangle 60] (v6) edge  node[near end] {3} (v3);    \draw[-triangle 60] (v7) edge  node[near start] {3} (v6);  \draw[-triangle 60] (v5) edge  node[near end] {2} (v4);    \draw[-triangle 60] (v6) edge  node[near start] {2} (v4);   
\draw[-triangle 60] (v9) edge  node[near end] {2} (v6);    \draw[-triangle 60] (v9) edge  node[near end] {2} (v5);   
\draw[-triangle 60] (v8) edge  node[near start] {3} (v7);    \draw[-triangle 60] (v10) edge  node[near start] {2} (v9);   
\draw[-triangle 60] (v1) edge (v3); \draw[-triangle 60] (v3) edge (v5);    \draw[-triangle 60] (v5) edge (v7);    \draw[-triangle 60] (v2) edge (v6); \draw[-triangle 60] (v6) edge (v8);    \draw[-triangle 60] (v8) edge (v10);    \draw[-triangle 60] (v4) edge (v9);     \end{tikzpicture} \endpgfgraphicnamed \caption{The quiver $\tQ(t_0[1])$ associated to the reduced word $\ui$ with vertices relabeled.} \label{fig:non-adapt-shift-relabeled} \end{figure}

\begin{Rem}\label{rem:injective_mutation_sequence}

By the combinatorial algorithm in \cite[Section 13.1]{GeissLeclercSchroeer10},
we can construct a mutation sequence for Theorem \ref{thm:inj_seed_admissible_mutations}
as follows. For $k\in[1,l],$denote the following mutation sequence
(read from right to left)

\begin{align*}
\seq_{k} & :=\mu_{k^{\min}[m(i_{k},[k,l])-2]}\cdots\mu_{k^{\min}[1]}\mu_{k^{\min}}.
\end{align*}
Notice that the sequence is trivial when $m(i_{k}[k,l])-2<0$. Define
the mutation sequence 
\begin{eqnarray*}
\seq_{\ow} & := & \seq_{l}\cdots\seq_{2}\seq_{1}.
\end{eqnarray*}
Then $t_{0}[1]=\seq_{\ow}t_{0}$. 

Moreover, $t_{0}$ and $t_{0}[1]$ are similar up to the permutation
$\sigma$ on on $[1,l]$ such that\reviseStart

\begin{eqnarray*}
\sigma(k)= & k, & \qquad k=k^{\max}\\
\sigma(k^{\min}[d])= & k^{\max}[-d-1], & \qquad0\leq d<m(i_{k}).
\end{eqnarray*}
See Remarks \ref{rem:initial_seed}, \ref{rem:injective_seed}.\reviseEnd

\end{Rem}

\bibliographystyle{amsalphaURL}
\def\cprime{$'$}
\providecommand{\bysame}{\leavevmode\hbox to3em{\hrulefill}\thinspace}
\providecommand{\MR}{\relax\ifhmode\unskip\space\fi MR }
% \MRhref is called by the amsart/book/proc definition of \MR.
\providecommand{\MRhref}[2]{%
  \href{http://www.ams.org/mathscinet-getitem?mr=#1}{#2}
}
\providecommand{\href}[2]{#2}

\end{document}